\numberwithin{equation}{section}
\let\al=\alpha
\let\f=\frac
\let\Om=\Omega
\let\na=\nabla
\let\pa=\partial
\def\R{\mathbb R}
\def\no{\noindent}
\newcommand{\beq}{\begin{equation}}
\newcommand{\eeq}{\end{equation}}
\newcommand{\ben}{\begin{eqnarray}}
\newcommand{\een}{\end{eqnarray}}
\newcommand{\beno}{\begin{eqnarray*}}
\newcommand{\eeno}{\end{eqnarray*}}
\newtheorem{theorem}{Theorem}[section]
\newtheorem{definition}[theorem]{Definition}
\newtheorem{lemma}[theorem]{Lemma}
\newtheorem{proposition}[theorem]{Proposition}
\newtheorem{remark}[theorem]{Remark}
\begin{document}

\begin{CJK*}{UTF8}{gkai}

\title[Stability of shear flows of Prandtl type]
{On the stability of shear flows of Prandtl type for the steady Navier-Stokes equations}

\author{Qi Chen}
\address{School of Mathematical Science, Peking University, 100871, Beijing, P. R. China}
\email{chenqi940224@gmail.com}

\author{Di Wu}
\address{School of Mathematics, South China University of Technology, Guangzhou, 510640,  P. R. China}
\email{wudi@scut.edu.cn}

\author{Zhifei Zhang}
\address{School of Mathematical Science, Peking University, 100871, Beijing, P. R. China}
\email{zfzhang@math.pku.edu.cn}

\date{\today}

\maketitle

\begin{abstract}
In this paper, we prove the stability of shear flows of Prandtl type as $
\big(U(y/\sqrt{\nu}),0\big)$ for the steady Navier-Stokes equations under a natural spectral assumption on the linearized NS operator.
We develop a direct energy method combined with compact method  to solve the Orr-Sommerfeld equation.

\end{abstract}

\section{Introduction}

\subsection{The problem and main result}
In this paper, we study the steady Navier-Stokes equations with small viscosity $\nu$ on the half-plane $\Om_\theta=\mathbb{T}_\theta\times\mathbb{R}_+$, where $\mathbb{T}_\theta$ is a torus with the period $2\pi\theta$:
\begin{align}\label{eq:SNS}
\left\{
\begin{aligned}
&v^{\nu}\cdot\nabla v^{\nu}-\nu\Delta v^\nu+\nabla q^\nu=g^\nu,\quad(x,y)\in\Omega_\theta,\\
&\nabla\cdot v^\nu=0,\quad(x,y)\in\Omega_\theta,\\
&v^\nu|_{y=0}=0,\quad x\in\mathbb{T}_\theta,
\end{aligned}\right.
\end{align}
where $v^\nu=(v_1^\nu,v_2^\nu)$ and $q^\nu$ are the unknown velocity and pressure of the fluid respectively, and $g^\nu$ is a given external force.

We are concerned with the stability of shear flows of Prandtl type as $U^\nu(x,y)=(U(y/\sqrt{\nu}),0)$, which is
a solution of \eqref{eq:SNS} with $g^\nu=-\nu\partial_y^2 U^\nu$ and $q^{\nu}=0$. We make the following structure assumptions on $U$:
\begin{eqnarray}\label{ass:U}
\begin{split}
   & U(0)=0,\quad U(Y)>0\ \text{in}\ Y>0,\quad \lim_{Y\rightarrow +\infty}U(Y)=1,\\
   &\partial_YU(0)>0,\quad \sum_{k=1,2}\sup_{Y\geq 0}(1+Y)^3|\partial_Y^kU(Y)|<\infty.
\end{split}
\end{eqnarray}
The assumption \eqref{ass:U}  implies that
\ben
  U(Y)\geq C^{-1}\min(1,Y)\sim Y/(1+Y).\label{eq:U-S1}
\een

In this paper, we take the external force $g^\nu=-\nu\pa_y^2U^\nu+f^\nu$ so that $u^\nu=v^\nu-U^\nu$ satisfies
\begin{align}\label{eq:perturbation-NS}
\left\{\begin{aligned}
&U(\frac{y}{\sqrt{\nu}})\partial_x u^\nu+\Big(u_2^\nu\partial_yU(\frac{y}{\sqrt{\nu}}),0\Big)-\nu\Delta u^\nu+\nabla p^\nu=-u^\nu\cdot\nabla u^\nu+f^\nu,\\
&\nabla\cdot u^\nu=0,\\
&u^\nu|_{y=0}=0.
\end{aligned}\right.
\end{align}
We denote by $\mathcal{P}_n, n\in\mathbb{Z}$ the  projection on the $n-$th Fourier mode in $x$ variable:
\begin{align*}
(\mathcal{P}_n u)(x,y)=u_n(y)e^{\mathrm{i}\tilde{n}x},\quad\tilde{n}=\frac{n}{\theta},\quad u_n(y)=\frac{1}{2\pi\theta}\int_0^{2\pi\theta}e^{-\mathrm{i}\tilde{n}x}u(x,y)dx.
\end{align*}
We denote 
$$\mathcal{Q}_0=I-\mathcal{P}_0,$$
where $I$ is the identity operator. It is easy to see that $P_0u^\nu=\big(u_{0,1}^\nu,0\big)$ due to $\na\cdot u^\nu=0$.

In a recent work \cite{GM}, D. Gerard-Varet and Y. Maekawa proved the following stability result: \\
{\it there exist small constants $\nu_0, \theta_0$ so that if $0<\nu\le \nu_0$ and $0<\theta\le \theta_0$ and $f^\nu=\mathcal{Q}_0f^\nu$ with $\|f^\nu\|_{L^2}\leq\eta \nu^{\f34}|\log\nu|^{-1}$,
then it holds that
\begin{eqnarray}\nonumber
\begin{split}
\|u^\nu_{0,1}\|_{L^\infty}+&\nu^{\f14}\|\partial_y u^\nu_{0,1}\|_{L^2}+\sum_{n\neq0}\|u^\nu_n\|_{L^\infty}\\
+&\nu^{-\f14}\|\mathcal{Q}_0 u^\nu\|_{L^2}+\nu^{\f14}\|\nabla\mathcal{Q}_0 u^\nu\|_{L^2}\leq C\nu^{-\f14}|\log\nu|^{\f12}\|f^\nu\|_{L^2}.
\end{split}
\end{eqnarray}}
The smallness assumption $\theta\le \theta_0$ plays the key role in the proof of the stability.\smallskip

The aim of this paper is twofold. The first aim of this paper is to replace the smallness assumption $\theta\le \theta_0$ by the following more natural condition: 
$$\theta\in \Sigma(U,\nu),$$
 where the set $\Sigma(U,\nu)$ consists of the positive number $\theta$ so that the following linearized Navier-Stokes equations have a unique solution $u^\nu\in H^2(\Om_\theta)\cap H^1_0(\Om_\theta)$ for $f^\nu\in L^2(\Om_\theta)$:
\begin{align}\nonumber
\left\{\begin{aligned}
&U(\frac{y}{\sqrt{\nu}})\partial_x u^\nu+\Big(u_2^\nu\partial_yU(\frac{y}{\sqrt{\nu}}),0\Big)-\nu\Delta u^\nu+\nabla p^\nu=f^\nu,\\
&\nabla\cdot u^\nu=0,\\
&u^\nu|_{y=0}=0.
\end{aligned}\right.
\end{align}
The set $\Sigma(U,\nu)$ is not empty, since small $\theta$ belongs to this set, which has been proved in \cite{GM}.
In fact, our assumption is equivalent to the spectral condition: {\it $0$ is not an eigenvalue of  the linearized operator $\mathcal{L}_\nu$ defined by
\beno
\mathcal{L}_\nu u=\mathrm{P}^\nu\Big(U(\frac{y}{\sqrt{\nu}})\partial_x u+\Big(u_2\partial_yU(\frac{y}{\sqrt{\nu}}),0\Big)\Big)-\nu\mathrm{P}^\nu \Delta u,
\eeno
where $\mathrm{P}^\nu$ is the Helmoholtz-Leray projection. } It is very interesting to determine appropriate structure assumptions on $U$ such that $0\notin \sigma(\mathcal{L}_\nu)$.

The second aim of this paper is to solve the Orr-Sommerfeld equation by developing a direct method in the sprit of our paper \cite{CWZ},
which should be independent of interest. The proof in \cite{GM} used the Rayleigh-Airy iteration method introduced in \cite{GGN, GMM} to solve the Orr-Sommerfeld equation.

Now we state our main result.

\begin{theorem}\label{th:main}
There exist positive constants $\nu_0$ and $\eta$ such that the following result holds for $0<\nu\leq\nu_0$: if $\theta\in \Sigma(U,\nu)$ and $f^\nu=\mathcal{Q}_0f^\nu$ with $\|f^\nu\|_{L^2}\leq\eta \nu^{\f34}|\log\nu|^{-1}$, then the unique solution $u^\nu$ to \eqref{eq:perturbation-NS} satisfies
\begin{eqnarray}
\begin{split}
\|u^\nu_{0,1}\|_{L^\infty}+&\nu^{\f14}\|\partial_y u^\nu_{0,1}\|_{L^2}+\sum_{n\neq0}\|u^\nu_n\|_{L^\infty}\\
+&\nu^{-\f14}\|\mathcal{Q}_0 u^\nu\|_{L^2}+\nu^{\f14}\|\nabla\mathcal{Q}_0 u^\nu\|_{L^2}\leq C\nu^{-\f14}|\log\nu|^{\f12}\|f^\nu\|_{L^2},
\end{split}
\end{eqnarray}
where $C$ is independent of $\theta$ and $\nu_0$.
\end{theorem}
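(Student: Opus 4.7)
The plan is to reduce the full nonlinear problem to the linearized system via a contraction argument. Setting $F=f^\nu-u^\nu\cdot\na u^\nu$, the target estimate together with $\|f^\nu\|_{L^2}\leq\eta\nu^{\f34}|\log\nu|^{-1}$ forces $\|u^\nu\|_{L^\infty}\lesssim\eta\nu^{\f12}|\log\nu|^{-\f12}$ and $\|\na u^\nu\|_{L^2}\lesssim\eta\nu^{\f14}|\log\nu|^{-\f12}$, whence $\|u^\nu\cdot\na u^\nu\|_{L^2}\lesssim\eta^2\nu^{\f34}|\log\nu|^{-1}$, which is absorbable into $f^\nu$ for $\eta$ small. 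Hence the heart of the proof is the analogous linear estimate for the linearized Navier--Stokes system with a general $L^2$ source, valid uniformly in $\theta\in\Sigma(U,\nu)$.

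I would work Fourier mode by mode in $x$. The zero mode $\cP_0 u=(u_{0,1},0)$ satisfies the scalar equation $-\nu\pa_y^2 u_{0,1}=F_{0,1}$ on $\R_+$ with $u_{0,1}(0)=0$, and an elementary one-dimensional argument yields the bounds on $\|u_{0,1}\|_{L^\infty}$ and $\nu^{\f14}\|\pa_y u_{0,1}\|_{L^2}$. For each nonzero mode $n$ I would introduce the stream function $\phi_n$ with $u_n=(\pa_y\phi_n,-\mathrm{i}\tilde n\phi_n)$, take the vorticity to eliminate the pressure, and reduce the system to the Orr--Sommerfeld equation
\beno
-\nu(\pa_y^2-\tilde n^2)^2\phi_n+\mathrm{i}\tilde n\Big(U(y/\sqrt{\nu})(\pa_y^2-\tilde n^2)\phi_n-\nu^{-1}U''(y/\sqrt{\nu})\phi_n\Big)=G_n,
\eeno
with boundary conditions $\phi_n(0)=\pa_y\phi_n(0)=0$ and decay as $y\to+\infty$.

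Following the strategy of \cite{CWZ}, I would solve the Orr--Sommerfeld equation by a direct energy method: test against multipliers of the form $\bar\phi_n$, $U(y/\sqrt\nu)\bar\phi_n$, and vorticity-weighted variants, integrate by parts, and exploit the boundary monotonicity $\pa_YU(0)>0$ together with the decay in \eqref{ass:U}--\eqref{eq:U-S1} to absorb the destabilizing term $\nu^{-1}U''\phi_n$ near the boundary layer. The resulting per-mode estimate should be uniform in the rescaled wavenumber $\tilde n=n/\theta$; summing in $n\neq 0$ then produces the announced control with at worst a $|\log\nu|^{\f12}$ loss, the logarithm originating in the critical layer where $U(y/\sqrt\nu)$ is small.

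The hard part is to upgrade the qualitative spectral hypothesis $\theta\in\Sigma(U,\nu)$ to a quantitative linear estimate with constant independent of $\theta$ --- equivalently, to actually produce the solution (not merely an a~priori bound) without losing the uniformity in $\theta$. For this I intend to combine the direct energy estimates with a compactness argument: regularize or truncate the half-line, pass to a weak limit, and use the spectral hypothesis $0\notin\sigma(\cL_\nu)$ to rule out a nontrivial kernel element in the limit. This compactness-plus-spectral-condition step is what allows the smallness condition $\theta\le\theta_0$ of \cite{GM} to be replaced by the natural $\theta\in\Sigma(U,\nu)$, and simultaneously sidesteps the more technical Rayleigh--Airy iteration of \cite{GGN,GMM} employed there.
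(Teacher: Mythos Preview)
Your high-level architecture --- contraction in a scaled $X_\nu$ ball, elementary treatment of the zero mode, mode-by-mode reduction to the Orr--Sommerfeld equation, and a compactness argument to convert the spectral hypothesis $\theta\in\Sigma(U,\nu)$ into a quantitative bound --- matches the paper exactly. You have also correctly identified that the compactness step is what removes the smallness requirement $\theta\le\theta_0$ of \cite{GM}.

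The gap is in the middle step: your description ``test against $\bar\phi_n$, $U\bar\phi_n$, and vorticity-weighted variants'' does not close for the non-slip problem when $\varepsilon=|\tilde n|^{-1}$ is small. With $\phi(0)=\partial_Y\phi(0)=0$ the boundary terms obstruct the natural pairings, and the term $-U''\phi$ has no sign; no combination of these multipliers alone yields the needed $\varepsilon^{1/3}$-scaled bounds. The paper resolves this in two stages. First it decomposes $\phi=\phi_0+\phi_1$, where $\phi_0$ solves a modified equation whose principal part $\partial_Y(U\partial_Y\phi_0)-\alpha^2U\phi_0$ is in divergence form and hence \emph{does} close by a single pairing (Lemma~\ref{prop:phi0}). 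For $\phi_1$ it then replaces the non-slip condition by an artificial Neumann condition $\partial_Y w|_{Y=0}=0$ on the vorticity; under \emph{that} boundary condition your multiplier strategy (plus a Rayleigh step to control $\|\varphi\|_{L^2}$) does close (Section~2). The mismatch in boundary data is repaired by an explicit boundary-layer corrector built from an Airy-function fast mode and, for small $|\alpha|$, a slow mode coming from the homogeneous Rayleigh equation (Section~3). This construction is the bulk of the work and is not bypassed by energy arguments.

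Correspondingly, the compactness argument is \emph{not} a universal device here: it is invoked only in the regime $c_2\le\varepsilon\le 1$ (equivalently $\theta>\theta_0$), where the fourth-order viscous term is uniformly elliptic and the weighted estimates of Lemma~\ref{lem:comp-Yinfty} give enough tightness to pass to the limit and appeal to the (S-A) hypothesis. For $\varepsilon\to 0$ the limit would be a Rayleigh problem with the wrong boundary data, so compactness cannot replace the constructive Airy/Rayleigh analysis above. In short, you need both: the boundary-corrector construction for small $\varepsilon$, and compactness under the spectral hypothesis for $\varepsilon$ bounded below.
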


\begin{remark}
When $\theta$ is small, the spectral condition holds and our result is the same as one in \cite{GM}. For $\theta$ not small but $\theta\in \Sigma(U, \nu)$, our result is new.
\end{remark}

\begin{remark}
The works \cite{GMM, CWZ} consider the stability of shear flows of Prandtl type for the unsteady Navier-Stokes equations for the perturbations in the Gevrey class.  In such case, it is enough to establish the resolvent estimates for the linearized operator when the spectral parameter lies in an unstable part of the resolvent set. While, in the steady case, one needs to establish the resolvent estimates when the spectral parameter equals to zero. However, it remains unknown whether $0$ lies in the resolvent set of $\sigma(\mathcal{L}_\nu)$ for general period $2\pi \theta$ in $x$. This is the essential reason why we impose the spectral condition $0\notin \sigma(\mathcal{L}_\nu)$.\smallskip

\end{remark}

\subsection{Some known results}

Most of earlier mathematical works are devoted to the study of the inviscid limit of the unsteady Navier-Stokes equations.
In the absence of the boundary, the inviscid limit from the Navier-Stokes equations to the Euler equations has been justified in various functional settings \cite{Kato, Swann, BM, Mas, CW, AD, Mar}. In the presence of the boundary, under the Navier-slip boundary condition, the inviscid limit has been also established in \cite{CMR, IP, MR, IS, XX, WXZ}.  

For non-slip boundary condition, due to the mismatch of the boundary condition between the Navier-Stokes equations and the Euler equations, Prandtl introduced the following boundary layer expansion:\ben\label{eq:Pran-exp}
 \left\{
 \begin{array}{l}
 u^{\nu}_1(t,x,y)=u^{e}_1(t,x,y)+ u^{BL}\big(t,x,\f{y}{\sqrt{\nu}}\big)+O(\sqrt{\nu}),\\
 u^{\nu}_2(t,x,y)=u^{e}_2(t,x,y)+\sqrt{\nu}v^{BL}\big(t,x,\f{y}{\sqrt{\nu}}\big)+O(\sqrt{\nu}),
 \end{array}\right.
 \een
where $(u_1^\nu, u_2^\nu)$ is the solution of the 2-D Navier-Stokes equations on the half space with non-slip boundary condition, $(u_1^e, u_2^e)$ is the solution of the 2-D Euler equations, and $(u^p,v^p)=\big(u^e_1(t,x,0)+u^{BL}(t,x,Y), \pa_yu^e_2(t,x,0)Y+v^{BL}(t,x,Y)\big)$ satisfies the Prandtl equation. To our knowledge, the Prandtl expansion \eqref{eq:Pran-exp} was justified only in some special cases:
 the analytic data  \cite{SC2, WWZ},  the initial vorticity vanishing near the boundary \cite{Mae, FTZ}, as well as the domain and the data with a circular symmetry  \cite{LMT, MT}.  Initiated by Kato \cite{Kato1}, there were many works devoted to the conditional convergence to the Euler equations \cite{TW, Wang, Ke}. These conditions can be confirmed for the data which is analytic near the boundary \cite{KVW, WF}.
 \smallskip

Grenier \cite{Gre} studied the Prandtl expansion of shear type flows  as
\ben\label{eq:Prandtl exp-shear}
u^\nu_s=\big(U^e(t,y),0\big)+\Big(U^{BL}\big(t,\f y {\sqrt{\nu}}\big),0\Big).
\een
When the shear flow $U^{BL}(0,Y)$ is linearly unstable for the Euler equations, he proved the $H^1$ instability of this expansion, and also $L^\infty$ instability in \cite{GNg}. In fact, even for the shear flows which are linearly stable for the Euler equations,
Guo, Grenier and Nguyen \cite{GGN} proved that it could be linearly unstable for the Navier-Stokes equations when $\nu$ is very small.
When $U^{BL}(t,Y)$ is a monotone and concave function, Gerard-Varet,  Masmoudi  and  Maekawa \cite{GMM}(see also \cite{CWZ}) proved the stability of the Prandtl expansion \eqref{eq:Prandtl exp-shear} for the perturbations in the Gevrey class(see \cite{GMM2} for general cases).

Guo and Nguyen \cite{GN}  considered the Prandtl expansion of the steady Navier-Stokes equations over a moving plate.
See \cite{Iy1, Iy2, Iy3}  for more relevant works. For the steady Navier-Stokes equations with non-slip boundary condition, Gerard-Varet and Maekawa \cite{GM} proved the stability of shear flows  $\big(U\big(\frac y {\sqrt{\nu}}\big),0\big)$ in the Sobolev space. Guo and  Iyer  \cite{GI} proved the local in time stability of the Blasius flow, and Iyer and Masmoudi \cite{IM} recently proved the global stability.
See also \cite{GZ} for  a different method.

\subsection{Sketch of the proof}

The road map of the proof of Theorem \ref{th:main} is as follows.  \smallskip

The key step is to  study the following linearized NS system for each Fourier mode:
\begin{align}\label{eq:per-n-mode}
\left\{\begin{aligned}
&\mathrm{i}\tilde{n} U(\frac{y}{\sqrt{\nu}})u_n+\Big(u_{n,2}\partial_y U(\frac{y}{\sqrt{\nu}}),0\Big)-\nu(\partial_y^2-\tilde{n}^2)u_n+(\mathrm{i}\tilde{n}p_n,\partial_y p_n)=f_n,\quad y>0,\\
&\mathrm{i}\tilde{n} u_{n,1}+\partial_y u_{n,2}=0,\quad y>0,\\
&u_n|_{y=0}=0,
\end{aligned}\right.
\end{align}
where $u_n=u_n(y)$ is the $n-$th Fourier mode of the velocity $u$. The divergence-free condition and homogeneous Dirichlet condition imply $u_0=(u_{0,1}, 0)$. The main ingredient of this paper is to establish the following estimates(see Proposition \ref{thm:main-linear}):
\begin{enumerate}
\item if $0<|\tilde{n}|\leq \delta_0\nu^{-\f34}$ and $\theta\in(0,\theta_0]$, then
\begin{align*}
|\tilde{n}|^{\f23}\|u_n\|_{L^2}+|\tilde{n}|^{\f13}\nu^{\f12}\|\partial_y u_n\|_{L^2}\leq C\|f_n\|_{L^2}.
\end{align*}
\item if $|\tilde{n}|\geq\delta_0\nu^{-\f34}$ and $\theta\in(0,\theta_0]$, then
\begin{align*}
|\tilde{n}|^{2}\nu\|u_n\|_{L^2}+|\tilde{n}|\nu\|\partial_y u_n\|_{L^2}\leq C\|f_n\|_{L^2}.
\end{align*}
\item if $\theta>\theta_0$ and $\theta\in\Sigma(U,\nu)$, then
 \begin{align*}
|\tilde{n}|\|u_n\|_{L^2}+\nu^{\f12}|\tilde{n}|\|\partial_y u_n\|_{L^2}\leq C\|f_n\|_{L^2}.
\end{align*}
\end{enumerate}
Compared with \cite{GM}, the result in the case of $\theta>\theta_0$ and $\theta\in\Sigma(U,\nu)$ is completely new.

With the help of the above estimates for the linearized system, we can prove nonlinear stability result by using a fixed point argument in the following functional space:
 \begin{align*}
X_{\nu,\varepsilon}=\big\{u:\|u\|_{X_\nu}\leq\varepsilon\nu^{\f12}|\log\nu|^{-\f12}\big\},
\end{align*}
where
\begin{align*}
\|u\|_{X_\nu}=\|u_{0,1}\|_{L^\infty}+\nu^{\f14}\|\partial_y u_{0,1}\|_{L^2}+\sum_{n\neq0}\|u_n\|_{L^\infty}+\nu^{-\f14}\|\mathcal{Q}_0u\|_{L^2}+\nu^{\f14}\|\nabla\mathcal{Q}_0 u\|_{L^2}.
\end{align*}
See section 5.3 for the details. \smallskip

Now we introduce main ideas of the proof for the estimates (1)-(3). \smallskip

For the case of $\tilde{n}\geq C\nu^{-\f34}$, the diffusion part in \eqref{eq:per-n-mode} can control the other terms.
Thus, the desired estimate can be proved via standard energy method to \eqref{eq:per-n-mode}.

For the case of  $0<\tilde{n}\leq C\nu^{-\f34}$, the situation is much more complicated.
As in \cite{GM}, we need to reformulate the system in terms of the stream function $\phi_n$ and make a change of variable.
Recall that 
\begin{align*}
u_{n,1}=\partial_y\phi_n,\quad u_{n,2}=-\mathrm{i}\tilde{n}\phi_n.
\end{align*}
Hence, the stream function $\phi_n$ satisfies the following equation
\begin{align*}
\left\{\begin{aligned}
&-\mathrm{i}\tilde{n}U\big(\frac{y}{\sqrt{\nu}}\big)(\partial_y^2-\tilde{n}^2)\phi_n+\mathrm{i}\tilde{n}\partial_y^2U\big(\frac{y}{\sqrt{\nu}}\big)\phi_n+\nu(\partial_y^2-\tilde{n}^2)^2\phi_n=\mathrm{i}\tilde{n}f_{n,2}-\partial_y f_{n,1},\quad y>0,\\
&\phi_n|_{y=0}=\partial_y\phi_n|_{y=0}=0.
\end{aligned}\right.
\end{align*}
Now we rescale the variable $y$ to $Y=\frac{y}{\sqrt{\nu}}$ and introduce 
\begin{align*}
\phi(Y)={\nu^{-\f12}}\phi_n(y),\quad f(Y)=\nu^{\f12}f_n(y).
\end{align*}
Then $\phi(Y)$ satisfies the Orr-Sommerfeld equation
\begin{align}\label{eq:OS}\left\{\begin{aligned}
&U(\partial_Y^2-\alpha^2)\phi-U''\phi+\mathrm{i}\varepsilon (\partial_Y^2-\alpha^2)^2\phi=-f_2-\dfrac{\mathrm{i}}{\alpha}\partial_Yf_1,\quad Y>0,\\
&\phi|_{Y=0}=\partial_Y\phi|_{Y=0}=0,
\end{aligned}\right.
\end{align}
where
\beno
\alpha=\tilde{n}\sqrt{\nu},\quad \varepsilon=\tilde{n}^{-1}=\frac{\theta}{n}.
\eeno

To solve \eqref{eq:OS}, we first make the following important decomposition to the solution $\phi=\phi_0+\phi_1$ with
$\phi_0$ and $\phi_1$ solving
\begin{align}\label{eq:Orr-phi0-s1}\left\{\begin{aligned}
&\partial_Y(U\partial_Y\phi_0)-\alpha^2U\phi_0+\mathrm{i}\varepsilon (\partial_Y^2-\alpha^2)^2\phi_0=-f_2-\dfrac{\mathrm{i}}{\alpha}\partial_Yf_1,\\
&\phi_0|_{Y=0}=\partial_Y\phi_0|_{Y=0}=0,
\end{aligned}\right.
\end{align}
and
\begin{align}\label{eq:Orr-phi1-s1}\left\{\begin{aligned}
&U(\partial_Y^2-\alpha^2)\phi_1+\mathrm{i}\varepsilon (\partial_Y^2-\alpha^2)^2\phi_1-U''\phi_1=\partial_Y(U'\phi_0),\\
&\phi_1|_{Y=0}=\partial_Y\phi_1|_{Y=0}=0.
\end{aligned}\right.
\end{align}
For the system \eqref{eq:Orr-phi0-s1}, we can obtain our estimate by a direct energy method due to good divergence structure of main part
(see Lemma \ref{prop:phi0}). While, for the system \eqref{eq:Orr-phi1-s1}, the source term $\partial_Y(U'\phi_0)$ has a good decay in $Y$ and takes the divergence structure. To solve \eqref{eq:Orr-phi1-s1}, we consider the following two cases: \smallskip

In the case when $\varepsilon$ is small,  we first solve the following system with artificial boundary conditions:
\begin{align}\label{eq:Orr-som-art-s1}\left\{\begin{aligned}
&U(\partial_Y^2-\alpha^2)\varphi-U''\varphi+\mathrm{i} \varepsilon(\partial_Y^2-\alpha^2)^2\varphi=f,\\
&w=(\partial_Y^2-\alpha^2)\varphi,\quad \varphi|_{Y=0}=\partial_Yw|_{Y=0}=0.
\end{aligned}\right.
\end{align}
In this case, the diffusion term  $\mathrm{i} \varepsilon(\partial_Y^2-\alpha^2)^2\varphi$ could be viewed as a perturbation
in some sense. More precisely, we can close our estimates by solving the Airy equation
\begin{align*}
U w+\mathrm{i}\varepsilon(\partial_Y^2-\alpha^2)w=U''\varphi+f,\quad \partial_Y w|_{Y=0}=0,
\end{align*}
and the Rayleigh equation
\begin{align*}
U(\partial_Y^2-\alpha^2)\varphi-U''\varphi=f-\mathrm{i} \varepsilon(\partial_Y^2-\alpha^2)^2\varphi,\quad \varphi|_{Y=0}=0.
\end{align*}
Thanks to good boundary condition on $w$ and our structure assumption on $U$, we can obtain our desired estimates by a direct energy method and using Rayleigh's trick. See section 2 for the details. In order to match the boundary condition,  we need to construct the boundary layer corrector via solving the system
\begin{align}\nonumber\left\{\begin{aligned}
&\mathrm{i}\varepsilon(\partial_Y^2-\alpha^2)W_b+UW_b-U''\Phi_b=0,\\
&(\partial_Y^2-\alpha^2)\Phi_b=W_b,\\
&\Phi_b|_{Y=0}=0,\ \partial_Y\Phi_b|_{Y=0}=1.
\end{aligned}\right.
\end{align}
For $1\leq|\alpha|<\infty$, the boundary corrector $W_b$ could be chosen as a perturbation of the Airy function.  For $0<|\alpha|\leq1$, we need to use the slow mode and fast mode to construct the boundary layer corrector as in \cite{GM}.

\smallskip

In the case when $\varepsilon$ is not small, the diffusion term can not be viewed as a perturbation. So, the above argument or Rayleigh-Airy iteration
does not work. In this case, we develop the compactness method to solve the system
\begin{align*}\left\{\begin{aligned}
&U(\partial_Y^2-\alpha^2)\varphi-U''\varphi+\mathrm{i} \varepsilon(\partial_Y^2-\alpha^2)^2\varphi=f,\\
&w=(\partial_Y^2-\alpha^2)\varphi,\quad \varphi|_{Y=0}=\partial_Y\varphi|_{Y=0}=0.
\end{aligned}\right.
\end{align*}
More precisely, when $\varepsilon\ge c$ and $|\al|\le C$,  we can use the compactness method to establish the following estimate:
 \begin{align*}
     & \|w\|_{L^2}+\|(\partial_Y\varphi,\alpha \varphi)\|_{L^2}\leq C\|(1+Y)^2f\|_{L^2}.
  \end{align*}
To obtain strong convergence of the sequence, we also need to establish the weighted estimates such as
\begin{align*}
     &\big\|\sqrt{Y}(\partial_Y\varphi,\alpha\varphi)\big\|_{L^2}^2\leq C\|(\partial_Y\varphi,\alpha\varphi)\|_{L^2}\big( \varepsilon\|(1+Y)(\partial_Yw,\al w)\|_{L^2}+\|(1+Y)^2f\|_{L^2}\big),\\
     &\varepsilon^{\f23}\|Y(\partial_Yw,\alpha w)\|_{L^2}+ \varepsilon^{\f13}\|Yw\|_{L^2}\leq C\big(\|\partial_Y\varphi\|_{L^2}+\|Yf\|_{L^2}+\varepsilon\|\partial_Yw\|_{L^2} \big).
  \end{align*}
To arrive a contradiction, we need to assume that the homogeneous system
  \begin{align}\nonumber\left\{\begin{aligned}
  &\mathrm{i}\varepsilon(\partial_Y^2-\alpha^2)w+Uw-U''\varphi=0,\\
  &(\partial_Y^2-\alpha^2)\varphi=w,\quad
  \partial_Y\varphi|_{Y=0}=\varphi|_{Y=0}=0,
  \end{aligned}\right.
  \end{align}
has only trivial solution $\varphi=0$. This assumption exactly corresponds to our spectral condition $0\notin \Sigma(U,\nu)$.
See section 4.2 for the details.

\section{Orr-sommerfeld equation with artificial boundary condition}
In this section, we study the Orr-Sommerfeld equation with artificial boundary condition
\begin{align}\label{eq:Orr-som-art}\left\{\begin{aligned}
&U(\partial_Y^2-\alpha^2)\varphi-U''\varphi+\mathrm{i} \varepsilon(\partial_Y^2-\alpha^2)^2\varphi=f,\quad f(0)=0,\\
&w=(\partial_Y^2-\alpha^2)\varphi,\quad \varphi|_{Y=0}=\partial_Yw|_{Y=0}=0.
\end{aligned}\right.
\end{align}
Compared with the non-slip condition, this kind of  boundary condition allows us to pick enough information from the Airy type structure in the Orr-Sommerfeld equation. Our main idea is as follows. 
\begin{enumerate}
\item We first treat nonlocal term as a perturbation. That is, we write \eqref{eq:Orr-som-art} as the following Airy type equation:
\begin{align*}
Airy[w]:=U w+\mathrm{i}\varepsilon(\partial_Y^2-\alpha^2)w=\tilde{f}:=U''\varphi+f,\quad \partial_Y w|_{Y=0}=0.
\end{align*}
Then we establish the estimates for general $\tilde{f}$ with $(1+Y)^2\tilde{f}\in L^2$.
\item To close the estimates, we need to control $\|\varphi\|_{L^2}$. For this purpose, we treat $\mathrm{i} \varepsilon(\partial_Y^2-\alpha^2)^2\varphi$ as a perturbation. That is, we write \eqref{eq:Orr-som-art}  as the following Rayleigh type equation:
\begin{align*}
Ray[\varphi]:=U(\partial_Y^2-\alpha^2)\varphi-U''\varphi=\bar{f}:=f-\mathrm{i} \varepsilon(\partial_Y^2-\alpha^2)^2\varphi,\quad \varphi|_{Y=0}=0,
\end{align*}
which brings us an useful estimate of $\|\varphi\|_{L^2}$.
\end{enumerate}

Now we state our main result of this section.

\begin{proposition}\label{prop:Orr-som-art}
 Assume that $U$ satisfies  the structure assumption \eqref{ass:U}. Then there exists a small positive number $c_1>0$ such that if $0<\varepsilon\leq c_1$ and $\alpha\neq0$, then for any $f\in H^1_{0}(\mathbb{R}_+)$ with $(1+Y)^2f\in L^2(\mathbb{R}_+)$, there exists a unique solution $\varphi\in H^4(\mathbb{R}_+)\cap H^1_0(\mathbb{R}_+)$ to \eqref{eq:Orr-som-art} satisfying
 \begin{align*}
    &\varepsilon^{\f13}\|(1+Y)w\|_{L^2}+ \varepsilon^{\f23}\|(1+Y)(\partial_Yw,\alpha w)\|_{L^2}+ \varepsilon\|(1+Y)(\partial_Y^2-\alpha^2)w\|_{L^2}\\
     &\quad+\|(\partial_Y\varphi,\alpha\varphi)\|_{L^2}\leq C\left(\|(1+Y)^2f\|_{L^2}+\dfrac{1}{|\alpha|^2}\bigg| \int_{0}^{+\infty}f\mathrm{d}Y\bigg|\right).
  \end{align*}
\end{proposition}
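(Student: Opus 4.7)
The plan is to establish the a priori estimate and then deduce existence by a standard continuity/Galerkin argument (uniqueness coming from the a priori bound itself). Setting $w=(\partial_Y^2-\alpha^2)\varphi$, the system \eqref{eq:Orr-som-art} produces the Airy identity
\begin{equation*}
Uw+\mathrm{i}\varepsilon(\partial_Y^2-\alpha^2)w=U''\varphi+f,\qquad \partial_Y w|_{Y=0}=0,
\end{equation*}
and the Rayleigh identity
\begin{equation*}
U(\partial_Y^2-\alpha^2)\varphi-U''\varphi=f-\mathrm{i}\varepsilon(\partial_Y^2-\alpha^2)w,\qquad \varphi|_{Y=0}=0.
\end{equation*}
The first will control the weighted norms of $w$ at the loss of $\|\partial_Y\varphi\|_{L^2}$ (through $U''\varphi$), while the second will control $\|(\partial_Y\varphi,\alpha\varphi)\|_{L^2}$ at the loss of an $\varepsilon^{1/3}$-fraction of the Airy LHS. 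The smallness $\varepsilon\le c_1$ is then used to absorb the coupling.

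\textbf{Airy step.} I would test the Airy identity against $(1+Y)^2\bar w$; the Neumann condition $\partial_Y w|_{Y=0}=0$ kills every boundary term produced by integration by parts on the fourth-order piece. Taking real and imaginary parts gives
\begin{equation*}
\|U^{1/2}(1+Y)w\|_{L^2}^2+\varepsilon\|(1+Y)(\partial_Y w,\alpha w)\|_{L^2}^2\lesssim \|(1+Y)^2\widetilde f\|_{L^2}\|(1+Y)w\|_{L^2}+\text{lower order},
\end{equation*}
with $\widetilde f=U''\varphi+f$, the lower order terms being commutators of the form $\varepsilon\|w\|_{L^2}\|(1+Y)\partial_Y w\|_{L^2}$ that are absorbed by Cauchy--Schwarz. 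Since $U\sim Y/(1+Y)$, interpolating at the Airy length scale $Y\sim \varepsilon^{1/3}$ between $\|U^{1/2}(1+Y)w\|_{L^2}$ and $\varepsilon^{1/2}\|(1+Y)\partial_Y w\|_{L^2}$ upgrades this to
\begin{equation*}
\varepsilon^{1/3}\|(1+Y)w\|_{L^2}+\varepsilon^{2/3}\|(1+Y)(\partial_Y w,\alpha w)\|_{L^2}\lesssim \|(1+Y)^2\widetilde f\|_{L^2},
\end{equation*}
and $\varepsilon\|(1+Y)(\partial_Y^2-\alpha^2)w\|_{L^2}$ follows by solving the Airy identity for the highest-order term. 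Finally, the decay $|U''|\lesssim (1+Y)^{-3}$ together with Hardy's inequality yields $\|(1+Y)^2 U''\varphi\|_{L^2}\lesssim \|(1+Y)^{-1}\varphi\|_{L^2}\lesssim \|\partial_Y\varphi\|_{L^2}$, so the Airy bound reduces to
\begin{equation*}
\varepsilon^{1/3}\|(1+Y)w\|_{L^2}+\varepsilon^{2/3}\|(1+Y)(\partial_Y w,\alpha w)\|_{L^2}+\varepsilon\|(1+Y)(\partial_Y^2-\alpha^2)w\|_{L^2}\lesssim \|(1+Y)^2 f\|_{L^2}+\|\partial_Y\varphi\|_{L^2}.
\end{equation*}

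\textbf{Rayleigh step and closure.} Because $U(0)=\varphi(0)=0$, the classical Rayleigh factorization
\begin{equation*}
U(\partial_Y^2-\alpha^2)\varphi-U''\varphi=\partial_Y\bigl(U^2\partial_Y(\varphi/U)\bigr)-\alpha^2 U\varphi
\end{equation*}
is valid, with $\varphi/U$ in $L^2$ by Hardy. Testing against $\overline{\varphi/U}$ gives
\begin{equation*}
\|U\partial_Y(\varphi/U)\|_{L^2}^2+\alpha^2\int_0^{+\infty}\! U|\varphi/U|^2\,\mathrm{d}Y=-\int_0^{+\infty}\!\bar f\,\overline{\varphi/U}\,\mathrm{d}Y,
\end{equation*}
and the identity $\partial_Y\varphi=U\partial_Y(\varphi/U)+U'\varphi/U$ combined with Hardy converts the left-hand side into $\|(\partial_Y\varphi,\alpha\varphi)\|_{L^2}^2$ up to absorbable corrections. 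For the right-hand side, I use $\bar f=f-\mathrm{i}\varepsilon(\partial_Y^2-\alpha^2)w$ and integrate by parts twice in the $\varepsilon$-piece: the boundary terms vanish by $\varphi(0)=0$ and $\partial_Y w|_{Y=0}=0$, so this contribution is dominated by $\varepsilon^{1/3}$ times the Airy left-hand side of Step 2. The contribution of $f$ itself is handled by splitting $\varphi/U$ orthogonally to its constant mode at infinity (on which the Rayleigh operator degenerates as $\alpha\to 0$); this decomposition produces exactly the correction $|\alpha|^{-2}\bigl|\int_0^{+\infty}\! f\,\mathrm{d}Y\bigr|$ advertised in the statement. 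Inserting the Airy bound into the Rayleigh inequality and absorbing $\varepsilon^{1/3}\cdot\text{LHS}$ on the left using $\varepsilon\le c_1$ with $c_1$ small closes the estimate.

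\textbf{Main obstacle.} The trickiest step is the low-frequency behaviour of the Rayleigh pairing $\int\bar f\,\overline{\varphi/U}$: the test weight $1/U$ is singular at $Y=0$ and does not decay at infinity (since $U\to 1$), so one must project $\varphi/U$ off its constant mode and pay exactly the $|\alpha|^{-2}|\int f|$ price from the Fredholm-type solvability condition of the limiting Rayleigh operator. Executing this while keeping the coupling to the Airy bound at order $\varepsilon^{1/3}$ (rather than $O(1)$) is the real work; once this is done, the smallness of $c_1$ finishes the argument.
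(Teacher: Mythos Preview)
Your overall Airy/Rayleigh split matches the paper, and the Airy step is essentially right. The gap is in the Rayleigh step. Testing the Rayleigh identity against $\overline{\varphi/U}$ yields the energy $\|U\partial_Y(\varphi/U)\|_{L^2}^2+\alpha^2\|U^{-1/2}\varphi\|_{L^2}^2$, and you claim this controls $\|(\partial_Y\varphi,\alpha\varphi)\|_{L^2}^2$ up to absorbable corrections. But an integration by parts shows
\[
\|U\partial_Y(\varphi/U)\|_{L^2}^2=\|\partial_Y\varphi\|_{L^2}^2+\int_0^{+\infty}\frac{U''}{U}|\varphi|^2\,\mathrm{d}Y,
\]
and under the structure assumption \eqref{ass:U} nothing prevents $U''\le 0$ (indeed for typical boundary layer profiles $U$ is concave). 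In that case the correction has the wrong sign and is of the \emph{same} order as $\|\partial_Y\varphi\|_{L^2}^2$ by Hardy; it cannot be absorbed. So the Rayleigh pairing does not deliver $\|\partial_Y\varphi\|_{L^2}$, and your closure breaks.

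The paper avoids this by relocating the two pieces of the argument. The bound on $\|(\partial_Y\varphi,\alpha\varphi)\|_{L^2}$ is obtained already in the \emph{Airy} step (Lemma~\ref{prop:nonlocal}), by testing the Airy identity against $-\varphi/(U+\varepsilon^{1/3})$; the $\varepsilon^{1/3}$ regularization replaces $1/U$ and makes the coupling to $w$ genuinely small of order $\varepsilon^{1/3}$. The Rayleigh step (Lemma~\ref{prop:Rayleigh-res}) is then used only to control $\|\varphi\|_{L^2}$, and this is done not by energy but by the explicit integral representation $\varphi_1=\mathcal{L}[\sigma[F_{1,1}]]$ for the $\alpha$-independent piece, together with the $\alpha^2$-coercivity for the remainder $\varphi_2$. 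This explicit formula is also precisely where the $|\alpha|^{-2}|\int f\,\mathrm{d}Y|$ term is produced (it comes from $\sigma[f](0)$ after a cutoff decomposition), rather than from a vague ``projection off the constant mode''. Finally, the paper splits into $|\alpha|\le M_0$ (Lemma~\ref{prop:Orr-som-art-alpha-small}) and $|\alpha|\ge M_0$ (Lemma~\ref{prop:Orr-som-art-alpha-large}); in the latter regime the nonlocal term $U''\varphi$ is directly small by $\|U''\varphi\|_{L^2}\le C|\alpha|^{-1}\|\alpha\varphi\|_{L^2}$, and no Rayleigh step is needed at all.
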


\subsection{Estimates for the Airy equation}
We consider the following system with different boundary conditions
\begin{align}\label{eq:toy0}\left\{\begin{aligned}
&U(\partial_Y^2-\alpha^2)\varphi+\mathrm{i}\varepsilon(\partial_Y^2 -\alpha^2)^2\varphi=f,\\
&w=(\partial_Y^2-\alpha^2)\varphi,\quad\varphi|_{Y=0}=w|_{Y=0}=0,
\end{aligned}
\right.\end{align}
and
\begin{align}\label{eq:toy0-1}\left\{\begin{aligned}
&U(\partial_Y^2-\alpha^2)\varphi+\mathrm{i}\varepsilon(\partial_Y^2 -\alpha^2)^2\varphi=f,\\
&w=(\partial_Y^2-\alpha^2)\varphi,\quad\varphi|_{Y=0}=\partial_Yw|_{Y=0}=0.
\end{aligned}
\right.\end{align}

These two boundary conditions are compatible with the dissipative structure involved in these two systems, which allows us to obtain the following estimates for both systems.

\begin{lemma}\label{prop:nonlocal}
Let $0<\varepsilon\leq 1$. Let $\varphi$ be the solution to \eqref{eq:toy0} or \eqref{eq:toy0-1} with  $f\in L^2(\mathbb{R}_+)$. Then we have\\
 $\bullet$ If $f\in L^2(\mathbb{R}_+)$, then it holds that
\begin{align*}
     &\|Uw\|_{L^2}+ \varepsilon^{\f16}\big\|\sqrt{U}w\big\|_{L^2}+\varepsilon^{\f13} \|w\|_{L^2} +\varepsilon^{\f23}\|(\partial_Yw,\alpha w)\|_{L^2}+\varepsilon\|(\partial_Y^2-\alpha^2)w\|_{L^2}\leq C\|f\|_{L^2}.
\end{align*}
 $\bullet$ If $(1+Y)f\in L^2(\mathbb{R}_+)$, then it holds that
\begin{align*}
     & \|(\partial_Y\varphi,\alpha\varphi)\|_{L^2}+ \varepsilon^{\f13}\|Yw\|_{L^2}+\varepsilon^{\f16} \big\|Y\sqrt{U}w\big\|_{L^2}+\varepsilon^{\f23} \|Y(\partial_Yw,\alpha w)\|_{L^2}\\
     &\qquad+\varepsilon\|Y(\partial_Y^2-\alpha^2)w\|_{L^2}\leq C\|(1+Y)f\|_{L^2}.
  \end{align*}
  {
 $\bullet$ If $(1+Y)f\in L^2(\mathbb{R}_+)$. If $f$ is replaced by $\partial_Y f$ or $f/Y$, then the solution $\varphi$ to \eqref{eq:toy0} satisfies
 \begin{align*}
 	\varepsilon^{\f12}\|\sqrt{U}w\|_{L^2}+\varepsilon^{\f23}\|w\|_{L^2}+\varepsilon\|(\partial_Y w,\alpha w)\|_{L^2}\leq C\|f\|_{L^2}.
 \end{align*}}
\end{lemma}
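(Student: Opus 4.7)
The strategy is to view the system as the Airy-type resolvent problem for $w=(\partial_Y^2-\alpha^2)\varphi$,
\[
\mathcal{A}_{\varepsilon}[w]:=Uw+\mathrm{i}\varepsilon(\partial_Y^2-\alpha^2)w=f,
\]
to first establish the Airy-scale resolvent bound $\varepsilon^{1/3}\|w\|_{L^2}\leq C\|f\|_{L^2}$, and then cascade to every other estimate via the equation, weighted multipliers, and elliptic inversion.

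First I would run the basic energy identity by pairing the equation with $\bar w$. Either boundary condition $w|_{Y=0}=0$ or $\partial_Y w|_{Y=0}=0$ annihilates the boundary term arising from integrating $(\partial_Y^2-\alpha^2)w$ by parts, giving
\[
\int_0^\infty U|w|^2\,\mathrm{d}Y+\varepsilon\int_0^\infty|(\partial_Yw,\alpha w)|^2\,\mathrm{d}Y\leq 2\|f\|_{L^2}\|w\|_{L^2}.
\]
To upgrade this to an $L^2$ bound on $w$, split $\mathbb{R}_+$ at the Airy scale $Y=c_0\varepsilon^{1/3}$ and use the structural bound $U(Y)\geq C^{-1}Y/(1+Y)$ from \eqref{eq:U-S1}: on $\{Y\geq c_0\varepsilon^{1/3}\}$ one has $U(Y)\geq c_1\varepsilon^{1/3}$, so $\|w\|_{L^2(\text{out})}^2\leq C\varepsilon^{-1/3}\int U|w|^2$; on $\{Y\leq c_0\varepsilon^{1/3}\}$, the Dirichlet boundary datum with $|w(Y)|^2\leq Y\int_0^Y|\partial_Yw|^2$ gives $\|w\|_{L^2(\text{in})}^2\leq C\varepsilon^{2/3}\|\partial_Yw\|_{L^2}^2$, while the Neumann datum is handled by an even reflection across $\{Y=0\}$ or a Gagliardo--Nirenberg bound $|w(0)|\leq C\|w\|_{L^2}^{1/2}\|\partial_Yw\|_{L^2}^{1/2}$. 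Combining with the basic energy and absorbing yields $\varepsilon^{1/3}\|w\|_{L^2}\leq C\|f\|_{L^2}$, and then $\varepsilon^{1/6}\|\sqrt Uw\|_{L^2}$ and $\varepsilon^{2/3}\|(\partial_Yw,\alpha w)\|_{L^2}$ follow by re-inserting into the energy. To capture $\|Uw\|_{L^2}$, test the equation with $U\bar w$: the boundary contribution vanishes because $U(0)=0$, and the real part yields $\|Uw\|_{L^2}^2\leq\|f\|_{L^2}\|Uw\|_{L^2}+C\varepsilon\|w\|_{L^2}\|\partial_Yw\|_{L^2}$; the last term is $O(\|f\|_{L^2}^2)$ by the previous bounds, so $\|Uw\|_{L^2}\leq C\|f\|_{L^2}$, and rearranging the equation delivers $\varepsilon\|(\partial_Y^2-\alpha^2)w\|_{L^2}\leq\|Uw\|_{L^2}+\|f\|_{L^2}\leq C\|f\|_{L^2}$, closing the first bullet.

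For the weighted second bullet I would set $W:=Yw$ and check directly that
\[
UW+\mathrm{i}\varepsilon(\partial_Y^2-\alpha^2)W=Yf+2\mathrm{i}\varepsilon\,\partial_Y w,\qquad W|_{Y=0}=0,
\]
so $W$ solves \eqref{eq:toy0} with a modified source bounded in $L^2$ by $C\|(1+Y)f\|_{L^2}$ (using $\varepsilon\|\partial_Yw\|_{L^2}\leq C\varepsilon^{1/3}\|f\|_{L^2}$ from the first bullet). Applying the first-bullet bounds to $W$ and using the algebraic identity $\|(\partial_Y,\alpha)W\|_{L^2}=\|Y(\partial_Y,\alpha)w\|_{L^2}$ (a consequence of $2\operatorname{Re}\int w\,\overline{Y\partial_Yw}\,\mathrm{d}Y=-\|w\|_{L^2}^2$), together with the elementary inequality $\|Y(\partial_Y^2-\alpha^2)w\|_{L^2}\leq\|(\partial_Y^2-\alpha^2)W\|_{L^2}+2\|\partial_Yw\|_{L^2}$, produces all the $Y$-weighted bounds on $w$. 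For the elliptic factor $\|(\partial_Y\varphi,\alpha\varphi)\|_{L^2}$, I would integrate $(\partial_Y^2-\alpha^2)\varphi=w$ against $\bar\varphi$ with $\varphi(0)=0$ to get $\|(\partial_Y\varphi,\alpha\varphi)\|_{L^2}^2=-\operatorname{Re}\int w\bar\varphi$, and Hardy's inequality $\|\varphi/Y\|_{L^2}\leq 2\|\partial_Y\varphi\|_{L^2}$ yields $\|(\partial_Y\varphi,\alpha\varphi)\|_{L^2}\leq 2\|Yw\|_{L^2}$, closed by the weighted Airy bound just obtained. The third bullet follows the same template: when the source is $\partial_Y f$ (with $f\in H^1_0$), integrating by parts in the $\bar w$-test transfers the derivative onto $\bar w$, $|\int\partial_Yf\,\bar w|=|\int f\,\partial_Y\bar w|\leq\|f\|_{L^2}\|\partial_Yw\|_{L^2}$, which improves the scaling by a factor $\varepsilon^{1/3}$; the $f/Y$ case is dual and uses Hardy.

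The principal obstacle is the sharp Airy-type resolvent $\varepsilon^{1/3}\|w\|_{L^2}\leq C\|f\|_{L^2}$: the outer/inner split must be quantitatively matched with the structure constants of $U$ near $Y=0$, and the Neumann boundary condition requires a workaround since the Hardy trick presupposes a Dirichlet datum. Once this resolvent bound is secured, every other estimate in the Lemma is an essentially algebraic consequence of either the equation itself or the substitution $W=Yw$.
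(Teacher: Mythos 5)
Most of your argument tracks the paper's proof: the energy identity, the Airy-scale splitting of $\mathbb{R}_+$ at $Y\sim\varepsilon^{1/3}$ combined with $\|w\|_{L^\infty}\leq C\|\partial_Yw\|_{L^2}^{1/2}\|w\|_{L^2}^{1/2}$ (which the paper uses for both boundary conditions, so no separate Neumann workaround is needed), and the substitution $W=Yw$ for the weighted bounds (the paper inserts a cutoff $Y\chi_R$ and lets $R\to\infty$ to justify that $Yw$ is an admissible solution; you should do the same, but that is a routine regularization). Your derivation of $\|Uw\|_{L^2}$ by testing with $U\bar w$ is a harmless variant of the paper's computation of $\|Uw+\mathrm{i}\varepsilon(\partial_Y^2-\alpha^2)w\|_{L^2}^2=\|f\|_{L^2}^2$, and your treatment of the third bullet is the paper's.

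The genuine gap is in the estimate of $\|(\partial_Y\varphi,\alpha\varphi)\|_{L^2}$. Your route is $\|(\partial_Y\varphi,\alpha\varphi)\|_{L^2}^2=-\mathbf{Re}\langle w,\varphi\rangle\leq\|Yw\|_{L^2}\|\varphi/Y\|_{L^2}$, hence $\|(\partial_Y\varphi,\alpha\varphi)\|_{L^2}\leq 2\|Yw\|_{L^2}$. But the best available bound on $\|Yw\|_{L^2}$ is the Airy-scale one, $\varepsilon^{1/3}\|Yw\|_{L^2}\leq C\|(1+Y)f\|_{L^2}$, so this chain only yields $\|(\partial_Y\varphi,\alpha\varphi)\|_{L^2}\leq C\varepsilon^{-1/3}\|(1+Y)f\|_{L^2}$ --- a loss of $\varepsilon^{-1/3}$ relative to the stated, $\varepsilon$-uniform bound $\|(\partial_Y\varphi,\alpha\varphi)\|_{L^2}\leq C\|(1+Y)f\|_{L^2}$. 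That uniformity is exactly what is needed downstream (e.g.\ in Lemma \ref{prop:Orr-som-art-alpha-small} and Proposition \ref{prop:Orr-som-art}), so the loss is not cosmetic. The paper avoids it by testing the \emph{full equation} against the singular multiplier $-\varphi/(U+\varepsilon^{1/3})$: the term $\langle Uw,-\varphi/(U+\varepsilon^{1/3})\rangle$ produces $\|(\partial_Y\varphi,\alpha\varphi)\|_{L^2}^2$ up to an $\varepsilon^{1/3}\|w\bar\varphi/U\|_{L^1}$ correction, the forcing term is bounded by $\|(1+Y)f\|_{L^2}\|\partial_Y\varphi\|_{L^2}$ using $|Y/((1+Y)U)|\leq C$ and Hardy, and the diffusion term contributes $\varepsilon^{2/3}\|(\partial_Yw,\alpha w)\|_{L^2}\|(\partial_Y\varphi,\alpha\varphi)\|_{L^2}$, each of which closes at order $\|(1+Y)f\|_{L^2}$ with no negative power of $\varepsilon$. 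You need this (or an equivalent device that extracts $\langle w,-\varphi\rangle$ from the $Uw$ term rather than from the elliptic relation alone) to obtain the second bullet as stated.
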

\begin{remark}
The existence of \eqref{eq:toy0} and \eqref{eq:toy0-1} can be obtained by the method of continuity. In fact, after replacing $\mathrm{i}\varepsilon(\partial_Y^2 -\alpha^2)^2\varphi$ by $\mathrm{i}\varepsilon(\partial_Y^2 -\alpha^2)^2\varphi-il(\partial_Y^2-\alpha^2)\varphi$ with $l$ large enough, one can obtain the existence of the solution, which along with the fact that the above estimates hold for any $l>0$ implies the existence of \eqref{eq:toy0} and \eqref{eq:toy0-1}.
\end{remark}
{
\begin{remark}\label{re:airy-part}
Without the assumption $\varphi|_{Y=0}=0$, we can also obtain the following estimate
	\begin{align*}
     &\|Uw\|_{L^2}+ \varepsilon^{\f16}\big\|\sqrt{U}w\big\|_{L^2}+\varepsilon^{\f13} \|w\|_{L^2} +\varepsilon^{\f23}\|(\partial_Yw,\alpha w)\|_{L^2}+\varepsilon\|(\partial_Y^2-\alpha^2)w\|_{L^2}\leq C\|f\|_{L^2}.
\end{align*}
\end{remark}

}

\begin{proof}
We first consider the case  of $f\in L^2(\mathbb{R}_+)$. Let $\varphi$ be the solution to \eqref{eq:toy0} or \eqref{eq:toy0-1}. Taking inner product with $w$, we have
\begin{align*}
\langle f,w\rangle_{L^2(\mathbb{R}_+)}=&\langle  Uw+\mathrm{i}\varepsilon(\partial_Y^2-\alpha^2)w ,w\rangle_{L^2(\mathbb{R}_+)}\\
=&\big\|\sqrt{U}w\big\|_{L^2}^2-\mathrm{i} \varepsilon\|(\partial_Yw,\alpha w)\|_{L^2}^2,
\end{align*}
which implies that
\begin{align}\label{est:nonl-w1}
\big\|\sqrt{U}w\big\|_{L^2}^2+ \varepsilon\|(\partial_Yw,\alpha w)\|_{L^2}^2\leq
     C\|f\|_{L^2}\|w\|_{L^2}.
\end{align}
By the structure assumption \eqref{ass:U} on $U$, there exists $Y_0>0$ such that
   \begin{align}\label{est:U-pro}
      & U(Y)\geq C^{-1}Y,\quad \text{if}\quad Y\leq Y_0;\qquad U(Y)\geq C^{-1},\quad \text{if}\quad Y\geq Y_0,
   \end{align}
then we have the following interpolation
   \begin{align}\label{est:interpolation-1}
   \begin{split}
      \|w\|_{L^2}&\leq \|w\|_{L^2\big([0,\epsilon^{\f13}Y_0]\big)}+ \|w\|_{L^2\big([\varepsilon^{\f13}Y_0,+\infty)\big)}\\
      &\leq \varepsilon^{\f16}Y_0^{\f12}\|w\|_{L^\infty}+
      C\max(\varepsilon^{-\f16}Y_0^{-\f12},1)\|\sqrt{U}w\|_{L^2\big([ \varepsilon^{\f13}Y_0,+\infty )\big)}\\
      &\leq C\varepsilon^{\f16}\|\partial_Yw\|_{L^2}^{\f12}\|w\|_{L^2}^{\f12}+ C\varepsilon^{-\f16}\|\sqrt{U}w\|_{L^2}.
      \end{split}
   \end{align}

   Combining with \eqref{est:nonl-w1} and using the fact that $\varepsilon\leq 1$, we get
   \begin{align*}
      & \|w\|_{L^2}\leq C\varepsilon^{-\f{1}{12}}\|f\|_{L^2}^{\f14}\|w\|_{L^2}^{\f34}+ C\varepsilon^{-\f16}\|f\|_{L^2}^{\f12}\|w\|_{L^2}^{\f12},
   \end{align*}
   which implies
   \begin{align*}
   \|w\|_{L^2}\leq C\varepsilon^{-\f13}\|f\|_{L^2}.
   \end{align*}
   Applying \eqref{est:nonl-w1} again, we obtain
   \begin{align}\label{est:nonl-res1}
      &\varepsilon^{\f16}\big\|\sqrt{U}w\big\|_{L^2}+\varepsilon^{\f13} \|w\|_{L^2}+\varepsilon^{\f23}\|(\partial_Yw,\alpha w)\|_{L^2}\leq C\|f\|_{L^2}.
   \end{align}

  Multiplying $f$ on both sides of the first equation in \eqref{eq:toy0} or \eqref{eq:toy0-1} and integrating by parts,  we get
  \begin{align}\label{est:nonl-dirL2}
     &\|Uw\|_{L^2}^2+\varepsilon^2\|(\partial_Y^2-\alpha^2)w\|_{L^2}^2 +2\varepsilon\mathbf{Re}\big(\mathrm{i}\langle Uw,(\partial_Y^2-\alpha^2)w\rangle
     \big)= \|f\|_{L^2}^2.
  \end{align}
  Noticing that
  \begin{align*}
     & \big|\mathbf{Re}\big(\mathrm{i}\langle Uw,(\partial_Y^2-\alpha^2)w\rangle
     \big)\big|=\big|\mathbf{Im}\big(\langle U'w,\partial_Yw\rangle
     +\|\sqrt{U}(\partial_Yw,\alpha w)\|_{L^2}^2\big)\big|\\
     &\leq \|U'w\|_{L^2}\|\partial_Yw\|_{L^2}\leq C\|w\|_{L^2}\|\partial_Yw\|_{L^2}\leq C\varepsilon^{-1}\|f\|_{L^2}^2,
  \end{align*}
where we used \eqref{est:nonl-res1} in the last inequality.   Then combining with \eqref{est:nonl-dirL2} and \eqref{est:nonl-res1}, the above inequality implies
  \begin{align}\label{est:nonl-res1-comp}
     & \|Uw\|_{L^2}+ \varepsilon^{\f16}\big\|\sqrt{U}w\big\|_{L^2}+\varepsilon^{\f13} \|w\|_{L^2} +\varepsilon^{\f23}\|(\partial_Yw,\alpha w)\|_{L^2}+\varepsilon\|(\partial_Y^2-\alpha^2)w\|_{L^2}\leq C\|f\|_{L^2}.
  \end{align}
This proves the first statement of this lemma.

Now we turn to prove the second statement. Let $(1+Y)f\in L^2(\mathbb{R}_+)$.
 For this purpose, we introduce the following cut-off function. Let $\chi\geq0$ be a $C^3(\mathbb{R}_+)$ cut-off function, such that
\begin{align*}
   &\chi(Y)=1,\ \text{if}\ Y\in[0,1);\quad\chi(Y)=0,\ \text{if}\ Y\in[2,+\infty),
\end{align*}let $\chi_R(Y)=\chi(Y/R),\ R>1$, then
\begin{align*}
   \chi_R(Y)=1,\ &\text{if}\ Y\in[0,R);\quad\chi(Y)=0,\ \text{if}\ Y\in[2R,+\infty),\\
   &R|\partial_Y\chi_R|+R|\partial_Y^2\chi_R|\leq C,
\end{align*}
which implies that
\begin{align}\label{eq:cut-off bound}
	\ |\partial_Y(Y\chi_R(Y))|+|\partial_Y^2(Y\chi_R(Y))|\leq C.
\end{align}
Hence, $Y\chi_{R} w$ satisfies \eqref{eq:toy0} with the source term $Y\chi_Rf+2\mathrm{i}\varepsilon\partial_Y(Y\chi_R) \partial_Yw+ \mathrm{i}\varepsilon\partial_Y^2(Y\chi_R)w$. That is,
\begin{align*}\left\{\begin{aligned}
   &\mathrm{i}\varepsilon(\partial_Y^2-\alpha^2)(Y\chi_Rw)+ U(Y\chi_Rw)=Y\chi_Rf+2\mathrm{i}\varepsilon\partial_Y(Y\chi_R) \partial_Yw+ \mathrm{i}\varepsilon\partial_Y^2(Y\chi_R)w,\\
   &(Y\chi_Rw)|_{Y=0}=\lim_{Y\rightarrow+\infty}(Y\chi_Rw)=0.
\end{aligned}\right.
\end{align*}
Then by replacing $w$ and $f$ by $Y\chi_R w$ and $Y\chi_Rf+2\mathrm{i}\varepsilon\partial_Y(Y\chi_R) \partial_Yw+ \mathrm{i}\varepsilon\partial_Y^2(Y\chi_R)w$ respectively in \eqref{est:nonl-res1-comp}, we get
\begin{align*}
   &\varepsilon^{\f13}\big\|Y\chi_Rw\big\|_{L^2}+ \varepsilon^{\f16}\big\|\sqrt{U}Y\chi_Rw\big\|_{L^2}+\varepsilon^{\f23} \big\|\big(\partial_Y(Y\chi_Rw),\alpha Y\chi_Rw\big)\big\|_{L^2}
   \\&\quad+\varepsilon \big\|(\partial_Y^2-\alpha^2)\big(Y\chi_Rw\big)\big\|_{L^2}\\
   &\leq C\big\|Y\chi_Rf+2\mathrm{i}\varepsilon\partial_Y(Y\chi_R) \partial_Yw+ \mathrm{i}\varepsilon\partial_Y^2(Y\chi_R)w\big\|_{L^2}\\
   &\leq C\|Y\chi_Rf\|_{L^2}+C\varepsilon\|\partial_Y(Y\chi_R) \|_{L^\infty}\|\partial_Yw\|_{L^2} +C\varepsilon\|\partial_Y^2(Y\chi_R) \|_{L^\infty}\|w\|_{L^2}\\
    &\leq C\|Yf\|_{L^2}+C\varepsilon\|\partial_Yw\|_{L^2} +C\varepsilon\|w\|_{L^2}.
\end{align*}
Then by \eqref{est:nonl-res1-comp} again, we get
\begin{align*}
  &\varepsilon^{\f13}\big\|Y\chi_Rw\big\|_{L^2} +\varepsilon^{\f16}\big\|\sqrt{U}Y\chi_Rw\big\|_{L^2}+\varepsilon^{\f23} \big\|\big(\partial_Y(Y\chi_Rw),\alpha Y\chi_Rw\big)\big\|_{L^2} \\&+\varepsilon \big\|(\partial_Y^2-\alpha^2)\big(Y\chi_Rw\big)\big\|_{L^2}\\
  \leq& C\|Yf\|_{L^2}+C(\varepsilon^{\f13}+\varepsilon^{\f23})\|f\|_{L^2}\leq C\|(1+Y)f\|_{L^2}.
\end{align*}
On the other hand, by \eqref{est:nonl-res1-comp} and \eqref{eq:cut-off bound}, we notice that
\begin{align*}
	\varepsilon^{\f23}\|\partial_Y(Y\chi_R w)\|_{L^2}\geq\varepsilon^{\f23}\|Y\chi_R\partial_Y w\|_{L^2}-C\varepsilon^{\f23} \|w\|_{L^2}\geq \varepsilon^{\f23}\|Y\chi_R\partial_Y w\|_{L^2}-C\varepsilon^{\f13}\|f\|_{L^2},
\end{align*}
and
\begin{align*}
	\varepsilon\|\partial_Y^2(Y\chi_R w)\|_{L^2}\geq \varepsilon\|Y\chi_R\partial_Y^2 w\|_{L^2}-C\varepsilon \|\partial_Y w\|_{L^2}-C\varepsilon\|w\|_{L^2}\geq \varepsilon\|Y\chi_R\partial_Y^2 w\|_{L^2}-C\varepsilon^{\f13}\|f\|_{L^2}.
\end{align*}
Therefore, we obtain
\begin{align*}
	\varepsilon^{\f13}\big\|Y\chi_Rw\big\|_{L^2} +&\varepsilon^{\f16}\big\|\sqrt{U}Y\chi_Rw\big\|_{L^2}+\varepsilon^{\f23} \big\|\big(Y\chi_R\partial_Y w,\alpha Y\chi_Rw\big)\big\|_{L^2}+\varepsilon \big\|Y\chi_R(\partial_Y^2-\alpha^2)w\big\|_{L^2}\\
	&\leq C\|(1+Y)f\|_{L^2}.
\end{align*}
Leting $R\rightarrow+\infty$,  we conclude
\begin{eqnarray}\label{est:nonl-Yw}
  \begin{split}
     &\varepsilon^{\f13}\big\|Yw\big\|_{L^2} +\varepsilon^{\f16}\big\|\sqrt{U}Yw\big\|_{L^2}+\varepsilon^{\f23} \|Y(\partial_Yw,\alpha w)\|_{L^2}\\
   &+\varepsilon \|Y(\partial_Y^2-\alpha^2)w\|_{L^2}\leq C\|(1+Y)f\|_{L^2}.
  \end{split}
\end{eqnarray}

Now we are left with the estimate of $\|(\partial_Y\varphi,\alpha\varphi)\|_{L^2}$.
Taking inner product with $-\varphi/(U+\varepsilon^{\f13})$, we get
  \begin{align}\label{est:nonl1}
     &\big\langle Uw+\mathrm{i}\varepsilon(\partial_Y^2-\alpha^2)w ,-\varphi/(U+\varepsilon^{\f13}) \big\rangle= \big\langle f,-\varphi/(U+\varepsilon^{\f13})\big\rangle.
  \end{align}
We first notice that
\begin{align*}
 \big\langle Uw,-\varphi/(U+\varepsilon^{\f13})\big\rangle =& \langle w,-\varphi\rangle-\varepsilon^{\f13}\big\langle w,-\varphi/(U+\varepsilon^{\f13})\big\rangle\\
     \geq &\|(\partial_Y\varphi,\alpha\varphi)\|_{L^2}^2-\varepsilon^{\f13} \big\|w\bar{\varphi}/U\big\|_{L^1},
\end{align*}
which implies that
\begin{eqnarray}\label{est:nonl2}
\begin{split}
\|(\partial_Y\varphi,\alpha\varphi)\|_{L^2}^2\leq& \varepsilon^{\f13} \big\|w\bar{\varphi}/U\big\|_{L^1}+\big|\big\langle \mathrm{i}\varepsilon(\partial_Y^2-\alpha^2)w ,-\varphi/(U+\varepsilon^{\f13}) \big\rangle\big|\\
&+\big|\big\langle f,\varphi/(U+\varepsilon^{\f13})\big\rangle\big|.
\end{split}
\end{eqnarray}
To deal with $\big\|w\bar{\varphi}/U\big\|_{L^1}$, we first notice that $1/U\leq C(1/Y+\sqrt{U})$ from \eqref{est:U-pro}. Hence, we have \begin{eqnarray}\label{eq:non-L1}
\begin{split}
  \big\|w\bar{\varphi}/U\big\|_{L^1}&\leq C\|w\|_{L^2}\big\|\varphi/Y\big\|_{L^2}+C\big\|\sqrt{U}Yw\big\|_{L^2} \big\|\varphi/Y\big\|_{L^2}\\
      &\leq C\|w\|_{L^2}\|\partial_Y\varphi\|_{L^2}+C \big\|\sqrt{U}Yw\big\|_{L^2}\|\partial_Y\varphi\|_{L^2}.
\end{split}
\end{eqnarray}

On the other hand, by the fact $|U'Y/U|\leq C$, we have
  \begin{align*}
     \big\|\partial_Y\big( \varphi/(U+\varepsilon^{\f13})\big)\big\|_{L^2} &\leq \varepsilon^{-\f13}\|\partial_Y \varphi\|_{L^2}+ \varepsilon^{-\f13}\|\varphi/Y\|_{L^2} \|U'Y/U\|_{L^\infty}\\
     &\leq C\varepsilon^{-\f13}\|\partial_Y\varphi\|_{L^2}.
  \end{align*}
 Since $\varphi/(U+\varepsilon^{\f13})|_{Y=0}=0$, we obtain
  \begin{align}\label{est:nonl3}
     &\big|\big\langle \mathrm{i}\varepsilon(\partial_Y^2-\alpha^2)w ,-\varphi/(U+\varepsilon^{\f13}) \big\rangle\big|\nonumber\\
     &\leq \varepsilon \big|\big\langle\partial_Yw, \partial_Y\big(\varphi/(U+\varepsilon^{\f13})\big)\big\rangle\big| +\varepsilon\big|\langle \alpha^2w,-\varphi/(U+\varepsilon^{\f13})\rangle\big|\nonumber\\
     &\leq \varepsilon\|\partial_Yw\|_{L^2}\big\|\partial_Y\big( \varphi/(U+\varepsilon^{\f13})\big)\big\|_{L^2} +\varepsilon^{\f23}\|\alpha w\|_{L^2}\|\alpha \varphi\|_{L^2}\nonumber\\
     &\leq C\varepsilon^{\f23}\|(\partial_Yw,\alpha w)\|_{L^2}\|(\partial_Y \varphi,\alpha\varphi)\|_{L^2}.
  \end{align}
  By the fact that $\big|Y/\big((1+Y)U\big)\big|\leq C$ and $(1+Y)f\in L^2(\mathbb{R}_+)$, we have
  \begin{eqnarray}\label{est:nonl4}
  \begin{split}
      \big|\big\langle f,\varphi/(U+\varepsilon^{\f13})\big\rangle\big|\leq& \|(1+Y)f\|_{L^2}\big\|Y/\big((1+Y)U\big)\big\|_{L^\infty}\|\varphi/Y\|_{L^2}\\
     \leq&  C\|(1+Y)f\|_{L^2}\|\partial_Y\varphi\|_{L^2}.
  \end{split}
  \end{eqnarray}

Summing up \eqref{est:nonl2}, \eqref{eq:non-L1}, \eqref{est:nonl3} and \eqref{est:nonl4}, we conclude
\begin{eqnarray}\nonumber
   \begin{split}
       \|(\partial_Y\varphi,\alpha\varphi)\|_{L^2}^2\leq& C\varepsilon^{\f23}\|(\partial_Yw,\alpha w)\|_{L^2}\|(\partial_Y\varphi,\alpha\varphi)\|_{L^2}+ C\varepsilon^{\f13}\|w\|_{L^2}\|\partial_Y\varphi\|_{L^2}\\
       &+C\varepsilon^{\f13} \big\|\sqrt{U}Yw\big\|_{L^2}\|\partial_Y\varphi\|_{L^2}
   +C\|(1+Y)f\|_{L^2}\|\partial_Y\varphi\|_{L^2},
   \end{split}
\end{eqnarray}
which along with \eqref{est:nonl-res1} and \eqref{est:nonl-Yw} gives
   \begin{eqnarray}\nonumber
   \begin{split}
       \|(\partial_Y\varphi,\alpha\varphi)\|_{L^2} \leq C\|(1+Y)f\|_{L^2}.
   \end{split}
   \end{eqnarray}
   
  Finally, let us turn to show the last statement of this lemma. Since the case of $\partial_Y f$ and $\frac{f} {Y}$ are similar, here we only consider the case of $\frac{f}{Y}$. We first notice that
   \begin{align*}
   	\big\|\sqrt{U}w\big\|_{L^2}^2+ \varepsilon\|(\partial_Yw,\alpha w)\|_{L^2}^2\leq
     C\|f\|_{L^2}\|w/Y\|_{L^2}\leq C\|f\|_{L^2}\|\partial_Y w\|_{L^2}.
   \end{align*}
   Then we have
   \begin{align*}
    	\varepsilon^\f12 \big\|\sqrt{U}w\big\|_{L^2}+ \varepsilon\|(\partial_Yw,\alpha w)\|_{L^2}\leq C\|f\|_{L^2}.
   \end{align*}
   From \eqref{est:interpolation-1}, we know that
   \begin{align*}
   	\|w\|_{L^2}\leq C\varepsilon^\f16\|\partial_Yw\|_{L^2}^\f12\|w\|_{L^2}^\f12+C\varepsilon^{-\f16}\|\sqrt{U}w\|_{L^2}\leq C\varepsilon^{-\f13}\|f\|_{L^2}^\f12\|w\|_{L^2}^\f12+C\varepsilon^{-\f23}\|f\|_{L^2},
   \end{align*}
   which gives   
   \begin{align*}
   	\|w\|_{L^2}\leq C\varepsilon^{-\f23}\|f\|_{L^2}.
   \end{align*}
   
   This completes the proof of this lemma.
\end{proof}

\subsection{Estimates for the Rayleigh equation}
We consider the Rayleigh equation
\begin{align}\label{eq:Rayleigh}
\left\{\begin{aligned}
   &U(\partial_Y^2-\alpha^2)\varphi-U''\varphi= -\partial_Yf_1+\alpha^2f_2,\\
   &\varphi|_{Y=0},\quad \partial_Yf_1|_{Y=0}=\alpha^2f_2|_{Y=0}.
   \end{aligned}\right.
\end{align}
\begin{lemma}\label{prop:Rayleigh-res}
  Let $(\varphi,f_1,f_2)$ solve \eqref{eq:Rayleigh}. Then we have
  \begin{align*}
     &\|\varphi\|_{L^2} \leq C\|(1+Y)f_1\|_{L^2}+C(1+|\alpha|^2)(|\alpha|^{-2}|f_1(0)|+\|f_2\|_{H^1}).
  \end{align*}
\end{lemma}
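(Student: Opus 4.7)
The plan is to apply the Rayleigh substitution $\varphi = U\psi$, which converts the equation into the Sturm--Liouville form
\[
\partial_Y(U^2\partial_Y\psi)-\alpha^2 U^2\psi = -\partial_Y f_1 + \alpha^2 f_2,
\]
with $\psi$ bounded at $Y=0$ (from $\varphi(0)=0$ together with $U'(0)>0$) and $\psi\to0$ at infinity. Multiplying by $\bar\psi$ and integrating by parts on $(0,\infty)$ --- the lower boundary term at $Y=0$ vanishes since $U(0)=0$, and one further integration by parts shifts the derivative off $f_1$ --- produces the energy identity
\[
\int_0^\infty U^2\bigl(|\partial_Y\psi|^2+\alpha^2|\psi|^2\bigr)\mathrm{d}Y = -\mathbf{Re}\Big[f_1(0)\bar\psi(0)+\int_0^\infty f_1\partial_Y\bar\psi\,\mathrm{d}Y+\alpha^2\int_0^\infty f_2\bar\psi\,\mathrm{d}Y\Big].
\]
Since $U^2|\psi|^2=|\varphi|^2$, this controls $|\alpha|^2\|\varphi\|_{L^2}^2+\|U\partial_Y\psi\|_{L^2}^2$ by the absolute values of the three RHS terms.

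The key difficulty is that the natural antiderivative $F:=f_1-f_1(0)-\alpha^2\int_0^Y f_2$ satisfies $F(0)=F'(0)=0$ (by the compatibility condition) but does not decay at infinity when $f_1(0)$ or $\int_0^\infty f_2$ is nonzero, so $F/U\notin L^2$ in general. To circumvent this, I would first absorb $f_1(0)$ by a corrector: pick $\chi\in C_c^\infty([0,\infty))$ with $\chi(0)=1$, $\chi'(0)=0$, and split $f_1=f_1(0)\chi+\tilde f_1$ so that $\tilde f_1(0)=0$ and compatibility is preserved (because $\tilde f_1'(0)=f_1'(0)=\alpha^2 f_2(0)$). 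Decompose $\varphi=\varphi_c+\tilde\varphi$ accordingly. The corrector $\varphi_c$ solves the Rayleigh equation with smooth, compactly supported source $-f_1(0)\chi'$; a direct ODE/Green's-function analysis using the two homogeneous solutions of $\partial_Y(U^2\partial_Y\psi)-\alpha^2 U^2\psi=0$ (the decaying solution behaves like $e^{-|\alpha|Y}$ at infinity and degenerates into a nondecaying function as $\alpha\to 0$) yields $\|\varphi_c\|_{L^2}\leq C(1+|\alpha|^2)|\alpha|^{-2}|f_1(0)|$; the $|\alpha|^{-2}$ scaling is precisely this degeneracy.

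For the remainder $\tilde\varphi$, the boundary contribution drops out and the energy inequality reads
\[
|\alpha|^2\|\tilde\varphi\|_{L^2}^2+\|U\partial_Y\tilde\psi\|_{L^2}^2 \leq \Big|\int_0^\infty \tilde f_1\partial_Y\bar{\tilde\psi}\,\mathrm{d}Y\Big|+|\alpha|^2\Big|\int_0^\infty f_2\bar{\tilde\psi}\,\mathrm{d}Y\Big|.
\]
I would estimate the first integral by $\|(1+Y)\tilde f_1\|_{L^2}\cdot\|(1+Y)^{-1}\partial_Y\tilde\psi\|_{L^2}$; the bound $U\geq cY/(1+Y)$ from \eqref{eq:U-S1}, together with the vanishing $U\partial_Y\tilde\psi|_{Y=0}=0$ (forced by $\tilde f_1(0)=0$ and compatibility) and a Hardy-type inequality, allows $\|(1+Y)^{-1}\partial_Y\tilde\psi\|_{L^2}$ to be controlled by $\|U\partial_Y\tilde\psi\|_{L^2}$ up to lower-order terms absorbable into the LHS. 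The second integral is handled similarly after splitting off $f_2(0)$ via another corrector; Sobolev embedding gives $|f_2(0)|\lesssim\|f_2\|_{H^1}$, and combining with the $\varphi_c$-type analysis produces the factor $(1+|\alpha|^2)\|f_2\|_{H^1}$. Summing the corrector and remainder bounds yields the stated inequality.

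The main obstacle is the delicate weight matching. Since the target bound contains $\|(1+Y)f_1\|_{L^2}$ rather than any $H^1$-norm of $f_1$, derivatives of $f_1$ must not appear in the final estimate; yet the Hardy-type bound bridging $\|(1+Y)^{-1}\partial_Y\tilde\psi\|_{L^2}$ and $\|U\partial_Y\tilde\psi\|_{L^2}$ through the degeneracy $U\sim Y/(1+Y)$ at the boundary is the principal technical hurdle. The $|\alpha|^{-2}$ factor multiplying $|f_1(0)|$ is a sharp manifestation of the degeneracy of the Rayleigh operator as $\alpha\to 0$: in the special case $\alpha=0$ the reduced equation $\partial_Y(U^2\partial_Y\psi)=-\partial_Y f_1$ integrates to $U^2\partial_Y\psi=f_1(0)-f_1$, and the decaying solution fails to exist whenever $f_1(0)\neq 0$, exactly the obstruction the corrector is designed to capture.
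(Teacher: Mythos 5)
Your overall strategy (divide by $U$, energy identity for the Sturm--Liouville form, correctors for the boundary data $f_1(0)$ and $f_2(0)$) is close in spirit to the paper's, and the corrector step and the treatment of the $\alpha^2 f_2$ term are plausible. The genuine gap is in the remainder estimate, at exactly the point you flag as ``the principal technical hurdle'' but do not resolve. You bound $\big|\int_0^\infty \tilde f_1\,\partial_Y\bar{\tilde\psi}\big|$ by $\|(1+Y)\tilde f_1\|_{L^2}\,\|(1+Y)^{-1}\partial_Y\tilde\psi\|_{L^2}$ and then claim that $\|(1+Y)^{-1}\partial_Y\tilde\psi\|_{L^2}$ is controlled by $\|U\partial_Y\tilde\psi\|_{L^2}$ up to absorbable terms. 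Near $Y=0$ the weight $(1+Y)^{-1}$ is of order $1$ while $U\sim Y$, so this asks for $\|\partial_Y\tilde\psi\|_{L^2(0,1)}\lesssim\|Y\partial_Y\tilde\psi\|_{L^2(0,1)}$, which is false (scale a bump toward the origin). The vanishing $g:=U\partial_Y\tilde\psi|_{Y=0}=0$ plus Hardy gives $\|g/Y\|_{L^2}\le 2\|\partial_Yg\|_{L^2}$, but $\partial_Yg=U'\partial_Y\tilde\psi+U\partial_Y^2\tilde\psi$ involves second derivatives of $\tilde\psi$, and the equation $\partial_Y(U^2\partial_Y\tilde\psi)=\alpha^2U^2\tilde\psi-\partial_Y\tilde f_1+\alpha^2f_2$ returns you to $\partial_Y\tilde f_1$ --- circular. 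The alternative pairing $\|\tilde f_1/U\|_{L^2}\|U\partial_Y\tilde\psi\|_{L^2}$ fails for the same reason: $\tilde f_1(0)=0$ does not give $\tilde f_1/Y\in L^2$ without $\partial_Y\tilde f_1\in L^2$, and the target bound must not contain any derivative of $f_1$.

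The paper escapes this by \emph{not} estimating the $\partial_Yf_1$ part by energy at all. It splits $\varphi=\varphi_1+\varphi_2$ where $\varphi_1$ solves the $\alpha$-free equation $\partial_Y[U^2\partial_Y(\varphi_1/U)]=F_{1,1}$ with a source of zero mean, so that $\varphi_1=\mathcal L[\sigma[F_{1,1}]]$ explicitly; the composition $\sigma[\partial_Yf_1]=-f_1$ removes the derivative, and the weighted boundedness $\|\mathcal L[h]\|_{L^2}\le C\|(1+Y)h\|_{L^2}$ (Lemma \ref{lem:GM's-hardy}) is precisely the Hardy-type gain of one power of $U$ at the boundary that your energy argument cannot produce. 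The term $\alpha^2U\varphi_1$ is then shifted into the source for $\varphi_2$, where it carries an explicit $\alpha^2$ and is absorbed by the $\alpha^2\|\varphi_2\|_{L^2}^2$ on the left of the energy identity. To repair your proof you would need to import this representation-formula step (or an equivalent) for the $-\partial_Yf_1$ part of the source; the pure energy method with the stated weights does not close.
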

\begin{proof}
  We first construct two cut-off functions to correct $\partial_Yf_1(0)$ and $f_1(0)$. Let $\rho_{0}$ be a cut-off function such that
\begin{align*}
   & \int_{0}^{+\infty}\rho_0\mathrm{d}Y=1,\quad \rho_0\geq0,\quad \rho_0(0)=0,\quad \left\|\dfrac{\rho_0}{U}\right\|_{L^2}+\|(1+Y)\sigma[\rho_0]\|_{L^2}\leq C,
\end{align*}
where $\sigma[\rho_0]:=\int_Y^\infty\rho_0(Y_1)dY_1$.
Let $\rho_1$ be a cut-off function such that
\begin{align*}
   & \rho_1(0)=1,\quad \int_{0}^{+\infty}\rho_1\mathrm{d}Y=0,\quad \|\rho_1\|_{H^1}+\|(1+Y)\sigma[\rho_1]\|_{L^2}\leq C.
\end{align*}
Then we decompose $-\partial_Y f_1$ as follow
\begin{align}
-\partial_Y f_1=F_{1,1}+F_{1,2},
\end{align}
where
\begin{align*}
   &F_{1,1}(Y)=-\partial_Yf_1-f_1(0)\rho_0(Y)+\partial_Y f_1(0)\rho_1(Y),\\
   &F_{1,2}= f_1(0)\rho_0(Y)-\partial_Y f_1(0)\rho_1(Y).
\end{align*}
Clearly, $\int_{0}^{+\infty}F_{1,1}\mathrm{d}Y=0$, $F_{1,1}|_{Y=0}=0$.
We decompose $\varphi=\varphi_1+\varphi_2$ as
\begin{align}\label{eq:Ray-phi1}\left\{\begin{aligned}
&U\partial_Y^2\varphi_1-U''\varphi_1=F_{1,1},\\
&\varphi_1|_{Y=0}=0,
\end{aligned}\right.
\end{align}
and
\begin{align}\label{eq:Ray-phi2}\left\{\begin{aligned}
&U(\partial_Y^2-\alpha^2)\varphi_2-U''\varphi_2=\alpha^2U\varphi_1 + F_{1,2}+\alpha^2f_2,\\
&\varphi_2|_{Y=0}=0,
\end{aligned}\right.
\end{align}

We first show the estimate of $\varphi_1$. We notice that equation \eqref{eq:Ray-phi1} can be written as
\begin{align*}
   &\partial_Y\bigg[U^2\partial_Y\big(\dfrac{\varphi_1}{U}\big)\bigg]=F_{1,1},\quad \varphi_1|_{Y=0}=0.
\end{align*}
Since $F_{1,1}$ satisfies the compatible condition of \eqref{eq:Ray-phi1}, the solution $\varphi_1$ to \eqref{eq:Ray-phi1} can be represented by the following formula
\begin{align*}
   & \varphi_1(Y)=U(Y)\int_{Y}^{+\infty}\dfrac{\int_{Y_1}^{+\infty} F_{1,1}(Y_2)\mathrm{d}Y_2}{U^2(Y_1)}\mathrm{d}Y_1= \mathcal{L}[\sigma[F_{1,1}]](Y),
\end{align*}
where the linear operator $\mathcal{L}[\cdot]$ is defined as
\begin{align*}
\mathcal{L}[f](Y)=U(Y )\int_Y^\infty\frac{f(Y_1)}{U^2(Y_1)}dY_1.
\end{align*}
According to Lemma \ref{lem:GM's-hardy} and $F_{1,1}=-\partial_Y f_1-F_{1,2}$, we deduce that
\begin{align*}
   \|\varphi_1\|_{L^2}\leq& \big\|\mathcal{L}[\sigma[F_{1,1}]]\big\|_{L^2} \leq C \big\|(1+Y)\sigma[F_{1,1}]\big\|_{L^2} \\
  \leq& C \big\|(1+Y)\sigma[\partial_Yf_1]\big\|_{L^2}+ C\big\|(1+Y)\sigma[F_{1,2}]\big\|_{L^2}\\
  \leq &C\|(1+Y)f_1\|_{L^2}+C\big\|(1+Y)\sigma[F_{1,2}]\big\|_{L^2}.
\end{align*}
By the definition of $F_{1,2}$ and the condition $\partial_Y f_1(0)=\alpha^2f_2(0)$, we have that
\begin{align*}
 \big\|(1+Y)\sigma[F_{1,2}]\big\|_{L^2}& \leq |f_1(0)|\big\|(1+Y)\sigma[\rho_0]\big\|_{L^2}+\alpha^2 |f_2(0)|\big\|(1+Y)\sigma[\rho_1]\big\|_{L^2},
\end{align*}
which along with the fact $\|(1+Y)\sigma[\rho_0]\|_{L^2}+\|(1+Y)\sigma[\rho_1]\|_{L^2}\leq C$ implies that
\begin{align*}
\big\|(1+Y)\sigma[F_{1,2}]\big\|_{L^2}\leq C(|f_1(0)|+|\al^2 f_2(0)|)\leq C(|f_1(0)|+\alpha^2\|f_2\|_{H^1}).
\end{align*}
This shows that
\begin{align}\label{est:Ray-phi1-L2}
   &\|\varphi_1\|_{L^2}\leq C\big(\|(1+Y)f_1\|_{L^2}+|f_1(0)|+|\alpha|^2\|f_2\|_{H^1} \big).
\end{align}

Now we turn to consider the estimate of $\varphi_2$. We also rewrite the equation as
\begin{align*}\left\{\begin{aligned}
&\partial_Y\bigg[U^2\partial_Y\big(\dfrac{\varphi_2}{U}\big)\bigg] -\alpha^2U\varphi_2=\alpha^2U\varphi_1 + F_{1,2}+\alpha^2f_2,\\
&\varphi_2|_{Y=0}=0.
\end{aligned}\right.
\end{align*}
Multiplying both sides of the first equation by $-\varphi_2/U$, we  get by integration by parts that
\begin{align}\label{est:Ray-phi2-inner}
   & \left\|U\partial_Y\big(\dfrac{\varphi_2}{U}\big)\right\|_{L^2}^2+ \alpha^2\|\varphi_2\|_{L^2}^2=-\alpha^2\langle \varphi_1,\varphi_2\rangle -\langle  F_{1,2}+\alpha^2f_2,\varphi_2/U\rangle.
\end{align}
For the second term on the right side, we have
\begin{align}\label{est:Ray-phi2-inner1}
   &\langle  F_{1,2}+\alpha^2f_2,\varphi_2/U\rangle= f_1(0)\langle \rho_0/U,\varphi_2\rangle +\big\langle (-\partial_Y f_1(0)\rho_1+\alpha^2f_2)/U,\varphi_2\big\rangle.
\end{align}
From the definition of $\rho_0$, we infer that
\begin{align}\label{est:Ray-phi2-inner2}
   & \big|f_1(0)\langle \rho_0/U,\varphi_2\rangle\big|\leq |f_1(0)|\left\|\rho_0/U\right\|_{L^2}\|\varphi_2\|_{L^2}\leq C |f_1(0)|\|\varphi_2\|_{L^2}.
\end{align}
According to the boundary condition $(-\partial_Y f_1(0)\rho_1+\alpha^2f_2)|_{Y=0}=0$ and the fact that $U\geq C^{-1}Y/(1+Y)$, we get by  Hardy's inequality that
\begin{align*}
   &\big|\big\langle (-\partial_Y f_1(0)\rho_1+\alpha^2f_2)/U,\varphi_2\big\rangle\big| \leq C\big\| \big((-\partial_Y f_1(0)\rho_1+\alpha^2f_2)(1+Y)\big)/Y\big\|_{L^2}\|\varphi_2\|_{L^2}\\
   &\leq C\|-\partial_Y f_1(0)\rho_1+\alpha^2f_2\|_{H^1}\|\varphi_{2}\|_{L^2}\leq C\big(|f_2(0)|+\|f_2\|_{H^1}\big)\alpha^2\|\varphi_2\|_{L^2}\\
   &\leq C\alpha^2\|f_2\|_{H^1}\|\varphi_2\|_{L^2}.
\end{align*}
Summing up with \eqref{est:Ray-phi2-inner}, \eqref{est:Ray-phi2-inner1} and \eqref{est:Ray-phi2-inner2}, we conclude that
\begin{align*}
   &  \left\|U\partial_Y\big(\dfrac{\varphi_2}{U}\big)\right\|_{L^2}^2+ \alpha^2\|\varphi_2\|_{L^2}^2\leq \alpha^2\|\varphi_1\|_{L^2}\|\varphi_{2}\|_{L^2}+ C|f_1(0)|\|\varphi_2\|_{L^2}+ C\alpha^2\|f_2\|_{H^1}\|\varphi_2\|_{L^2}.
\end{align*}
Therefore, we obtain
\begin{align*}
   & \|\varphi_2\|_{L^2}\leq C\big(\|\varphi_1\|_{L^2}+|\alpha|^{-2}|f_1(0)|+\|f_2\|_{H^1}\big),
\end{align*}
which along with \eqref{est:Ray-phi1-L2} gives
\begin{align*}
   &\|\varphi\|_{L^2}\leq \|\varphi_1\|_{L^2}+\|\varphi_2\|_{L^2}\\
   &\leq C\|(1+Y)f_1\|_{L^2}+C(1+|\alpha|^2)(|\alpha|^{-2}|f_1(0)|+\|f_2\|_{H^1})
\end{align*}

This completes the proof of the lemma.
\end{proof}

\subsection{Proof of Proposition \ref{prop:Orr-som-art}}
In this part, we prove Proposition \ref{prop:Orr-som-art}. In fact, Proposition \ref{prop:Orr-som-art} is a consequence of the following two lemmas, which give the estimates of the solution $\varphi$ to \eqref{eq:Orr-som-art} for small $|\alpha|$ and large $|\alpha|$ respectively.

\begin{lemma}\label{prop:Orr-som-art-alpha-small}
 Let $(\varphi,w,f)$ solve \eqref{eq:Orr-som-art}. Then for any fixed $M\geq 0$, there exist $c_0^{(0)}=c_0^{(0)}(M)>0$ such that for any $|\alpha|\leq M$, $0<\varepsilon\leq c_0^{(0)}$, there holds
 \begin{align*}
    &\varepsilon^{\f13}\|(1+Y)w\|_{L^2}+ \varepsilon^{\f23}\|(1+Y)(\partial_Yw,\alpha w)\|_{L^2}+ \varepsilon\|(1+Y)(\partial_Y^2-\alpha^2)w\|_{L^2}\\
     &\quad +\|(\partial_Y\varphi,\alpha\varphi)\|_{L^2}+\|\varphi\|_{L^2}\leq C\left(\|(1+Y)^2f\|_{L^2}+\dfrac{1+|\alpha|^2}{|\alpha|^2}\bigg| \int_{0}^{+\infty}f\mathrm{d}Y\bigg|\right).
  \end{align*}
\end{lemma}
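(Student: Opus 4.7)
The proof implements the two-sided perturbative picture sketched in the section introduction. The system \eqref{eq:Orr-som-art} simultaneously admits an Airy reading (putting $U''\varphi$ into the source) and a Rayleigh reading (putting $\mathrm{i}\varepsilon(\partial_Y^2-\alpha^2)^2\varphi$ into the source); the Airy reading controls $w$ modulo $\|\varphi\|_{L^2}$, while the Rayleigh reading controls $\|\varphi\|_{L^2}$ modulo $w$-norms that all carry compensating powers of $\varepsilon$. Smallness of $\varepsilon$ closes the loop, and the resulting bound on $\|\varphi\|_{L^2}$ is fed back into the Airy estimate.

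\textbf{Airy side.} Set $\widetilde{f}:=U''\varphi+f$, so that \eqref{eq:Orr-som-art} is of the form \eqref{eq:toy0-1}. The decay $|U''(Y)|\le C(1+Y)^{-3}$ coming from \eqref{ass:U} yields $\|(1+Y)\widetilde{f}\|_{L^2}\le C\|\varphi\|_{L^2}+C\|(1+Y)f\|_{L^2}$. Applying both parts of Lemma~\ref{prop:nonlocal} produces the composite quantity
\begin{equation*}
\mathcal{A}:=\varepsilon^{\f13}\|(1+Y)w\|_{L^2}+\varepsilon^{\f23}\|(1+Y)(\partial_Yw,\alpha w)\|_{L^2}+\varepsilon\|(1+Y)(\partial_Y^2-\alpha^2)w\|_{L^2}+\|(\partial_Y\varphi,\alpha\varphi)\|_{L^2}
\end{equation*}
together with the control $\mathcal{A}\le C\|\varphi\|_{L^2}+C\|(1+Y)f\|_{L^2}$.

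\textbf{Rayleigh side and closing the loop.} Rewrite the equation as $U(\partial_Y^2-\alpha^2)\varphi-U''\varphi=-\partial_Yf_1+\alpha^2 f_2$ with $f_1(Y):=\mathrm{i}\varepsilon\partial_Yw(Y)+\int_Y^{+\infty}f(Y')\,\mathrm{d}Y'$ and $f_2(Y):=\mathrm{i}\varepsilon w(Y)$. Evaluating the Airy equation at $Y=0$ together with $U(0)=\varphi(0)=f(0)=0$ gives $(\partial_Y^2-\alpha^2)w(0)=0$, which combined with $\partial_Yw(0)=0$ verifies the compatibility $\partial_Yf_1(0)=\alpha^2f_2(0)$ and produces $f_1(0)=\int_0^{+\infty}f\,\mathrm{d}Y$. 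A standard Hardy-type estimate gives $\|(1+Y)\int_Y^{+\infty}f(Y')\,\mathrm{d}Y'\|_{L^2}\le C\|(1+Y)^2f\|_{L^2}$, so Lemma~\ref{prop:Rayleigh-res} yields
\begin{equation*}
\|\varphi\|_{L^2}\le C\varepsilon\|(1+Y)\partial_Yw\|_{L^2}+C\|(1+Y)^2f\|_{L^2}+C(1+|\alpha|^2)\Big[|\alpha|^{-2}\Big|\int_0^{+\infty}f\,\mathrm{d}Y\Big|+\varepsilon\bigl(\|w\|_{L^2}+\|\partial_Yw\|_{L^2}\bigr)\Big].
\end{equation*}
Each $w$-norm above is dominated by $\mathcal{A}$ with a compensating power of $\varepsilon$: $\varepsilon\|(1+Y)\partial_Yw\|_{L^2}\le\varepsilon^{1/3}\mathcal{A}$, $\varepsilon\|w\|_{L^2}\le\varepsilon^{2/3}\mathcal{A}$, $\varepsilon\|\partial_Yw\|_{L^2}\le\varepsilon^{1/3}\mathcal{A}$. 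Plugging in $\mathcal{A}\le C\|\varphi\|_{L^2}+C\|(1+Y)f\|_{L^2}$ and using $|\alpha|\le M$ to absorb $1+|\alpha|^2$ into a constant $C(M)$,
\begin{equation*}
\|\varphi\|_{L^2}\le C(M)\varepsilon^{1/3}\|\varphi\|_{L^2}+C(M)\Big(\|(1+Y)^2f\|_{L^2}+\f{1+|\alpha|^2}{|\alpha|^2}\Big|\int_0^{+\infty}f\,\mathrm{d}Y\Big|\Big).
\end{equation*}
Choosing $c_0^{(0)}(M)$ so that $C(M)\varepsilon^{1/3}\le\f12$ absorbs the first term on the right; substituting the resulting bound on $\|\varphi\|_{L^2}$ back into the estimate for $\mathcal{A}$ yields the full inequality claimed in the lemma.

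\textbf{Main obstacle and existence.} The delicate point is the Rayleigh decomposition. The choice $f_1=\mathrm{i}\varepsilon\partial_Yw+\int_Y^{+\infty}f$ is what forces the $(1+Y)^2$ weight on $f$ through Hardy, and $f_1(0)=\int_0^{+\infty}f$ is precisely what produces the singular $|\alpha|^{-2}$ boundary factor in the right-hand side. The compatibility $\partial_Yf_1(0)=\alpha^2f_2(0)$ is not automatic but holds thanks to the degeneracy $U(0)=0$ together with the artificial Neumann condition $\partial_Yw(0)=0$; this observation is what makes the double-sided scheme applicable to \eqref{eq:Orr-som-art}. Existence and uniqueness of the solution follow from the a priori estimate by the method of continuity, as in the remark after Lemma~\ref{prop:nonlocal}.
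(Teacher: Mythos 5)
Your proposal is correct and follows essentially the same route as the paper: read the equation once as an Airy problem (with $U''\varphi$ in the source) to control the $w$-norms and $\|(\partial_Y\varphi,\alpha\varphi)\|_{L^2}$ modulo $\|\varphi\|_{L^2}$ via Lemma \ref{prop:nonlocal}, once as a Rayleigh problem with $f_1=\mathrm{i}\varepsilon\partial_Yw+\sigma[f]$, $f_2=\mathrm{i}\varepsilon w$ to control $\|\varphi\|_{L^2}$ via Lemma \ref{prop:Rayleigh-res}, and close by absorbing the $O(\varepsilon^{1/3})$ coupling for $\varepsilon\le c_0^{(0)}(M)$. Your explicit check of the compatibility $\partial_Yf_1(0)=\alpha^2f_2(0)$ from $U(0)=\varphi(0)=f(0)=\partial_Yw(0)=0$ is a welcome detail the paper leaves implicit.
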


\begin{proof}
We first view the nonlocal term $U''\varphi$ as a perturbation. Hence,  we rewrite the equation as
  \begin{align*}\left\{\begin{aligned}
     &U(\partial_Y^2-\alpha^2)\varphi+\mathrm{i} \varepsilon(\partial_Y^2-\alpha^2)^2\varphi=f+U''\varphi:=g,\\
     &w=(\partial_Y^2-\alpha^2)\varphi,\quad \varphi|_{Y=0}=\partial_Yw|_{Y=0}=0.
     \end{aligned}\right.
  \end{align*}
By the fact that $g(0)=0,$ and $|U''|\leq C(1+Y)^{-3}$, we obtain
  \begin{align}
     &\|(1+Y)g\|_{L^2}\leq \|(1+Y)f\|_{L^2}+C\|\varphi\|_{L^2}\label{est:Orr-som-art-gL2}.
  \end{align}
  By Lemma \ref{prop:nonlocal} and \eqref{est:Orr-som-art-gL2}, we get
  \begin{eqnarray}\label{est:Orr-som-art-wH1}
  \begin{split}
    &\varepsilon^{\f13}\|(1+Y)w\|_{L^2}+ \varepsilon^{\f23}\|(1+Y)(\partial_Yw,\alpha w)\|_{L^2}+ \varepsilon\|(1+Y)(\partial_Y^2-\alpha^2)w\|_{L^2}\\
    &+\|(\partial_Y\varphi,\alpha\varphi)\|_{L^2}\leq C\big( \|(1+Y)f\|_{L^2}+\|\varphi\|_{L^2}\big).
   \end{split}
  \end{eqnarray}

Next we view $\mathrm{i}\varepsilon(\partial_Y^2-\alpha^2)^2\varphi$ as a perturbation. So, we rewrite the equation as
  \begin{align*}\left\{\begin{aligned}
     &U(\partial_Y^2-\alpha^2)\varphi-U''\varphi=-\partial_Y\left(\sigma[f] -\mathrm{i}\varepsilon\partial_Yw\right)-\mathrm{i}\varepsilon\alpha^2 w,\\
     &w=(\partial_Y^2-\alpha^2)\varphi,\quad \varphi|_{Y=0}=\partial_Yw|_{Y=0}=0,
     \end{aligned}\right.
  \end{align*}
  here $\sigma[f](Y)=\int_{Y}^{+\infty}f(Y_1)\mathrm{d}Y_1.$
  Then by Lemma \ref{prop:Rayleigh-res} and $\big(\sigma[f] -\mathrm{i}\varepsilon\partial_Yw)|_{Y=0}=\sigma[f](0)$, we get
  \begin{align*}
     &\|\varphi\|_{L^2}\leq C\left(\big\|(1+Y)\big(\sigma[f] -\mathrm{i}\varepsilon\partial_Yw)\big\|_{L^2}+(1+ |\alpha|^{-2})|\sigma[f](0)|+\varepsilon(1+|\alpha|^2)\|w\|_{H^1}\right).
  \end{align*}
By the definition of $\sigma[\cdot]$ and Lemma \ref{lem:GM's-hardy}, we have
  \begin{align*}
     &\big\|(1+Y)\sigma[f]\big\|_{L^2}\leq C\|(1+Y)^2f\|_{L^2},\quad |\sigma[f](0)|=\bigg|\int_{0}^{+\infty}f\mathrm{d}Y\bigg|.
  \end{align*}
Hence, we conclude that
  \begin{align*}
     &\|\varphi\|_{L^2}\leq C\left(\|(1+Y)^2f\|_{L^2}+ \dfrac{1+|\alpha|^2}{|\alpha|^2}\bigg|\int_{0}^{+\infty}f\mathrm{d}Y\bigg| +\varepsilon(1+M^2)\|(1+Y)w\|_{H^1}\right),
  \end{align*}
which along with \eqref{est:Orr-som-art-wH1} implies that
  \begin{align*}
     &\varepsilon^{\f13}\|(1+Y)w\|_{L^2}+ \varepsilon^{\f23}\|(1+Y)(\partial_Yw,\alpha w)\|_{L^2}+ \varepsilon\|(1+Y)(\partial_Y^2-\alpha^2)w\|_{L^2}\\
    &+\|(\partial_Y\varphi,\alpha\varphi)\|_{L^2}+\|\varphi\|_{L^2}\\
    \leq& C\left( \|(1+Y)^2f\|_{L^2}+ (1+|\alpha|^{-2})\bigg|\int_{0}^{+\infty}f\mathrm{d}Y\bigg| +\varepsilon(1+M^2)\|(1+Y)w\|_{H^1}\right).
  \end{align*}
  Then we obtain
  \begin{align*}
     &\varepsilon^{\f13}\big(1-C(1+M^2)\varepsilon^{\f23}\big)\|(1+Y)w\|_{L^2}+ \varepsilon^{\f23}\big(1-C(1+M^2)\varepsilon^{\f13}\big)\|(1+Y)(\partial_Yw,\alpha w)\|_{L^2}\\
    &+ \varepsilon\|(1+Y)(\partial_Y^2-\alpha^2)w\|_{L^2}+\|(\partial_Y\varphi,\alpha\varphi)\|_{L^2}+\|\varphi\|_{L^2}\\
    \leq& C\left( \|(1+Y)^2f\|_{L^2}+ \dfrac{1+|\alpha|^2}{|\alpha|^2}\bigg|\int_{0}^{+\infty}f\mathrm{d}Y\bigg| \right).
  \end{align*}
  Choosing $c_0^{(0)}$ sufficiently small, such that $\big(1-C(1+M^2)(c_0^{(0)})^{\f23}\big)\geq 1/2$ and$ \big(1-C(1+M^2)(c_0^{(0)})^{\f13}\big)\geq 1/2$, we  arrive at
    \begin{align*}
    &\varepsilon^{\f13}\|(1+Y)w\|_{L^2}+ \varepsilon^{\f23}\|(1+Y)(\partial_Yw,\alpha w)\|_{L^2}+ \varepsilon\|(1+Y)(\partial_Y^2-\alpha^2)w\|_{L^2}\\
     & \quad+\|(\partial_Y\varphi,\alpha\varphi)\|_{L^2}+\|\varphi\|_{L^2}\leq C\left(\|(1+Y)^2f\|_{L^2}+\dfrac{1+|\alpha|^2}{|\alpha|^2}\bigg| \int_{0}^{+\infty}f\mathrm{d}Y\bigg|\right).
  \end{align*}
  
The proof is completed. 
\end{proof}

The following lemma gives the estimate for $|\alpha|$ sufficiently large, where we can regard the nonlocal term as a perturbation term.

\begin{lemma}\label{prop:Orr-som-art-alpha-large}
 Let $(\varphi,w,f)$ solve \eqref{eq:Orr-som-art}. There exist $M_0>0$ such that if $|\alpha|\geq M_0,\ 0\leq \varepsilon\leq 1$, then we have
 \begin{align*}
    &\varepsilon^{\f13}\|(1+Y)w\|_{L^2}+ \varepsilon^{\f23}\|(1+Y)(\partial_Yw,\alpha w)\|_{L^2}+ \varepsilon\|(1+Y)(\partial_Y^2-\alpha^2)w\|_{L^2}\\
     & \quad+\|(\partial_Y\varphi,\alpha\varphi)\|_{L^2}\leq C\|(1+Y)f\|_{L^2}.
  \end{align*}
\end{lemma}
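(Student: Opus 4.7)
The plan is to imitate the proof of Lemma \ref{prop:Orr-som-art-alpha-small} but to exploit the largeness of $|\alpha|$ in the final absorption step so as to dispose of the $\|\varphi\|_{L^2}$ remainder without any Rayleigh-type estimate. More precisely, I would treat the nonlocal term $U''\varphi$ as a source and rewrite \eqref{eq:Orr-som-art} as the Airy-type system
\begin{align*}
Uw+\mathrm{i}\varepsilon(\partial_Y^2-\alpha^2)w=f+U''\varphi=:g,\quad \varphi|_{Y=0}=\partial_Yw|_{Y=0}=0.
\end{align*}
Since $|U''(Y)|\leq C(1+Y)^{-3}$ by the structure assumption \eqref{ass:U}, we have
\begin{align*}
\|(1+Y)g\|_{L^2}\leq \|(1+Y)f\|_{L^2}+C\|\varphi\|_{L^2},
\end{align*}
and $g(0)=U(0)w(0)+\cdots$ need not vanish, but Lemma \ref{prop:nonlocal} only requires $(1+Y)g\in L^2$.

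Applying Lemma \ref{prop:nonlocal} (second bullet) to the Airy-type system above, I obtain
\begin{align*}
&\varepsilon^{\f13}\|(1+Y)w\|_{L^2}+ \varepsilon^{\f23}\|(1+Y)(\partial_Yw,\alpha w)\|_{L^2}+ \varepsilon\|(1+Y)(\partial_Y^2-\alpha^2)w\|_{L^2}\\
&\quad+\|(\partial_Y\varphi,\alpha\varphi)\|_{L^2}\leq C\bigl(\|(1+Y)f\|_{L^2}+\|\varphi\|_{L^2}\bigr).
\end{align*}
Because $\varphi|_{Y=0}=0$, the trivial bound $\|\varphi\|_{L^2}\leq |\alpha|^{-1}\|\alpha\varphi\|_{L^2}\leq |\alpha|^{-1}\|(\partial_Y\varphi,\alpha\varphi)\|_{L^2}$ holds, so the last term on the right is bounded by $C|\alpha|^{-1}\|(\partial_Y\varphi,\alpha\varphi)\|_{L^2}$. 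Choosing $M_0$ large enough that $C M_0^{-1}\leq 1/2$, I can absorb this contribution into the left-hand side and conclude.

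There is no genuine obstacle: the work done in Lemma \ref{prop:nonlocal} already yields every estimate listed in the conclusion, and the role of $|\alpha|\geq M_0$ is purely to convert the nonlocal remainder $\|\varphi\|_{L^2}$ into a fraction of $\|\alpha\varphi\|_{L^2}$. The only thing to double-check is the dependence of the constants in Lemma \ref{prop:nonlocal} on $\alpha$ and $\varepsilon$: they are uniform in $\alpha\neq 0$ and in $\varepsilon\in(0,1]$, which is why the same choice of $M_0$ works for all $\varepsilon\leq 1$. This is exactly the asymmetry between Lemma \ref{prop:Orr-som-art-alpha-small} (which pays with a smallness condition on $\varepsilon$) and the present lemma (which pays with a largeness condition on $|\alpha|$).
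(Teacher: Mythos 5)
Your proposal is correct and follows the same route as the paper: rewrite the nonlocal term $U''\varphi$ as part of the source, apply Lemma \ref{prop:nonlocal}, and absorb the resulting remainder using $|\alpha|\geq M_0$. The only cosmetic difference is that the paper bounds $\|(1+Y)U''\varphi\|_{L^2}\leq C|\alpha|^{-1}\|\alpha\varphi\|_{L^2}$ in one step, whereas you first pass through $\|\varphi\|_{L^2}$ before dividing by $|\alpha|$; these are the same estimate.
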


\begin{proof}
  We rewrite the equation as
  \begin{align*}
   U(\partial_Y^2-\alpha^2)\varphi+\mathrm{i} \varepsilon(\partial_Y^2-\alpha^2)^2\varphi=f+U''\varphi=g.
  \end{align*}
 It follows from Lemma \ref{prop:nonlocal} that
  \begin{align*}
     &\varepsilon^{\f13}\|(1+Y)w\|_{L^2}+ \varepsilon^{\f23}\|(1+Y)(\partial_Yw,\alpha w)\|_{L^2} +\varepsilon\|(1+Y)(\partial_Y^2-\alpha^2)w\|_{L^2}+\|(\partial_Y\varphi,\alpha \varphi)\|_{L^2}\\
     &\leq C\|(1+Y)g\|_{L^2}\leq C\|(1+Y)f\|_{L^2}+C\|(1+Y)U''\varphi\|_{L^2}\\
     &\leq C\|(1+Y)f\|_{L^2}+C|\alpha|^{-1}\|\alpha\varphi\|_{L^2}\leq C\|(1+Y)f\|_{L^2}+CM_0^{-1}\|\alpha\varphi\|_{L^2},
  \end{align*}
which gives our result by choosing $M_0$ sufficiently large so that $CM_0^{-1}\leq 1/2$.
\end{proof}

Now we prove Proposition \ref{eq:Orr-som-art}.

\begin{proof}[Proof of Proposition \ref{eq:Orr-som-art}]
  Let $M_0$ be the constant in Lemma \ref{prop:Orr-som-art-alpha-large}. Then we take $M=M_0$, $c_1=c_0^{(0)}(M)$ in Lemma \ref{prop:Orr-som-art-alpha-small}. It follows from Lemma  \ref{prop:Orr-som-art-alpha-large} and Lemma \ref{prop:Orr-som-art-alpha-small} that
   \begin{align*}
    &\varepsilon^{\f13}\|(1+Y)w\|_{L^2}+ \varepsilon^{\f23}\|(1+Y)(\partial_Yw,\alpha w)\|_{L^2}+ \varepsilon\|(1+Y)(\partial_Y^2-\alpha^2)w\|_{L^2}\\
     &\quad+\|(\partial_Y\varphi,\alpha\varphi)\|_{L^2}\leq C\left(\|(1+Y)^2f\|_{L^2}+\dfrac{1}{|\alpha|^2}\bigg| \int_{0}^{+\infty}f\mathrm{d}Y\bigg|\right).
  \end{align*}
  The uniqueness is a direct consequence of the above estimate.

Next we prove the existence via the method of continuity. We consider the following system
\begin{align*}\left\{\begin{aligned}
&U(\partial_Y^2-\alpha^2)\varphi_{\lambda}+\mathrm{i} \varepsilon(\partial_Y^2-\alpha^2)^2\varphi_{\lambda}+\lambda(\partial_Y^2-\alpha^2)\varphi_\lambda-  U''\varphi_{\lambda}=f,\quad f(0)=0,\\
&w_\lambda=(\partial_Y^2-\alpha^2)\varphi_{\lambda},\quad \varphi_{\lambda}|_{Y=0}=\partial_Yw_\lambda|_{Y=0}=0,
\end{aligned}\right.
\end{align*}
where $\lambda\in[0,+\infty)$. It is obvious that $\varphi_{\lambda}$ is the solution to \eqref{eq:Orr-som-art} if $\lambda=0$. It is also easy to check that for large enough $\lambda$, there exists a unique solution $\varphi_\lambda\in H^4\cap H^1_0$ to the above system. Hence to prove the existence of \eqref{eq:Orr-som-art}, we only need to obtain a uniform estimates of $\varphi_{\lambda}$ with respected to $\lambda$. In fact, we can prove this by a similar argument in the proof of Lemma \ref{prop:Orr-som-art-alpha-small}, which is guaranteed by the following facts:
\begin{enumerate}
\item if we replace $\mathrm{i}\varepsilon(\partial_Y^2-\alpha^2)^2$ by $\mathrm{i}\varepsilon(\partial_Y^2-\alpha^2)^2+\lambda(\partial_Y^2-\alpha^2)$ with $\lambda\geq 0$, the estimates in Lemma \ref{prop:nonlocal} still hold true and is independent of $\lambda$.
\item if we replace $U(\partial_Y^2-\alpha^2)$ by $(U+\lambda)(\partial_Y^2-\alpha^2)$, the estimates in Lemma \ref{prop:Rayleigh-res} still hold true and is independent of $\lambda$. In fact, we just need to modify \eqref{eq:Ray-phi1} and \eqref{eq:Ray-phi2} to
\begin{align*}
(U+\lambda)\partial_Y^2\varphi_1-U''\varphi_1=F_{1,1},\varphi_1|_{Y=0}=0,
\end{align*}
and
\begin{align*}
(U+\lambda)(\partial_Y^2-\alpha^2)\varphi_2-U''\varphi_2=\alpha^2U\varphi_1 + F_{1,2}+\alpha^2f_2,\varphi_2|_{Y=0}=0.
\end{align*}
Along with that fact that $\mathcal{L}_{\lambda}[\cdot]$ also  satisfies the results in Lemma \ref{lem:GM's-hardy} for any $\lambda>0$, we get this conclusion. Here
\begin{align*}
\mathcal{L}_{\lambda}[f](Y):=(U(Y)+\lambda)\int_Y^\infty\frac{f(Y_1)}{(U(Y_1)+\lambda)^2}dY_1.
\end{align*}
\end{enumerate}
Therefore, we can obtain the existence of \eqref{eq:Orr-som-art}.
\end{proof}

\section{Boundary layer corrector}

This section is devoted to constructing the boundary layer corrector. That is, we need to construct the solution to the following homogeneous system
\begin{align}\label{eq:boundary-corr-parphi}\left\{\begin{aligned}
&\mathrm{i}\varepsilon(\partial_Y^2-\alpha^2)W_b+UW_b-U''\Phi_b=0,\\
&(\partial_Y^2-\alpha^2)\Phi_b=W_b,\\
&\Phi_b|_{Y=0}=0,\ \partial_Y\Phi_b|_{Y=0}=1.
\end{aligned}\right.
\end{align}
We construct the solution $W_b$ by finding two special solution (fast and slow mode) to the homogeneous Orr-Sommerfeld equation to match the boundary condition in \eqref{eq:boundary-corr-parphi}.  The fast mode is a solution to the homogeneous Orr-Sommerfeld equation built around the Airy function, and the slow mode is built around a solution to the Rayleigh equation.

We shall see that for $1\leq|\alpha|<\infty$, the boundary corrector $W_b$ is a perturbation of the Airy function. In this case, the ``stream fucntion'' $\Phi_a$ of Airy function is equipped with zero boundary condition. While, for $0<|\alpha|\leq1$, we choose the fast decay part $\Phi_{a,f}$ of $\Phi_a$ as the ``stream function'' of Airy function. Since $\Phi_{a,f}(0)\neq0$, we need to use the slow mode to correct it.

We always assume $\varepsilon|\alpha|^3\ll1$ throughout this section.
\subsection{Construction of the fast mode}
We construct the fast mode $W$  around the Airy function. We first define the  main part of $W$.
Let $Ai(y)$ be the Airy function defined in the appendix, which satisfies $Ai''(y)-yAi(y)=0$. Let $d=-\mathrm{i}\varepsilon\alpha^2/(U'(0))$ and denote 
\begin{align}\label{formula:Wa-construct}
  &W_a(Y)=\dfrac{Ai\big(\mathrm{e}^{\mathrm{i}\f{\pi}{6}} \big(U'(0)/\varepsilon\big)^{\f13}(Y+d) \big)}{\mathrm{e}^{\mathrm{i}\f{\pi}{6}} \big(U'(0)/\varepsilon\big)^{\f13}Ai'(0)}.
\end{align}
Then we find that 
\begin{align}\label{formula:Wa-eq}
   &\mathrm{i}\varepsilon(\partial_Y^2-\alpha^2)W_a+U'(0)YW_a=0,\quad \partial_YW_a(0)=\dfrac{Ai'\big(\mathrm{e}^{\mathrm{i}\f{\pi}{6}} \big(U'(0)/\varepsilon\big)^{\f13}d \big)}{Ai'(0)}.
\end{align}
We first notice that
\begin{align*}
	|(U'(0)/\varepsilon)^{\f13}d|=U'(0)^{-\f23}\varepsilon^\f23\alpha^2,
\end{align*}
which along with the facts $A_i(0)=\dfrac{1}{3^{2/3}\Gamma(2/3)}$, $Ai'(0)=-\dfrac{1}{3^{1/3}\Gamma(1/3)}$ and the smoothness of $Ai(y)$ implies that if $\varepsilon|\alpha|^3$ is small enough, we have
\begin{align}\label{eq:Ai(ke)-Ai(0)}
	\dfrac{Ai\big(\mathrm{e}^{\mathrm{i}\f{\pi}{6}} \big(U'(0)/\varepsilon\big)^{\f13}d \big)}{Ai(0)}= 1+\mathcal O(\varepsilon^{\f23}\alpha^2 ) \quad\mathrm{and} \quad\dfrac{Ai'\big(\mathrm{e}^{\mathrm{i}\f{\pi}{6}} \big(U'(0)/\varepsilon\big)^{\f13}d \big)}{Ai'(0)}= 1+\mathcal O(\varepsilon^{\f23}\alpha^2 ).
\end{align}
Let $\Phi_a$ solve $(\partial_Y^2-\alpha^2)\Phi_a=W_a,\ \Phi_a|_{Y=0}=0$. By Lemma \ref{lem:ham-bound}, we know that
\begin{align*}
	\Phi_a(Y)=-\alpha^{-1} \mathrm{e}^{-\alpha Y}\int_{0}^{+\infty}W_a(Z)\sinh(\alpha Z)\mathrm{d}Z+\alpha^{-1} \int_{Y}^{+\infty}W_a(Z)\sinh\big(\alpha(Z-Y)\big)\mathrm{d}Z.
\end{align*}
We denote by $\Phi_{a,f}$ the fast decay part of $\Phi_a$, i.e., 
\begin{align*}
  \Phi_{a,f} &= \dfrac{1}{\alpha}\int_{Y}^{+\infty} W_a(Z)\sinh\big(\alpha (Z-Y)\big)\mathrm{d}Z.
\end{align*}

We first establish estimates about $W_a$ and $\Phi_a$ before going further. Thanks to $\mathbf{Im}(d)<0$, we take
 \begin{align*}
    & \kappa= \big(U'(0)/\varepsilon\big)^{\f13},\quad \eta=d,\quad \tilde{A}(Y)=Ai(\mathrm{e}^{\mathrm{i}\f{\pi}{6}}\kappa(Y+\eta)) /Ai(\mathrm{e}^{\mathrm{i}\f{\pi}{6}}\kappa\eta).
 \end{align*}
Then we have
\begin{align}\label{formula:Wa-tildeA}
   W_a(Y)&= \dfrac{Ai(\mathrm{e}^{\mathrm{i}\frac{\pi}{6}}\kappa\eta )}{\mathrm{e}^{\mathrm{i}\f{\pi}{6}} \big(U'(0)/\varepsilon\big)^{\f13}Ai'(0)}\times \dfrac{Ai\big(\mathrm{e}^{\mathrm{i}\f{\pi}{6}} \big(U'(0)/\varepsilon\big)^{\f13}(Y+d) \big)}{Ai(\mathrm{e}^{\mathrm{i}\frac{\pi}{6}}\kappa\eta)}= \dfrac{Ai(\mathrm{e}^{\mathrm{i}\frac{\pi}{6}}\kappa\eta)\tilde{A}(Y)}{\mathrm{e}^{\mathrm{i}\f{\pi}{6}} \big(U'(0)/\varepsilon\big)^{\f13}Ai'(0)},
\end{align}
and
\begin{align*}
	\Phi_{a,f}(Y)=\dfrac{Ai(\mathrm{e}^{\mathrm{i}\frac{\pi}{6}}\kappa\eta)\tilde{\Phi}_f(Y)}{\mathrm{e}^{\mathrm{i}\f{\pi}{6}} \big(U'(0)/\varepsilon\big)^{\f13}Ai'(0)}.
\end{align*}
Then we get by Lemma \ref{lem:Airy-bound} that
\begin{align*}
	&\Phi_{a,f}(0)=\dfrac{e^{\mathrm i\frac{\pi}{3}}\varepsilon^{\f43}Ai(e^{\mathrm i\frac{\pi}{6}}\kappa\eta)}{(U'(0))^{\f43}Ai'(0)}+\mathcal O(\varepsilon^{\f43}\alpha^2),\\
	&\partial_Y\Phi_{a,f}(0)=-\dfrac{\varepsilon^\f23}{3(U'(0))^\f23Ai'(0)}+\mathcal O(\varepsilon\alpha^2),
\end{align*}
which along with \eqref{eq:Ai(ke)-Ai(0)} implies that
\begin{align}\label{eq:phif(0)}
\begin{split}
	&\Phi_{a,f}(0)=\dfrac{e^{\mathrm i\frac{\pi}{3}}\varepsilon^{\f43}Ai(0)}{(U'(0))^{\f43}Ai'(0)}+\mathcal O(\varepsilon^{\f43}\alpha^2),\\
	&\partial_Y\Phi_{a,f}(0)=-\dfrac{\varepsilon^\f23}{3(U'(0))^\f23Ai'(0)}+\mathcal O(\varepsilon\alpha^2).
	\end{split}
\end{align}
We also notice that by \eqref{eq:Ai(ke)-Ai(0)},
\begin{align*}
	|W_a(Y)|=\Big|\dfrac{Ai(0)\tilde{A}(Y)}{\mathrm{e}^{\mathrm{i}\f{\pi}{6}} \big(U'(0)/\varepsilon\big)^{\f13}Ai'(0)}\times\dfrac{Ai(\mathrm{e}^{\mathrm{i}\frac{\pi}{6}}\kappa\eta)}{Ai(0)}\Big|\sim \varepsilon^{\f13}|\tilde{A}(Y)|.
\end{align*}
According to the fact $1\leq 1+|\kappa\eta|\leq C(1+\varepsilon|\alpha|^3)^{\f23}$ and applying Lemma \ref{lem:Airy-w} and \ref{lem:Airy-bound}, we then have
\begin{align}\label{est:Wa-1}
   & \varepsilon^{-\f16}\|W_a\|_{L^2} +
   \varepsilon^{-\f12}\|(\partial_Y\Phi_a,\alpha\Phi_a)\|_{L^2} +\varepsilon^{-\f56}\|Y^2W_a\|_{L^2}+\varepsilon^{-\f32}\|Y^4W_a\|_{L^2}\nonumber\\
   &+ \varepsilon^{-\f23}|\alpha|^{\f12}\|\Phi_a\|_{L^2}+\varepsilon^{-\f23}
   |\alpha|^{\f72}\|Y^3\Phi_a\|_{L^2} \leq C\varepsilon^{\f13},
\end{align}
and
\begin{align}\label{est:tildePhi-1}
  & |\partial_Y\Phi_a(0)|\geq C^{-1}\varepsilon^{\f23}(1+\varepsilon|\alpha|^3)^{-\f13}.
\end{align}
For the fast decay part, we also have
\begin{align}\label{est:Wa-1-fast}
   &
   \varepsilon^{-\f12}\|(\partial_Y\Phi_{a,f},\alpha\Phi_{a,f})\|_{L^2} +\varepsilon^{-\f56}\|\Phi_{a,f}\|_{L^2}+\varepsilon^{-\frac{11}{6}}\|Y^3\Phi_{a,f}\|_{L^2} \leq C\varepsilon^{\f13},
\end{align}
and
\begin{align}\label{est:tildePhi-1-fast}
  & |\partial_Y\Phi_{a,f}(0)|\geq C^{-1}\varepsilon^{\f23}(1+\varepsilon|\alpha|^3)^{-\f13}.
\end{align}

As we mentioned before, we construct the fast mode $(W,\Phi)$ around $(W_a,\Phi_a)$ for the case of $1\leq|\alpha|<\infty$ and $(W_f,\Phi_f)$ around $(W_a,\Phi_{a,f})$ for the case of $0<|\alpha|\leq1$. More precisely, we define $W=W_a+W_e$ with $W_e$ satisfying
\begin{align}\label{formula:Wa-eq}\left\{\begin{aligned}
   &\mathrm{i}\varepsilon(\partial_Y^2-\alpha^2)W_e+UW_e-U''\Phi_e= -\big(U-U'(0)Y\big)W_a+U''\Phi_a,\\
   &(\partial_Y^2-\alpha^2)\Phi_e= W_e,\\
   &\Phi_e|_{Y=0}=\partial_YW_e|_{Y=0}=0,
\end{aligned}\right.
\end{align}
and $W_f=W_a+W_{e,f}$ with $W_{e,f}$ being the solution to
\begin{align}\label{formula:Wa-eq-fast}\left\{\begin{aligned}
   &\mathrm{i}\varepsilon(\partial_Y^2-\alpha^2)W_{e,f}+UW_{e,f}-U''\Phi_{e,f}= -\big(U-U'(0)Y\big)W_a+U''\Phi_{a,f},\\
   &(\partial_Y^2-\alpha^2)\Phi_e= W_{e,f},\\
   &\Phi_e|_{Y=0}=\partial_YW_e|_{Y=0}=0.
\end{aligned}\right.
\end{align}
\begin{lemma}
Let $c_1$ be the positive number in Proposition \ref{prop:Orr-som-art}. Then for any $0<\varepsilon\leq c_1$, the following statements hold.
\begin{itemize}
	\item Let $(W_e,\Phi_e)$ be the solution to \eqref{formula:Wa-eq}. Then for any $1\leq|\alpha|<+\infty$, we have
        \begin{align}\label{est:We-1}
 &\varepsilon^{\f13}\|(1+Y)W_e\|_{L^2}+\varepsilon^{\f23}\|(1+Y)(\partial_YW_e,\alpha W_e)\|_{L^2}+ \|(\partial_Y\Phi_e,\alpha\Phi_e)\|_{L^2}\leq C\varepsilon.
\end{align}
    \item Let $(W_{e,f},\Phi_{e,f})$ be the solution to \eqref{formula:Wa-eq-fast}. Then for any $\alpha>0$, we have
    \begin{align}\label{est:We-1-fast}
 &\varepsilon^{\f13}\|(1+Y)W_{e,f}\|_{L^2} +\varepsilon^{\f23}\|(1+Y)(\partial_YW_{e,f},\alpha W_{e,f})\|_{L^2}+ \|(\partial_Y\Phi_{e,f},\alpha\Phi_{e,f})\|_{L^2}\leq C\varepsilon^{\f76}.
\end{align}
\end{itemize}
\end{lemma}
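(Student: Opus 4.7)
The plan is to apply Proposition~\ref{prop:Orr-som-art} to each of the two systems, taking the source $f$ to be
$g:=-(U-U'(0)Y)W_a+U''\Phi_a$ for \eqref{formula:Wa-eq} and $g_f:=-(U-U'(0)Y)W_a+U''\Phi_{a,f}$ for \eqref{formula:Wa-eq-fast}. The artificial boundary conditions $\Phi|_{Y=0}=\partial_YW|_{Y=0}=0$ match exactly. For the first system the compatibility condition $g(0)=0$ is automatic since $U(0)=0$ and $\Phi_a(0)=0$. Moreover, the integral term in Proposition~\ref{prop:Orr-som-art} is controlled by the main term via Cauchy--Schwarz,
\begin{align*}
\bigg|\int_0^{+\infty}g\,\mathrm{d}Y\bigg|\leq\|(1+Y)^{-3/2}\|_{L^2}\,\|(1+Y)^{3/2}g\|_{L^2}\leq C\|(1+Y)^2g\|_{L^2},
\end{align*}
so combined with the restriction $|\alpha|\geq 1$ everything reduces to estimating $\|(1+Y)^2g\|_{L^2}$.

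By Taylor's theorem and the decay assumption \eqref{ass:U} one has $|U(Y)-U'(0)Y|\leq CY^2/(1+Y)$ and $|U''(Y)|\leq C(1+Y)^{-3}$, hence $(1+Y)^2|g|\leq CY^2(1+Y)|W_a|+C(1+Y)^{-1}|\Phi_a|$. From \eqref{est:Wa-1} one reads $\|Y^2W_a\|_{L^2}\leq C\varepsilon^{7/6}$ and $\|Y^4W_a\|_{L^2}\leq C\varepsilon^{11/6}$, and the interpolation $\|Y^3W_a\|_{L^2}^2\leq\|Y^2W_a\|_{L^2}\|Y^4W_a\|_{L^2}\leq C\varepsilon^3$ then yields $\|Y^2(1+Y)W_a\|_{L^2}\leq C\varepsilon^{7/6}$. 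The Rayleigh piece is estimated by $\|(1+Y)^{-1}\Phi_a\|_{L^2}\leq\|\Phi_a\|_{L^2}\leq C\varepsilon|\alpha|^{-1/2}\leq C\varepsilon$, using $|\alpha|\geq 1$. Inserting these bounds in Proposition~\ref{prop:Orr-som-art} produces the $C\varepsilon$ on the right-hand side and proves \eqref{est:We-1}.

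For \eqref{est:We-1-fast} the same scheme works with $\Phi_{a,f}$ in place of $\Phi_a$: the sharper weighted estimate $\|\Phi_{a,f}\|_{L^2}\leq C\varepsilon^{7/6}$ coming from \eqref{est:Wa-1-fast} is $\alpha$-independent, which both upgrades the gain from $\varepsilon$ to $\varepsilon^{7/6}$ and permits the conclusion for every $\alpha>0$. The subtlety is that $\Phi_{a,f}(0)\neq 0$, so $g_f(0)=U''(0)\Phi_{a,f}(0)\neq 0$ in general and Proposition~\ref{prop:Orr-som-art} does not apply verbatim. I will handle this by writing $g_f=\tilde g_f+g_f(0)\rho$ for a fixed smooth compactly supported cutoff $\rho$ with $\rho(0)=1$ and $\int_0^{+\infty}\rho\,\mathrm{d}Y=0$; the vanishing-mean condition is essential because it kills the $|\alpha|^{-2}$ contribution of the cutoff piece in Proposition~\ref{prop:Orr-som-art}, which would otherwise blow up for small $\alpha$. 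Then $\tilde g_f\in H^1_0$ is treated exactly as $g$ above, while \eqref{eq:phif(0)} gives $|g_f(0)|\leq C\varepsilon^{4/3}$, so the cutoff contribution is harmlessly absorbed in $C\varepsilon^{7/6}$.

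The chief technical hurdle is precisely the compatibility issue arising from $\Phi_{a,f}(0)\neq 0$: the splitting must be set up so that both the $H^1_0$ requirement and the $|\alpha|^{-2}$ term of Proposition~\ref{prop:Orr-som-art} behave uniformly down to $\alpha\to 0$. A secondary point is the systematic exploitation of the decay bound $\sup_Y(1+Y)^3|U''(Y)|<\infty$ from \eqref{ass:U}, without which neither the pointwise inequality $|U-U'(0)Y|\leq CY^2/(1+Y)$ nor the weighted smallness of the Rayleigh term $(1+Y)^{-1}|\Phi_a|$ would be available.
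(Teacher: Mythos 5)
Your first bullet is fine: handling the term $|\alpha|^{-2}\big|\int_0^{+\infty}g\,\mathrm{d}Y\big|$ by Cauchy--Schwarz together with $|\alpha|\geq1$ is a legitimate, if cruder, substitute for what the paper does, and your weighted bounds on $Y^2(1+Y)W_a$ (via interpolation between $\|Y^2W_a\|_{L^2}$ and $\|Y^4W_a\|_{L^2}$ from \eqref{est:Wa-1}) and on $(1+Y)^{-1}\Phi_a$ do yield the $C\varepsilon$ bound in \eqref{est:We-1}.

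The second bullet has a genuine gap. Your cutoff correction for $g_f(0)\neq0$ addresses a compatibility point the paper glosses over, but the vanishing mean of $\rho$ only annihilates the integral of the correction piece $g_f(0)\rho$; the main piece still satisfies $\int_0^{+\infty}\tilde g_f\,\mathrm{d}Y=\int_0^{+\infty}g_f\,\mathrm{d}Y$, so Proposition \ref{prop:Orr-som-art} hands you back the term $|\alpha|^{-2}\big|\int_0^{+\infty}g_f\,\mathrm{d}Y\big|$, which you never control. Since \eqref{est:We-1-fast} must hold for \emph{all} $\alpha>0$, in particular $\alpha\to0$, "treated exactly as $g$ above" does not apply: there is no $|\alpha|\geq1$ to fall back on. The missing idea is the divergence structure of the source. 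Writing
\begin{equation*}
-(U-U'(0)Y)W_a+U''\Phi_{a,f}=\partial_YR_1+R_2,\qquad
R_2=\alpha^2(U-U'(0)Y)\Phi_{a,f},
\end{equation*}
with $R_1=-(U-U'(0)Y)\partial_Y\Phi_{a,f}+(U'-U'(0))\Phi_{a,f}$ vanishing at $Y=0$ and at infinity, one finds $\int_0^{+\infty}g_f\,\mathrm{d}Y=\alpha^2\int_0^{+\infty}(U-U'(0)Y)\Phi_{a,f}\,\mathrm{d}Y$; the explicit factor $\alpha^2$ cancels the $|\alpha|^{-2}$ exactly, and the resulting quantity is bounded by $C\|(1+Y)^3\Phi_{a,f}\|_{L^2}\leq C\varepsilon^{7/6}$ thanks to \eqref{est:Wa-1-fast}. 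Without this observation (or an equivalent integration by parts showing $\int_0^{+\infty}g_f\,\mathrm{d}Y=\mathcal{O}(\alpha^2)$), your argument fails precisely in the small-$\alpha$ regime that the fast-mode construction is designed for.
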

\begin{remark}
The existence of $(W_e, \Phi_e)$ and $(W_{e,f},\Phi_{e,f})$ are also deduced by Proposition \ref{prop:Orr-som-art}.
\end{remark}
\begin{proof}
We start our proof of \eqref{est:We-1}.
We first notice that the source term in \eqref{formula:Wa-eq} can be written as
\begin{eqnarray}\label{formula:force}
\begin{split}
    &-(U-U'(0)Y)W_a+U''\Phi_a=-\partial_Y\big[(U-U'(0)Y)\partial_Y\Phi_a\big] \\
   &\quad +\partial_Y\big[(U'-U'(0))\Phi_a\big]+\alpha^2(U-U'(0)Y)\Phi_a\\
   &=\partial_YR_1+R_2,
\end{split}
\end{eqnarray}
where
\begin{align}
   &R_1= -(U-U'(0)Y)\partial_Y\Phi_a+(U'-U'(0))\Phi_a,\label{formula:R1}\\
    &R_2=\alpha^2(U-U'(0)Y)\Phi_a.\label{formula:R2}
\end{align}
Moreover, $\lim_{Y\rightarrow+\infty}R_1(Y)=R_1|_{Y=0}=0$. Then
\begin{align*}
   & \dfrac{1}{|\alpha|^2}\left|\int_{0}^{+\infty}(\partial_YR_1+R_2)\mathrm{d}Y\right|= \dfrac{1}{|\alpha|^2}\left|\int_{0}^{+\infty}R_2\mathrm{d}Y\right|\leq C|\alpha|^{-2}\|(1+Y)R_2\|_{L^2}.
\end{align*}
 Then by Proposition \ref{prop:Orr-som-art}, for any $0<\varepsilon\leq c_1$, we get
\begin{align*}
   &\varepsilon^{\f13}\|(1+Y)W_e\|_{L^2}+\varepsilon^{\f23}\|(1+Y)(\partial_YW_e,\alpha W_e)\|_{L^2}+ \|(\partial_Y\Phi_e,\alpha\Phi_e)\|_{L^2}\\
   &\leq C\bigg(\|(1+Y)^2(\partial_YR_1+R_2)\|_{L^2}+ \dfrac{1}{|\alpha|^2}\left|\int_{0}^{+\infty}(\partial_YR_1+R_2)\mathrm{d}Y\right| \bigg)\\
   &\leq C\big(\|(1+Y)^2(\partial_YR_1+R_2)\|_{L^2}+ |\alpha|^{-2}\|(1+Y)R_2\|_{L^2}\big).
\end{align*}
On the other hand, thanks to $|(U-U'(0)Y)W_a|\leq CY^2|W_a|$ , we have
\begin{align*}
  &\|(1+Y)^2(\partial_YR_1+R_2)\|_{L^2}\leq C\big( \|(1+Y)^2Y^2W_a\|_{L^2}+\|\Phi_a\|_{L^2}\big),
\end{align*}
and
\begin{align*}
|\alpha|^{-2}\|(1+Y)R_2\|_{L^2}\leq \|(1+Y)^3\Phi_a\|_{L^2}.
\end{align*}
From \eqref{est:Wa-1} and $\varepsilon\leq 1$, $|\alpha|\geq 1$, we infer that
\begin{align*}
\|(1+Y)^2Y^2W_a\|_{L^2}+\|\Phi_a\|_{L^2}+ \|(1+Y)^3\Phi_a\|_{L^2}\leq C{\color{blue}\varepsilon}.
\end{align*}
Therefore, we obtain
\begin{align*}
\|(1+Y)^2(\partial_YR_1+R_2)\|_{L^2}+ |\alpha|^{-2}\|(1+Y)R_2\|_{L^2}\leq  C\varepsilon.
\end{align*}
Thus, we conclude 
\begin{align*}
   &\varepsilon^{\f13}\|(1+Y)W_e\|_{L^2}+\varepsilon^{\f23}\|(1+Y)(\partial_YW_e,\alpha W_e)\|_{L^2}+ \|(\partial_Y\Phi_e,\alpha\Phi_e)\|_{L^2}\leq  C\varepsilon.
\end{align*}

 Now we turn to the proof of \eqref{est:We-1-fast}. Again by Proposition \ref{prop:Orr-som-art}, for any $0<\varepsilon\leq c_1$ and a similar argument as above, we have 
\begin{align*}
	   &\varepsilon^{\f13}\|(1+Y)W_{e,f}\|_{L^2}+\varepsilon^{\f23}\|(1+Y)(\partial_YW_{e,f},\alpha W_{e,f})\|_{L^2}+ \|(\partial_Y\Phi_{e,f},\alpha\Phi_{e,f})\|_{L^2}\\
	   &\leq C\big(\|(1+Y)^2Y^2W_{a,f}\|_{L^2}+\|\Phi_{a,f}\|_{L^2}+ \|(1+Y)^3\Phi_{a,f}\|_{L^2}\big),
\end{align*}
which along with \eqref{est:Wa-1-fast} implies that
\begin{align*}
	&\varepsilon^{\f13}\|(1+Y)W_{e,f}\|_{L^2} +\varepsilon^{\f23}\|(1+Y)(\partial_YW_{e,f},\alpha W_{e,f})\|_{L^2}+ \|(\partial_Y\Phi_{e,f},\alpha\Phi_{e,f})\|_{L^2}\leq C\varepsilon^{\f76}.
\end{align*}

\end{proof}

\begin{lemma}\label{lem:W-est}
Let $c_1$ be the constant in Proposition \ref{prop:Orr-som-art}. There exist $c_2\in(0,c_1]$, $\delta_{*}\in(0,1],$ such that if $\varepsilon|\alpha|^3\leq \delta_{*},\ 0<\varepsilon \leq c_2$, the following statements hold true:
\begin{enumerate}
	\item For any $1\leq|\alpha|<\infty$, there exists a unique solution $\Phi\in H^4$ to the homogeneous Orr-Sommerfeld equation satisfying 
	   \begin{align*}
     & \varepsilon^{-\f16}\|W\|_{L^2}+ \varepsilon^{-\f12}\|(\partial_Y\Phi,\alpha\Phi)\|_{L^2} +\varepsilon^{-\f13}\|(\partial_Y\Phi,\alpha\Phi)\|_{L^\infty}\leq C\varepsilon^{\f13}.
  \end{align*}
  Moreover, we have
  \begin{align*}
     & |\partial_Y\Phi(0)|\geq C^{-1}\varepsilon^{\f23},\quad\Phi(0)=0.
  \end{align*}
  \item For any $\alpha>0$, there exists a unique solution $\Phi_f\in H^4$ to the homogeneous Orr-Sommerfeld equation satisfying 
   \begin{align*}
     & \varepsilon^{-\f16}\|W_f\|_{L^2}+ \varepsilon^{-\f12}\|(\partial_Y\Phi_f,\alpha\Phi_f)\|_{L^2} +\varepsilon^{-\f13}\|(\partial_Y\Phi_f,\alpha\Phi_f)\|_{L^\infty}\leq C\varepsilon^{\f13}.
  \end{align*}
  Moreover, we have
  \begin{align*}
     \partial_Y\Phi_f(0)=-\dfrac{\varepsilon^\f23}{3(U'(0))^\f23Ai'(0)}+\mathcal O(\varepsilon(1+\alpha^2)) ,\quad \Phi_f(0)=\dfrac{e^{\mathrm i\frac{\pi}{3}}\varepsilon^{\f43}Ai(0)}{(U'(0))^{\f43}Ai'(0)}+\mathcal O(\varepsilon^{\f43}\alpha^2).
  \end{align*}

\end{enumerate}
\end{lemma}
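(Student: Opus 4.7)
The plan is to set $W := W_a + W_e$ and $\Phi := \Phi_a + \Phi_e$ for the first item, and analogously $W_f := W_a + W_{e,f}$, $\Phi_f := \Phi_{a,f} + \Phi_{e,f}$ for the second. First I would verify that each such pair solves the homogeneous Orr-Sommerfeld system: combining the defining equation $\mathrm{i}\varepsilon(\partial_Y^2-\alpha^2)W_a + U'(0)Y W_a = 0$ with the equation \eqref{formula:Wa-eq} for $(W_e,\Phi_e)$, the forcing $-(U-U'(0)Y)W_a + U''\Phi_a$ cancels, yielding $\mathrm{i}\varepsilon(\partial_Y^2-\alpha^2)W + UW - U''\Phi = 0$ together with $(\partial_Y^2-\alpha^2)\Phi = W$. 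Existence of $(W_e,\Phi_e)$ and $(W_{e,f},\Phi_{e,f})$ is guaranteed by Proposition \ref{prop:Orr-som-art}, and uniqueness of the whole construction follows from linearity plus the unique solvability there.

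Next I would combine \eqref{est:Wa-1} with \eqref{est:We-1} term-by-term via the triangle inequality. Since \eqref{est:We-1} gives $\|W_e\|_{L^2}\leq C\varepsilon^{2/3}$, which is lower order than $\|W_a\|_{L^2}\leq C\varepsilon^{1/2}$ coming from \eqref{est:Wa-1}, the Airy part dominates and we obtain $\varepsilon^{-1/6}\|W\|_{L^2} + \varepsilon^{-1/2}\|(\partial_Y\Phi,\alpha\Phi)\|_{L^2} \leq C\varepsilon^{1/3}$. For the $L^\infty$ bound I would use the one-dimensional Gagliardo-Nirenberg inequality $\|g\|_{L^\infty}^2 \leq 2\|g\|_{L^2}\|\partial_Y g\|_{L^2}$ applied with $g=\partial_Y\Phi$ and $g=\alpha\Phi$, controlling the needed derivatives through $\partial_Y^2\Phi = W + \alpha^2\Phi$ and the smallness hypothesis $\varepsilon|\alpha|^3\leq\delta_*$ (which makes $\alpha^2\|\Phi\|_{L^2}$ of the same size as $\|W\|_{L^2}$). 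The identity $\Phi(0) = \Phi_a(0) + \Phi_e(0) = 0$ is immediate since both pieces vanish at $Y=0$ by construction.

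The main obstacle is preserving the lower bound $|\partial_Y\Phi(0)| \geq C^{-1}\varepsilon^{2/3}$. By \eqref{est:tildePhi-1} the Airy piece provides $|\partial_Y\Phi_a(0)|\geq C^{-1}\varepsilon^{2/3}(1+\delta_*)^{-1/3}$ under the hypothesis $\varepsilon|\alpha|^3\leq\delta_*$. To show this survives the perturbation, I would bound the corrector contribution by a Sobolev trace estimate $|\partial_Y\Phi_e(0)| \leq C\|\partial_Y\Phi_e\|_{L^2}^{1/2}\|\partial_Y^2\Phi_e\|_{L^2}^{1/2}$, using \eqref{est:We-1} for the first factor (giving $C\varepsilon$) and $\partial_Y^2\Phi_e = W_e + \alpha^2\Phi_e$ for the second. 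The smallness $\varepsilon|\alpha|^3\leq\delta_*$ forces $\alpha^2\|\Phi_e\|_{L^2} \lesssim |\alpha|\varepsilon \lesssim \delta_*^{1/3}\varepsilon^{2/3}$, yielding $\|\partial_Y^2\Phi_e\|_{L^2}\leq C\varepsilon^{2/3}$ and hence $|\partial_Y\Phi_e(0)|\leq C\varepsilon^{5/6}$. This is $o(\varepsilon^{2/3})$, so choosing $c_2$ small enough absorbs it and preserves the lower bound.

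For the fast mode I would repeat the same argument with $(W_{e,f}, \Phi_{e,f})$ in place of $(W_e, \Phi_e)$ and the estimates \eqref{est:Wa-1-fast}, \eqref{est:We-1-fast} replacing their non-subscripted counterparts. The explicit boundary values $\Phi_f(0)$ and $\partial_Y\Phi_f(0)$ then come directly from \eqref{eq:phif(0)} applied to $\Phi_{a,f}$, since $\Phi_{e,f}(0)=0$ and the trace estimate on $\partial_Y\Phi_{e,f}(0)$ produces a contribution of order $\varepsilon^{5/6}$ times factors controlled by $(1+\alpha^2)^{1/2}$, which is absorbed into the $\mathcal{O}(\varepsilon(1+\alpha^2))$ remainder stated in the lemma.
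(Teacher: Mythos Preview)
Your proposal is correct and follows the same approach as the paper: define $\Phi=\Phi_a+\Phi_e$ (resp.\ $\Phi_f=\Phi_{a,f}+\Phi_{e,f}$), combine \eqref{est:Wa-1}--\eqref{est:We-1} (resp.\ \eqref{est:Wa-1-fast}--\eqref{est:We-1-fast}) by the triangle inequality, and then use a one-dimensional interpolation for the $L^\infty$ norm and for the boundary trace of the corrector.

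Two small points. First, the paper's interpolation is slightly cleaner than yours: since $\Phi(0)=0$ (and $\Phi_e(0)=0$), one has directly $\|(\partial_Y\Phi,\alpha\Phi)\|_{L^\infty}\le \|W\|_{L^2}^{1/2}\|(\partial_Y\Phi,\alpha\Phi)\|_{L^2}^{1/2}$ without having to split $\partial_Y^2\Phi=W+\alpha^2\Phi$ and invoke $\varepsilon|\alpha|^3\le\delta_*$ to control $\alpha^2\|\Phi\|_{L^2}$; your route works too, it just carries an extra step. Second, your stated order for $|\partial_Y\Phi_{e,f}(0)|$ is miscalibrated: the claim ``$\varepsilon^{5/6}$ times $(1+\alpha^2)^{1/2}$'' would \emph{not} be absorbed into $\mathcal O(\varepsilon(1+\alpha^2))$ for small $\alpha$. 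In fact, using \eqref{est:We-1-fast} (which is sharper by a factor $\varepsilon^{1/6}$ than \eqref{est:We-1}), your own trace estimate gives $|\partial_Y\Phi_{e,f}(0)|\le C\|\partial_Y\Phi_{e,f}\|_{L^2}^{1/2}\|\partial_Y^2\Phi_{e,f}\|_{L^2}^{1/2}\le C(\varepsilon^{7/6})^{1/2}(\varepsilon^{5/6})^{1/2}=C\varepsilon$, which is exactly the remainder claimed. So the argument is sound; just replace the stated power.
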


\begin{proof}
We first show the first statement of this lemma.
  By \eqref{est:Wa-1} and \eqref{est:We-1}, if $\varepsilon|\alpha|^3\leq \delta_*\leq 1,\ 0<\varepsilon\leq c_2\leq c_1$, $|\alpha|\geq 1$, we have
  \begin{eqnarray}\label{est:tildeW1}
  \begin{split}
     & \varepsilon^{-\f16}\|W\|_{L^2}+ \varepsilon^{-\f12}\|(\partial_Y\Phi,\alpha\Phi)\|_{L^2}\\
     &\leq C\big( \varepsilon^{-\f16}\|(1+Y)W_a\|_{L^2}+ \varepsilon^{-\f12}\|(\partial_Y\Phi_a,\alpha\Phi_a)\|_{L^2}+ \varepsilon^{-\f16}\|(1+Y)W_e\|_{L^2}\\
     &\quad+ \varepsilon^{-\f12}\|(\partial_Y\Phi_e,\alpha\Phi_e)\|_{L^2}\big)\\
     &\leq C\varepsilon^{\f13}\big(1+\varepsilon^{\f16} \big)\leq C\varepsilon^{\f13}(1+c_2^{\f16})\leq C\varepsilon^{\f13}.
  \end{split}
  \end{eqnarray}
By the interpolation, we get
  \begin{align}\label{est:tildeW2}
     & \|(\partial_Y\Phi,\alpha\Phi)\|_{L^\infty}\leq \|W\|_{L^2}^{\f12} \|(\partial_Y\Phi,\alpha\Phi)\|_{L^2}^{\f12} \leq C\varepsilon^{\f23}.
  \end{align}

Again by \eqref{est:We-1} and the interpolation, we have
\begin{align*}
   & \|\partial_Y\Phi_e\|_{L^\infty}\leq \|W_e\|_{L^2}^{\f12} \|(\partial_Y\Phi_e,\alpha\Phi_e)\|_{L^2}^{\f12}\leq C\varepsilon^{\f56},
\end{align*}
which along with \eqref{est:tildePhi-1} implies that
\begin{align*}
   |\partial_Y\Phi(0)|&\geq |\partial_Y\Phi_a(0)|-\|\partial_Y\Phi_e\|_{L^\infty} \geq C^{-1}\varepsilon^{\f23}(1-C\varepsilon^{\f16}).
\end{align*}
Then taking $c_2$ sufficient small so that $1-Cc_2^{\f16}\geq 1/2$,  we get
\begin{align}\label{est:tildePhi0}
 |\partial_Y\Phi(0)|\geq C^{-1}\varepsilon^{\f23}.
\end{align}

 Now we turn to the proof of the second statement. By a similar argument as \eqref{est:tildeW1}, we have
\begin{align*}
     & \varepsilon^{-\f16}\|W_f\|_{L^2}+ \varepsilon^{-\f12}\|(\partial_Y\Phi_f,\alpha\Phi_f)\|_{L^2} +\varepsilon^{-\f13}\|(\partial_Y\Phi_f,\alpha\Phi_f)\|_{L^\infty}\leq C\varepsilon^{\f13}.
  \end{align*}
  We also notice that
  \begin{align*}
  	|\partial_Y \Phi_{f}(0)-\partial_Y\Phi_{a,f}(0)|=|\Phi_{e,f}(0)|\leq\|\Phi_{e,f}\|_{L^\infty}\leq \|W_{e,f}\|_{L^2}^{\f12}\|\partial_Y\Phi_{e,f}\|_{L^2}^{\f12}\leq C\varepsilon
  \end{align*}
  and $\Phi_{f}(0)=\Phi_{a,f}(0)$ which along with \eqref{eq:phif(0)} implies that
  \begin{align*}
     \partial_Y\Phi_f(0)=-\dfrac{\varepsilon^\f23}{3(U'(0))^\f23Ai'(0)}+\mathcal O(\varepsilon(1+\alpha^2)) ,\quad \Phi_f(0)=\dfrac{e^{\mathrm i\frac{\pi}{3}}\varepsilon^{\f43}Ai(0)}{(U'(0))^{\f43}Ai'(0)}+\mathcal O(\varepsilon^{\f43}\alpha^2).
  \end{align*}
  
\end{proof}

\subsection{Construction of the slow mode}
In this part, we construct a solution to the homogeneous Orr-Sommerfeld equation around a solution $\varphi_{Ray}$ to the homogeneous Rayleigh when $0<\alpha\leq 1$. Let $\varphi=\varphi_0+\varphi_1+\varphi_2$ be the solution to the homogeneous Rayleigh equation constructed in Proposition \ref{Prop:hom-ray}. For $0<\alpha\leq1$, we define $\varphi_{Ray}$ as follow
\begin{align}
	\varphi_{Ray}=\frac{c_E}{\alpha}\varphi,\quad c_E=\frac{\alpha}{\varphi_1(0)}=U'(0)+\mathcal{O}(\alpha).
\end{align}
Then from Proposition \ref{Prop:hom-ray}, we directly have
\begin{lemma}\label{Lem:slow-ray}
	For any $0<\alpha\leq 1$, there exits a solution $\varphi_{Ray}\in H^1(\mathbb R_+)$ to the homogeneous Rayleigh equation satisfying the following properties: $\varphi_{Ray}=\varphi_{Ray,0}+\varphi_{Ray,1}+\varphi_{Ray,2}$ with
	\begin{eqnarray}\label{eq:hom-ray}
		\begin{split}
			&\varphi_{Ray,0}=\frac{c_E}{\alpha}Ue^{-\alpha Y},\quad\varphi_{Ray,1}(0)=1,\\
			&\|\partial_Y\varphi_{Ray,1}\|_{L^2}+\|\varphi_{Ray,1}\|_{L^2}\leq C,\\
			&\|\partial_Y\varphi_{Ray,2}\|_{L^2}+\alpha\|\varphi_{Ray,2}\|_{L^2}\leq C\alpha^\f12.
		\end{split}
	\end{eqnarray}
	In particular, we have
	\begin{align}
		\varphi_{Ray}(0)=1.
	\end{align}
	If $\frac{U''}{U}\in L^2(\mathbb R_+)$ in addition, then $\varphi_{Ray,1},\varphi_{Ray,2}\in H^2(\mathbb R_+)$.
\end{lemma}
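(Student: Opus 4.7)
The plan is to obtain Lemma \ref{Lem:slow-ray} as essentially a rescaling of Proposition \ref{Prop:hom-ray}. The only content beyond that proposition is the correct choice of normalization so that $\varphi_{Ray}(0)=1$, together with the identification of the scaling constant $c_E$. Since the authors themselves write ``we directly have,'' the proof is short; the work is all in the bookkeeping and in extracting the asymptotic $\varphi_1(0)=\alpha/U'(0)+O(\alpha^2)$ from Proposition \ref{Prop:hom-ray}.

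First I would invoke Proposition \ref{Prop:hom-ray} to produce the three-piece decomposition $\varphi=\varphi_0+\varphi_1+\varphi_2$ of a homogeneous Rayleigh solution with leading slow mode $\varphi_0=Ue^{-\alpha Y}$. This $\varphi_0$ is a natural ansatz because a direct computation gives $U(\partial_Y^2-\alpha^2)(Ue^{-\alpha Y})-U''(Ue^{-\alpha Y})=-2\alpha UU'e^{-\alpha Y}$, so it solves the Rayleigh equation up to an $O(\alpha)$ residual that the correctors $\varphi_1,\varphi_2$ are designed to kill. Since $U(0)=0$ and (from the construction in Proposition \ref{Prop:hom-ray}) $\varphi_2(0)=0$, the boundary value $\varphi(0)$ reduces to $\varphi_1(0)$, and the proposition supplies the asymptotic $\varphi_1(0)=\alpha/U'(0)+O(\alpha^2)$ together with the estimates $\|\partial_Y\varphi_1\|_{L^2}+\|\varphi_1\|_{L^2}\leq C\alpha$ and $\|\partial_Y\varphi_2\|_{L^2}+\alpha\|\varphi_2\|_{L^2}\leq C\alpha^{3/2}$.

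Second, I would set $c_E:=\alpha/\varphi_1(0)$, so that $c_E=U'(0)+O(\alpha)$, and define $\varphi_{Ray}:=(c_E/\alpha)\varphi$ with $\varphi_{Ray,i}:=(c_E/\alpha)\varphi_i$. Linearity preserves the fact that $\varphi_{Ray}$ solves the homogeneous Rayleigh equation; $\varphi_{Ray,0}=(c_E/\alpha)Ue^{-\alpha Y}$ matches the claimed formula; and $\varphi_{Ray}(0)=\varphi_{Ray,1}(0)=1$ by the choice of $c_E$. The bounds in \eqref{eq:hom-ray} follow by multiplying Proposition \ref{Prop:hom-ray}'s estimates by $|c_E/\alpha|\sim \alpha^{-1}$: the $O(\alpha)$ bound on $\varphi_1$ becomes $O(1)$, and the $O(\alpha^{3/2})$ bound on $\varphi_2$ becomes $O(\alpha^{1/2})$.

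For the final $H^2$ claim under the hypothesis $U''/U\in L^2(\mathbb R_+)$, I would use the Rayleigh equation itself, rewritten as $\partial_Y^2\varphi_{Ray,i}=\alpha^2\varphi_{Ray,i}+(U''/U)\varphi_{Ray,i}+(\text{explicit source from }\varphi_{Ray,0})$, and observe that the right-hand side lies in $L^2(\mathbb R_+)$: the first two terms are controlled by combining $\|U''/U\|_{L^2}$ with the $L^\infty$ bound on $\varphi_{Ray,i}$ coming by Sobolev embedding from the $H^1$ estimate just established, while the source from $\varphi_{Ray,0}=(c_E/\alpha)Ue^{-\alpha Y}$ is explicit. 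The main (and essentially only) nontrivial point in the whole argument is the asymptotic $\varphi_1(0)=\alpha/U'(0)+O(\alpha^2)$; this encodes how Proposition \ref{Prop:hom-ray} specifically constructs $\varphi_1$ to absorb the $-2\alpha UU'e^{-\alpha Y}$ residual of $\varphi_0$, and once it is in hand the rest is bookkeeping.
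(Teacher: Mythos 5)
Your proposal is correct and follows exactly the paper's route: the lemma is obtained by rescaling the solution of Proposition \ref{Prop:hom-ray} by $c_E/\alpha$ with $c_E=\alpha/\varphi_1(0)=U'(0)+\mathcal{O}(\alpha)$, so that the $\mathcal{O}(\alpha)$ and $\mathcal{O}(\alpha^{3/2})$ bounds become $\mathcal{O}(1)$ and $\mathcal{O}(\alpha^{1/2})$, and the normalization $\varphi_{Ray,1}(0)=1$ holds by construction. Your supporting remarks (the residual computation for $Ue^{-\alpha Y}$, the implicit use of $\varphi_2(0)=0$, and the $H^2$ regularity via the equation) are consistent with what the paper takes for granted from Proposition \ref{Prop:hom-ray}.
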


We denote $OS=\mathrm i \varepsilon(\partial_Y^2-\alpha^2)^2+U(\partial_Y^2-\alpha^2)-U''$. We observe that
\begin{align*}
	OS[\varphi_{Ray}]=\mathrm i \varepsilon(\partial_Y^2-\alpha^2)^2\varphi_{Ray},
\end{align*}
whose source term contains too much singularity. Hence, we introduce $\psi$ being the solution to the following system:
\begin{align}\label{eq:airy-slow}
	\left\{\begin{aligned}
		&\mathrm i \varepsilon(\partial_Y^2-\alpha^2)\psi+U\psi=\mathrm i \varepsilon(\partial_Y^2-\alpha^2)\varphi_{Ray},\quad Y>0,\\
		&\psi(0)=0.
	\end{aligned}\right.
\end{align}

\begin{lemma}\label{Lem:slow-airy}
	Let $\alpha\in(0,1]$ and $\psi$ be the solution  of \eqref{eq:airy-slow}. Then $\psi=\psi_0+\psi_1$ with
	\begin{align*}
		 &\varepsilon^{\f13} \|\partial_Y\psi_0\|_{L^2}+\|\psi_0\|_{L^2}\leq C\alpha^{-1}\varepsilon^{\f23},\\
		 &\varepsilon^{\f13}\|\partial_Y\psi_1\|_{L^2}+\alpha\|\psi_1\|_{L^2}\leq C\varepsilon^{\f13}.
	\end{align*}
\end{lemma}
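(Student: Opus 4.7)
The plan is to exploit linearity by decomposing $\psi = \psi_0 + \psi_1$ according to the splitting $\varphi_{Ray} = \varphi_{Ray,0} + \tilde\varphi$ with $\tilde\varphi := \varphi_{Ray,1} + \varphi_{Ray,2}$ from Lemma \ref{Lem:slow-ray}. I will define $\psi_0$ and $\psi_1$ as the unique solutions of the Airy equation \eqref{eq:airy-slow} with sources $F_0 := \mathrm i\varepsilon(\partial_Y^2-\alpha^2)\varphi_{Ray,0}$ and $F_1 := \mathrm i\varepsilon(\partial_Y^2-\alpha^2)\tilde\varphi$ respectively, each with the zero boundary condition $\psi_j(0)=0$.

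For $\psi_0$, the explicit form $\varphi_{Ray,0} = \frac{c_E}{\alpha}U e^{-\alpha Y}$ gives $F_0 = \mathrm i\varepsilon \frac{c_E}{\alpha}(U''-2\alpha U')e^{-\alpha Y}$. Since $|c_E|$ is bounded and the structure assumption \eqref{ass:U} implies $\|U'\|_{L^2}, \|U''\|_{L^2} \leq C$, one has $\|F_0\|_{L^2} \leq C\varepsilon/\alpha$. The Airy estimate for this second-order equation is obtained exactly as in the proof of the first bullet of Lemma \ref{prop:nonlocal} (see also Remark \ref{re:airy-part}): testing against $\bar\psi_0$ and separating real and imaginary parts produces $\|\sqrt U \psi_0\|_{L^2}^2 \leq \|F_0\|_{L^2}\|\psi_0\|_{L^2}$ and $\varepsilon\bigl(\|\partial_Y\psi_0\|_{L^2}^2 + \alpha^2\|\psi_0\|_{L^2}^2\bigr) \leq \|F_0\|_{L^2}\|\psi_0\|_{L^2}$. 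Running the interpolation \eqref{est:interpolation-1} with these two inequalities then absorbs $\|\psi_0\|_{L^2}$ on the right and gives $\|\psi_0\|_{L^2} \leq C\varepsilon^{-1/3}\|F_0\|_{L^2} \leq C\alpha^{-1}\varepsilon^{2/3}$ along with $\|\partial_Y\psi_0\|_{L^2} \leq C\alpha^{-1}\varepsilon^{1/3}$, which is the target bound.

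For $\psi_1$, the source $F_1$ contains second derivatives of $\tilde\varphi$, which is only in $H^1$, so the equation must be interpreted weakly. Testing against $\bar\psi_1$, integrating by parts and using $\psi_1(0) = 0$ produces the identity
\begin{align*}
-\mathrm i\varepsilon\bigl(\|\partial_Y\psi_1\|_{L^2}^2+\alpha^2\|\psi_1\|_{L^2}^2\bigr) + \|\sqrt U\psi_1\|_{L^2}^2 = -\mathrm i\varepsilon\langle\partial_Y\tilde\varphi,\partial_Y\psi_1\rangle - \mathrm i\varepsilon\alpha^2\langle\tilde\varphi,\psi_1\rangle.
\end{align*}
Taking imaginary parts, together with the bounds $\|\partial_Y\tilde\varphi\|_{L^2} \leq C$ and $\alpha\|\tilde\varphi\|_{L^2} \leq C\alpha^{1/2}$ from Lemma \ref{Lem:slow-ray}, yields $\|\partial_Y\psi_1\|_{L^2} \leq C$ (hence $\varepsilon^{1/3}\|\partial_Y\psi_1\|_{L^2} \leq C\varepsilon^{1/3}$) and $\alpha\|\psi_1\|_{L^2} \leq C$.

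The main obstacle is that the bound $\alpha\|\psi_1\|_{L^2} \leq C$ is too weak by a factor $\varepsilon^{1/3}$, so I must extract this decay. My plan is to use the real part of the same identity, which, in view of the already established first-order bounds, gives $\|\sqrt U \psi_1\|_{L^2}^2 \leq C\varepsilon\bigl(\|\partial_Y\tilde\varphi\|_{L^2}\|\partial_Y\psi_1\|_{L^2} + \alpha^2\|\tilde\varphi\|_{L^2}\|\psi_1\|_{L^2}\bigr) \leq C\varepsilon$. Combined with the pointwise lower bound $U(Y) \geq C^{-1}Y/(1+Y)$ from \eqref{eq:U-S1}, this controls $\int_\delta^\infty |\psi_1|^2\,\mathrm{d}Y \leq C\delta^{-1}\varepsilon$ for $0<\delta\leq 1$; and the Poincar\'e-type estimate $|\psi_1(Y)|^2 \leq Y\|\partial_Y\psi_1\|_{L^2}^2$ (from $\psi_1(0) = 0$) gives $\int_0^\delta |\psi_1|^2\,\mathrm{d}Y \leq C\delta^2$. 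Optimizing $\delta \sim \varepsilon^{1/3}$ yields $\|\psi_1\|_{L^2}^2 \leq C\varepsilon^{2/3}$, and therefore $\alpha\|\psi_1\|_{L^2} \leq C\alpha\varepsilon^{1/3} \leq C\varepsilon^{1/3}$ since $\alpha \leq 1$, closing the argument.
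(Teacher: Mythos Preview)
Your argument is correct. The decomposition and the treatment of $\psi_0$ match the paper's essentially verbatim (up to a harmless extra term $-2\mathrm i\varepsilon c_E U' e^{-\alpha Y}$ in your $F_0$, which has $L^2$ norm $\le C\varepsilon$ and is absorbed into the same bound).

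For $\psi_1$ you take a genuinely different route from the paper. The paper exploits that $\varphi_{Ray}$ solves the Rayleigh equation, so $(\partial_Y^2-\alpha^2)\varphi_{Ray}=\frac{U''}{U}\varphi_{Ray}$, and hence the source for $\psi_1$ can be written as $\mathrm i\varepsilon\frac{U''}{U}\tilde\varphi$. Since $\frac{YU''}{U}$ is bounded, this is a source of the form $f/Y$ with $f\in L^2$, and the paper simply invokes the third bullet of Lemma~\ref{prop:nonlocal}. Your approach bypasses the Rayleigh identity entirely: you test the weak formulation directly, use the imaginary part to get $\|\partial_Y\psi_1\|_{L^2}\le C$, then the real part to get $\|\sqrt U\,\psi_1\|_{L^2}^2\le C\varepsilon$, and finally a near/far splitting at scale $\delta\sim\varepsilon^{1/3}$ to conclude $\|\psi_1\|_{L^2}\le C\varepsilon^{1/3}$. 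This is exactly the interpolation that underlies the proof of the third bullet of Lemma~\ref{prop:nonlocal}, so in effect you have reproved that lemma in situ rather than citing it. Your version is more self-contained and does not need the Rayleigh rewriting; the paper's version is shorter because the relevant Airy estimate is already on the shelf.
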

\begin{proof}
    We first notice that
    \begin{align*}
    	(\partial_Y^2-\alpha^2)\varphi_{Ray}=&\frac{U''}{U}\varphi_{Ray}=\frac{U''}{U}\varphi_{Ray,0}+\frac{U''}{U}(\varphi_{Ray,1}+\varphi_{Ray,2})\\
    	=&\frac{c_E}{\alpha}U''e^{-\alpha Y}+\frac{U''}{U}(\varphi_{Ray,1}+\varphi_{Ray,2}).
    \end{align*}
	Then we decompose $\psi$ as $\psi=\psi_0+\psi_1$, where $\psi_0$ is the solution to
	\begin{align}
		\mathrm i \varepsilon(\partial_Y^2-\alpha^2)\psi_0+U\psi_0=\mathrm i \varepsilon\frac{c_E}{\alpha}U''e^{-\alpha Y},\quad\psi_0(0)=0,
	\end{align}
	and $\psi_1$ is the solution to
	\begin{align}
		\mathrm i \varepsilon(\partial_Y^2-\alpha^2)\psi_1+U\psi_1=\mathrm i \varepsilon \frac{U''}{U}(\varphi_{Ray,1}+\varphi_{Ray,2}),\quad \psi_2(0)=0.
	\end{align}
	By the first statement in  Lemma \ref{prop:nonlocal}, we have
	\begin{align*}
		&\|\partial_Y\psi_0\|_{L^2}\leq C\varepsilon^{\f13}\alpha^{-1}\|U''e^{-\alpha Y}\|_{L^2}\leq C\varepsilon^{\f13}\alpha^{-1},\\
&\|\psi_0\|_{L^2}\leq C\varepsilon^{\f23}\alpha^{-1}\|U''e^{-\alpha Y}\|_{L^2}\leq C\varepsilon^{\f23}\alpha^{-1}.
	\end{align*}
According to the last statement in Lemma \ref{prop:nonlocal} and \eqref{eq:hom-ray}, we have
\begin{align*}
		&\|\partial_Y\psi_1\|_{L^2}\leq C\|\frac{YU''}{U}(\varphi_{Ray,1}+\varphi_{Ray,2})\|_{L^2}\leq C,\\
		&\alpha\|\psi_1\|_{L^2}\leq C\alpha\varepsilon^{\f13}\|\frac{YU''}{U}(\varphi_{Ray,1}+\varphi_{Ray,2})\|_{L^2}\leq C\alpha\varepsilon^{\f13}.
\end{align*}

\end{proof}

Now we are ready to construct the slow mode. We define $\phi_s=\varphi_{Ray}+\psi+\tilde\phi$, where $\tilde\phi $ is the solution to
\begin{align}\label{eq:per-slow}
\left\{\begin{aligned}
   &\mathrm{i}\varepsilon(\partial_Y^2-\alpha^2)\tilde w +U\tilde w -U''\tilde\phi = -2\partial_Y(U'\psi) ,\\
   &(\partial_Y^2-\alpha^2)\tilde\phi =\tilde w ,\\
   &\tilde\phi |_{Y=0}=\partial_Y\tilde w|_{Y=0}=0.
\end{aligned}\right.
\end{align}
\begin{lemma}\label{Lem:slow-pertu}
	Let $\alpha\in(0,1]$ and $0<\varepsilon\leq c_1$, where $c_1$ is the small positive constant in Proposition \ref{prop:Orr-som-art}. Suppose that   $\tilde\phi$ is the solution to \eqref{eq:per-slow}. Then we have
	 \begin{align*}
    &\varepsilon^{\f13}\|(1+Y)\tilde w\|_{L^2}+ \varepsilon^{\f23}\|(1+Y)(\partial_Y\tilde w,\alpha\tilde w)\|_{L^2}+ \varepsilon\|(1+Y)(\partial_Y^2-\alpha^2)\tilde w\|_{L^2}\\
     &\quad+\|(\partial_Y\tilde\phi,\alpha\tilde\phi)\|_{L^2}\leq C(\alpha^{-1}\varepsilon^{\f13}+1).
     \end{align*}
	
\end{lemma}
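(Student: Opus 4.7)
The plan is to apply Proposition \ref{prop:Orr-som-art} directly to \eqref{eq:per-slow}, regarding it as an Orr--Sommerfeld equation with artificial boundary conditions and source $f:=-2\partial_Y(U'\psi)$. Since \eqref{eq:per-slow} matches \eqref{eq:Orr-som-art} term by term, everything reduces to estimating the two quantities appearing on the right-hand side of that proposition: the weighted $L^2$ norm $\|(1+Y)^2f\|_{L^2}$ and the zero-order moment $|\alpha|^{-2}\bigl|\int_0^{+\infty}f\,\mathrm{d}Y\bigr|$.

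For the moment, $\psi(0)=0$ by \eqref{eq:airy-slow}, while the decay $|U'|\leq C(1+Y)^{-3}$ together with the $H^1$ control of $\psi$ from Lemma \ref{Lem:slow-airy} forces $U'(Y)\psi(Y)\to 0$ as $Y\to+\infty$. Hence
\begin{align*}
\int_0^{+\infty}f\,\mathrm{d}Y=-2\bigl[U'\psi\bigr]_0^{+\infty}=0,
\end{align*}
and this contribution drops out. For the weighted norm I would expand $f=-2U''\psi-2U'\partial_Y\psi$ and use $|U'|+|U''|\leq C(1+Y)^{-3}$ to obtain
\begin{align*}
\|(1+Y)^2f\|_{L^2}\leq C\bigl(\|(1+Y)^{-1}\psi\|_{L^2}+\|(1+Y)^{-1}\partial_Y\psi\|_{L^2}\bigr)\leq C\bigl(\|\psi\|_{L^2}+\|\partial_Y\psi\|_{L^2}\bigr).
\end{align*}
Inserting the decomposition $\psi=\psi_0+\psi_1$ from Lemma \ref{Lem:slow-airy} gives $\|\psi\|_{L^2}\leq C\alpha^{-1}\varepsilon^{\f13}$ and $\|\partial_Y\psi\|_{L^2}\leq C(\alpha^{-1}\varepsilon^{\f13}+1)$, so Proposition \ref{prop:Orr-som-art} delivers the desired bound $C(\alpha^{-1}\varepsilon^{\f13}+1)$.

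The one technical subtlety to address is that Proposition \ref{prop:Orr-som-art} is phrased for $f\in H^1_0(\mathbb{R}_+)$, whereas here $f(0)=-2U'(0)\partial_Y\psi(0)$ is generically nonzero. This is not a genuine obstruction: the source is already in divergence form $f=\partial_Y(-2U'\psi)$ with antiderivative vanishing at both endpoints, so the Rayleigh--Airy argument underlying Proposition \ref{prop:Orr-som-art} transfers verbatim. In the conversion to Rayleigh form with $F_1=\sigma[f]-\mathrm{i}\varepsilon\partial_Y\tilde w=2U'\psi-\mathrm{i}\varepsilon\partial_Y\tilde w$, the compatibility $\partial_YF_1(0)=\alpha^2F_2(0)$ demanded by Lemma \ref{prop:Rayleigh-res} is automatic from evaluating the PDE at $Y=0$, and existence of $\tilde\phi$ is supplied by the same method of continuity used at the end of the proof of Proposition \ref{prop:Orr-som-art}.
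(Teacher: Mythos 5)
Your argument is correct and is essentially the paper's own proof: apply Proposition \ref{prop:Orr-som-art} to \eqref{eq:per-slow} with source $-2\partial_Y(U'\psi)$, kill the $|\alpha|^{-2}$ moment term via $U'\psi|_{Y=0}=0$ (and decay at infinity), and bound $\|(1+Y)^2\partial_Y(U'\psi)\|_{L^2}$ through the decay of $U',U''$ and Lemma \ref{Lem:slow-airy}, the only cosmetic difference being that the paper controls the $U''\psi$ term by Hardy's inequality $\|\psi/Y\|_{L^2}\leq C\|\partial_Y\psi\|_{L^2}$ rather than by $\|\psi\|_{L^2}$. Your closing remark about $f(0)=-2U'(0)\partial_Y\psi(0)$ being generically nonzero flags a point the paper passes over in silence, and your resolution (the source is in divergence form with $\sigma[f]=2U'\psi$ vanishing at $Y=0$, so the Rayleigh compatibility condition is still met) is the right one.
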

\begin{proof}
	From Proposition \ref{prop:Orr-som-art} and the fact $U'\psi|_{Y=0}=0$, we have
	\begin{align*}
    &\varepsilon^{\f13}\|(1+Y)\tilde w\|_{L^2}+ \varepsilon^{\f23}\|(1+Y)(\partial_Y\tilde w,\alpha\tilde w)\|_{L^2}+ \varepsilon\|(1+Y)(\partial_Y^2-\alpha^2)\tilde w\|_{L^2}\\
     &\quad+\|(\partial_Y\tilde\phi,\alpha\tilde\phi)\|_{L^2}\leq C\|(1+Y)^2\partial_Y(U'\psi) \|_{L^2}.
  \end{align*}
	On the other hand, we get by Lemma \ref{Lem:slow-airy}  that
	\begin{align*}
		\|(1+Y)^2\partial_Y(U'\psi) \|_{L^2}\leq& \|(1+Y)^2U''\psi\|_{L^2}+\|(1+Y)^2U'\partial_Y\psi\|_{L^2}\\
		\leq&\|(1+Y)^2YU''\|_{L^\infty}\|\psi/Y\|_{L^2}+\|(1+Y)^2U'\|_{L^\infty}\|\partial_Y\psi\|_{L^2}\\
		\leq&C\|\partial_Y\psi\|_{L^2}\leq C(\alpha^{-1}\varepsilon^{\f13}+1).
	\end{align*}
   Therefore, we obtain 
  \begin{align*}
    &\varepsilon^{\f13}\|(1+Y)\tilde w\|_{L^2}+ \varepsilon^{\f23}\|(1+Y)(\partial_Y\tilde w,\alpha\tilde w)\|_{L^2}+ \varepsilon\|(1+Y)(\partial_Y^2-\alpha^2)\tilde w\|_{L^2}\\
     &\quad+\|(\partial_Y\tilde\phi,\alpha\tilde\phi)\|_{L^2}\leq C(\alpha^{-1}\varepsilon^{\f13}+1).
     \end{align*}
 \end{proof}

\begin{proposition}\label{prop:slow mode}
	Let $\alpha\in(0,1]$ and $0<\varepsilon\leq c_1$, where $c_1$ is the small positive constant in Proposition \ref{prop:Orr-som-art}. Then there exists a solution $\phi_s\in H^4$ to the homogeneous Orr-Sommerfeld equation such that
	\begin{align*}
		\phi_s=\frac{c_E}{\alpha}Ue^{-\alpha Y}+\phi_{s,re},\quad\phi_{s,re}(0)=1,
	\end{align*}
	where
	\begin{align*}
		&\|(\partial_Y\phi_{s,re},\alpha\phi_{s,re})\|_{L^2}\leq C(1+\alpha^{-1}\varepsilon^{\f13}),\\
		&\|\partial_Y\phi_{s,re}\|_{L^\infty}\leq C(\varepsilon^{-\f14}+\alpha^{-1}\varepsilon^{\frac{1}{12}}),\\
		&\|(\partial_Y^2-\alpha^2)\phi_{s,re}\|_{L^2}\leq C(\varepsilon^{-\f13}+\alpha^{-1}\varepsilon^{-\f16}).
	\end{align*}
	In particular,
	\begin{align*}
		\partial_Y\phi_s(0)=\frac{c_EU'(0)}{\alpha}+\mathcal O(\varepsilon^{-\f14}+\alpha^{-1}\varepsilon^{\frac{1}{12}}).
	\end{align*}
\end{proposition}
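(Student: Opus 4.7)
The plan is to verify that a signed combination $\phi_s=\varphi_{Ray}+\psi+\tilde\phi$ (with the sign of $\psi$ chosen so that the source terms cancel) annihilates the Orr-Sommerfeld operator $OS:=\mathrm{i}\varepsilon(\partial_Y^2-\alpha^2)^2+U(\partial_Y^2-\alpha^2)-U''$, then to isolate the Euler leading mode $\frac{c_E}{\alpha}Ue^{-\alpha Y}$ from the Rayleigh decomposition and collect remainder estimates from the three preparatory lemmas. Since $\varphi_{Ray}$ solves the homogeneous Rayleigh equation, $OS[\varphi_{Ray}]=\mathrm{i}\varepsilon(\partial_Y^2-\alpha^2)^2\varphi_{Ray}$. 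Applying $(\partial_Y^2-\alpha^2)$ to \eqref{eq:airy-slow} and expanding $(\partial_Y^2-\alpha^2)(U\psi)=U(\partial_Y^2-\alpha^2)\psi+2U'\partial_Y\psi+U''\psi$ yields $OS[\psi]=\mathrm{i}\varepsilon(\partial_Y^2-\alpha^2)^2\varphi_{Ray}-2\partial_Y(U'\psi)$. Because \eqref{eq:per-slow} prescribes $OS[\tilde\phi]=-2\partial_Y(U'\psi)$, the right combination produces $OS[\phi_s]=0$, and the regularity $\phi_s\in H^4(\mathbb{R}_+)$ is inherited componentwise from Lemmas \ref{Lem:slow-ray}, \ref{Lem:slow-airy}, \ref{Lem:slow-pertu}.

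Next I set $\phi_{s,re}:=\varphi_{Ray,1}+\varphi_{Ray,2}+\psi+\tilde\phi$, so that $\phi_s=\frac{c_E}{\alpha}Ue^{-\alpha Y}+\phi_{s,re}$. Since $U(0)=0$ we have $\varphi_{Ray,0}(0)=0$; combined with $\varphi_{Ray}(0)=1=\varphi_{Ray,1}(0)$ from Lemma \ref{Lem:slow-ray} this forces $\varphi_{Ray,2}(0)=0$, and $\psi(0)=\tilde\phi(0)=0$ then gives $\phi_{s,re}(0)=1$. The first-derivative estimate follows by summing the $L^2$-bounds on $(\partial_Y,\alpha)\varphi_{Ray,1}$ and $(\partial_Y,\alpha)\varphi_{Ray,2}$ from \eqref{eq:hom-ray} (using $\alpha\le 1$), on $(\partial_Y,\alpha)\psi$ from Lemma \ref{Lem:slow-airy}, and on $(\partial_Y,\alpha)\tilde\phi$ from Lemma \ref{Lem:slow-pertu}.

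For $\|(\partial_Y^2-\alpha^2)\phi_{s,re}\|_{L^2}$ I would express each component's image through its defining equation: $(\partial_Y^2-\alpha^2)\varphi_{Ray,j}$ via the Rayleigh identity $(\partial_Y^2-\alpha^2)\varphi_{Ray}=\frac{U''}{U}\varphi_{Ray}$ minus the $\varphi_{Ray,0}$-piece, $(\partial_Y^2-\alpha^2)\psi=\mathrm{i}\varepsilon^{-1}U\psi+(\partial_Y^2-\alpha^2)\varphi_{Ray}$ from \eqref{eq:airy-slow}, and $(\partial_Y^2-\alpha^2)\tilde\phi=\tilde w$ from \eqref{eq:per-slow}, whose $L^2$-norm is bounded by $C(\varepsilon^{-1/3}+\alpha^{-1})$ via Lemma \ref{Lem:slow-pertu}. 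The $L^\infty$ control of $\partial_Y\phi_{s,re}$ then follows from the one-dimensional Gagliardo-Nirenberg interpolation $\|\partial_Y f\|_{L^\infty}^2\le C\|\partial_Y f\|_{L^2}\bigl(\|(\partial_Y^2-\alpha^2)f\|_{L^2}+\alpha\|\partial_Y f\|_{L^2}\bigr)$ combined with the two $L^2$-bounds. Finally, using $U(0)=0$ to compute $\partial_Y\bigl(\frac{c_E}{\alpha}Ue^{-\alpha Y}\bigr)|_{Y=0}=\frac{c_EU'(0)}{\alpha}$, the asymptotic $\partial_Y\phi_s(0)=\frac{c_EU'(0)}{\alpha}+\mathcal O(\|\partial_Y\phi_{s,re}\|_{L^\infty})$ follows immediately.

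The main technical hurdle is the $(\partial_Y^2-\alpha^2)$-bound on $\psi$: one must control $\|U\psi\|_{L^2}$, not merely $\|\sqrt{U}\psi\|_{L^2}$, at a rate strong enough that $\varepsilon^{-1}\|U\psi\|_{L^2}\le C(\varepsilon^{-1/3}+\alpha^{-1}\varepsilon^{-1/6})$. This requires invoking Remark \ref{re:airy-part}'s unweighted bound $\|Uw\|_{L^2}\le C\|f\|_{L^2}$ separately on $\psi_0$ (whose source $\mathrm{i}\varepsilon\frac{c_E}{\alpha}U''e^{-\alpha Y}$ is regular, giving $\|U\psi_0\|_{L^2}\le C\alpha^{-1}\varepsilon$) and on $\psi_1$ (rewriting its source in the form $\tilde f/Y$ with $\tilde f=\mathrm{i}\varepsilon\frac{YU''}{U}(\varphi_{Ray,1}+\varphi_{Ray,2})\in L^2$ and invoking the final statement of Lemma \ref{prop:nonlocal}), and then summing carefully to balance the $\varepsilon^{-1/3}$ contribution from $\tilde\phi$ and $\psi_1$ against the $\alpha^{-1}\varepsilon^{-1/6}$ contribution from $\psi_0$.
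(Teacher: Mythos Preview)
Your construction of $\phi_s$ and the verification that it solves the homogeneous Orr--Sommerfeld equation are correct, as is the $H^1$ bound on $\phi_{s,re}$ by summing Lemmas~\ref{Lem:slow-ray}--\ref{Lem:slow-pertu}. Two points deserve attention.

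\textbf{Regularity.} The claim that $\phi_s\in H^4$ is ``inherited componentwise'' is not right: Lemma~\ref{Lem:slow-ray} gives $\varphi_{Ray,1},\varphi_{Ray,2}\in H^2$ only under the extra hypothesis $U''/U\in L^2$, and Lemma~\ref{Lem:slow-airy} places $\psi$ only in $H^1$. What actually happens is that the singular contributions $(\partial_Y^2-\alpha^2)(\varphi_{Ray,1}+\varphi_{Ray,2})$ and $(\partial_Y^2-\alpha^2)\psi$ each contain a copy of $(\partial_Y^2-\alpha^2)\varphi_{Ray}=\tfrac{U''}{U}\varphi_{Ray}$ (which is not in $L^2$ when $U''(0)\neq 0$), and these cancel in the combination $\varphi_{Ray}-\psi$; the cancellation gives $\phi_s\in H^2$, after which the OS equation bootstraps to $H^4$. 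The paper makes this rigorous by a density argument (assume $U''/U\in L^2$, derive the energy identity, then remove the assumption via Remark~\ref{re:airy-part}).

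\textbf{The $H^2$ bound.} Here your route diverges from the paper and has a genuine gap. After the cancellation you correctly reduce to $\varepsilon^{-1}\|U\psi\|_{L^2}$. For $\psi_0$ your use of the first bullet of Lemma~\ref{prop:nonlocal} is fine. For $\psi_1$, however, the \emph{final} statement of Lemma~\ref{prop:nonlocal} only provides $\varepsilon^{1/2}\|\sqrt{U}\psi_1\|_{L^2}+\varepsilon^{2/3}\|\psi_1\|_{L^2}+\varepsilon\|\partial_Y\psi_1\|_{L^2}\le C\|\tilde f\|_{L^2}\le C\varepsilon$; it gives no bound on $\|U\psi_1\|_{L^2}$, and the crude $\|U\psi_1\|\le\|\sqrt U\psi_1\|\le C\varepsilon^{1/2}$ yields only $\varepsilon^{-1}\|U\psi_1\|\le C\varepsilon^{-1/2}$, missing the target $\varepsilon^{-1/3}$. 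Your scheme can be rescued by one extra energy estimate: test the $\psi_1$-equation against $U\bar\psi_1$, use $|\langle \tilde f/Y,U\psi_1\rangle|\le C\|\tilde f\|\|\psi_1\|$ (since $U/Y$ is bounded), and combine with the bounds you already have to get $\|U\psi_1\|_{L^2}^2\le C\varepsilon\|\partial_Y\psi_1\|\|\psi_1\|+C\|\tilde f\|\|\psi_1\|\le C\varepsilon^{4/3}$.

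The paper bypasses this entirely. Instead of estimating components of $w_{s,re}$, it exploits that $\phi_s$ satisfies the homogeneous OS equation to write the single energy identity $\langle Uw_s-U''\phi_s,w_s\rangle-\mathrm{i}\varepsilon\|(\partial_Y w_s,\alpha w_s)\|_{L^2}^2=0$, subtracts off the explicit Euler contribution, bounds $\|U''\phi_{s,re}\|_{L^2}\le C(1+\alpha^{-1}\varepsilon^{1/3})$ via Hardy (using $(\phi_{s,re}-\varphi_{Ray,1})(0)=0$), and then closes by the interpolation $\|w\|\le C\varepsilon^{1/6}\|\partial_Y w\|^{1/2}\|w\|^{1/2}+C\varepsilon^{-1/6}\|\sqrt U w\|$ exactly as in \eqref{est:interpolation-1}. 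This global energy argument never needs $\|U\psi\|_{L^2}$ at all.
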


\begin{proof}
   Recall that $\phi_s=\varphi_{Ray}+\psi+\tilde\phi$. First of all, we show $\phi_s\in H^2$. For this moment, we assume that $U''/U\in L^2(\mathbb R_+)$ and later we recover the $H^2$ regularity of $\phi_s$ without the assumption $U''/U\in L^2(\mathbb R_+)$.
	 By Lemma \ref{Lem:slow-ray}-\ref{Lem:slow-pertu},  we obtain that $\phi_s\in H^2$. Hence, $\phi_s$ satisfies the homogeneous Orr-Sommerfeld equation in the following weak sense
	 \begin{align*}
	 	&\big\langle U(\partial_Y^2-\alpha^2)\phi_s-U''\phi_s,h\big\rangle+\mathrm i\varepsilon\big\langle(\partial_Y^2-\alpha^2)\phi_s,(\partial_Y^2-\alpha^2)h\big\rangle=0,\quad h\in H^2,~\partial_Y h(0)=0,
	 \end{align*}
	which implies that $w_s=(\partial_Y^2-\alpha^2)\phi_s$ is a weak solution to the Poisson equation $\mathrm i\varepsilon(\partial_Y^2-\alpha^2)w_s=-U w_s+U''\phi_s$ with the Neumann boundary condition $\partial_Y w_s|_{Y=0}=0$. Therefore, we have $w_s\in H^2$ and
	\begin{align}\label{eq:phis-energy}
		\big\langle Uw_s-U''\phi_s,w_s\big\rangle-\mathrm i\varepsilon(\|\partial_Y w_s\|_{L^2}^2+\alpha^2\|w_s\|_{L^2}^2)=0.
	\end{align}
	According to Lemma \ref{prop:nonlocal} and Remark \ref{re:airy-part}, we have
	\begin{align*}
		\|w_s\|_{L^2}\leq C\varepsilon^{-\f13} \|U''\phi_s\|_{L^2}.
	\end{align*}
	Hence, the $H^2$ regularity of $\phi_s$ is controlled by  $\|U''\phi_s\|_{L^2}$, which means that the assumption $U''/U\in L^2$ is not required. Now we complete the proof of $\phi_s\in H^4$.
	
	By Lemma \ref{Lem:slow-ray}-Lemma \ref{Lem:slow-pertu}, we directly have
	\begin{align*}
		\|(\partial_Y\phi_{s,re},\alpha\phi_{s,re})\|_{L^2}\leq C(1+\alpha^{-1}\varepsilon^{\f13}).
	\end{align*}
	Now we show the estimates about $\|(\partial_Y^2-\alpha^2)\phi_{s,re}\|_{L^2}$. Recall that
	\begin{align*}
		\phi_{s,re}=\phi_s-\frac{c_E}{\alpha}Ue^{-\alpha Y},\quad w_{s,re}=(\partial_Y^2-\alpha^2)\phi_{s,re}
	\end{align*}
	which along with \eqref{eq:phis-energy} implies
	\begin{eqnarray}\label{eq:slow-vort}
		\begin{split}
			\big\langle-&2c_EUU'e^{-\alpha Y}+Uw_{s,re}-U''\phi_{s,re},c_E\alpha^{-1}(U''e^{-\alpha Y}-2\alpha U'e^{-\alpha Y})+w_{s,re}\big\rangle\\
		&-\mathrm i\varepsilon(\|\partial_Yw_{s}\|_{L^2}^2+\alpha^2\|w_{s}\|_{L^2}^2)=0
		\end{split}
	\end{eqnarray}
By taking the real part of the above equality, we obtain 
	\begin{align*}
        \|\sqrt{U}w_{s,re}\|_{L^2}^2=&\mathbf{Re}\langle2c_EUU'e^{-\alpha Y}+U''\phi_{s,re},w_{s,re}\rangle-\mathbf{Re}\big\langle Uw_{s,re},\frac{c_E}{\alpha}(U''e^{-\alpha Y}-2\alpha U'e^{-\alpha Y})\big\rangle\\
        &+\mathbf{Re}\big\langle2c_EUU'e^{-\alpha Y}+U''\phi_{s,re},\frac{c_E}{\alpha}(U''e^{-\alpha Y}-2\alpha U'e^{-\alpha Y})\big\rangle,
	\end{align*}
	which shows that
	\begin{align*}
		\|\sqrt{U}w_{s,re}\|_{L^2}^2\leq& C\|U^{\f12}U'e^{-\alpha Y}\|_{L^2}^2+\|U''\phi_{s,re}\|_{L^2}\|w_{s,re}\|_{L^2}\\
		&+C\alpha^{-2}\|U^{\f12}(U''e^{-\alpha Y}-2\alpha U'e^{-\alpha Y})\|_{L^2}^2+C\alpha^{-1}(1+\|U''\phi_{s,re}\|_{L^2})\\
		\leq&C\alpha^{-2}+\|U''\phi_{s,re}\|_{L^2}\|w_{s,re}\|_{L^2}+C\alpha^{-1}(1+\|U''\phi_{s,re}\|_{L^2}).
	\end{align*}
	On the other hand, notice that $\phi_{s,re}=\varphi_{Ray,1}+\varphi_{Ray,2}+\psi+\tilde \phi$, and then 
	\begin{align*}
		\|U''\phi_{s,re}\|_{L^2}\leq \|U''\varphi_{Ray,1}\|_{L^2}+\|U''(\phi_{s,re}-\varphi_{Ray,1})\|_{L^2},
	\end{align*}
	which along with the fact $(\phi_{s,re}-\varphi_{Ray,1})|_{Y=0}=0$ implies
	\begin{align*}
		\|U''\phi_{s,re}\|_{L^2}&\leq C\|\varphi_{Ray,1}\|_{L^2}+C\|\frac{\phi_{s,re}-\varphi_{Ray,1}}{Y}\|_{L^2}\\
&\leq C\|\varphi_{Ray,1}\|_{L^2}+C(\|\partial_Y\phi_{s,re}\|_{L^2}+\|\partial_Y\varphi_{Ray,1}\|_{L^2})\leq C\Big(1+\frac{\varepsilon^{\f13}}{\alpha}\Big).
	\end{align*}
	Hence, we have
	\begin{align}\label{est:wsre1}
		\|\sqrt{U}w_{s,re}\|_{L^2}^2\leq C\alpha^{-2}+C\Big(1+\frac{\varepsilon^{\f13}}{\alpha}\Big)\|w_{s,re}\|_{L^2}.
	\end{align}
	
	From the imaginary part of \eqref{eq:slow-vort}, we have
	\begin{eqnarray}
		\begin{split}
			&\varepsilon(\|\partial_Y w_{s}\|_{L^2}^2+\alpha^2\|w_{s}\|_{L^2}^2)\\
			&\leq C\alpha^{-1}\|\sqrt{U}w_{s,re}\|_{L^2}+C\alpha^{-1}\|U''\phi_{s,re}\|_{L^2}+C\|w_{s,re}\|_{L^2}\\
			&\qquad+C\|U''\phi_{s,re}\|_{L^2}\|w_{s,re}\|_{L^2}\\
			&\leq C\alpha^{-2}+C\Big(1+\frac{\varepsilon^{\f13}}{\alpha}\Big)\|w_{s,re}\|_{L^2}.
		\end{split}
	\end{eqnarray}
Thanks to $w_{s,re}=w_s-c_E\alpha^{-1}U''\mathrm{e}^{-\alpha Y}+2c_EU'\mathrm{e}^{-\alpha Y}$, we have
\begin{align*}
   & \|\partial_Y w_{s,re}\|_{L^2}^2+\alpha^2\|w_{s,re}\|_{L^2}^2\leq \|\partial_Y w_{s}\|_{L^2}^2+\alpha^2\|w_{s}\|_{L^2}+C\alpha^{-1}.
\end{align*}  
Thus, we have 
\begin{align}\label{est:wsre2}
   & \varepsilon(\|\partial_Y w_{s,re}\|_{L^2}^2+\alpha^2\|w_{s,re}\|_{L^2}^2) \leq C\alpha^{-2}+C\Big(1+\frac{\varepsilon^{\f13}}{\alpha}\Big)\|w_{s,re}\|_{L^2}.
\end{align}
By a similar interpolation inequality as \eqref{est:interpolation-1}, we get
\begin{align*}
	\|w_{s,re}\|_{L^2}\leq C\varepsilon^\f16\|\partial_Y w_{s,re}\|_{L^2}^\f12\|w_{s,re}\|_{L^2}^\f12+C\varepsilon^{-\f16}\|\sqrt{U}w_{s,re}\|_{L^2},
\end{align*}
which along with \eqref{est:wsre1} and \eqref{est:wsre2}  implies that 
\begin{align*}
	\|w_{s,re}\|_{L^2}\leq& C\varepsilon^\f16\Big(\alpha^{-\f12}\varepsilon^{-\f14}+(\varepsilon^\f14+\varepsilon^{-\f16}\alpha^{-\f14})\|w_{s,re}\|_{L^2}^{\f14}\Big)\|w_{s,re}\|_{L^2}^\f12\\
	&+C\varepsilon^{-\f16}\Big(\alpha^{-1}+(1+\varepsilon^{\f16}\alpha^{-\f12})\|w_{s,re}\|_{L^2}^\f12\Big)\\
	\leq&C\big(\varepsilon^{-\f16}\alpha^{-1}+\varepsilon^{-\f13}\big).
\end{align*}
Then we finally have 
	\begin{align*}
		\|(\partial_Y^2-\alpha^2)\phi_{s,re}\|_{L^2}=\|w_{s,re}\|_{L^2}\leq C\big(\varepsilon^{-\f16}\alpha^{-1}+\varepsilon^{-\f13}\big),
	\end{align*}
and by the interpolation,
	\begin{align*}
		\|\partial_Y\phi_{s,re}\|_{L^\infty}\leq C\big(\varepsilon^{-\f13}+\alpha^{-1}\varepsilon^{-\f16}\big)^{\f12}\big(1+\alpha^{-1}\varepsilon^{\f13}\big)^{\f12}
\leq C\big(\varepsilon^{-\f14}+\alpha^{-1}\varepsilon^{\f{1}{12}}\big).
	\end{align*}
	
This completes the proof of the proposition.
\end{proof}

\subsection{Boundary corrector}
Here we will construct the boundary corrector $\Phi_b$.
Since $|\partial_Y\Phi(0)|\geq C^{-1}\varepsilon^{\f23}$, we know that the solution $W_b$ to \eqref{eq:boundary-corr-parphi} can be given by $W_b=W/\partial_Y\Phi(0)$ for the case of $1\leq|\alpha|<\infty$. For the case of $0<|\alpha|\leq1$, we need to use the slow mode $\phi_s$ to modify the boundary value of $\Phi_f$.

\begin{proposition}\label{prop:Wb-est}
Let $c_1$ be the constant in Proposition \ref{prop:Orr-som-art}. There exist $c_2\in(0,c_1]$, $\delta_{*}\in(0,1],$ such that if $\varepsilon|\alpha|^3\leq \delta_{*},\ 0<\varepsilon \leq c_2$, there exists a unique solution $\Phi_b\in H^4\cap H^1_0$ to the homogeneous Orr-Sommerfeld equation \eqref{eq:boundary-corr-parphi} satisfying the following properties:
\begin{itemize}
	\item If $1\leq|\alpha|<\infty$, then we have
    \begin{align*}
     & \varepsilon^{\f16}\|W_b\|_{L^2}+ \varepsilon^{-\f16}\|(\partial_Y\Phi_b,\alpha\Phi_b)\|_{L^2} +\|(\partial_Y\Phi_b,\alpha\Phi_b)\|_{L^\infty}\leq C.
  \end{align*}
  \item If $0<|\alpha|\leq 1$, then we have
        \begin{align*}
     & \varepsilon^{\f16}\|W_b\|_{L^2}+\frac{\alpha+\varepsilon^\f23}{\alpha\varepsilon^\f16+\varepsilon^\f23} \|(\partial_Y\Phi_b,\alpha\Phi_b)\|_{L^2} +\|(\partial_Y\Phi_b,\alpha\Phi_b)\|_{L^\infty}\leq C.
  \end{align*}
\end{itemize}
\end{proposition}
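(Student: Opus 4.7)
\textbf{Proof proposal for Proposition \ref{prop:Wb-est}.}
The plan is to construct $\Phi_b$ as a suitable linear combination of the fast and slow modes built in Lemma \ref{lem:W-est} and Proposition \ref{prop:slow mode}, with coefficients determined by matching the two boundary conditions $\Phi_b(0)=0$ and $\partial_Y\Phi_b(0)=1$. Since the linear combination of solutions of the homogeneous Orr--Sommerfeld equation is still a solution, it suffices to verify the boundary conditions and push the estimates of Lemma \ref{lem:W-est} and Proposition \ref{prop:slow mode} through the normalization. Uniqueness will come from the homogeneous side: any two such $\Phi_b$ differ by an element of $H^4\cap H^1_0$ with $\partial_Y\Phi_b(0)=0$, which can be ruled out by combining Proposition \ref{prop:Orr-som-art} with the fact $\varepsilon|\alpha|^3\leq \delta_*$.

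For the regime $1\leq |\alpha|<\infty$, I would simply set
\begin{align*}
\Phi_b=\frac{\Phi}{\partial_Y\Phi(0)},\qquad W_b=\frac{W}{\partial_Y\Phi(0)},
\end{align*}
where $(W,\Phi)$ is the fast mode of Lemma \ref{lem:W-est}(1). Since $\Phi(0)=0$ and $|\partial_Y\Phi(0)|\geq C^{-1}\varepsilon^{2/3}$, the boundary conditions are met, and dividing the bounds
$\|W\|_{L^2}\lesssim \varepsilon^{1/2}$, $\|(\partial_Y\Phi,\alpha\Phi)\|_{L^2}\lesssim \varepsilon^{5/6}$, $\|(\partial_Y\Phi,\alpha\Phi)\|_{L^\infty}\lesssim \varepsilon^{2/3}$ by $\varepsilon^{2/3}$ yields the claimed estimates $\varepsilon^{1/6}\|W_b\|_{L^2}+\varepsilon^{-1/6}\|(\partial_Y\Phi_b,\alpha\Phi_b)\|_{L^2}+\|(\partial_Y\Phi_b,\alpha\Phi_b)\|_{L^\infty}\leq C$ immediately.

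For the regime $0<|\alpha|\leq 1$, the fast mode $\Phi_f$ from Lemma \ref{lem:W-est}(2) satisfies $\Phi_f(0)\neq 0$, so we need a second mode to cancel it. I would look for
\begin{align*}
\Phi_b=a\Phi_f+b\,\phi_s,
\end{align*}
with $\phi_s$ the slow mode from Proposition \ref{prop:slow mode} (for which $\phi_s(0)=1$). Solving the $2\times 2$ matching system gives $b=-a\Phi_f(0)$ and
\begin{align*}
a=\frac{1}{\partial_Y\Phi_f(0)-\Phi_f(0)\partial_Y\phi_s(0)}.
\end{align*}
Using the asymptotic values $\partial_Y\Phi_f(0)=-\frac{\varepsilon^{2/3}}{3(U'(0))^{2/3}Ai'(0)}+\mathcal{O}(\varepsilon(1+\alpha^2))$, $\Phi_f(0)=\mathcal{O}(\varepsilon^{4/3})$, and $\partial_Y\phi_s(0)=\frac{c_E U'(0)}{\alpha}+\mathcal{O}(\varepsilon^{-1/4}+\alpha^{-1}\varepsilon^{1/12})$, one checks that
\begin{align*}
|\partial_Y\Phi_f(0)-\Phi_f(0)\partial_Y\phi_s(0)|\sim \varepsilon^{2/3}+\frac{\varepsilon^{4/3}}{\alpha}\sim \frac{\varepsilon^{2/3}(\alpha+\varepsilon^{2/3})}{\alpha},
\end{align*}
so $|a|\lesssim \frac{\alpha}{\varepsilon^{2/3}(\alpha+\varepsilon^{2/3})}$ and $|b|=|a\Phi_f(0)|\lesssim \frac{\alpha\varepsilon^{2/3}}{\alpha+\varepsilon^{2/3}}$. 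I would then compute each norm on the right-hand side: combine $\|W_f\|_{L^2}\lesssim \varepsilon^{1/2}$ with the slow-mode estimate $\|w_{s,re}\|_{L^2}\lesssim \varepsilon^{-1/3}+\alpha^{-1}\varepsilon^{-1/6}$ from Proposition \ref{prop:slow mode} (plus the explicit $\frac{c_E}{\alpha}Ue^{-\alpha Y}$ piece) to bound $\|W_b\|_{L^2}$, and similarly assemble $\|(\partial_Y\Phi_b,\alpha\Phi_b)\|_{L^2}$ and $\|(\partial_Y\Phi_b,\alpha\Phi_b)\|_{L^\infty}$.

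The main obstacle will be this last computation in the small-$\alpha$ regime: one has to juggle the two sub-regimes $\alpha\gg \varepsilon^{2/3}$ and $\alpha\ll \varepsilon^{2/3}$ (and the transition) to recover exactly the weight $\frac{\alpha+\varepsilon^{2/3}}{\alpha\varepsilon^{1/6}+\varepsilon^{2/3}}$ in front of $\|(\partial_Y\Phi_b,\alpha\Phi_b)\|_{L^2}$. The slow-mode contribution $b\phi_s$ carries the singular factor $1/\alpha$ inside the Rayleigh profile $\frac{c_E}{\alpha}U e^{-\alpha Y}$, and its $L^2$/$L^\infty$ estimates degrade as $\alpha\to 0$; only after multiplication by $b\sim \alpha\varepsilon^{2/3}/(\alpha+\varepsilon^{2/3})$ do these singularities cancel to produce the stated bound. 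This cancellation and the bookkeeping of the nearly-degenerate determinant of the boundary-matching system is the delicate part; everything else is an application of estimates already established.
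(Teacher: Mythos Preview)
Your proposal is correct and follows essentially the same approach as the paper: for $|\alpha|\geq 1$ you normalize the fast mode $\Phi$ by $\partial_Y\Phi(0)$, and for $0<|\alpha|\leq 1$ you form $\Phi_b=A\Phi_f+B\phi_s$ with exactly the coefficients $A,B$ the paper uses, obtaining the same bounds $|A|\lesssim \alpha/\big(\varepsilon^{2/3}(\alpha+\varepsilon^{2/3})\big)$ and $|B|\lesssim \alpha\varepsilon^{2/3}/(\alpha+\varepsilon^{2/3})$. The one place where you are slightly over-cautious is the ``delicate bookkeeping'' in the small-$\alpha$ regime: once $|A|$ and $|B|$ are in hand, the norm estimates are obtained by a direct substitution (no sub-regime splitting is needed), e.g.\ $\|(\partial_Y\Phi_b,\alpha\Phi_b)\|_{L^2}\leq |A|\cdot C\varepsilon^{5/6}+|B|\cdot C\alpha^{-1}=C\frac{\alpha\varepsilon^{1/6}+\varepsilon^{2/3}}{\alpha+\varepsilon^{2/3}}$, and similarly for $\|W_b\|_{L^2}$ and the $L^\infty$ norm.
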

\begin{proof}
	We first consider the case of $1\leq|\alpha|<\infty$. We define $\Phi_b(Y)=\Phi(Y)/\partial_Y\Phi(0)$. By Lemma \ref{lem:W-est}, we have 
	\begin{align*}
     & \varepsilon^{\f16}\|W_b\|_{L^2}+ \varepsilon^{-\f16}\|(\partial_Y\Phi_b,\alpha\Phi_b)\|_{L^2} +\|(\partial_Y\Phi_b,\alpha\Phi_b)\|_{L^\infty}\leq C.
  \end{align*}
  Now we turn to the case of $0<|\alpha|\leq1$. We may assume  $0<\alpha\leq1$. Recall that
  \begin{align*}
  	\Phi_f(0)=\Phi_{a,f}(0)=\alpha^{-1}\int_{0}^{+\infty}W_a(Z)\sinh(\alpha Z)\mathrm{d}Z.
  \end{align*}
 We define 
  \begin{align*}
  	\Phi_b=A\Phi_f+B\phi_s,
  \end{align*}
  where
  \begin{align*}
  	A=\frac{1}{\partial_Y\Phi_f(0)-\Phi_f(0)\partial_Y\phi_s(0)},\quad B=-\frac{\Phi_f(0)}{\partial_Y\Phi_f(0)-\Phi_f(0)\partial_Y\phi_s(0)}.
  \end{align*}
  It is easy to check that $\Phi_b(0)=0,\partial_Y\Phi_{b}(0)=1$.
  From Lemma \ref{lem:W-est} and Proposition \ref{prop:slow mode}, we infer that
  \begin{align*}
  	\partial_Y\Phi_f(0)-\Phi_f(0)\partial_Y\phi_s(0)=-\dfrac{\varepsilon^\f23}{3(U'(0))^\f23Ai'(0)}+\dfrac{e^{\mathrm i\frac{\pi}{3}}\varepsilon^{\f43}Ai(0) (U'(0))^{\f23}}{Ai'(0)\alpha}+\mathcal O(\varepsilon+\varepsilon^{\frac{17}{12}}\alpha^{-1} ).
  \end{align*}
  On the other hand, notice that
  \begin{align*}
  	\left|-\dfrac{\varepsilon^\f23}{3(U'(0))^\f23Ai'(0)}+\dfrac{e^{\mathrm i\frac{\pi}{3}}\varepsilon^{\f43}Ai(0) (U'(0))^{\f23}}{Ai'(0)\alpha}\right|\geq C\left(\varepsilon^\f23+\frac{\varepsilon^\f43}{\alpha} \right),
  \end{align*}
  Therefore, we obtain 
  \begin{align*}
  	\left|\partial_Y\Phi_f(0)-\Phi_f(0)\partial_Y\phi_s(0)\right|\geq C\Big(\varepsilon^\f23+\frac{\varepsilon^\f43}{\alpha} \Big).
  \end{align*}
 Thus,
   \begin{align*}
  	|A|\leq C\frac{\alpha}{\varepsilon^\f23(\alpha+\varepsilon^\f23)} ,\quad |B|\leq C\frac{\alpha\varepsilon^\f23}{\alpha+\varepsilon^\f23} ,
  \end{align*}
  which along with Lemma \ref{lem:W-est} and Proposition \ref{prop:slow mode} imply that
  \begin{align*}
    \|(\partial_Y\Phi_{b},\alpha\Phi_b)\|_{L^2}\leq C\frac{\varepsilon^\f23+\alpha\varepsilon^\f16}{\varepsilon^\f23+\alpha},\quad\|(\partial_Y^2-\alpha^2)\Phi_b\|_{L^2}\leq C\varepsilon^{-\f16},\quad\|(\partial_Y\Phi_b,\alpha\Phi_b)\|_{L^\infty}\leq C.
    \end{align*}
    
 This finishes the proof of the proposition.
\end{proof}

%

\section{Orr-Sommerfeld equation with non-slip boundary condition}

This section is devoted to solve the Orr-Sommerfeld equation \eqref{eq:OS}. Since the source term of \eqref{eq:OS} belongs to $H^{-1}$, we decompose the solution $\phi$ as $\phi=\phi_0+\phi_1$, where $\phi_0$ and $\phi_1$ solve the following system respectively,
\begin{align}\label{eq:Orr-phi0}\left\{\begin{aligned}
&\partial_Y(U\partial_Y\phi_0)-\alpha^2U\phi_0+\mathrm{i}\varepsilon (\partial_Y^2-\alpha^2)^2\phi_0=-f_2-\dfrac{\mathrm{i}}{\alpha}\partial_Yf_1,\\
&\phi_0|_{Y=0}=\partial_Y\phi_0|_{Y=0}=0,
\end{aligned}\right.
\end{align}
and
\begin{align}\label{eq:Orr-phi1}\left\{\begin{aligned}
&U(\partial_Y^2-\alpha^2)\phi_1+\mathrm{i}\varepsilon (\partial_Y^2-\alpha^2)^2\phi_1-U''\phi_1=\partial_Y(U'\phi_0),\\
&\phi_1|_{Y=0}=\partial_Y\phi_1|_{Y=0}=0.
\end{aligned}\right.
\end{align}

\begin{remark}
The main reason that we use this decomposition is as follows.
\begin{itemize}
	\item All of terms on the left hand side of \eqref{eq:Orr-phi0} are the divergence form, so that we can obtain good estimates by simple energy method.
	
	\item The source term in \eqref{eq:Orr-phi1} satisfies $\int_0^\infty \partial_Y(U'\phi_0)dY'=0$, which reduces $|\alpha|^{-2}$ order loss compared with general  $L^2$ source term.
\end{itemize}
\end{remark}

Therefore, our main task of this section is to solve \eqref{eq:Orr-phi1}. For this purpose, we first focus on the Orr-Sommerfeld equation with general type source term:
\begin{align}\label{eq:Orr-som-nonslip}\left\{\begin{aligned}
&U(\partial_Y^2-\alpha^2)\varphi-U''\varphi+\mathrm{i} \varepsilon(\partial_Y^2-\alpha^2)^2\varphi=f,\quad f(0)=0,\\
&w=(\partial_Y^2-\alpha^2)\varphi,\quad \varphi|_{Y=0}=\partial_Y\varphi|_{Y=0}=0.
\end{aligned}\right.
\end{align}
For small $\varepsilon$, we could obtain estimates of the solution to \eqref{eq:Orr-som-nonslip} by applying the results in previous section. However, for general $\varepsilon>0$, we apply compactness argument under our spectral assumption $0\notin \sigma(\mathcal{L}_\nu)$.

\subsection{Estimates for small viscosity $\varepsilon$}

\begin{lemma}\label{prop:Orr-som-nonslip}
Let $c_2,\ \delta_{*}>0$ be the constants in Proposition \ref{prop:Wb-est}. Then if $\varepsilon|\alpha|^3\leq \delta_{*},\ 0<\varepsilon \leq c_2$, then for any $f\in H^1$ with $f(0)=0$, there exists a unique solution $\varphi\in H^4\cap H^2_{0}$ to the system \eqref{eq:Orr-som-nonslip} satisfying
 \begin{align*}
    &\varepsilon^{\f13}\|w\|_{L^2} +\varepsilon^{\f16}\|(\partial_Y\varphi,\alpha\varphi)\|_{L^\infty}+ \|(\partial_Y\varphi,\alpha\varphi)\|_{L^2} \leq C\left(\|(1+Y)^2f\|_{L^2}+\dfrac{1}{|\alpha|^2}\bigg| \int_{0}^{+\infty}f\mathrm{d}Y\bigg|\right).
  \end{align*}
\end{lemma}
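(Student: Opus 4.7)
The plan is to reduce the non-slip problem to the artificial-boundary-condition problem of Section 2 and then kill the resulting mismatch at $Y=0$ using the boundary-layer corrector from Section 3. Concretely, I would first invoke Proposition \ref{prop:Orr-som-art}, which is applicable since $\varepsilon\le c_2\le c_1$, to produce $\tilde\varphi\in H^4\cap H^1_0$ solving the same Orr--Sommerfeld equation but with the artificial conditions $\tilde\varphi(0)=0$ and $\partial_Y\tilde w(0)=0$ (where $\tilde w=(\partial_Y^2-\alpha^2)\tilde\varphi$), together with the estimate
\[
\varepsilon^{\f13}\|(1+Y)\tilde w\|_{L^2}+\varepsilon^{\f23}\|(1+Y)(\partial_Y\tilde w,\alpha\tilde w)\|_{L^2}+\|(\partial_Y\tilde\varphi,\alpha\tilde\varphi)\|_{L^2}\le CF,
\]
where $F:=\|(1+Y)^2 f\|_{L^2}+|\alpha|^{-2}\bigl|\int_0^{+\infty} f\,\mathrm{d}Y\bigr|$. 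In general $\partial_Y\tilde\varphi(0)\neq 0$, so I would then set
\[
\varphi:=\tilde\varphi-\partial_Y\tilde\varphi(0)\,\Phi_b,\qquad w:=\tilde w-\partial_Y\tilde\varphi(0)\,W_b,
\]
using the corrector $\Phi_b$ from Proposition \ref{prop:Wb-est}. Because $\Phi_b$ solves the homogeneous equation and satisfies $\Phi_b(0)=0$, $\partial_Y\Phi_b(0)=1$, the function $\varphi$ obeys \eqref{eq:Orr-som-nonslip} along with both non-slip conditions $\varphi(0)=\partial_Y\varphi(0)=0$.

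\textbf{Estimates.} The quantitative step is to bound the trace $|\partial_Y\tilde\varphi(0)|$ via the one-dimensional Sobolev interpolation $|g(0)|^2\le 2\|g\|_{L^2}\|g'\|_{L^2}$ applied to $g=\partial_Y\tilde\varphi$. Writing $\partial_Y^2\tilde\varphi=\tilde w+\alpha^2\tilde\varphi$ and exploiting the constraint $\varepsilon|\alpha|^3\le\delta_*$ to absorb $\alpha^2\|\tilde\varphi\|_{L^2}$ into the $\tilde w$-term, one obtains $\|\partial_Y^2\tilde\varphi\|_{L^2}\le C\varepsilon^{-\f13}F$ and hence
\[
|\partial_Y\tilde\varphi(0)|\le C\varepsilon^{-\f16}F.
\]
Plugging this into the decompositions of $w$ and $\varphi$ and invoking Proposition \ref{prop:Wb-est}, the bounds $\varepsilon^{\f16}\|W_b\|_{L^2}\le C$ and $\|(\partial_Y\Phi_b,\alpha\Phi_b)\|_{L^\infty}\le C$ immediately yield $\varepsilon^{\f13}\|w\|_{L^2}\le CF$ and $\varepsilon^{\f16}\|(\partial_Y\varphi,\alpha\varphi)\|_{L^\infty}\le CF$ (the $L^\infty$ norm of $(\partial_Y\tilde\varphi,\alpha\tilde\varphi)$ being controlled by the same interpolation). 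For the $L^2$-bound $\|(\partial_Y\varphi,\alpha\varphi)\|_{L^2}\le CF$ the case $|\alpha|\ge 1$ follows from the stronger estimate $\|(\partial_Y\Phi_b,\alpha\Phi_b)\|_{L^2}\le C\varepsilon^{\f16}$, while for $0<|\alpha|\le 1$ one uses the weighted estimate $\frac{\alpha+\varepsilon^{2/3}}{\alpha\varepsilon^{1/6}+\varepsilon^{2/3}}\|(\partial_Y\Phi_b,\alpha\Phi_b)\|_{L^2}\le C$ of Proposition \ref{prop:Wb-est}.

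\textbf{Main obstacle and wrap-up.} The trickiest point is the $L^2$-estimate for $(\partial_Y\varphi,\alpha\varphi)$ in the transition regime $\alpha\sim\varepsilon^{2/3}$, where a naive pairing of the $\varepsilon^{-1/6}$-bound on $|\partial_Y\tilde\varphi(0)|$ with the $O(1)$-bound on $\|(\partial_Y\Phi_b,\alpha\Phi_b)\|_{L^2}$ appears to lose a factor $\varepsilon^{-1/6}$; to recover the correct bound one must track which piece of $\tilde\varphi$ near $Y=0$ is absorbed by the fast Airy mode $\Phi_f$ and which by the slow mode $\phi_s$ in the decomposition $\Phi_b=A\Phi_f+B\phi_s$ of Section 3, so that each component is paired with a matching $L^2$-size. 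Uniqueness is automatic from the a priori bound applied to the difference of two solutions with $f=0$, and existence follows from the explicit construction $\varphi=\tilde\varphi-\partial_Y\tilde\varphi(0)\Phi_b$ combined with the existence statements already established in Propositions \ref{prop:Orr-som-art} and \ref{prop:Wb-est}.
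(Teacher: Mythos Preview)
Your approach is exactly the paper's: solve the artificial-boundary problem via Proposition~\ref{prop:Orr-som-art} to get $\varphi_{ar}$ (your $\tilde\varphi$), bound the trace by interpolation
\[
|\partial_Y\varphi_{ar}(0)|\le C\|w_{ar}\|_{L^2}^{1/2}\|(\partial_Y\varphi_{ar},\alpha\varphi_{ar})\|_{L^2}^{1/2}\le C\varepsilon^{-1/6}F,
\]
and subtract $\partial_Y\varphi_{ar}(0)\,\Phi_b$ using Proposition~\ref{prop:Wb-est}. The $\varepsilon^{1/3}\|w\|_{L^2}$ and $\varepsilon^{1/6}\|(\partial_Y\varphi,\alpha\varphi)\|_{L^\infty}$ bounds, and the $L^2$-bound for $|\alpha|\ge1$, go through exactly as you describe.

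Regarding the $L^2$-estimate for $0<|\alpha|\le1$: you correctly spot that the naive pairing of $|\partial_Y\varphi_{ar}(0)|\le C\varepsilon^{-1/6}F$ with the $\Phi_b$-bound loses a factor. The paper does \emph{not} carry out the fast/slow mode matching you propose; it simply combines the trace bound with the weighted estimate $\|(\partial_Y\Phi_b,\alpha\Phi_b)\|_{L^2}\le C\frac{\alpha\varepsilon^{1/6}+\varepsilon^{2/3}}{\alpha+\varepsilon^{2/3}}$ from Proposition~\ref{prop:Wb-est} to obtain
\[
\|(\partial_Y\varphi,\alpha\varphi)\|_{L^2}\le C\,\frac{\alpha+\varepsilon^{1/2}}{\alpha+\varepsilon^{2/3}}\,F\le C\varepsilon^{-1/6}F,
\]
and stops there. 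So the paper's own proof yields a bound that is weaker by $\varepsilon^{-1/6}$ than the stated lemma in the regime $|\alpha|\lesssim\varepsilon^{1/2}$; your proposed refinement goes beyond what the paper actually does. Whether one can truly recover the full $CF$ bound by a more careful mode decomposition is not addressed in the paper, and your sketch of that step (``pair each component with a matching $L^2$-size'') would need to be made precise, since $\partial_Y\varphi_{ar}(0)$ is a single scalar, not naturally split into fast and slow pieces.
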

\begin{proof}
Let $(w_{ar},\varphi_{ar})$ solve the following OS equation with Neumann's type boundary condition
\begin{align*}\left\{\begin{aligned}
&U(\partial_Y^2-\alpha^2)\varphi_{ar}-U''\varphi_{ar} +\mathrm{i}\varepsilon(\partial_Y^2-\alpha^2)^2\varphi_{ar}=f,\quad f(0)=0,\\
&w_{ar}=(\partial_Y^2-\alpha^2)\varphi_{ar},\quad \varphi_{ar}|_{Y=0}=\partial_Yw_{ar}|_{Y=0}=0.
\end{aligned}\right.
\end{align*}
We take  $(W_b,\Phi_b)$ to be the boundary layer corrector satisfying  \eqref{eq:boundary-corr-parphi}.
By comparing the boundary conditions, we can decompose $w$ as
\begin{align}\label{est:orr-som-non-0}
   &w=w_{ar}-\partial_Y\varphi_{ar}(0) W_b,\qquad \varphi=\varphi_{ar}-\partial_Y\varphi_{ar}(0)\Phi_b.
\end{align}
We point out that the existence of $(w_{ar},\varphi_{ar})$ and $(W_b,\Phi_b)$ is guaranteed by Proposition \ref{prop:Orr-som-art} and Proposition \ref{prop:Wb-est} respectively.
By Proposition \ref{prop:Orr-som-art}, we have
\begin{align}\label{est:orr-som-non-1}
   & \varepsilon^{\f13}\|w_{ar}\|_{L^2} +\|(\partial_Y\varphi_{ar},\alpha\varphi_{ar})\|_{L^2}\leq C\left(\|(1+Y)^2f\|_{L^2}+\dfrac{1}{|\alpha|^2}\bigg| \int_{0}^{+\infty}f\mathrm{d}Y\bigg|\right).
\end{align}
By the interpolation, we get
\begin{eqnarray}\label{est:orr-som-non-2}
\begin{split}
   |\partial_Y\phi_{ar}(0)| \leq& \|(\partial_Y\varphi_{ar},\alpha\varphi_{ar})\|_{L^\infty}\leq C \|w_{ar}\|_{L^2}^{\f12}\|(\partial_Y\varphi_{ar},\alpha\varphi_{ar})\|_{L^2}^{\f12}\\
   \leq& C\varepsilon^{-\f16}\left(\|(1+Y)^2f\|_{L^2}+\dfrac{1}{|\alpha|^2}\bigg| \int_{0}^{+\infty}f\mathrm{d}Y\bigg|\right).
\end{split}
\end{eqnarray}
For $(W_b, \Phi_b)$, we get by Proposition \ref{prop:Wb-est} that if $1\leq |\alpha|<+\infty$,
\begin{align}\label{est:orr-som-non-3}
   &  \varepsilon^{\f16}\|W_b\|_{L^2}+ \varepsilon^{-\f16}\|(\partial_Y\Phi_b,\alpha\Phi_b)\|_{L^2} +\|(\partial_Y\Phi_b,\alpha\Phi_b)\|_{L^\infty}\leq C,
\end{align}
and if $0<|\alpha|\leq 1$,
     \begin{align}\label{eq:Wb-alpha-small}
     & \varepsilon^{\f16}\|W_b\|_{L^2}+\frac{\alpha+\varepsilon^\f23}{\alpha\varepsilon^\f16+\varepsilon^\f23} \|(\partial_Y\Phi_b,\alpha\Phi_b)\|_{L^2} +\|(\partial_Y\Phi_b,\alpha\Phi_b)\|_{L^\infty}\leq C.
  \end{align}

Summing up \eqref{est:orr-som-non-0}-\eqref{eq:Wb-alpha-small} , we conclude that for $1\leq|\alpha|<\infty$,
\begin{align*}
   & \varepsilon^{\f13}\|w\|_{L^2} +\varepsilon^{\f16}\|(\partial_Y\varphi,\alpha\varphi)\|_{L^\infty}+ \|(\partial_Y\varphi,\alpha\varphi)\|_{L^2} \leq C\left(\|(1+Y)^2f\|_{L^2}+\dfrac{1}{|\alpha|^2}\bigg| \int_{0}^{+\infty}f\mathrm{d}Y\bigg|\right),
\end{align*}
and for $0<|\alpha|\leq1$,
\begin{align*}
		 & \varepsilon^{\f13}\|w\|_{L^2}+\frac{\alpha+\varepsilon^\f23}{\alpha+\varepsilon^\f12} \|(\partial_Y\varphi ,\alpha\varphi )\|_{L^2} +\varepsilon^\f16 \|(\partial_Y\varphi ,\alpha\varphi )\|_{L^\infty}\\
	 &\leq C\left(\|(1+Y)^2f\|_{L^2}+\dfrac{1}{|\alpha|^2}\bigg| \int_{0}^{+\infty}f\mathrm{d}Y\bigg|\right).
\end{align*}
In particular, we have
\begin{align*}
	\|(\partial_Y\varphi,\alpha\varphi)\|_{L^2}\leq& C\frac{\alpha+\varepsilon^\f12}{\alpha+\varepsilon^\f23}\left(\|(1+Y)^2f\|_{L^2}+\dfrac{1}{|\alpha|^2}\bigg| \int_{0}^{+\infty}f\mathrm{d}Y\bigg|\right)\\
	&\leq C(1+\varepsilon^{-\f16})\left(\|(1+Y)^2f\|_{L^2}+\dfrac{1}{|\alpha|^2}\bigg| \int_{0}^{+\infty}f\mathrm{d}Y\bigg|\right)\\
	&\leq C\varepsilon^{-\f16}\left(\|(1+Y)^2f\|_{L^2}+\dfrac{1}{|\alpha|^2}\bigg| \int_{0}^{+\infty}f\mathrm{d}Y\bigg|\right).
\end{align*}

This finishes the proof of the proposition.
\end{proof}

\begin{lemma}\label{prop:phi0}
Let $0<\varepsilon\leq 1$. Then for any $f\in L^2$, there exists a unique solution to \eqref{eq:Orr-phi0} satisfying
  \begin{align*}
     & \varepsilon^{\f13}\|(\partial_Y\phi_0,\alpha\phi_0)\|_{L^2}+ \varepsilon^{\f23}\|(\partial_Y^2-\alpha^2)\phi_0\|_{L^2} +\varepsilon^{\f16}\|\sqrt{U}(\partial_Y\phi_0,\alpha\phi_0)\|_{L^2}\leq C|\alpha|^{-1}\|(f_1,f_2)\|_{L^2}.
  \end{align*}
\end{lemma}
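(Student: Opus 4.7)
The plan is to use a direct energy method that exploits the divergence-form structure of the left-hand side of \eqref{eq:Orr-phi0}.

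First I will pair the equation with $\bar\phi_0$ and integrate by parts. Because $\phi_0(0)=\partial_Y\phi_0(0)=0$, every boundary term drops out, and the left-hand side collapses to
\begin{align*}
-\|\sqrt{U}\partial_Y\phi_0\|_{L^2}^2-\alpha^2\|\sqrt{U}\phi_0\|_{L^2}^2+\mathrm{i}\varepsilon\|(\partial_Y^2-\alpha^2)\phi_0\|_{L^2}^2.
\end{align*}
On the right-hand side, a further integration by parts (again using $\phi_0(0)=0$) moves $\partial_Y$ off $f_1$, so the pairing with the source is bounded by $C|\alpha|^{-1}\|(f_1,f_2)\|_{L^2}\|(\partial_Y\phi_0,\alpha\phi_0)\|_{L^2}$. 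Separating real and imaginary parts yields
\begin{align*}
\|\sqrt{U}(\partial_Y\phi_0,\alpha\phi_0)\|_{L^2}^2+\varepsilon\|(\partial_Y^2-\alpha^2)\phi_0\|_{L^2}^2\leq\frac{C}{|\alpha|}\|(f_1,f_2)\|_{L^2}\|(\partial_Y\phi_0,\alpha\phi_0)\|_{L^2}.
\end{align*}

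The second step is to upgrade this weighted control into a bound on the unweighted norm $X:=\|(\partial_Y\phi_0,\alpha\phi_0)\|_{L^2}$. I will establish the interpolation
\begin{align*}
X^2\leq C\bigl(\varepsilon^{-1/3}\|\sqrt{U}(\partial_Y\phi_0,\alpha\phi_0)\|_{L^2}^2+\varepsilon^{2/3}\|(\partial_Y^2-\alpha^2)\phi_0\|_{L^2}^2\bigr)
\end{align*}
by splitting at the scale $Y=\varepsilon^{1/3}$. For $Y\geq\varepsilon^{1/3}$, the structural bound $U(Y)\geq C^{-1}\min(1,Y)$ from \eqref{ass:U} gives $U\geq c\varepsilon^{1/3}$, which controls the tail by the weighted term. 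For $Y\leq\varepsilon^{1/3}$, the homogeneous boundary data give $|\partial_Y\phi_0(Y)|\leq Y^{1/2}\|\partial_Y^2\phi_0\|_{L^2}$ and $|\phi_0(Y)|\leq Y^{1/2}\|\partial_Y\phi_0\|_{L^2}$, from which $\|\partial_Y\phi_0\|_{L^2([0,\varepsilon^{1/3}])}^2\leq C\varepsilon^{2/3}\|\partial_Y^2\phi_0\|_{L^2}^2$ and $\alpha^2\|\phi_0\|_{L^2([0,\varepsilon^{1/3}])}^2\leq C\varepsilon^{2/3}\alpha^2\|\partial_Y\phi_0\|_{L^2}^2$ follow. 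Both remainders are then absorbed by the identity $\|(\partial_Y^2-\alpha^2)\phi_0\|_{L^2}^2=\|\partial_Y^2\phi_0\|_{L^2}^2+2\alpha^2\|\partial_Y\phi_0\|_{L^2}^2+\alpha^4\|\phi_0\|_{L^2}^2$, which is again available because $\phi_0$ and $\partial_Y\phi_0$ vanish at $0$.

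Combining the two displays produces a quadratic-in-$X$ inequality that is absorbed to give $\varepsilon^{1/3}X\leq C|\alpha|^{-1}\|(f_1,f_2)\|_{L^2}$; reinserting this into the energy identity then recovers the remaining two norms with their sharp powers of $\varepsilon$. The only delicate point is the calibration of the cut-off at $Y=\varepsilon^{1/3}$, which is exactly the scale at which the Airy-type diffusion balances the convection coefficient $U\sim Y$ and produces the exponents $1/3$, $2/3$, $1/6$ in the final estimate. Uniqueness is immediate from the a priori bound, and existence follows by the continuity method already used for \eqref{eq:Orr-som-art}: perturbing the operator by $\lambda(\partial_Y^2-\alpha^2)$ with $\lambda$ large yields a coercive biharmonic problem with standard solvability, while the estimates above are uniform in $\lambda\geq 0$.
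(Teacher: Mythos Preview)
Your proposal is correct and follows the same overall strategy as the paper: pair with $\bar\phi_0$ to obtain the energy identity \eqref{est:phi0-1}, then interpolate to recover the unweighted norm $\|(\partial_Y\phi_0,\alpha\phi_0)\|_{L^2}$ and close. The only difference is in how the interpolation is carried out: the paper uses the pointwise inequality $1\le C(U/Y)^{1/3}+C\sqrt{U}$ together with H\"older and Hardy to obtain the multiplicative bound
\[
\|(\partial_Y\phi_0,\alpha\phi_0)\|_{L^2}\le C\|\sqrt{U}(\partial_Y\phi_0,\alpha\phi_0)\|_{L^2}^{2/3}\|(\partial_Y^2-\alpha^2)\phi_0\|_{L^2}^{1/3}+C\|\sqrt{U}(\partial_Y\phi_0,\alpha\phi_0)\|_{L^2},
\]
whereas you split at the scale $Y=\varepsilon^{1/3}$ and use the pointwise bounds from $\phi_0(0)=\partial_Y\phi_0(0)=0$ to obtain the equivalent additive form. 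Both produce the same $\varepsilon^{1/3}$ scaling; your zone argument is slightly more elementary in that it avoids the algebraic weight $(U/Y)^{1/3}$, while the paper's H\"older version is scale-free and would also cover the analogous statement in Lemma~\ref{prop:nonlocal}.
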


\begin{proof}
This elliptic problem is uniquely solvable. Hence, it suffices to provide a priori estimate.
  Taking the inner product with $\phi_0$, we get
  \begin{align*}
     &\int_{0}^{+\infty}-U\big(|\partial_Y\phi_0|^{2}+\alpha^2|\phi_0|^2\big) \mathrm{d}Y+ \mathrm{i}\varepsilon\|(\partial_Y^2-\alpha^2)\phi_0\|_{L^2}^2= -\int_{0}^{+\infty}(f_2+\dfrac{\mathrm{i}}{\alpha}\partial_Yf_1)\phi_0\mathrm{d}Y.
  \end{align*}
Taking the real part and imaginary part of the above equation, we obtain
  \begin{eqnarray}\label{est:phi0-1}
  \begin{split}
     &\big\|\sqrt{U}(\partial_Y\phi_0,\alpha\phi_0)\big\|_{L^2}^2+ \varepsilon\|(\partial_Y^2-\alpha^2)\phi_0\|_{L^2}^2 \leq \left| \int_{0}^{+\infty}(f_2+\dfrac{\mathrm{i}}{\alpha}\partial_Yf_1)\phi_0\mathrm{d}Y\right|\\
     & \leq C|\alpha|^{-1}\|(f_1,f_2)\|_{L^2} \|(\partial_Y\phi_0,\alpha\phi_0)\|_{L^2}.
  \end{split}
  \end{eqnarray}
  
On the other hand, using the structure assumptions on $U$, we can actually obtain the control of $\|(\partial_Y\phi_0,\alpha\phi_0\|_{L^2}$ from $\|\sqrt{U}(\partial_Y\phi_0,\alpha\phi_0)\|_{L^2}$ via the interpolation. Indeed, we notice that  $1\leq C(U/Y)^{\f13}+C \sqrt{U(Y)}$ and $\partial_Y\phi_0|_{Y=0}=\phi_0|_{Y=0}$, which implies that
  \begin{align*}
     &\|(\partial_Y\phi_0,\alpha\phi_0)\|_{L^2}\leq C \left\|(U/Y)^{\f13}(\partial_Y\phi_0,\alpha\phi_0)\right\|_{L^2} +C\|\sqrt{U}(\partial_Y\phi_0,\alpha\phi_0)\|_{L^2}\\
     &\leq C\Big(\int_0^\infty(\frac{|\partial_Y\phi_0|}{Y})^{\f23}(U^{\f23}|\partial_Y\phi_0|^{\f43})dY\Big)^{\f12}+C\alpha\Big(\int_0^\infty(\frac{|\phi_0|}{Y})^{\f23}(U^{\f23}|\phi_0|^{\f43})dY\Big)^{\f12}\\
     &\quad+C\|\sqrt{U}(\partial_Y\phi_0,\alpha\phi_0)\|_{L^2}\\
     &\leq C\left\|\sqrt{U}(\partial_Y\phi_0,\alpha\phi_0)\right\|_{L^2}^{\f23} \left\|(\partial_Y\phi_0,\alpha\phi_0)/Y\right\|_{L^2}^{\f13} +C\|\sqrt{U}(\partial_Y\phi_0,\alpha\phi_0)\|_{L^2}\\
    &\leq C\left\|\sqrt{U}(\partial_Y\phi_0,\alpha\phi_0)\right\|_{L^2}^{\f23} \left\|(\partial_Y^2-\alpha^2)\phi_0\right\|_{L^2}^{\f13} +C\|\sqrt{U}(\partial_Y\phi_0,\alpha\phi_0)\|_{L^2},
  \end{align*}
 which along with \eqref{est:phi0-1} gives
  \begin{align*}
     &\|(\partial_Y\phi_0,\alpha\phi_0)\|_{L^2} \leq C\big(\varepsilon^{-\f16}+1\big)|\alpha|^{-\f12}\|(f_1,f_2)\|_{L^2}^{\f12}\|(\partial_Y\phi_0,\alpha\phi_0)\|_{L^2}^{\f12}.
  \end{align*}
  Applying \eqref{est:phi0-1} again and using $\varepsilon\leq 1$, we conclude
  \begin{align*}
     & \varepsilon^{\f13}\|(\partial_Y\phi_0,\alpha\phi_0)\|_{L^2}+ \varepsilon^{\f23}\|(\partial_Y^2-\alpha^2)\phi_0\|_{L^2} +\varepsilon^{\f16}\|\sqrt{U}(\partial_Y\phi_0,\alpha\phi_0)\|_{L^2}\leq C|\alpha|^{-1}\|(f_1,f_2)\|_{L^2}.
  \end{align*}
 
\end{proof}

\begin{proposition}\label{prop:os-nonslip-small}
  Let $f=(f_1,f_2)\in L^2(\mathbb{R}_{+}^2)$. Then there exist small positive number $\delta_*, c_*$ such that for any $\varepsilon|\alpha|^3\leq \delta_*$, $0<\varepsilon\leq c_*$, there exists a unique solution $\phi\in H^4\cap H^2_0(\mathbb{R}_+)$ to \eqref{eq:OS} satisfying
 \begin{align*}
    &\varepsilon^{\f23}|\alpha|\|(\partial_Y^2-\alpha^2)\phi\|_{L^2} +\varepsilon^{\f12}|\alpha|\|(\partial_Y\varphi,\alpha\varphi)\|_{L^\infty}+ \varepsilon^{\f13}|\alpha|\|(\partial_Y\phi,\alpha\phi)\|_{L^2} \leq C\|(f_1,f_2)\|_{L^2}.
  \end{align*}
\end{proposition}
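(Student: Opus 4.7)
\textbf{Proof proposal for Proposition \ref{prop:os-nonslip-small}.} The plan is to exploit the decomposition $\phi=\phi_0+\phi_1$ already introduced, applying Lemma \ref{prop:phi0} to $\phi_0$ and Lemma \ref{prop:Orr-som-nonslip} to $\phi_1$, and then combining the two. Existence and uniqueness of $\phi_0\in H^4\cap H^2_0$ follow from the elliptic character of \eqref{eq:Orr-phi0} together with the a priori bound in Lemma \ref{prop:phi0}, which gives in particular
\begin{align*}
\varepsilon^{\f13}\|(\partial_Y\phi_0,\alpha\phi_0)\|_{L^2}+\varepsilon^{\f23}\|(\partial_Y^2-\alpha^2)\phi_0\|_{L^2}\leq C|\alpha|^{-1}\|(f_1,f_2)\|_{L^2}.
\end{align*}

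Next I would write the equation \eqref{eq:Orr-phi1} for $\phi_1$ as \eqref{eq:Orr-som-nonslip} with source $F:=\partial_Y(U'\phi_0)$, and verify the two hypotheses required by Lemma \ref{prop:Orr-som-nonslip}. First, $F(0)=U''(0)\phi_0(0)+U'(0)\partial_Y\phi_0(0)=0$ from the boundary condition on $\phi_0$. Second, since $\phi_0\in H^2$ has enough decay and $|U'(Y)|\le C(1+Y)^{-3}$, we have $\int_0^{+\infty}F\,\mathrm d Y=[U'\phi_0]_0^\infty=0$, so the $|\alpha|^{-2}$-term on the right-hand side vanishes. For the weighted norm, the structure assumption \eqref{ass:U} gives $(1+Y)^2|U''|\le C(1+Y)^{-1}$ and $(1+Y)^2|U'|\le C$, so by Hardy's inequality (using $\phi_0(0)=0$),
\begin{align*}
\|(1+Y)^2F\|_{L^2}\le C\bigl\|\phi_0/(1+Y)\bigr\|_{L^2}+C\|\partial_Y\phi_0\|_{L^2}\le C\|\partial_Y\phi_0\|_{L^2}.
\end{align*}
Lemma \ref{prop:Orr-som-nonslip} then yields $\phi_1\in H^4\cap H^2_0$ with
\begin{align*}
\varepsilon^{\f13}\|(\partial_Y^2-\alpha^2)\phi_1\|_{L^2}+\varepsilon^{\f16}\|(\partial_Y\phi_1,\alpha\phi_1)\|_{L^\infty}+\|(\partial_Y\phi_1,\alpha\phi_1)\|_{L^2}\le C\|\partial_Y\phi_0\|_{L^2}\le C\varepsilon^{-\f13}|\alpha|^{-1}\|(f_1,f_2)\|_{L^2}.
\end{align*}

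It remains to recombine the two pieces. Adding the bounds for $\phi_0$ and $\phi_1$ gives directly the target estimates for $\varepsilon^{\f23}|\alpha|\|(\partial_Y^2-\alpha^2)\phi\|_{L^2}$ and $\varepsilon^{\f13}|\alpha|\|(\partial_Y\phi,\alpha\phi)\|_{L^2}$. For the $L^\infty$-part, the contribution of $\phi_1$ is already controlled, while for $\phi_0$ I use the one-dimensional interpolation $\|g\|_{L^\infty}\le C\|g\|_{L^2}^{1/2}\|\partial_Y g\|_{L^2}^{1/2}$ applied to $\partial_Y\phi_0$ and $\alpha\phi_0$, which together with the two bounds from Lemma \ref{prop:phi0} gives $\|(\partial_Y\phi_0,\alpha\phi_0)\|_{L^\infty}\le C\varepsilon^{-1/2}|\alpha|^{-1}\|(f_1,f_2)\|_{L^2}$; multiplying by $\varepsilon^{1/2}|\alpha|$ produces the bound in the statement. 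Uniqueness is then a direct consequence of the a priori estimate applied with $f=0$.

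The only genuinely delicate point is the verification of the two special features of the source $F=\partial_Y(U'\phi_0)$ --- namely $F(0)=0$ and $\int_0^\infty F=0$. These are exactly what allow the application of Lemma \ref{prop:Orr-som-nonslip} without paying the $|\alpha|^{-2}$ factor, and they are the reason the authors chose the decomposition \eqref{eq:Orr-phi0}--\eqref{eq:Orr-phi1} in the first place; everything else is bookkeeping that combines the estimates from the two lemmas.
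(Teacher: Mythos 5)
Your proposal is correct and follows essentially the same route as the paper: the decomposition $\phi=\phi_0+\phi_1$, Lemma \ref{prop:phi0} for $\phi_0$, Lemma \ref{prop:Orr-som-nonslip} for $\phi_1$ with the source $\partial_Y(U'\phi_0)$ bounded via Hardy's inequality by $C\|\partial_Y\phi_0\|_{L^2}$, and interpolation for the $L^\infty$ bound on $\phi_0$. Your explicit verification that $\partial_Y(U'\phi_0)$ vanishes at $Y=0$ and has zero integral is exactly the point the paper uses implicitly when it drops the $|\alpha|^{-2}$ term.
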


\begin{proof}
  We decompose $\phi$ as $\phi=\phi_0+\phi_1$, where $\phi_0,\ \phi_1$ solve \eqref{eq:Orr-phi0} and \eqref{eq:Orr-phi1} respectively. By Lemma \ref{prop:Orr-som-nonslip}, we get
   \begin{align*}
      & \varepsilon^{\f13}\|(\partial_Y^2-\alpha^2)\phi_1\|_{L^2} +\varepsilon^{\f16}\|(\partial_Y\phi_1,\alpha\phi_1)\|_{L^\infty}+ \|(\partial_Y\phi_1,\alpha\phi_1)\|_{L^2} \\
   &\leq C\left(\|(1+Y)^2\partial_Y(U'\phi_0)\|_{L^2}+\dfrac{1}{|\alpha|^2}\bigg| \int_{0}^{+\infty}\partial_Y(U'\phi_0)\mathrm{d}Y\bigg|\right)\\
   &\leq C\big\|\big(\partial_Y\phi_0,\phi_0/(1+Y)\big)\big\|_{L^2}\leq C\|\partial_Y\phi_0\|_{L^2}.
   \end{align*}
   We get by Lemma \ref{prop:phi0} and the interpolation that
   \begin{align*}
      & \varepsilon^{\f23}\|(\partial_Y^2-\alpha^2)\phi_0\|_{L^2} +\varepsilon^{\f12}\|(\partial_Y\phi_0,\alpha\phi_0)\|_{L^\infty} +\varepsilon^{\f13}\|(\partial_Y\phi_0,\alpha\phi_0)\|_{L^2}\\
      &\leq C\big(\varepsilon^{\f13}\|(\partial_Y\phi_0,\alpha\phi_0)\|_{L^2} +  +\varepsilon^{\f23}\|(\partial_Y^2-\alpha^2)\phi_0\|_{L^2}\big)\\
      &\leq C|\alpha|^{-1}\|(f_1,f_2)\|_{L^2}.
   \end{align*}
 Summing up the above two inequalities, we finish the proof.
\end{proof}

\subsection{Estimates when $c_2\leq\varepsilon\leq1$}

\begin{definition}
We say that $\varphi$ is a  weak solution to \eqref{eq:Orr-som-nonslip} if
  $\varphi\in H^2_0(\mathbb{R_+}),\ \partial_Y\varphi\in H^1_0(\mathbb{R_+})$, and for any $\xi\in C_{c}^{\infty}(\mathbb{R_+})$, it holds that
  \begin{align*}
     & \mathrm{i}\varepsilon \langle w, (\partial_Y^2-\alpha^2)\xi\rangle+ \langle Uw,\xi\rangle-\langle U''\varphi,\xi\rangle=\langle f,\xi\rangle,\quad w=(\partial_Y^2-\alpha^2)\varphi.
  \end{align*}
\end{definition}

In the rest of this part, we always assume the following solvability. \smallskip

\no{\it Solvalbility Assumption (S-A)}: for  $c_2\leq \varepsilon\leq1$ and $\alpha\neq0$, there is no nontrivial weak solution to the homogeneous equation
  \begin{align}\label{eq:OS-002}\left\{\begin{aligned}
  &\mathrm{i}\varepsilon(\partial_Y^2-\alpha^2)w+Uw-U''\varphi=0,\\
  &(\partial_Y^2-\alpha^2)\varphi=w,\quad
  \partial_Y\varphi|_{Y=0}=\varphi|_{Y=0}=0.
  \end{aligned}\right.
  \end{align}

\begin{remark}
The solvalbility assumption is ensured by our spectral condition.
For small $0<\varepsilon\leq c_2$, Lemma \ref{prop:Orr-som-nonslip}  can ensure that there only exists zero solution to \eqref{eq:OS-002}. When $\al=0$, it is easy to find by the energy method that for any $0<\varepsilon\leq 1$, there only exists zero solution to \eqref{eq:OS-002} .
\end{remark}

\begin{lemma}\label{lem:comp-Yinfty}
 Let $0<\varepsilon\leq 1,\ \alpha\neq 0$. Let $(1+Y)^2f\in L^2(\mathbb{R}_+)$ and $f(0)=0$ be given. Suppose that $\varphi$ is the solution to   \eqref{eq:Orr-som-nonslip} with source term $f$ and $\varphi\in H^2(\mathbb{R}_+)$. Then we have
  \begin{align*}
     &\big\|\sqrt{Y}(\partial_Y\varphi,\alpha\varphi)\big\|_{L^2}^2\leq C\|(\partial_Y\varphi,\alpha\varphi)\|_{L^2}\big( \varepsilon\|(1+Y)(\partial_Yw,\al w)\|_{L^2}+\|(1+Y)^2f\|_{L^2}\big),\\
     &\varepsilon^{\f23}\|Y(\partial_Yw,\alpha w)\|_{L^2}+ \varepsilon^{\f13}\|Yw\|_{L^2}\leq C\big(\|\partial_Y\varphi\|_{L^2}+\|Yf\|_{L^2}+\varepsilon\|\partial_Yw\|_{L^2} \big).
  \end{align*}
\end{lemma}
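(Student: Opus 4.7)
The plan is to prove each of the two estimates via a weighted energy identity obtained from pairing the Orr-Sommerfeld equation against a well-chosen test function; both proofs exploit the factorization structure of the equation together with the fine behavior of $U$ (namely $U\sim U'(0)Y$ near zero, $U\to 1$ at infinity, and $|U^{(k)}|\leq C(1+Y)^{-3}$).

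For the first estimate I would begin from the integration-by-parts identity
\begin{align*}
\|\sqrt{Y}(\partial_Y\varphi,\alpha\varphi)\|_{L^2}^2 = -\operatorname{Re}\int_0^\infty Y w\,\bar\varphi\,dY,
\end{align*}
which follows from $w=(\partial_Y^2-\alpha^2)\varphi$ using $\varphi|_{Y=0}=0$, decay at infinity, and the fact that the cross term $\operatorname{Re}\int \partial_Y\varphi\,\bar\varphi\,dY = \frac{1}{2}[|\varphi|^2]_0^\infty$ vanishes. Then I substitute $Yw = (Y/U)\bigl(f + U''\varphi - \mathrm{i}\varepsilon(\partial_Y^2-\alpha^2)w\bigr)$ from the equation, exploiting that $Y/U$ is smooth and uniformly bounded thanks to $U'(0)>0$. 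The three resulting pieces are handled separately: the forcing piece $\int (Y/U) f\bar\varphi$ is bounded by $C\|(1+Y)^2 f\|_{L^2}\|(\partial_Y\varphi,\alpha\varphi)\|_{L^2}$ via Cauchy-Schwarz and Hardy ($\|\varphi/Y\|_{L^2}\leq C\|\partial_Y\varphi\|_{L^2}$); the viscous piece $\varepsilon\operatorname{Im}\int (Y/U)\bar\varphi(\partial_Y^2-\alpha^2)w$ is integrated by parts, with the boundary contribution at $Y=0$ vanishing because $(Y/U)\bar\varphi(0)=0$, yielding $C\varepsilon\|(1+Y)(\partial_Yw,\alpha w)\|_{L^2}\|(\partial_Y\varphi,\alpha\varphi)\|_{L^2}$ after applying Hardy to the weight derivative $\partial_Y(Y/U)$; and the nonlocal piece $\int(YU''/U)|\varphi|^2$ is controlled by extracting one factor of $\|(\partial_Y\varphi,\alpha\varphi)\|_{L^2}$ from the weighted Hardy bound $\|\varphi/(1+Y)\|_{L^2}\leq C\|\partial_Y\varphi\|_{L^2}$ combined with $|YU''/U|\leq C(1+Y)^{-2}$.

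For the second estimate I would reduce to the Airy estimate of Lemma \ref{prop:nonlocal}. Rewriting the equation as $\mathrm{i}\varepsilon(\partial_Y^2-\alpha^2)w + Uw = f + U''\varphi$ and multiplying by the truncation $Y\chi_R$ (the cutoff used in the proof of the weighted part of Lemma \ref{prop:nonlocal}), I obtain
\begin{align*}
\mathrm{i}\varepsilon(\partial_Y^2-\alpha^2)(Y\chi_R w) + U(Y\chi_R w) = Y\chi_R(f+U''\varphi) + 2\mathrm{i}\varepsilon\partial_Y(Y\chi_R)\partial_Y w + \mathrm{i}\varepsilon\partial_Y^2(Y\chi_R) w,
\end{align*}
with $Y\chi_R w|_{Y=0}=0$ holding automatically. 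The Airy estimate from Lemma \ref{prop:nonlocal} then applies to $Y\chi_R w$ and gives $\varepsilon^{1/3}\|Y\chi_R w\|_{L^2}+\varepsilon^{2/3}\|(\partial_Y(Y\chi_R w),\alpha Y\chi_R w)\|_{L^2}$ bounded by the $L^2$ norm of the source. I then control $\|Y\chi_R U''\varphi\|_{L^2}\leq C\|\partial_Y\varphi\|_{L^2}$ via $|YU''|\leq C(1+Y)^{-2}$ and Hardy, while the uniform bounds $\|\partial_Y(Y\chi_R)\|_{L^\infty}+\|\partial_Y^2(Y\chi_R)\|_{L^\infty}\leq C$ produce commutator contributions bounded by $C\varepsilon\|\partial_Y w\|_{L^2}$ and $C\varepsilon\|w\|_{L^2}$ supported near $[R,2R]$. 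Passing $R\to\infty$ (using $w,\partial_Y w\in L^2$ from $\varphi\in H^2$) and absorbing the $Y\chi_R$ factor on the left by the identities $\partial_Y(Y\chi_R w)=\chi_R\partial_Y w+(Y\chi_R)'w$ yields the stated estimate.

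The delicate part is the first estimate: without careful tracking, the nonlocal $U''\varphi$ term produces an unwanted $\|(\partial_Y\varphi,\alpha\varphi)\|_{L^2}^2$ contribution that does not match the stated right-hand side. Overcoming this requires exploiting the decay $|YU''/U|\lesssim (1+Y)^{-2}$ so that Hardy feeds back only one power of $\|(\partial_Y\varphi,\alpha\varphi)\|_{L^2}$, and then using Young's inequality with the other factor coming from the forcing and viscous pieces.
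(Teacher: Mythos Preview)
Your strategy coincides with the paper's for both inequalities: the paper also tests the equation against $-Y\chi_R\bar\varphi/U$ for the first bound and applies Lemma~\ref{prop:nonlocal} to $Y\chi_R w$ for the second, exactly as you outline.

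Two remarks. First, the paper carries the cutoff $\chi_R$ already in the first inequality, not only in the second; you need it there too, since from $\varphi\in H^2$ alone one does not know a priori that $\sqrt{Y}(\partial_Y\varphi,\alpha\varphi)\in L^2$, so your opening identity $\|\sqrt{Y}(\partial_Y\varphi,\alpha\varphi)\|_{L^2}^2=-\operatorname{Re}\int Yw\bar\varphi$ is not justified without truncation and a limit $R\to\infty$.

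Second, and more substantively, your treatment of the nonlocal $U''$ term does not close. After substituting $Yw=(Y/U)(f+U''\varphi-\mathrm{i}\varepsilon(\partial_Y^2-\alpha^2)w)$, the $U''$ contribution is exactly $-\int_0^\infty (YU''/U)|\varphi|^2\,dY$. Using $|YU''/U|\leq C(1+Y)^{-2}$ and Hardy gives at best $C\|\partial_Y\varphi\|_{L^2}^2$, a genuinely quadratic term in $\varphi$; there is no way to recast it as a cross term by Young's inequality, so ``the other factor coming from the forcing and viscous pieces'' does not exist. The paper handles this by moving the term to the left of the identity as $+\int(Y\chi_R U''/U)|\varphi|^2$ and simply discarding it, which is legitimate precisely when $U''\geq 0$. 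If $U''$ has no sign (the structural hypotheses \eqref{ass:U} do not force one), the honest version of the first inequality carries an additional $+\,C\|(\partial_Y\varphi,\alpha\varphi)\|_{L^2}^2$ on the right. This is harmless in the only place the lemma is used (the compactness argument of Lemma~\ref{prop:epsilonbig-alphasmall}), where $\|(\partial_Y\varphi^{(n)},\alpha_n\varphi^{(n)})\|_{L^2}$ is normalized to $1$. So your instinct that this term is the delicate point is correct, but the mechanism you propose for absorbing it does not work; either assume $U''\ge 0$ as the paper implicitly does, or accept the extra quadratic term.
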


\begin{proof}
First of all, we construct a $C^2(\mathbb{R}_+)$ cut-off function $\chi\geq 0,$ such that
\begin{align*}
   &\chi(Y)=1,\ \text{if}\ Y\in[0,1);\quad\chi(Y)=0,\ \text{if}\ Y\in[2,+\infty).
\end{align*}
Let $\chi_R(Y)=\chi(Y/R),\ R>1$. Then
\begin{align*}
   &\chi_R(Y)=1,\ \text{if}\ Y\in[0,R);\quad\chi(Y)=0,\ \text{if}\ Y\in[2R,+\infty),\\
   &\sum_{k=1,2}|R^k\partial_Y^k\chi_R|\leq C,
\end{align*}
which implies that
\begin{align*}
	 \sum_{k=1,2}|(1+Y)^{k}\partial_Y^k\chi_R|\leq C.
\end{align*}

Taking the inner production with $-Y\chi_R\varphi/U\in L^2$, we get
\begin{align}\label{est:nonl-psi/u}
   & \mathrm{i}\varepsilon\big\langle(\partial_Y^2-\alpha^2)w, -Y\chi_R\varphi/U\big\rangle+ \langle w,-Y\chi_R\varphi\rangle+\Big\|\sqrt{\frac{U''Y\chi_R}{U}}\varphi\Big\|_{L^2}^2=\big\langle f,-Y\chi_R\varphi/U\big\rangle.
\end{align}

Thanks to $|\partial_Y^2(Y\chi_R)|\leq CR^{-1}$, we get
\begin{align*}
   \mathbf{Re}\left(\langle w,-Y\chi_R\varphi\rangle\right)&= \mathbf{Re} \left(\langle \partial_Y\varphi,\partial_Y(Y\chi_R)\varphi \rangle + \langle \partial_Y\varphi,Y\chi_R\partial_Y\varphi \rangle +\alpha^2\langle \varphi,Y\chi_R\varphi\rangle\right)\\
   &=\big\langle \partial_Y|\varphi|^2/2,\partial_Y(Y\chi_R) \big\rangle+ \left\|\sqrt{Y\chi_R}(\partial_Y\varphi,\alpha\varphi)\right\|_{L^2}^2\\
   &=-\dfrac{1}{2}\int_{0}^{+\infty} \partial_Y^2(Y\chi_R)|\varphi|^2\mathrm{d}Y+ \left\|\sqrt{Y\chi_R}(\partial_Y\varphi,\alpha\varphi)\right\|_{L^2}^2\\
   &\geq -CR^{-1}\|\varphi\|_{L^2}^2+ \left\|\sqrt{Y\chi_R}(\partial_Y\varphi,\alpha\varphi)\right\|_{L^2}^2.
\end{align*}
Hence, we obtain
\begin{eqnarray}\label{est:nonl-wphi}
\begin{split}
 \left\|\sqrt{Y\chi_R}(\partial_Y\varphi,\alpha\varphi)\right\|_{L^2}^2 \leq&  \mathbf{Re}\left(\langle w,-Y\chi_R\varphi\rangle\right)+CR^{-1}|\alpha|^{-2}\|\alpha\varphi\|_{L^2}^2\\
 \leq&\varepsilon\left|\big\langle(\partial_Y^2-\alpha^2)w, -Y\chi_R\varphi/U\big\rangle \right|+\left|\langle f,-Y\chi_R\varphi/U\rangle\right|\\
 &-\Big\|\sqrt{\frac{U''Y\chi_R}{U}}\varphi\Big\|_{L^2}^2+CR^{-1}|\alpha|^{-2}\|\alpha\varphi\|_{L^2}^2\\
 \leq&\varepsilon\left|\big\langle(\partial_Y^2-\alpha^2)w, -Y\chi_R\varphi/U\big\rangle \right|+\left|\langle f,-Y\chi_R\varphi/U\rangle\right|\\
 &+CR^{-1}|\alpha|^{-2}\|\alpha\varphi\|_{L^2}^2.
\end{split}
\end{eqnarray}

We get by integration by parts that
\begin{align*}
   &\left|\big\langle(\partial_Y^2-\alpha^2)w, -Y\chi_R\varphi/U\big\rangle \right| = \left|\big\langle\partial_Yw, \partial_Y\big(Y\chi_R\varphi/U\big)\big\rangle+ \alpha^2\langle w, Y\chi_R\varphi/U\rangle\right|\\
   &\leq \left|\big\langle(1+Y)\partial_Yw, \big(Y\chi_R/U(1+Y)\big)\partial_Y\varphi\big\rangle+ \langle\alpha(1+Y)w, \big(Y\chi_R/U(1+Y)\big)\alpha\varphi\rangle\right|\\
   &\quad+ \left|\langle\partial_Yw, \partial_Y(Y\chi_R/U)\varphi\rangle\right|\\
   &\leq \left\|\dfrac{Y\chi_R}{(1+Y)U}\right\|_{L^\infty} \bigg(\|(1+Y)\partial_Yw\|_{L^2} \|\partial_Y\varphi\|_{L^2}+\|(1+Y)\alpha w\|_{L^2} \|\alpha\varphi\|_{L^2}\bigg)\\
   &\quad+ \left\|\dfrac{Y\partial_Y(Y\chi_R/U)}{(1+Y)}\right\|_{L^\infty} \|(1+Y)\partial_Yw\|_{L^2}\|\varphi/Y\|_{L^2},
\end{align*}
which along with the fact that $U\geq C^{-1}Y/(1+Y)$, $\big|\partial_Y(Y\chi_R/U)\big|\leq C(1+Y)/Y$, implies
\begin{align}\label{est:nonl-par2wphiu}
  \left|\big\langle(\partial_Y^2-\alpha^2)w, -Y\chi_R\varphi/U\big\rangle \right|\leq C\|(1+Y)(\partial_Yw,\alpha w)\|_{L^2} \|(\partial_Y\varphi,\alpha\varphi)\|_{L^2}.
\end{align}
Using the fact $U\geq C^{-1}Y/(1+Y)$ again, we have
\begin{eqnarray}\label{est:nonl-fYvarphi}
  \begin{split}
     \left|\langle f,-Y\chi_R\varphi/U\rangle\right| &\leq \left\|\dfrac{Y\chi_R}{(1+Y)U}\right\|_{L^\infty}\|(1+Y)Yf\|_{L^2} \|\varphi/Y\|_{L^2}\\
   &\leq C\|(1+Y)^2f\|_{L^2}\|\partial_Y\varphi\|_{L^2}.
  \end{split}
\end{eqnarray}
Summing up \eqref{est:nonl-wphi}, \eqref{est:nonl-par2wphiu} and \eqref{est:nonl-fYvarphi}, we arrive at
\begin{align*}
   &\left\|\sqrt{Y\chi_R}(\partial_Y\varphi,\alpha\varphi)\right\|_{L^2}^2\\
   &\leq C\|(\partial_Y\varphi,\alpha\varphi)\|_{L^2} \big(R^{-1}|\alpha|^{-2}\|\alpha\varphi\|_{L^2}+\varepsilon\|(1+Y)(\partial_Yw,\al w)\|_{L^2} +\|(1+Y)^2f\|_{L^2}\big),
\end{align*}
which gives by taking $R\rightarrow +\infty$ that
\begin{align*}
   &\big\|\sqrt{Y}(\partial_Y\varphi,\alpha\varphi)\big\|_{L^2}^2\leq C\|(\partial_Y\varphi,\alpha\varphi)\|_{L^2}\big( \varepsilon\|(1+Y)(\partial_Yw,\al w)\|_{L^2}+\|(1+Y)^2f\|_{L^2}\big).
\end{align*}
This shows the first inequality of the lemma.

On the other hand, $Y\chi_R w$ satisfies
\begin{align*}
   & \mathrm{i}\varepsilon(\partial_Y^2-\alpha^2)(Y\chi_Rw)+U(Y\chi_Rw)= Y\chi_RU''\varphi+Y\chi_Rf+ 2\mathrm{i}\varepsilon\partial_Y(Y\chi_R)\partial_Yw+\mathrm{i}\varepsilon \partial_Y^2(Y\chi_R)w.
\end{align*}
For the source term on the right hand side, we have
\begin{align*}
   & \left\|Y\chi_RU''\varphi+Y\chi_Rf+ 2\mathrm{i}\varepsilon\partial_Y(Y\chi_R)\partial_Yw+\mathrm{i}\varepsilon \partial_Y^2(Y\chi_R)w \right\|_{L^2}\\
   \leq& C\big(\|\partial_Y\varphi\|_{L^2}+\|Yf\|_{L^2}+\varepsilon\|\partial_Yw\|_{L^2} +R^{-1}\|w\|_{L^2}\big).
\end{align*}
Thanks to $(Y\chi_Rw)|_{Y=0}=0$, we get by Lemma \ref{prop:nonlocal}  that
\begin{align*}
   & \varepsilon^{\f23}\|(\partial_Y,\alpha)(Y\chi_Rw)\|_{L^2} +\varepsilon^{\f13}\|Y\chi_Rw\|_{L^2}\leq C\big(\|\partial_Y\varphi\|_{L^2}+\|Yf\|_{L^2}+\varepsilon\|\partial_Yw\|_{L^2} +R^{-1}\|w\|_{L^2}\big).
\end{align*}
Taking $R\rightarrow +\infty$,  we obtain
\begin{align*}
   & \varepsilon^{\f23}\|Y(\partial_Yw,\alpha w)\|_{L^2}+ \varepsilon^{\f13}\|Yw\|_{L^2}\leq C\big(\|\partial_Y\varphi\|_{L^2}+\|Yf\|_{L^2}+\varepsilon\|\partial_Yw\|_{L^2} \big).
\end{align*}

\end{proof}

Now we consider the case when $|\alpha|$ has upper bound and $\varepsilon$ has lower bound.

\begin{lemma}\label{prop:epsilonbig-alphasmall}
  Let $M_1>0,\ 0<c_3\leq 1$ and $0\neq |\alpha|\leq M_1$.  Assume that the solvalbility assumption (S-A) holds. Let $\varphi$ be the solution to  \eqref{eq:Orr-som-nonslip} with $(1+Y)^2f\in L^2(\mathbb{R}_+)$ and $f\in H_0^1(\mathbb{R}_{+})$. Then it holds that for any $1\geq\varepsilon\geq c_3$,
  \begin{align*}
     & \|w\|_{L^2}+\|(\partial_Y\varphi,\alpha \varphi)\|_{L^2}\leq C\|(1+Y)^2f\|_{L^2},
  \end{align*}
  where constant $C$ depends on $c_3$ and $ M_1$.
\end{lemma}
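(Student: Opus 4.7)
The plan is to argue by contradiction using a compactness method. Suppose the claim fails. Then one can construct sequences $\alpha_n\in[-M_1,M_1]\setminus\{0\}$, $\varepsilon_n\in[c_3,1]$, source terms $f_n\in H^1_0$ with $(1+Y)^2f_n\in L^2$, and corresponding solutions $(\varphi_n,w_n)$ to \eqref{eq:Orr-som-nonslip} satisfying the normalization
\begin{align*}
\|w_n\|_{L^2}+\|(\partial_Y\varphi_n,\alpha_n\varphi_n)\|_{L^2}=1,\qquad \|(1+Y)^2 f_n\|_{L^2}\longrightarrow 0.
\end{align*}
By the compactness of $[-M_1,M_1]\times[c_3,1]$, pass to a subsequence with $\alpha_n\to\alpha_*$ and $\varepsilon_n\to\varepsilon_*\in[c_3,1]$.

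The next step is to upgrade the uniform $H^1$-type bound into a uniform weighted estimate strong enough for compactness. I would first regard the equation as an Airy problem $\mathrm i\varepsilon_n(\partial_Y^2-\alpha_n^2)w_n+Uw_n=U''\varphi_n+f_n$ and apply Lemma \ref{prop:nonlocal} (with Remark \ref{re:airy-part}) to get
\begin{align*}
\varepsilon_n^{1/3}\|w_n\|_{L^2}+\varepsilon_n^{2/3}\|(\partial_Yw_n,\alpha_nw_n)\|_{L^2}\leq C\big(\|U''\varphi_n\|_{L^2}+\|f_n\|_{L^2}\big).
\end{align*}
Using $\varphi_n|_{Y=0}=0$, Hardy's inequality, and $|U''|\leq C(1+Y)^{-3}$, the right-hand side is controlled by $\|\partial_Y\varphi_n\|_{L^2}+\|f_n\|_{L^2}$, which is uniformly bounded. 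Feeding this into Lemma \ref{lem:comp-Yinfty} then yields uniform bounds
\begin{align*}
\|\sqrt{Y}(\partial_Y\varphi_n,\alpha_n\varphi_n)\|_{L^2}+\varepsilon_n^{1/3}\|Yw_n\|_{L^2}+\varepsilon_n^{2/3}\|Y(\partial_Yw_n,\alpha_nw_n)\|_{L^2}\leq C.
\end{align*}

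Now extract a further subsequence so that $\varphi_n\rightharpoonup\varphi_*$ weakly in $H^1_{\text{loc}}$ with $Yw_n\rightharpoonup Yw_*$ and $w_n\rightharpoonup w_*$ weakly in $L^2$. The weighted tail bounds yield uniform tightness at infinity: for any $\eta>0$, there is $R$ so that the portion of the normalized $L^2$-norm beyond $Y=R$ is at most $\eta$, uniformly in $n$. Combined with Rellich compactness on $[0,R]$ (using local $H^2$-bounds on $\varphi_n$ coming from $w_n$ bounded in $L^2$ and $\alpha_n$ bounded), this upgrades the weak convergence to strong convergence of $w_n\to w_*$ and $(\partial_Y\varphi_n,\alpha_n\varphi_n)\to(\partial_Y\varphi_*,\alpha_*\varphi_*)$ in $L^2(\mathbb R_+)$. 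In particular the limit preserves the normalization:
\begin{align*}
\|w_*\|_{L^2}+\|(\partial_Y\varphi_*,\alpha_*\varphi_*)\|_{L^2}=1.
\end{align*}
Passing to the limit in the weak formulation of Definition 4.1 (the source term vanishes by assumption, and all coefficients are bounded and smooth), $\varphi_*$ is a nontrivial weak solution of the homogeneous problem \eqref{eq:OS-002} with parameters $(\alpha_*,\varepsilon_*)$. If $\alpha_*\neq 0$, this directly contradicts the Solvability Assumption (S-A). If $\alpha_*=0$, the remark following (S-A) provides a direct energy-method proof that the only weak solution is the trivial one, again a contradiction.

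The main obstacle in this plan is establishing the global (rather than merely local) strong convergence of $w_n$ and $(\partial_Y\varphi_n,\alpha_n\varphi_n)$: weak limits may lose mass at infinity, which would break the nontriviality of $\varphi_*$. This is precisely why Lemma \ref{lem:comp-Yinfty} is indispensable, as it supplies the uniform decay needed to rule out concentration at $Y=+\infty$. A secondary subtlety is the case $\alpha_n\to 0$, where the $L^2$ estimate on $\varphi_n$ itself (not just on $\alpha_n\varphi_n$) is unavailable, but this is harmless because what we actually need in the limit is only $(\partial_Y\varphi_*,\alpha_*\varphi_*)$, and the $\alpha=0$ branch of the homogeneous problem is treated by the remark rather than by (S-A).
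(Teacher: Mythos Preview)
Your overall compactness-by-contradiction strategy is exactly the one in the paper, and Lemma~\ref{lem:comp-Yinfty} is indeed the key source of tightness. There is, however, one genuine technical gap and one point that needs more care.

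\textbf{The gap.} Your appeal to Lemma~\ref{prop:nonlocal} (with Remark~\ref{re:airy-part}) to bound $\|(\partial_Yw_n,\alpha_nw_n)\|_{L^2}$ does not apply as written. That lemma is proved for the systems \eqref{eq:toy0}--\eqref{eq:toy0-1}, which carry a boundary condition on $w$ (either $w|_{Y=0}=0$ or $\partial_Yw|_{Y=0}=0$); the remark relaxes only the hypothesis $\varphi|_{Y=0}=0$, not the one on $w$. Under the non-slip condition $\varphi|_{Y=0}=\partial_Y\varphi|_{Y=0}=0$ you have no such information on $w_n$, so the integration by parts $\langle(\partial_Y^2-\alpha_n^2)w_n,w_n\rangle=-\|(\partial_Yw_n,\alpha_nw_n)\|_{L^2}^2$ picks up an uncontrolled boundary term. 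The paper avoids this by exploiting the lower bound $\varepsilon_n\ge c_3$ directly: from the equation one reads off
\[
\|(\partial_Y^2-\alpha_n^2)w_n\|_{L^2}\le c_3^{-1}\big(\|Uw_n\|_{L^2}+\|U''\varphi_n\|_{L^2}+\|f_n\|_{L^2}\big)\le C,
\]
and then Lemma~\ref{lemma:nonslip-elliptic}, which is tailored precisely to the non-slip conditions $\varphi|_{Y=0}=\partial_Y\varphi|_{Y=0}=0$, delivers the uniform $H^1$ bound on $w_n$. This is the place where $\varepsilon\ge c_3$ enters, and your route via the Airy lemma would only yield bounds with powers of $\varepsilon_n$ anyway.

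\textbf{The degenerate case $\alpha_*=0$.} You correctly flag this as subtle, but the passage to the limit requires more than you sketch. Since $\|\varphi_n\|_{L^2}$ itself is not controlled, one cannot simply speak of a limit $\varphi_*$; the paper defines $\varphi_*(Y)=\int_0^Y u_1(Z)\,\mathrm{d}Z$ from the strong $L^2$ limit $u_1$ of $\partial_Y\varphi_n$, and then checks via Hardy that $\|U''\varphi_n-U''\varphi_*\|_{L^2}\le C\|\partial_Y\varphi_n-u_1\|_{L^2}\to 0$, so that the weak limit equation is genuinely \eqref{eq:OS-002} with $\alpha=0$. For the contradiction one also needs $\alpha_n\varphi_n\to 0$ strongly in $L^2$ (so the normalization survives as $\|w_*\|_{L^2}+\|\partial_Y\varphi_*\|_{L^2}=1$); this follows since $\partial_Y(\alpha_n\varphi_n)=\alpha_n\partial_Y\varphi_n\to 0$ in $L^2$, the trace at $Y=0$ vanishes, and tightness from the weighted bound rules out escape to infinity.
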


\begin{proof}
Without loss of generality, we may assume $\alpha\geq0$. We prove this proposition by a contradiction argument.
 Assume that this proposition is not true. Then there exists a sequence $\{\varepsilon_n,\ \alpha_n,\ \varphi^{(n)},\ w^{(n)},\ f^{(n)}\}$ satisfying
  \begin{align}\label{eq:comp-1}\left\{\begin{aligned}
    &c_3\leq \varepsilon_n\leq 1,\quad 0\neq|\alpha_n|\leq M_1,\\
    &\mathrm{i}\varepsilon_n(\partial_Y^2-\alpha_n^2)w^{(n)}+ Uw^{(n)}-U''\varphi^{(n)}=f^{(n)},\\
    &(\partial_Y^2-\alpha^2_n)\varphi^{(n)}=w^{(n)},\quad \varphi^{(n)}|_{Y=0}=\partial_Y\varphi^{(n)}|_{Y=0}=0,\\
    &\|w^{(n)}\|_{L^2}+ \|(\partial_Y\varphi^{(n)},\alpha_n\varphi^{(n)})\|_{L^2} =1,\quad \|(1+Y)^2f^{(n)}\|_{L^2}\rightarrow 0,
  \end{aligned}\right.
  \end{align}
  such that we can take a subsequence $\{\varepsilon_n,\ \alpha_n,\ \varphi^{(n)},\ w^{(n)},\ f^{(n)}\}$( denoted by the same index) satisfying
  \begin{align}\label{rela:epsilon-n}
     & \varepsilon_n\rightarrow \varepsilon\geq c_3,\quad \alpha_n\rightarrow \alpha\in [0,M_1],\quad\mathrm{as}\quad n\to\infty.
  \end{align}
Then we obtain the following weak convergence results when $n\to\infty$:
  \begin{itemize}
    \item If $\alpha=0$, we have
    \begin{align*}
       & \|\partial_Y\varphi^{(n)}\|_{H^1}+\|\alpha_n\varphi^{(n)}\|_{L^2}\leq C\big(\|w^{(n)}\|_{L^2}+ \|(\partial_Y\varphi^{(n)},\alpha_n\varphi^{(n)})\|_{L^2})\leq C.
    \end{align*}
    Then
    \begin{align*}
       &\partial_Y\varphi^{(n)}\rightharpoonup u_1\quad\text{in}\ H^1_0(\mathbb{R}_+)\quad \text{with}\ u_1\in H^1_0(\mathbb{R}_+),\\
    &\alpha_n\varphi^{(n)}\rightharpoonup \mathrm{i}u_2\quad\text{in}\ H^1_0(\mathbb{R}_+)\quad \text{with}\ \mathrm{i}u_2\in H^1_0(\mathbb{R}_+),
    \end{align*}
    hence,
    \begin{align}\label{rela:weakcon-a=0}
       &(w^{(n)},\partial_Y\varphi^{(n)},\alpha_n\varphi^{(n)}) \rightharpoonup (\partial_Yu_1,u_1,\mathrm{i}u_2)\quad\text{in}\ L^2(\mathbb{R}_+)\quad \text{with}\ (u_1,u_2)\in H^1_0(\mathbb{R}_+).
    \end{align}

    \item If $\alpha\neq 0,$ we may assume that $\alpha_n>0,$ and thanks to $\alpha_n\geq \alpha/2$ for large $n$, we can deduce that
  \begin{align*}
     &\|\varphi^{(n)}\|_{H^2}\leq  (|\alpha_n|^{-1}+1)\big( \|w^{(n)}\|_{L^2}+ \|(\partial_Y\varphi^{(n)},\alpha_n\varphi^{(n)})\|_{L^2}\big)\leq (2\alpha^{-1}+1).
  \end{align*}
  Then $\|\varphi^{(n)}\|_{H^2}$ is uniformly bounded, hence there exists $\varphi\in H^2(\mathbb{R}_+)$, such that $\varphi^{(n)}\rightharpoonup \varphi$ in $H^2(\mathbb{R}_+)$. We get
  \begin{align}\label{rela:weakcon-aneq0}
     &(w^{(n)},\partial_Y\varphi^{(n)},\alpha_n\varphi^{(n)})\rightharpoonup ((\partial_Y^2-\alpha^2)\varphi,\partial_Y\varphi,\alpha\varphi) := (w,u_1,\mathrm{i}u_2) \quad\text{in}\ L^2(\mathbb{R}_+),\\
     & \text{with}\ \varphi\in H^2(\mathbb{R}_+)\,\textrm{and}\, (w,u_1,\mathrm{i}u_2)\in L^2(\mathbb{R}_+).\nonumber
  \end{align}
  \end{itemize}

 \no\textbf{Step 1. Strong convergence}.

  Since $w,\varphi$ and $f$ all belong to $L^2(\mathbb{R}_+)$, we know that $(\partial_y^2-\alpha^2)w$  is also in $L^2(\mathbb{R}_+)$. Moreover, the following bound holds
  \begin{align*}
     \varepsilon_n\|(\partial_Y^2-\alpha_n^2)w^{(n)}\|_{L^2}\leq & \|Uw^{(n)}\|_{L^2}+\|U''\varphi^{(n)}\|_{L^2}+\|f^{(n)}\|_{L^2}\\
     \leq& C(\|w^{(n)}\|_{L^2}+\|\partial_Y\varphi^{(n)}\|_{L^2})+\|f^{(n)}\|_{L^2}.
  \end{align*}
Thanks to $\varepsilon_n\geq c_3>0$, we deduce that $\|(\partial_Y^2-\alpha_n^2)w^{(n)}\|_{L^2}$ are uniformly bounded with respect to $n$. We get by Lemma \ref{lemma:nonslip-elliptic} that
  \begin{align*}
     &\|\partial_Yw^{(n)}\|_{L^2}\leq C\|(\partial_Y^2-\alpha^2_n)w^{(n)}\|_{L^2}^{\f12}\|w^{(n)}\|_{L^2}^{\f12}.
   \end{align*}
   Then we have
   \begin{align}
    \|\partial_Yw^{(n)}\|_{L^2}\ \text{are uniformly bounded}.\label{est:comp-parw-bounded}
  \end{align}
  Thanks to $\|\partial_Y(\partial_Y\varphi^{(n)},\alpha_n\varphi^{(n)})\|_{L^2}\leq C\|w^{(n)}\|_{L^2}\leq C$, we get
  \begin{align}
     & \|\partial_Y(\partial_Y\varphi^{(n)},\alpha_n\varphi^{(n)})\|_{L^2}\ \text{are uniformly bounded}.\label{est:comp-par(parphi)-bounded}
  \end{align}
  Thanks to Lemma \ref{lem:comp-Yinfty}, we get
  \begin{align*}
     &\big\|\sqrt{Y}(\partial_Y\varphi^{(n)},\alpha_n\varphi^{(n)})\big\|_{L^2}^2\\
     &\leq C\|(\partial_Y\varphi^{(n)},\alpha_n\varphi^{(n)})\|_{L^2}\big( \varepsilon_n\|(1+Y)(\partial_Yw^{(n)},\al_n w^{(n)})\|_{L^2}+\|(1+Y)^2f^{(n)}\|_{L^2}\big),
  \end{align*}
  and
  \begin{align*}
     &\varepsilon_n\|Y(\partial_Y^2-\alpha_n^2) w^{(n)})\|_{L^2}+ \varepsilon^{\f13}_n\|Yw^{(n)}\|_{L^2}\leq C\big(\|\partial_Y\varphi^{(n)}\|_{L^2}+\|Yf^{(n)}\|_{L^2}+ \varepsilon_n\|\partial_Yw^{(n)}\|_{L^2} \big).
  \end{align*}
  Summing up, we arrive at
  \begin{align}\label{est:Y-bounded}
     &\|(1+Y)(\partial_Y^2-\alpha_n^2)w^{n}\|_{L^2}+\big\|\sqrt{Y}(\partial_Y\varphi^{(n)},\alpha_n\varphi^{(n)})\big\|_{L^2} +\|Yw^{(n)}\|_{L^2}
  \end{align}
are uniformly bounded. Then
  \begin{align}\label{est:comp-parvarphi-w}
     &\lim_{R\rightarrow +\infty}\sup_{n}\big(\|(w^{(n)},\partial_Y\varphi^{(n)}, \alpha_n\varphi^{(n)})\|_{L^2[R,+\infty)}\big)=0.
  \end{align}
  Along with \eqref{est:comp-parw-bounded}, \eqref{est:comp-par(parphi)-bounded} and \eqref{est:comp-parvarphi-w}, we deduce that as $n\to+\infty$,
  \begin{align}\label{est:comp-strong-a=0}
     & (w^{(n)},\partial_Y\varphi^{(n)},\alpha_n\varphi^{(n)})\rightarrow (\partial_Yu_1,u_1,\mathrm{i}u_2)\quad \text{strongly in}\ L^2(\mathbb{R}_+).
  \end{align}
  Moreover if $\alpha\neq0$, we have
  \begin{align}\label{est:comp-strong-aneq0}
     & \varphi^{(n)}\rightarrow \varphi\quad \text{strongly in}\ H^2(\mathbb{R}_+).
  \end{align}

 \no\textbf{Step 2. The limit  equation.}

  Let $\xi$ be any $C_c^{\infty}(\mathbb{R}_+)$ test function. Then we have
  \begin{align*}
     & \mathrm{i}\varepsilon_n \langle w^{(n)}, (\partial_Y^2-\alpha^2_n)\xi\rangle+ \langle Uw^{(n)},\xi\rangle-\langle U''\varphi^{(n)},\xi\rangle=\langle f^{(n)},\xi\rangle,\quad w^{(n)}=(\partial_Y^2-\alpha^2_n)\varphi^{(n)}.
  \end{align*}
\begin{itemize}
    \item
  If $\alpha\neq 0$, by \eqref{est:comp-strong-aneq0} and passing to the limit for $n\to\infty$, we get
  \begin{align*}
     & \mathrm{i}\varepsilon \langle w, (\partial_Y^2-\alpha^2)\xi\rangle+ \langle Uw,\xi\rangle-\langle U''\varphi,\xi\rangle=0,\quad w=(\partial_Y^2-\alpha^2)\varphi.
  \end{align*}
For the boundary condition, thanks to $\varphi^{(n)}\in H^2_{0}(\mathbb{R}_+),\ \partial_Y\varphi^{(n)}\in H^1_{0}(\mathbb{R}_+)$ and \eqref{est:comp-strong-aneq0}, we deduce that $\varphi\in H^2_{0}(\mathbb{R}_+),\ \partial_Y\varphi\in H^1_{0}(\mathbb{R}_+)$.
This means that $\varphi$ is a  weak solution to \eqref{eq:OS-002}. By the solvability assumption (S-A), we get $\varphi=0.$

\item
If $\alpha=0,$ we define $\varphi(Y)=\int_{0}^{Y}u_1(Z)\mathrm{d}Z$.  By \eqref{est:comp-strong-a=0}, we get
\begin{align*}
   & \mathrm{i}\varepsilon_n \langle w^{(n)}, (\partial_Y^2-\alpha^2_n)\xi\rangle+ \langle Uw^{(n)},\xi\rangle\longrightarrow \mathrm{i}\varepsilon \langle \partial_Yu_1,\partial_Y^2\xi\rangle+ \langle U\partial_Yu_1,\xi\rangle.
\end{align*}
On the other hand, we notice that
\begin{align*}
    &\big\|U''\varphi^{(n)}-U''\int_{0}^{Y}u_1(Z)\mathrm{d}Z\big\|_{L^2} \leq C\bigg\|\dfrac{\varphi^{(n)}-\int_{0}^{Y}u_1(Z)\mathrm{d}Z}{Y}\bigg\|_{L^2} \\
   &\leq C\|\partial_Y\varphi^{(n)}-u_1\|_{L^2}\longrightarrow 0,
\end{align*}
which implies that
\begin{align*}
   & \langle U''\varphi^{n},\xi\rangle \longrightarrow \Big\langle U''\int_{0}^{Y}u_1(Z)\mathrm{d}Z,\xi\Big\rangle,\quad \langle f^{n},\xi\rangle \longrightarrow 0.
\end{align*}
Thus, we have $w=\partial_Y^2\varphi$, $\varphi\in H^2_0(\mathbb{R}_+)$, $\partial_Y\varphi\in H^1_0(\mathbb{R}_+)$ and
\begin{align*}
        &  \mathrm{i}\varepsilon \langle w, \partial_Y^2\xi\rangle+ \langle Uw,\xi\rangle-\langle U''\varphi,\xi\rangle=0.
     \end{align*}
Then $\varphi$ is a weak solution to \ref{eq:OS-002} with $\alpha=0$, and $\varphi=0$ by (S-A).
\end{itemize}

\no\textbf{Step 3. Contradiction.}

\begin{itemize}
  \item If $\alpha\neq0$, we have
  \begin{align*}
     & (1+\alpha^2_n)\|\varphi^{(n)}\|_{H^2}\geq C^{-1}\big(\|w^{(n)}\|_{L^2}+ \|(\partial_Y\varphi^{(n)},\alpha_n\varphi^{(n)})\|_{L^2}\big)\geq C^{-1},
  \end{align*}
  which along with  \eqref{est:comp-strong-aneq0} and $\varphi=0$ implies
  \begin{align*}
     &0=(1+\alpha^2)\|\varphi\|_{H^2}=\lim_{n}(1+\alpha^2_n)\|\varphi^{(n)}\|_{H^2} \geq C^{-1}>0,
  \end{align*}
 which leads to a contradiction.

  \item If $\alpha=0$, by \eqref{est:comp-strong-a=0} and $\varphi=0$,
  \begin{align*}
     &0=\|(\partial^2_Y\varphi,\partial_Y\varphi)\|_{L^2}=\lim_{n\rightarrow +\infty}\|(w^{(n)},\partial_Y\varphi^{(n)})\|_{L^2}\geq C^{-1}>0.
  \end{align*}
   This is a contradiction.
\end{itemize}

This finishes the proof of  the lemma.
\end{proof}

Next  we consider the case when $|\alpha|$ is large enough and $\varepsilon$ has lower bound.

\begin{lemma}\label{prop:epsilonbig-alphabig}
  Let $0<c_3\leq 1$ and $c_3\leq\varepsilon \leq 1$. Let $(1+Y)^2f\in L^2(\mathbb{R}_+)$ and $f\in H^1_0(\mathbb{R}_{+})$. Assume that $\varphi$ is the solution to   \eqref{eq:Orr-som-nonslip}. Then there exists $M_2>0$ sufficiently large, such that for any $|\alpha|\geq M_2$, it holds that
  \begin{align*}
     & \|w\|_{L^2}+\|(\partial_Y\varphi,\alpha \varphi)\|_{L^2}\leq C\|f\|_{L^2},
  \end{align*}
  where the constant $C$  depends on $c_3$.
\end{lemma}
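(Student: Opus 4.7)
Because $|\alpha|$ is taken large and $\varepsilon$ is bounded below by $c_3>0$, I will close the estimate by a direct energy argument rather than the contradiction/compactness scheme used in Lemma~\ref{prop:epsilonbig-alphasmall}. The strategy has two ingredients: a Poincar\'e-type inequality that converts control of $w$ into control of $(\partial_Y\varphi,\alpha\varphi)$ and $\varphi$ (gaining a factor of $|\alpha|^{-1}$ each time), and an $L^2$ energy identity obtained by testing the equation against $\bar\varphi$.

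First, I pair $w=(\partial_Y^2-\alpha^2)\varphi$ with $-\bar\varphi$ and integrate by parts; the non-slip boundary conditions $\varphi(0)=\partial_Y\varphi(0)=0$ kill every boundary contribution and give the identity $\|(\partial_Y\varphi,\alpha\varphi)\|_{L^2}^2=-\mathrm{Re}\langle w,\varphi\rangle\leq\|w\|_{L^2}\|\varphi\|_{L^2}$. Combined with the trivial bound $\|\varphi\|_{L^2}\leq|\alpha|^{-1}\|(\partial_Y\varphi,\alpha\varphi)\|_{L^2}$, this yields
\begin{align*}
  \|(\partial_Y\varphi,\alpha\varphi)\|_{L^2}\leq|\alpha|^{-1}\|w\|_{L^2},\qquad \|\varphi\|_{L^2}\leq|\alpha|^{-2}\|w\|_{L^2}.
\end{align*}

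Second, I test \eqref{eq:Orr-som-nonslip} against $\bar\varphi$. Using the non-slip BC again, $\langle(\partial_Y^2-\alpha^2)w,\varphi\rangle=\|w\|_{L^2}^2$, and one integration by parts gives $\langle Uw,\varphi\rangle=-\|\sqrt U(\partial_Y\varphi,\alpha\varphi)\|_{L^2}^2-\int_0^\infty U'\bar\varphi\,\partial_Y\varphi\,dY$. Taking the imaginary part of the resulting scalar identity, the Hermitian $\sqrt U$-piece and the $U''$-piece drop, leaving
\begin{align*}
  \varepsilon\|w\|_{L^2}^2=\mathrm{Im}\langle f,\varphi\rangle+\mathrm{Im}\int_0^\infty U'\bar\varphi\,\partial_Y\varphi\,dY.
\end{align*}
Since $\|U'\|_{L^\infty}\leq C$ by \eqref{ass:U}, plugging in the two bounds from the first step produces
\begin{align*}
  \varepsilon\|w\|_{L^2}^2\leq|\alpha|^{-2}\|f\|_{L^2}\|w\|_{L^2}+C|\alpha|^{-3}\|w\|_{L^2}^2.
\end{align*}

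Choosing $M_2$ so that $CM_2^{-3}\leq c_3/2$, the last term is absorbed by the left-hand side via $\varepsilon\geq c_3$, and I obtain $\|w\|_{L^2}\leq C\|f\|_{L^2}$ and then $\|(\partial_Y\varphi,\alpha\varphi)\|_{L^2}\leq C\|f\|_{L^2}$ from the first step, as claimed. There is no serious obstacle: the only things to watch are that every boundary contribution in the various integrations by parts vanishes (non-slip BC at $Y=0$ and the $(1+Y)^{-3}$ decay of $U'$ at infinity from \eqref{ass:U} take care of this), and that the cross terms carry enough negative powers of $|\alpha|$ to be absorbed, which is precisely what the Poincar\'e-type step supplies. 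Compactness is not needed in this regime because the high-frequency limit provides outright coercivity.
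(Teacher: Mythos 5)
Your proof is correct, and it closes the estimate along a different path from the paper's. The paper reads off the $L^2$ bound $\varepsilon\|(\partial_Y^2-\alpha^2)w\|_{L^2}\leq\|Uw\|_{L^2}+\|U''\varphi\|_{L^2}+\|f\|_{L^2}$ directly from the equation and then invokes the elliptic regularity estimate of Lemma \ref{lemma:nonslip-elliptic}, which supplies the crucial conversion $\alpha^2\|w\|_{L^2}\leq C\|(\partial_Y^2-\alpha^2)w\|_{L^2}$; the absorption at large $|\alpha|$ then proceeds exactly as in your final step. You instead pair the whole operator against $\bar\varphi$ and extract $\varepsilon\|w\|_{L^2}^2$ from the imaginary part, using only the elementary Poincar\'e-type bound $\|(\partial_Y\varphi,\alpha\varphi)\|_{L^2}\leq|\alpha|^{-1}\|w\|_{L^2}$ (itself obtained by pairing $w$ with $-\bar\varphi$) to make the cross terms small. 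The paper's route is a one-liner once the appendix lemma is available and avoids tracking real/imaginary parts; yours is self-contained, does not rely on the elliptic lemma, and is closer in spirit to the energy identity \eqref{est:Origial-Im} used later at the $u_n$-level. Both arguments actually yield $\|w\|_{L^2}\lesssim|\alpha|^{-2}\|f\|_{L^2}$, so neither loses anything. Your bookkeeping of the boundary terms is right: $\varphi(0)=\partial_Y\varphi(0)=0$ kills all boundary contributions in the biharmonic pairing, and $U(0)=0$ together with the decay of $\varphi\in H^4$ at infinity handles the $Uw$-term.
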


\begin{proof}
  Notice that 
  \begin{align*}
     & \varepsilon\|(\partial_Y^2-\alpha^2)w\|_{L^2}\leq \|Uw\|_{L^2}+\|U''\varphi\|_{L^2}+ \|f\|_{L^2},
  \end{align*}
  which along with Lemma \ref{lemma:nonslip-elliptic} gives
  \begin{align*}
     &c_3\alpha^2\|w\|_{L^2}\leq C\varepsilon\|(\partial_Y^2-\alpha^2)w\|_{L^2}\leq C( \|w\|_{L^2}+\alpha^{-2}\|\alpha^2\varphi\|_{L^2}+\|f\|_{L^2}).
  \end{align*}
  Then we have
  \begin{align*}
     & c_3\alpha^2\|w\|_{L^2}\leq C\big(1+M_2^{-2}\big)\|w\|_{L^2}+C\|f\|_{L^2}.
  \end{align*}
 Choosing $M_2$ sufficiently large so that $c_3M_2^2\geq 2C\big(1+M_2^{-2}\big)$, we obtain
  \begin{align*}
     &  c_3\alpha^2\|w\|_{L^2}\leq C\|f\|_{L^2}.
  \end{align*}
This along with $\|(\partial_Y\varphi,\alpha\varphi)\|_{L^2}\leq |\alpha|^{-1}\|w\|_{L^2}$ gives our result.
\end{proof}

\begin{proposition}\label{prop:epsilonbig-H-1}
Let $\phi$ solve \eqref{eq:OS} with $(f_1,f_2)\in L^2(\mathbb{R}_{+}^2)$. For any fixed $0<c_3\leq 1$, if $c_3\leq\varepsilon \leq 1$, $\alpha\neq 0$, then we have
 \begin{align*}
    &|\alpha|\|(\partial_Y^2-\alpha^2)\phi\|_{L^2} +|\alpha|\|(\partial_Y\varphi,\alpha\varphi)\|_{L^\infty}+ |\alpha|\|(\partial_Y\phi,\alpha\phi)\|_{L^2} \leq C\|(f_1,f_2)\|_{L^2},
  \end{align*}
  where the constant $C$  depends on $c_3$.
\end{proposition}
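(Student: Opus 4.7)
The plan is to follow the same decomposition $\phi = \phi_0 + \phi_1$ used in the proof of Proposition \ref{prop:os-nonslip-small}, but replace the small-$\varepsilon$ estimate by the compactness-based resolvent bounds of this subsection. Since Lemma \ref{prop:phi0} is valid for all $0 < \varepsilon \leq 1$ and the prefactors $\varepsilon^{1/3}$, $\varepsilon^{2/3}$ are harmless under $\varepsilon \geq c_3$, I obtain immediately
\begin{align*}
\|(\partial_Y \phi_0, \alpha \phi_0)\|_{L^2} + \|(\partial_Y^2 - \alpha^2)\phi_0\|_{L^2} \leq C(c_3)\,|\alpha|^{-1}\|(f_1, f_2)\|_{L^2}.
\end{align*}

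The source term for $\phi_1$ is $g := \partial_Y(U'\phi_0) = U''\phi_0 + U'\partial_Y \phi_0$; the vanishing $\phi_0(0) = \partial_Y \phi_0(0) = 0$ together with the decay $|\partial_Y^k U| \leq C(1+Y)^{-3}$ and Hardy's inequality yields $g(0) = 0$, $g \in H^1_0(\mathbb{R}_+)$, and the weighted bound $\|(1+Y)^2 g\|_{L^2} \leq C \|\partial_Y \phi_0\|_{L^2} \leq C|\alpha|^{-1}\|(f_1,f_2)\|_{L^2}$. Let $M_2$ be the threshold from Lemma \ref{prop:epsilonbig-alphabig}. For $|\alpha| \geq M_2$ I invoke that lemma directly on the system for $\phi_1$; for $0 < |\alpha| \leq M_2$ I apply Lemma \ref{prop:epsilonbig-alphasmall} with $M_1 := M_2$. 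In both cases I obtain
\begin{align*}
\|w_1\|_{L^2} + \|(\partial_Y \phi_1, \alpha \phi_1)\|_{L^2} \leq C\,|\alpha|^{-1}\|(f_1,f_2)\|_{L^2}.
\end{align*}
Summing the $\phi_0$ and $\phi_1$ estimates and multiplying by $|\alpha|$ delivers the $L^2$ portions of the proposition.

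For the $L^\infty$ term, I would use the 1D embedding $\|h\|_{L^\infty}^2 \leq 2\|h\|_{L^2}\|\partial_Y h\|_{L^2}$ applied to $h = \partial_Y \phi$ and $h = \alpha \phi$, combined with the identity
\begin{align*}
\|w\|_{L^2}^2 = \|\partial_Y^2 \phi\|_{L^2}^2 + 2\alpha^2 \|\partial_Y \phi\|_{L^2}^2 + \alpha^4 \|\phi\|_{L^2}^2,
\end{align*}
which follows from integration by parts using $\phi(0) = \partial_Y \phi(0) = 0$ and gives in particular $\|\partial_Y^2 \phi\|_{L^2} \leq \|w\|_{L^2}$ and $|\alpha|\|\partial_Y \phi\|_{L^2} \leq \|w\|_{L^2}/\sqrt{2}$. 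This reduces $|\alpha|^2 \|(\partial_Y \phi, \alpha \phi)\|_{L^\infty}^2$ to products of the quantities $|\alpha|\|\partial_Y \phi\|_{L^2}$, $|\alpha|\|\alpha\phi\|_{L^2}$ and $|\alpha|\|w\|_{L^2}$ that have just been controlled, yielding $|\alpha|\|(\partial_Y \phi, \alpha \phi)\|_{L^\infty} \leq C\|(f_1,f_2)\|_{L^2}$.

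There is no real obstacle here; the proof is essentially assembly. The only point worth emphasising is the invocation of Lemma \ref{prop:epsilonbig-alphasmall}, which relies on the solvability assumption (S-A); this is legitimate because the spectral condition $\theta \in \Sigma(U, \nu)$ appearing in Theorem \ref{th:main} is exactly what guarantees (S-A). Accordingly, all the genuine analytic work has already been packaged into Lemmas \ref{prop:epsilonbig-alphasmall} and \ref{prop:epsilonbig-alphabig}, and the remaining task for this proposition is purely combinatorial.
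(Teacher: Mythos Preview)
Your proposal is correct and follows essentially the same route as the paper: decompose $\phi=\phi_0+\phi_1$, bound $\phi_0$ via Lemma \ref{prop:phi0} (with the $\varepsilon$-powers absorbed into $C(c_3)$), bound the weighted source $\|(1+Y)^2\partial_Y(U'\phi_0)\|_{L^2}\le C\|\partial_Y\phi_0\|_{L^2}$, and feed it into Lemmas \ref{prop:epsilonbig-alphasmall}/\ref{prop:epsilonbig-alphabig} for $\phi_1$. Your treatment of the $L^\infty$ term via the Gagliardo--Nirenberg interpolation and the identity for $\|w\|_{L^2}^2$ is slightly more explicit than the paper, which simply writes ``the interpolation'', but the content is the same.
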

\begin{proof}
  We decompose $\phi$ as $\phi=\phi_0+\phi_1$, where $\phi_0,\ \phi_1$ solve \eqref{eq:Orr-phi0} and \eqref{eq:Orr-phi1}.
  By Lemma \ref{prop:epsilonbig-alphasmall} and Lemma  \ref{prop:epsilonbig-alphabig}, we have
  \begin{align*}
      & \|(\partial_Y^2-\alpha^2)\phi_1\|_{L^2} +\|(\partial_Y\phi_1,\alpha\phi_1)\|_{L^\infty}+ \|(\partial_Y\phi_1,\alpha\phi_1)\|_{L^2}
   \leq C\|(1+Y)^2\partial_Y(U'\phi_0)\|_{L^2}.
   \end{align*}
  By Lemma \ref{prop:phi0} and the interpolation, we have
   \begin{align*}
      & \|(\partial_Y^2-\alpha^2)\phi_0\|_{L^2} +\|(\partial_Y\phi_0,\alpha\phi_0)\|_{L^\infty} +\|(\partial_Y\phi_0,\alpha\phi_0)\|_{L^2}\\
      &\leq C\big(\varepsilon^{\f13}\|(\partial_Y\phi_0,\alpha\phi_0)\|_{L^2}  +\varepsilon^{\f23}\|(\partial_Y^2-\alpha^2)\phi_0\|_{L^2}\big)\\
      &\leq C|\alpha|^{-1}\|(f_1,f_2)\|_{L^2}.
   \end{align*}
   Summing up the above two inequalities, we finish the proof.
\end{proof}

\section{Nonlinear stability}

This section is devoted to the proof of Theorem \ref{th:main}.

\subsection{Estimate for zero mode}
 In this case, the linearized system \eqref{eq:per-n-mode} can be written as
\begin{align*}
\left\{\begin{aligned}
&\nu\partial_y^2 u_{0,1}=f_{0,1},\quad u_{2,0}=0,\quad\partial_y p_0=f_{0,2},\\
&u_{0,1}|_{y=0}=0.
\end{aligned}\right.
\end{align*}

\begin{proposition}\label{th:linear}
Let $f_0\in L^1(\mathbb{R}_+)$ and $f_{0,1}=\partial_y F_{0,1}$ with $F_{0,1}\in L^1(\mathbb{R}_+)\cap L^2(\mathbb{R}_+)$. Then there exists a unique solution $u_0=(u_{0,1},0)$ to \eqref{eq:per-n-mode} with $\tilde{n}=0$ such that
\begin{align*}
\|u_{0,1}\|_{L^\infty}\leq \nu^{-1}\|F_{0,1}\|_{L^1},
\end{align*}
\begin{align*}
\|\partial_y u_{0,1}\|_{L^2}\leq\nu^{-1}\|F_{0,1}\|_{L^2}.
\end{align*}
Moreover, we have
\begin{align*}
\lim_{y\to+\infty} u_{0,1}=\nu^{-1}\int_0^{+\infty} F_{0,1}dy.
\end{align*}
\end{proposition}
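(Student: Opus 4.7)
The plan is conceptually very simple, since at the zero Fourier mode the linearized system collapses to an elementary ODE. First I would observe that the divergence-free condition $\partial_y u_{0,2}=0$ together with the no-slip boundary condition $u_{0,2}(0)=0$ immediately forces $u_{0,2}\equiv 0$, which eliminates the transport term $u_{0,2}\partial_yU$ and reduces the first component equation to the scalar ODE $\nu\partial_y^2 u_{0,1}=f_{0,1}=\partial_y F_{0,1}$ on $\mathbb{R}_+$ with $u_{0,1}(0)=0$. The second component becomes $\partial_y p_0=f_{0,2}$, which is solved (up to an additive constant) by direct integration and plays no role in the stated estimates.

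The natural construction for $u_{0,1}$ is by explicit integration: set
\begin{align*}
u_{0,1}(y):=\nu^{-1}\int_0^y F_{0,1}(z)\,\mathrm{d}z.
\end{align*}
This automatically satisfies $u_{0,1}(0)=0$, and differentiating once gives $\nu\partial_y u_{0,1}=F_{0,1}$, so one more derivative recovers $\nu\partial_y^2u_{0,1}=\partial_y F_{0,1}=f_{0,1}$. All three advertised bounds then drop out essentially immediately: the $L^\infty$ bound from $|u_{0,1}(y)|\le \nu^{-1}\int_0^y|F_{0,1}|\le\nu^{-1}\|F_{0,1}\|_{L^1}$; the $L^2$ bound on the derivative from the identity $\partial_y u_{0,1}=\nu^{-1}F_{0,1}$; and the limit at infinity from the $L^1$ hypothesis on $F_{0,1}$, which makes $\int_0^\infty F_{0,1}\,\mathrm{d}z$ well defined as an absolutely convergent integral and gives $\lim_{y\to\infty}u_{0,1}(y)=\nu^{-1}\int_0^\infty F_{0,1}\,\mathrm{d}z$.

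For uniqueness, the difference $v$ of any two solutions satisfies $\partial_y^2 v=0$ with $v(0)=0$, so $v(y)=cy$ for some constant $c$. Membership in the function class where the stated bounds (in particular the $L^\infty$ bound on $u_{0,1}$, or equivalently the existence of a finite limit at infinity) make sense forces $c=0$, giving uniqueness.

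Honestly, there is no serious obstacle in this proposition: it is a routine bookkeeping step to separate off the $\mathcal{P}_0$ component before the substantive non-zero-mode analysis of Section 4 kicks in. The only point requiring any care is to choose the integration constant consistently with the decay condition at infinity, and the $L^1$ assumption on $F_{0,1}$ is precisely what guarantees the limit $\lim_{y\to\infty}u_{0,1}$ exists and matches the stated formula.
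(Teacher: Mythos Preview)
Your proposal is correct and follows essentially the same route as the paper: both reduce to the explicit representation $u_{0,1}(y)=\nu^{-1}\int_0^y F_{0,1}(z)\,\mathrm{d}z$ and read off the three bounds directly from it. Your write-up is in fact more complete than the paper's, which omits the reduction $u_{0,2}\equiv 0$ and the uniqueness argument you supply.
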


\begin{proof}
Notice that $u_{0,1}$ can be represented as
\begin{align*}
u_{0,1}(y)=-\nu^{-1}\int_0^y\int_{y'}^{+\infty} f_{0,1}(y'')dy''dy'=\nu^{-1}\int_0^y F_{0,1}(y')dy',
\end{align*}
which along with $F_{0,1}\in L^1$ implies that $\|u_{0,1}\|_{L^\infty}\leq\nu^{-1}\|F_{0,1}\|_{L^1}$ and $\lim_{y\to+\infty} u_{0,1}=\nu^{-1}\int_0^{+\infty}F_{0,1}dy$. Moreover, since $\partial_y u_{0,1}(y)=\nu^{-1}F_{0,1}(y)$ and $F_{0,1}\in L^2$, we have $\|\partial_y u_{0,1}\|_{L^2}\leq\nu^{-1}\|F_{0,1}\|_{L^2}$.
\end{proof}

\subsection{Estimates for non-zero modes}

\begin{proposition}\label{thm:main-linear}
There exist positive number $\theta_0$, $\nu_0$ and $\delta_0$ so that the following statements hold.
For any $0<\nu\leq\nu_0$, $(0,\theta_0]\subset \Sigma(U,\nu)$. Moreover, for any $f_n\in L^2(\mathbb{R}_+)$, there holds that 
\begin{enumerate}
\item if $0<|\tilde{n}|\leq \delta_0\nu^{-\f34}$ and $\theta\in(0,\theta_0]$, then
\begin{align*}
|\tilde{n}|^{\f23}\|u_n\|_{L^2}+|\tilde{n}|^{\f13}\nu^{\f12}\|\partial_y u_n\|_{L^2}\leq C\|f_n\|_{L^2}.
\end{align*}
\item if $|\tilde{n}|\geq\delta_0\nu^{-\f34}$ and $\theta\in(0,\theta_0]$, then
\begin{align*}
|\tilde{n}|^{2}\nu\|u_n\|_{L^2}+|\tilde{n}|\nu\|\partial_y u_n\|_{L^2}\leq C\|f_n\|_{L^2}.
\end{align*}
\item if $\theta>\theta_0$ and $\theta\in\Sigma(U,\nu)$,
 \begin{align*}
|\tilde{n}|\|u_n\|_{L^2}+\nu^{\f12}|\tilde{n}|\|\partial_y u_n\|_{L^2}\leq C\|f_n\|_{L^2}.
\end{align*}
\end{enumerate}
\end{proposition}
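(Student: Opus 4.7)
For each nonzero mode, incompressibility together with the Dirichlet condition forces a stream function $\phi_n$ with $u_{n,1}=\partial_y\phi_n$, $u_{n,2}=-\mathrm{i}\tilde n\phi_n$. Eliminating the pressure from \eqref{eq:per-n-mode} reduces it to a scalar fourth-order equation. Rescaling $Y=y/\sqrt{\nu}$, $\phi(Y)=\nu^{-1/2}\phi_n(y)$, $f(Y)=\nu^{1/2}f_n(y)$ puts it in the Orr-Sommerfeld form \eqref{eq:OS} with $\alpha=\tilde n\sqrt{\nu}$, $\varepsilon=1/\tilde n$. Under this rescaling, the $L^2$-norms scale as $\|(\partial_Y\phi,\alpha\phi)\|_{L^2(dY)}=\nu^{-1/4}\|u_n\|_{L^2(dy)}$, $\|w\|_{L^2(dY)}=\nu^{1/4}\|\omega_n\|_{L^2(dy)}$, $\|f\|_{L^2(dY)}=\nu^{1/4}\|f_n\|_{L^2(dy)}$, so that every OS estimate obtained in Sections 2--4 transfers back to one for $u_n$. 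The plan is then to handle each of the three regimes with the matching piece of machinery.

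\textbf{Case (1).} Pick $\delta_0^2\leq \delta_*$ and $\theta_0\leq c_*$, where $\delta_*,c_*$ come from Proposition~\ref{prop:os-nonslip-small}. The assumption $|\tilde n|\leq \delta_0\nu^{-3/4}$ yields $\varepsilon|\alpha|^3=\tilde n^2\nu^{3/2}\leq \delta_*$, and $\theta\leq\theta_0$ yields $\varepsilon=\theta/|n|\leq c_*$, so Proposition~\ref{prop:os-nonslip-small} applies. Its conclusion $\varepsilon^{1/3}|\alpha|\|(\partial_Y\phi,\alpha\phi)\|_{L^2}+\varepsilon^{2/3}|\alpha|\|w\|_{L^2}\leq C\|f\|_{L^2}$ translates directly to the desired bound in (1).

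\textbf{Case (2).} Here the diffusion dominates the shear-stretching and a direct energy estimate on \eqref{eq:per-n-mode} suffices: testing against $\bar{u}_n$ and taking the real part gives
\[
\nu\|\partial_y u_n\|_{L^2}^2+\nu\tilde n^2\|u_n\|_{L^2}^2\leq \|f_n\|_{L^2}\|u_n\|_{L^2}+\|\partial_y U(\cdot/\sqrt{\nu})\|_{L^\infty}\|u_n\|_{L^2}^2.
\]
Since $\|\partial_y U(\cdot/\sqrt{\nu})\|_{L^\infty}\leq C\nu^{-1/2}$ and $|\tilde n|\geq \delta_0\nu^{-3/4}$ gives $\nu\tilde n^2\geq \delta_0^2\nu^{-1/2}$, choosing $\delta_0^2\geq 2C$ lets the diffusion absorb the shear-interaction term, producing $|\tilde n|^2\nu\|u_n\|_{L^2}\leq C\|f_n\|_{L^2}$; reinserting this into the identity controls $\nu|\tilde n|\|\partial_y u_n\|_{L^2}$ as well.

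\textbf{Case (3) and main obstacle.} For $\theta>\theta_0$ with $\theta\in \Sigma(U,\nu)$, decompose the modes into three sub-ranges. For $|\tilde n|\geq \delta_0\nu^{-3/4}$ the argument of Case (2) applies verbatim. For bounded $|\tilde n|$ with $\varepsilon=1/|\tilde n|\in[c_3,1]$, apply Proposition~\ref{prop:epsilonbig-H-1}: its conclusion $|\alpha|\|(\partial_Y\phi,\alpha\phi)\|_{L^2}+|\alpha|\|w\|_{L^2}\leq C\|f\|_{L^2}$ translates precisely into $|\tilde n|\|u_n\|_{L^2}+\nu^{1/2}|\tilde n|\|\partial_y u_n\|_{L^2}\leq C\|f_n\|_{L^2}$, with the Solvability Assumption (S-A) ensured by the hypothesis $\theta\in \Sigma(U,\nu)$---a non-trivial weak solution of the homogeneous system \eqref{eq:OS-002} would, through the stream-function correspondence, produce an eigenvector of $\mathcal L_\nu$ at eigenvalue $0$. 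The hardest point is stitching together these pieces with a constant $C$ independent of $\theta$ and $\nu_0$: one must verify that the spectral condition is uniformly quantitative on the bounded-$\varepsilon$ band and that the threshold separating the direct-energy and compactness regimes can be chosen so that the overlap sub-range is handled by combining both estimates. This $\theta$-uniform synthesis of the compactness result with the high-frequency energy bound is what is genuinely new compared to \cite{GM}, and where the care in applying the spectral assumption is essential.
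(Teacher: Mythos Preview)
Your reduction to the Orr--Sommerfeld equation and your treatment of Case (1) and Case (3) are essentially what the paper does. The real gap is in Case (2).

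You require $\delta_0^2\leq \delta_*$ in Case (1) so that Proposition~\ref{prop:os-nonslip-small} applies, and then in Case (2) you require $\delta_0^2\geq 2C$ with $C=\|\partial_YU\|_{L^\infty}$ so that the diffusive term $\nu\tilde n^2\|u_n\|_{L^2}^2\geq \delta_0^2\nu^{-1/2}\|u_n\|_{L^2}^2$ absorbs the shear-stretching term. These two constraints are incompatible: $\delta_*$ is a small structural constant coming from the boundary-layer construction, while $\|\partial_YU\|_{L^\infty}$ is an $O(1)$ quantity fixed by the profile. No single choice of $\delta_0$ can satisfy both, so the energy argument as you wrote it does not close across the whole range $|\tilde n|\geq \delta_0\nu^{-3/4}$.

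The paper's fix is to split Case (2) into two sub-bands. For $|\tilde n|\geq \delta_0^{-1}\nu^{-3/4}$ one has $\nu\tilde n^2\geq \delta_0^{-2}\nu^{-1/2}$, and now taking $\delta_0\leq \|\partial_YU\|_{L^\infty}^{-1/2}/2$ (a \emph{smallness} condition, compatible with Case (1)) makes the direct energy estimate work. The remaining intermediate band $\delta_0\nu^{-3/4}\leq |\tilde n|\leq \delta_0^{-1}\nu^{-3/4}$, where $|\tilde n|\sim \nu^{-3/4}$, cannot be handled by the real part alone: one must also exploit the imaginary part of $\langle \text{equation},u_n\rangle$, which yields control of $\|\sqrt{U(\cdot/\sqrt\nu)}\,u_n\|_{L^2}$ after integrating by parts and using a smallness of $\nu$ (not of $\delta_0$) to absorb the $U''$ contribution. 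An interpolation between $\|u_n\|_{L^2}$, $\|\sqrt{U}\,u_n\|_{L^2}$, and $\|\partial_y u_n\|_{L^2}$ then closes the loop. This intermediate-band argument is the missing ingredient in your proposal; without it, Case (2) (and hence also the high-frequency part of Case (3), which you defer to Case (2) ``verbatim'') does not go through.
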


\begin{proof}
Let $\delta_0\leq \delta_*$ and we take $\theta_0= c_*$, $\nu_1= \delta_*^{\f43}$, where $c_*$ and $\delta_*$ are small positive number in Proposition \ref{prop:os-nonslip-small}.
For any fixed $0<\nu\leq\nu_1$, the the estimates for the case of $\theta\in(0,\theta_0], 0<|\tilde{n}|\leq\delta_*\nu^{-\f34}$ or $\theta>\theta_0, \theta\in\Sigma(U)$ are deduced from the estimates for the Orr-Sommerfeld equation in Section 5. Indeed,  for each given $\tilde{n}\neq 0$, we have
\begin{align}\label{eq:relation-scale}
u_{n,1}=\partial_Y\phi(\frac{y}{\sqrt{\nu}}),\quad u_{n,2}=-\mathrm{i}\alpha\phi(\frac{y}{\sqrt{\nu}})\quad\mathrm{and}\quad f_n(y)=\nu^{-\f12}f(\frac{y}{\sqrt{\nu}}),
\end{align}
where $(\phi,f)$ satisfies \eqref{eq:OS}.

Notice that $\varepsilon|\alpha|^3\leq \delta_*$ and $\varepsilon\leq c_*$ when $\nu\leq\nu_1$, $\theta\leq\theta_0$ and $0< |\tilde{n}|\leq\delta_*\nu^{-\f34}$. Hence, by Proposition \ref{prop:os-nonslip-small} and \eqref{eq:relation-scale}, we deduce that for any $0<\nu\leq\nu_1$, $\theta\in(0,\theta_0]$ and $0< |\tilde{n}|\leq\delta_*\nu^{-\f34}$, there exists a unique solution $u_n\in H^2\cap H^1_0$. Moreover, there holds 
\begin{align*}
\|u_n\|_{L^2}\leq \nu^{\f14}\|(\partial_Y\phi,\alpha\phi)\|_{L^2}\leq C\nu^{\f14}|\tilde{n}|^{-\f23}\|f_n(\sqrt{\nu}\cdot)\|_{L^2}\leq C|\tilde{n}|^{-\f23}\|f_n\|_{L^2},
\end{align*}
and
\begin{align*}
\|\partial_yu_n\|_{L^2}+|\tilde{n}|\|u_n\|_{L^2}\leq \nu^{-\f14}\|(\partial_Y^2-\alpha^2)\phi\|_{L^2}\leq C\nu^{-\f14}|\tilde{n}|^{-\f13}\|f_n(\sqrt{\nu}\cdot)\|_{L^2}\leq \frac{C\|f_n\|_{L^2}}{|\tilde{n}|^{\f13}\nu^{\f12}}.
\end{align*}

For the case of $\theta>\theta_0$ and $\theta\in\Sigma(U,\nu)$, by Proposition \ref{prop:epsilonbig-H-1} and \eqref{eq:relation-scale}, we have
\begin{align*}
\|u_n\|_{L^2}\leq \nu^{\f14}\|(\partial_Y\phi,\alpha\phi)\|_{L^2}\leq C\nu^{\f14}|\tilde{n}|^{-1}\|f_n(\sqrt{\nu}\cdot)\|_{L^2}\leq C|\tilde{n}|^{-1}\|f_n\|_{L^2},
\end{align*}
and
\begin{align*}
\|\partial_y u_n\|_{L^2}\leq \nu^{-\f14}\|(\partial_Y-\alpha^2)\phi\|_{L^2}\leq C\nu^{-\f14}|\tilde{n}|^{-1}\|f_n(\sqrt{\nu}\cdot)\|_{L^2}\leq  C\nu^{-\f12}|\tilde{n}|^{-1}\|f_n\|_{L^2}.
\end{align*}

It remains to prove the second statement. Instead of considering the Orr-Sommerfeld equation, we are back to the original system \eqref{eq:per-n-mode}. Taking the inner product with $u_n$,  we obtain
\begin{align}\label{est:Origial-Re}
\nu\big(\|\partial_y u_n\|^2_{L^2}+\tilde{n}^2\|u_n\|_{L^2}^2\big)\leq \big|\mathbf{Re}\langle u_{n,2}\partial_y U(\frac{y}{\sqrt{\nu}}),u_{n,1}\rangle\big|+\|f_n\|_{L^2}\|u_n\|_{L^2},
\end{align}
and
\begin{align}\label{est:Origial-Im}
   & \tilde{n}\bigg(\Big\langle
    U(\frac{y}{\sqrt{\nu} })u_n,u_n\Big\rangle-\mathbf{Re}\Big\langle(\partial_yU(\frac{y}{\sqrt{\nu} }))\phi_n,\partial_y\phi_n\Big\rangle \bigg)=\mathbf{Im}\langle f_n,u_n\rangle.
\end{align}

We consider two cases: $(i)\ |\tilde{n}|\geq \delta_0^{-1}\nu^{-\f34}$ and $(ii)\ \delta_0\nu^{-\f34}\leq |\tilde{n}|\leq \delta_0^{-1}\nu^{-\f34}$ where we take $\delta_0= \min\big(\delta_*,\|\partial_YU\|_{L^\infty}^{-\f12}/2\big)$.

For the case $(i)$, we notice that $\delta_0= \min\big(\delta_*,\|\partial_YU\|_{L^\infty}^{-\f12}/2\big)$ Then for any $|\tilde{n}|\geq\delta_0^{-1}\nu^{-\f34}$, we have
\begin{align*}
\big|\mathbf{Re}\langle u_{n,2}\partial_y U(\frac{y}{\sqrt{\nu}}),u_{n,1}\rangle\big|\leq \nu^{-\f12}\|\partial_Y U\|_{L^\infty}\|u_n\|_{L^2}^2\leq \nu\tilde{n}^2\delta_0^2\|\partial_YU\|_{L^\infty}\|u_n\|_{L^2}^2\leq \frac{\nu\tilde{n}^2}{4}\|u_n\|_{L^2}^2,
\end{align*}
which along with \eqref{est:Origial-Re} gives
\begin{align*}
\nu(\|\partial_y u_n\|^2_{L^2}+\tilde{n}^2\|u_n\|_{L^2}^2)\leq C\|f_n\|_{L^2}\|u_n\|_{L^2}.
\end{align*}
This implies  that
\begin{align*}
|\tilde{n}|^{2}\nu\|u_n\|_{L^2}+|\tilde{n}|\nu\|\partial_y u_n\|_{L^2}\leq C\|f_n\|_{L^2}.
\end{align*}

For the case $(ii)$, by \eqref{est:Origial-Im}, we obtain
\begin{align}\label{est:Origial-Im-1}
   &\int_{0}^{+\infty}U(\frac{y}{\sqrt{\nu} })(|\partial_y\phi_n|^2+|\tilde{n}|^2|\phi_n|^2)\mathrm{d}y +\f12 \int_{0}^{+\infty}(\partial_y^2U(\frac{y}{\sqrt{\nu} }))|\phi_n|^2\mathrm{d}y=\dfrac{1}{\tilde{n}} \mathbf{Im}\langle f_n,u_n\rangle.
\end{align}
Let $\chi$ be a cut-off function such that $\chi(Y)=1$ for $0\leq Y\leq 1$ and $\chi(Y)=0$ for $Y\geq 2$. Let $Y_3>0$ such that $\partial_YU>0$ for $Y\in[0,4Y_3]$. Then $|\partial^2_yU(\frac{y}{\sqrt{\nu} })|\leq C\nu^{-\f12}\partial_yU(\frac{y}{\sqrt{\nu} })$ for $0\leq Y\leq 2Y_3\nu^{\f12}$, which gives
\begin{align*}
   \left|\int_{0}^{+\infty}(\partial_y^2U(\frac{y}{\sqrt{\nu} }))|\phi_n|^2\mathrm{d}y \right|\leq& C\nu^{-\f12}\int_{0}^{+\infty}(\partial_yU(\frac{y}{\sqrt{\nu} }))\chi\big(\dfrac{ y}{2\sqrt{\nu}Y_3}\big)|\phi_n|^2\mathrm{d}y +C\nu^{-1}\int_{Y_3\nu^{\f12}}^{+\infty}|\phi_n|^2\mathrm{d}y\\
   =& -C\nu^{-\f12}\int_{0}^{+\infty}U(\frac{y}{\sqrt{\nu} })\partial_y\big(\chi\big(\dfrac{ y}{2\sqrt{\nu}Y_3}\big|\phi_n|^2\big)\big)\mathrm{d}y +C\nu^{-1}\int_{Y_3\nu^{\f12}}^{+\infty}|\phi_n|^2\mathrm{d}y\\
   \leq &C\nu^{-\f12}\int_{0}^{+\infty}|U(\frac{y}{\sqrt{\nu} })|\partial_y\phi_n|\phi_n|\mathrm{d}y +C\nu^{-1}\int_{Y_3\nu^{\f12}}^{+\infty}|\phi_n|^2\mathrm{d}y\\
   \leq & \dfrac{C}{\nu^{\f12}|\tilde{n}|}\Big\|\sqrt{U(\frac{y}{\sqrt{\nu} })}\tilde{n}\phi_n\Big\|_{L^2} \Big\|\sqrt{U(\frac{y}{\sqrt{\nu} })}\partial_y\phi_n\Big\|_{L^2} + \dfrac{C}{\nu|\tilde{n}|^2}\Big\|\sqrt{U(\frac{y}{\sqrt{\nu} })}\tilde{n}\phi_n\Big\|_{L^2}^2\\
   \leq & \dfrac{C\nu^{\f14}}{\delta_0}\Big\|\sqrt{U(\frac{y}{\sqrt{\nu} })}\tilde{n}\phi_n\Big\|_{L^2} \Big\|\sqrt{U(\frac{y}{\sqrt{\nu} })}\partial_y\phi_n\Big\|_{L^2} +\dfrac{C\nu^{\f12}}{\delta_0^2}\Big\|\sqrt{U(\frac{y}{\sqrt{\nu} })}\tilde{n}\phi_n\Big\|_{L^2}^2.
\end{align*}
Take $\nu_0= \min\big(\nu_1, (C^{-1}\delta_0/4)^4\big)$. If $0<\nu\leq \nu_0$, we get by \eqref{est:Origial-Im-1}  that
\begin{align}\label{est:Origial-Im-2}
   \Big\|\sqrt{U(\frac{y}{\sqrt{\nu} })}u_n\Big\|_{L^2}^2&=\Big\|\sqrt{U(\frac{y}{\sqrt{\nu} })}\partial_y\phi_n\Big\|_{L^2}^2+|\tilde{n}|^2\Big\|\sqrt{U(\frac{y}{\sqrt{\nu} })}\phi_n\Big\|_{L^2}^2 \\&\leq C|\tilde{n}|^{-1}\|f_n\|_{L^2}\|u_n\|_{L^2}.\nonumber
\end{align}

Using the fact that $\mathrm{i}\tilde{n} u_{n,1}+\partial_y u_{n,2}=0$ and Hardy's inequality,  we can deduce from \eqref{est:Origial-Re}  that
\begin{align}\label{est:Origial-Re-1}
   \nu(\|\partial_yu_n\|_{L^2}^2+\tilde{n}^2\|u_n\|_{L^2}^2)&\leq C\nu^{-\f14}|\tilde{n}|^{\f12}\|u_n\|_{L^2}\Big\|\sqrt{U(\frac{y}{\sqrt{\nu} })}u_n\Big\|_{L^2} +\|f_n\|_{L^2}\|u_n\|_{L^2}.
\end{align}
As in the proof of  \eqref{est:interpolation-1}, we have
\begin{align}\label{est:interpolation-2}
  &\|u_n\|_{L^2}\leq C\nu^{\f14}|\tilde{n}|^{-\f16}\|\partial_yu_n\|_{L^2}^{\f12}\|u_n\|_{L^2}^{\f12} +C|\tilde{n}|^{\f16}\Big\|\sqrt{U(\frac{y}{\sqrt{\nu} })}u_n\Big\|_{L^2}.
\end{align}
Plugging \eqref{est:Origial-Re-1} into \eqref{est:interpolation-2}, we get
\begin{align*}
  \|u_n\|_{L^2} \leq&  C|\tilde{n}|^{-\f16}\|f_n\|_{L^2}^{\f14}\|u_n\|_{L^2}^{\f34} +C|\tilde{n}|^{\f16}\Big\|\sqrt{U(\frac{y}{\sqrt{\nu} })}u_n\Big\|_{L^2}\\
  &+\nu^{-\f{1}{16}}|\tilde{n}|^{-\f{1}{24}}\Big\|\sqrt{U(\frac{y}{\sqrt{\nu} })}u_n\Big\|_{L^2}^{\f14}\|u_n\|_{L^2}^{\f34},
\end{align*}
which gives
\begin{align*}
   \|u_n\|_{L^2} \leq&  C|\tilde{n}|^{-\f23}\|f_n\|_{L^2} +C(|\tilde{n}|^{\f16}+\nu^{-\f{1}{4}}|\tilde{n}|^{-\f{1}{6}})\Big\|\sqrt{U(\frac{y}{\sqrt{\nu} })}u_n\Big\|_{L^2}.
\end{align*}
This along with \eqref{est:Origial-Im-2} shows
\begin{align*}
   \|u_n\|_{L^2}&\leq C(|\tilde{n}|^{-\f23}+ \nu^{-\f12}|\tilde{n}|^{-\f43})\|f_n\|_{L^2}\leq C|\tilde{n}|^{-2}\nu^{-1}\|f_n\|_{L^2}.
\end{align*}
where in the last line we used $|\tilde{n}|\sim \nu^{-\f34}$ in the case $(ii)$.
Putting this inequality into \eqref{est:Origial-Im-2} and \eqref{est:Origial-Re-1}, and using the fact that $|\tilde{n}|\sim \nu^{-\f34}$,  we conclude that 
\begin{align*}
   & |\tilde{n}|^2\nu\|u_n\|_{L^2}+|\tilde{n}|\nu\|\partial_yu_n\|_{L^2}\leq C\|f_n\|_{L^2}.
\end{align*}

The existence of the solution can be proved by using the method of continuity. After replacing $-\nu(\partial_y^2-\tilde{n}^2)$ by $-\nu(\partial_y^2-\tilde{n}^2)+l$ with $l>0$ ,
it is easy to show that there exists a unique solution for any $f_n\in L^2$ and $l$ large enough, and the above priori estimates still hold true for any $l>0$.
\end{proof}

\subsection{Proof of Theorem \ref{th:main}}
With the estimates for the linearized system, the  proof of nonlinear stability is similar to \cite{GM}. For the completeness, we present a sketch.

We firs introduce the functional space
\begin{align*}
X_{\nu,\varepsilon}:=\big\{u:\|u\|_{X_\nu}\leq\varepsilon\nu^{\f12}|\log\nu|^{-\f12}\big\},
\end{align*}
where
\begin{align*}
\|u\|_{X_\nu}=\|u_{0,1}\|_{L^\infty}+\nu^{\f14}\|\partial_y u_{0,1}\|_{L^2}+\sum_{n\neq0}\|u_n\|_{L^\infty}+\nu^{-\f14}\|\mathcal{Q}_0u\|_{L^2}+\nu^{\f14}\|\nabla\mathcal{Q}_0 u\|_{L^2}.
\end{align*}

For $v\in X_{\nu,\varepsilon}$, we define the map $\Psi[v]=u$ as the solution to the system
\begin{align*}
\left\{\begin{aligned}
&U(\frac{y}{\sqrt{\nu}})\partial_x u+(u_2\partial_yU(\frac{y}{\sqrt{\nu}}),0)-\nu\Delta u+\nabla p=-v \cdot\nabla v+f^\nu,\\
&\nabla\cdot u=0,\\
&u|_{y=0}=0.
\end{aligned}\right.
\end{align*}

Notice that
\begin{align*}
-v\cdot\nabla v=-v_{0,1}\partial_x\mathcal{Q}_0 v-(\mathcal{Q}_0 v_2\partial_y v_{0,1},0)-\mathcal{Q}_0 v\cdot\nabla \mathcal{Q}_0 v,
\end{align*}
which implies 
\begin{align*}
\mathcal{P}_0\big(-v\cdot\nabla v+f^\nu)=\partial_y\big(-\mathcal{P}_0(\mathcal{Q}_0v\cdot\nabla\mathcal{Q}_0 v)\big).
\end{align*}
Then we get by Proposition \ref{th:linear} that 
\begin{align}
\|u_{0,1}\|_{L^\infty}\leq \nu^{-1}\|\mathcal{Q}_0 v\|_{L^2}^2,\quad\|\partial_y u_{0,1}\|_{L^2}\leq\nu^{-1}\|\mathcal{Q}_0 v\|_{L^\infty}\|\mathcal{Q}_0v\|_{L^2}.
\end{align}

For non-zero modes, we have
\begin{align*}
\mathcal{Q}_0\big(-v\cdot\nabla v+f^\nu)=-v_{0,1}\partial_x\mathcal{Q}_0 v-(\mathcal{Q}_0 v_2\partial_y v_{0,1},0)-\mathcal{Q}_0(\mathcal{Q}_0 v\cdot\nabla \mathcal{Q}_0 v)+f^\nu.
\end{align*}
Proposition \ref{thm:main-linear} implies that
\begin{eqnarray}\label{eq:nonlinear-1}
\begin{split}
\|\mathcal{Q}_0 u\|_{L^2}\leq& C\big(\|v_{0,1}\|_{L^\infty}\|\nabla\mathcal{Q}_0 v\|_{L^2}
+\|\mathcal{Q}_0 v\|_{L^\infty}\|\partial_y v_{0,1}\|_{L^2}\\
&\quad+\|\mathcal{Q}_0 v\|_{L^\infty}\|\nabla\mathcal{Q}_0 v\|_{L^2}+\|f^\nu\|_{L^2}\big),
\end{split}
\end{eqnarray}
and
\begin{eqnarray}\label{eq:nonlinar-2}
\begin{split}
\|\nabla\mathcal{Q}_0 u\|_{L^2}\leq& C\nu^{-\f12}\big(\|v_{0,1}\|_{L^\infty}\|\nabla\mathcal{Q}_0 v\|_{L^2}+\|\mathcal{Q}_0v\|_{L^\infty}\|\partial_y v_{0,1}\|_{L^2}\\
&\quad+\|\mathcal{Q}_0 v\|_{L^\infty}\|\nabla\mathcal{Q}_0 v\|_{L^2}+\|f^\nu\|_{L^2}\big).
\end{split}
\end{eqnarray}
For the case of  $n\neq0$,  notice that
\begin{align*}
	\mathcal{P}_n(-v\cdot\nabla v+f^\nu)=-v_{0,1}\partial_x\mathcal{P}_n v-(\mathcal{P}_n v_2\partial_y v_{0,1},0)-\mathcal{P}_n(\mathcal{Q}_0v\cdot\nabla\mathcal{Q}_0 v)+\mathcal{P}_n f^\nu.
\end{align*}
Then it  follows from Proposition \ref{thm:main-linear} and the interpoaltion that  for $0<|\tilde{n}|=n/\theta \leq\delta_0\nu^{-\f34}$,
\begin{align*}
	\|\mathcal{P}_nu\|_{L^{\infty}}\leq& C\nu^{-\f14}|\tilde{n}|^{-\f12}\big(\|v_{0,1}\|_{L^\infty}\|\nabla\mathcal{P}_n v\|_{L^2}+\|\mathcal{P}_n v\|_{L^\infty}\|\partial_y v_{0,1}\|_{L^2}\\
	&+\|\mathcal{P}_n(\mathcal{Q}_0v\cdot\nabla\mathcal{Q}_0 v)\|_{L^2}+\|\mathcal{P}_n f^\nu\|_{L^2}\big),
\end{align*}
and for $|\tilde{n}|\geq \delta_0\nu^{-\f34}$,
\begin{align*}
	\|\mathcal{P}_nu\|_{L^{\infty}}\leq& C\nu^{-1}|\tilde{n}|^{-\f32}\big(\|v_{0,1}\|_{L^\infty}\|\nabla\mathcal{P}_n v\|_{L^2}+\|\mathcal{P}_n v\|_{L^\infty}\|\partial_y v_{0,1}\|_{L^2}\\
	&+\|\mathcal{P}_n(\mathcal{Q}_0v\cdot\nabla\mathcal{Q}_0 v)\|_{L^2}+\|\mathcal{P}_n f^\nu\|_{L^2}\big),
\end{align*}
which imply by the Parseval's equality that
\begin{eqnarray}\label{eq:nonliear-3}
\begin{split}
\sum_{n\neq0}\|\mathcal{P}_n u\|_{L^\infty}\leq& C\nu^{-\f14}\big(|\log\nu|^{\f12}\|v_{0,1}\|_{L^\infty}\|\nabla\mathcal{Q}_0 v\|_{L^2}+\sum_{n\neq0}\|\mathcal{P}_n v\|_{L^\infty}\|\partial_y v_{0,1}\|_{L^2}\\
&+|\log\nu|^{\f12}\|\mathcal{Q}_0 v\|_{L^\infty}\|\nabla\mathcal{Q}_0 v\|_{L^2}+|\log\nu|^{\f12}\|f^\nu\|_{L^2}\big).
\end{split}
\end{eqnarray}

By collecting \eqref{eq:nonlinear-1} to \eqref{eq:nonliear-3}, we arrive at
\begin{align*}
\|\Psi[v]\|_{X_\nu}\leq C\nu^{-\f12}|\log\nu|^{\f12}\|v\|^2_{X_\nu}+C\nu^{-\f14}|\log\nu|^{\f12}
\|f{\color{red}^{\nu}} \|_{L^2},
\end{align*}
and
\begin{align*}
\|\Psi[v]-\Psi[v']\|_{X_\nu}\leq C\nu^{-\f12}|\log\nu|^{\f12}(\|v\|_{X_\nu}+\|v'\|_{X_\nu})\|v-v'\|_{X_\nu}.
\end{align*}
Therefore, $\Psi$ is a contraction from $X_{\nu,\varepsilon}$ to itself if $\varepsilon$ and $\|f^\nu\|_{L^2}$ are small enough. Hence, by the fixed point theorem, for any $f^\nu\in L^2$ with $\|f^\nu\|_{L^2}\leq \varepsilon |\log\nu|^{-1}\nu^{\f34}$, there exists a unique solution $u^\nu$ to \eqref{eq:perturbation-NS} in $X_{\nu,\varepsilon}$. Using the elliptic regularity of the Stokes equation, we obtain $\nabla ^2 u^\nu\in L^2(\Omega_\theta)$. The proof of Theorem \ref{th:main} is completed.

\appendix

\section{Some basic inequalities}

 \begin{lemma}\label{lem:ham-bound}
  If $w\in L^2(\mathbb{R}_+)\cap L^1(\mathbb{R}_+;\mathrm{e}^{\alpha Y}\mathrm{d}Y)$, and $\phi\in H^1_0\cap H^2(\mathbb{R}_+)$ satisfies
  \begin{align*}
     & (\partial_Y^2-\alpha^2)\phi=w,\ \alpha > 0,
  \end{align*}
  then it holds that
  \begin{align*}
     &\partial_Y\phi(Y)=e^{-\alpha Y}\int_0^{Y}w(Z)\sinh(\alpha Z)\mathrm{d} Z-\cosh(\alpha Y)\int_{Y}^{+\infty}w(Z)\mathrm{e}^{-\alpha Z}\mathrm{d}Z,\\
     &\alpha \phi(Y)=-\sinh(\alpha Y)\int_{0}^{+\infty} w(Z)\mathrm{e}^{-\alpha Z}\mathrm{d}Z-\mathrm{e}^{-\alpha Y}\int_{0}^{Y}w(Z)\sinh(\alpha Z)\mathrm{d}Z\\
     &\qquad\quad=-\mathrm{e}^{-\alpha Y}\int_{0}^{+\infty}w(Z)\sinh(\alpha Z)\mathrm{d}Z+ \int_{Y}^{+\infty}w(Z)\sinh\big(\alpha(Z-Y)\big)\mathrm{d}Z.
  \end{align*}
Specially, we have
\begin{align*}
   & \partial_Y\phi(0)=-\int_{0}^{+\infty}w(Y)\mathrm{e}^{-\alpha Y}\mathrm{d}Y.
\end{align*}
\end{lemma}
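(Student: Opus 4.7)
The plan is to treat this as a boundary value problem for the linear ODE $(\partial_Y^2-\alpha^2)\phi=w$ on $\mathbb R_+$ with $\phi(0)=0$ and the decay $\phi(Y),\,\partial_Y\phi(Y)\to 0$ as $Y\to+\infty$ (which follows from $\phi\in H^1_0\cap H^2(\mathbb R_+)$ via the standard 1D Sobolev embedding $H^2(\mathbb R_+)\hookrightarrow C_0^1(\overline{\mathbb R_+})$). The homogeneous solutions are $e^{\alpha Y}$ and $e^{-\alpha Y}$, whose Wronskian is $-2\alpha$. So first I would write the answer via the Green's function
\[
G(Y,Z)=\tfrac{1}{\alpha}\sinh(\alpha\min(Y,Z))\,e^{-\alpha\max(Y,Z)},
\]
namely the unique solution satisfying the boundary conditions is $\phi(Y)=-\int_0^{+\infty}G(Y,Z)w(Z)\,dZ$, i.e.
\[
\alpha\phi(Y)=-\sinh(\alpha Y)\int_Y^{+\infty}w(Z)e^{-\alpha Z}\,dZ-e^{-\alpha Y}\int_0^Y w(Z)\sinh(\alpha Z)\,dZ.
\]
The integrability hypothesis $w\in L^1(\mathbb R_+;e^{\alpha Y}dY)$ is exactly what is needed for the $\int_Y^{+\infty}w(Z)e^{-\alpha Z}dZ$ piece to be unproblematic, and $w\in L^2$ ensures $\phi\in H^2$.

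Next I would verify the two claimed forms by manipulation. For the third form, write $\sinh(\alpha(Z-Y))=\sinh(\alpha Z)\cosh(\alpha Y)-\cosh(\alpha Z)\sinh(\alpha Y)$ and split $\int_0^{+\infty}w(Z)\sinh(\alpha Z)\,dZ=\int_0^Y+\int_Y^{+\infty}$; then using $\cosh(\alpha Y)-e^{-\alpha Y}=\sinh(\alpha Y)$ and $\cosh(\alpha Z)-\sinh(\alpha Z)=e^{-\alpha Z}$, the rearrangement directly yields the Green's function form above. (I would remark that the first displayed equality in the lemma as stated requires $\int_Y^{+\infty}$ in place of $\int_0^{+\infty}$ on the first term, or equivalently must be interpreted modulo the identity above — a small bookkeeping check with $\sinh(\alpha Y)\int_0^Y w(Z)e^{-\alpha Z}dZ$.)

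For $\partial_Y\phi$, I would differentiate the formula for $\phi$: the interior endpoint contributions cancel because $e^{-\alpha Y}\sinh(\alpha Y)w(Y)$ appears from both boundary terms with opposite signs, leaving
\[
\partial_Y\phi(Y)=e^{-\alpha Y}\int_0^Y w(Z)\sinh(\alpha Z)\,dZ-\cosh(\alpha Y)\int_Y^{+\infty}w(Z)e^{-\alpha Z}\,dZ,
\]
which is exactly the stated expression. Evaluating at $Y=0$ kills the first integral ($\sinh(0)=0$) and gives $\partial_Y\phi(0)=-\int_0^{+\infty}w(Z)e^{-\alpha Z}dZ$, as claimed.

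The only real obstacle is not technical but purely verificational: one needs the integrability assumption $w\in L^1(e^{\alpha Y}dY)$ to make sense of $\int_Y^{+\infty}w e^{-\alpha Z}dZ$ pointwise and to justify differentiation under the integral; otherwise every step is a direct consequence of variation of parameters and hyperbolic identities. Thus the entire lemma reduces to (a) constructing the Green's function, (b) confirming uniqueness in $H^1_0\cap H^2$ from the spectral gap $\alpha^2>0$ of $-\partial_Y^2$ with Dirichlet data, and (c) an algebraic rewriting step, none of which presents a serious difficulty.
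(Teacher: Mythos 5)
Your proof is correct, and it takes a somewhat different route from the paper's. You build the Green's function $G(Y,Z)=\alpha^{-1}\sinh(\alpha\min(Y,Z))\,\mathrm{e}^{-\alpha\max(Y,Z)}$ for the operator $\partial_Y^2-\alpha^2$ with Dirichlet condition at $Y=0$ and decay at infinity, invoke uniqueness in $H^1_0\cap H^2$ to identify $\phi$ with $-\int_0^\infty G(\cdot,Z)w(Z)\,\mathrm{d}Z$, and then recover the stated formulas by differentiation and hyperbolic identities. The paper instead integrates $w\,\mathrm{e}^{-\alpha Z}$ over $[Y,+\infty)$ and $w\,\mathrm{e}^{\alpha Z}$ over $[0,Y]$ by parts, obtaining two linear relations in $\partial_Y\phi(Y)$ and $\alpha\phi(Y)$ (together with the value of $\partial_Y\phi(0)$), and solves this $2\times 2$ system; this derives the representation directly from the hypotheses without a separate uniqueness step, whereas your argument makes the solution structure (and the role of each boundary condition) more transparent and reusable. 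Both are elementary and complete. Your side remark is also correct: the middle expression for $\alpha\phi(Y)$ in the statement should read $-\sinh(\alpha Y)\int_{Y}^{+\infty}w(Z)\mathrm{e}^{-\alpha Z}\mathrm{d}Z$ rather than $\int_{0}^{+\infty}$; this is confirmed both by the Green's function form and by the paper's own intermediate computation, and the final form with the kernel $\sinh(\alpha(Z-Y))$ (the one actually used later for $\Phi_a$ and $\Phi_{a,f}$) is consistent with the corrected version.
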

\begin{proof}
 Integration by parts gives
  \begin{align*}
     & \int_{Y}^{+\infty}w(Z)\mathrm{e}^{-\alpha Z}\mathrm{d}Z= \int_{Y}^{+\infty}\big((\partial_Z^2-\alpha^2)\phi(Z)\big)\mathrm{e}^{-\alpha Z}\mathrm{d}Z\\
     &=\int_{Y}^{+\infty}\phi\big((\partial_Z^2-\alpha^2)\mathrm{e}^{-\alpha Z}\big)\mathrm{d}Z+\big((\partial_Z\phi)\mathrm{e}^{-\alpha Z}\big)\big|_{Y}^{+\infty}-\big(\phi\partial_Z\mathrm{e}^{-\alpha Z}\big)\big|_{Y}^{+\infty}\\
     &=-\partial_Y\phi(Y)\mathrm{e}^{-\alpha Y}-\alpha \phi(Y)\mathrm{e}^{-\alpha Y}.
  \end{align*}
  That is,
  \begin{align}\label{est:ham-bound-1}
     \partial_Y\phi(Y)\mathrm{e}^{-\alpha Y}+\alpha \phi(Y)\mathrm{e}^{-\alpha Y}&=-\int_{Y}^{+\infty}w(Z)\mathrm{e}^{-\alpha Z}\mathrm{d}Z.
  \end{align}
  Specially, we have
  \begin{align}\label{est:ham-bound-2}
     & \partial_Y\phi(0)=-\int_{0}^{+\infty}w(Y)\mathrm{e}^{-\alpha Y}\mathrm{d}Y.
  \end{align}
  
  Integration by parts again gives
  \begin{align*}
     & \int_{0}^{Y}w(Z)\mathrm{e}^{\alpha Z}\mathrm{d}Z= \int_{0}^{Y}\big((\partial_Z^2-\alpha^2)\phi(Z)\big)\mathrm{e}^{\alpha Z}\mathrm{d}Z\\
     &= \big(\partial_Z\phi(Z)\mathrm{e}^{\alpha Z}\big)\big|_{0}^{Y}-\big(\phi(Z)\partial_Z(\mathrm{e}^{\alpha Z})\big)\big|_{0}^{Y}=\partial_Y\phi(Y)\mathrm{e}^{\alpha Y}-\alpha \phi(Y)\mathrm{e}^{\alpha Y}-\partial_Y\phi(0).
  \end{align*}
  Along with \eqref{est:ham-bound-1} and \eqref{est:ham-bound-2}, we obtain
  \begin{align*}
     \partial_Y\phi(Y)&=\dfrac{1}{2} \mathrm{e}^{-\alpha Y}\int_{0}^{Y}w(Z)\mathrm{e}^{\alpha Z}\mathrm{d}Z-\dfrac{1}{2}\mathrm{e}^{\alpha Y} \int_{Y}^{+\infty}w(Z)\mathrm{e}^{-\alpha Z}\mathrm{d}Z+\dfrac{1}{2}\mathrm{e}^{-\alpha Y}\partial_Y\phi(0)\\
     &=\mathrm{e}^{-\alpha Y}\int_{0}^{Y}w(Z)\sinh(\alpha Z)\mathrm{d}Z-\cosh(\alpha Y)\int_{Y}^{+\infty}w(Z)\mathrm{e}^{-\alpha Z}\mathrm{d}Z.
  \end{align*}
  Similarly, we have
  \begin{align*}
     \alpha\phi(Y)&=-\dfrac{1}{2} \mathrm{e}^{-\alpha Y}\int_{0}^{Y}w(Z)\mathrm{e}^{\alpha Z}\mathrm{d}Z-\dfrac{1}{2}\mathrm{e}^{\alpha Y} \int_{Y}^{+\infty}w(Z)\mathrm{e}^{-\alpha Z}\mathrm{d}Z-\dfrac{1}{2}\mathrm{e}^{-\alpha Y}\partial_Y\phi(0)\\
     &=-\mathrm{e}^{-\alpha Y}\int_{0}^{Y}w(Z)\sinh(\alpha Z)\mathrm{d}Z-\sinh(\alpha Y)\int_{Y}^{+\infty}w(Z)\mathrm{e}^{-\alpha Z}\mathrm{d}Z\\
     &=-\mathrm{e}^{-\alpha Y}\int_{0}^{+\infty}w(Z)\sinh(\alpha Z)\mathrm{d}Z+ \int_{Y}^{+\infty}w(Z)\sinh\big(\alpha(Z-Y)\big)\mathrm{d}Z.
  \end{align*}
  
\end{proof}

\begin{lemma}\label{lemma:nonslip-elliptic}
Let $\alpha\neq0$. Suppose that $(\partial_Y^2-\alpha^2)w\in L^2(\mathbb{R}_+)$ with $(\partial_Y^2-\alpha^2)\varphi=w,\ \partial_Y\varphi|_{Y=0}=\varphi|_{Y}=0$. Then it holds that
  \begin{align*}
     &|\alpha|^3\|(\partial_Y\varphi,\alpha\varphi)\|_{L^2}+ \alpha^2\|w\|_{L^2}+|\alpha|\|\partial_Yw\|_{L^2}+\|\partial_Y^2w\|_{L^2}\leq C\|(\partial_Y^2-\alpha^2)w\|_{L^2},
  \end{align*}
and
  \begin{align*}
     &\|(\partial_Yw,\alpha w)\|_{L^2}\leq C \|(\partial_Y^2-\alpha^2)w\|_{L^2}^{\f12}\|w\|_{L^2}^{\f12}.
  \end{align*}
\end{lemma}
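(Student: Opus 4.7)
Write $g := (\partial_Y^2-\alpha^2)w \in L^2(\mathbb{R}_+)$. The first inequality is obtained by a two-step bootstrap: one integration-by-parts identity uses the full fourth-order equation together with the Dirichlet-type data on $\varphi$, and a second uses the standard Dirichlet energy identity for $\varphi$. Specifically, I will first pair $g$ against $\varphi$ and integrate by parts twice; the boundary contributions at $Y=0$ are combinations of $\varphi(0)$ and $\partial_Y\varphi(0)$, both vanishing by hypothesis, while those at $+\infty$ vanish by decay. This yields the identity $\langle g,\varphi\rangle = \|w\|_{L^2}^2$, so that by Cauchy--Schwarz
\begin{align*}
\|w\|_{L^2}^2 \le \|g\|_{L^2}\|\varphi\|_{L^2}.
\end{align*}
Next, testing $(\partial_Y^2-\alpha^2)\varphi=w$ against $\overline\varphi$ and using $\varphi(0)=0$ produces $\|(\partial_Y\varphi,\alpha\varphi)\|_{L^2}^2 = -\langle w,\varphi\rangle \le |\alpha|^{-1}\|w\|_{L^2}\|(\partial_Y\varphi,\alpha\varphi)\|_{L^2}$, hence $\|\varphi\|_{L^2}\le|\alpha|^{-2}\|w\|_{L^2}$. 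Inserting this into the previous display closes the loop: $\alpha^2\|w\|_{L^2}\le\|g\|_{L^2}$, and therefore $|\alpha|^3\|(\partial_Y\varphi,\alpha\varphi)\|_{L^2}\le\|g\|_{L^2}$.

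It remains to control the derivatives of $w$ itself. The equation gives directly $\partial_Y^2 w = g+\alpha^2 w$, so $\|\partial_Y^2 w\|_{L^2}\le \|g\|_{L^2}+\alpha^2\|w\|_{L^2}\le 2\|g\|_{L^2}$ from the previous step. For the first derivative I will establish the half-line interpolation inequality $\|\partial_Y w\|_{L^2}^2 \le C\,\|w\|_{L^2}\|\partial_Y^2 w\|_{L^2}$, valid without any boundary condition on $w$: integrating $\int_0^\infty (\partial_Y w)^2\,\mathrm{d}Y$ by parts produces the spurious boundary contribution $-w(0)\partial_Y w(0)$, which I control by the one-dimensional trace bounds $|w(0)|^2\le 2\|w\|_{L^2}\|\partial_Y w\|_{L^2}$ and $|\partial_Y w(0)|^2\le 2\|\partial_Y w\|_{L^2}\|\partial_Y^2 w\|_{L^2}$, after which Young's inequality absorbs the resulting $\|\partial_Y w\|_{L^2}^2$ factor into the left side. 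Combining this interpolation with the two bounds just derived yields $|\alpha|\|\partial_Y w\|_{L^2}\le C\|g\|_{L^2}$, completing the first inequality.

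For the second inequality, the same interpolation together with $\|\partial_Y^2 w\|_{L^2}\le \|g\|_{L^2}+\alpha^2\|w\|_{L^2}$ and the already-proved $\alpha^2\|w\|_{L^2}\le\|g\|_{L^2}$ gives
\begin{align*}
\|(\partial_Y w,\alpha w)\|_{L^2}^2 \le C\|w\|_{L^2}\bigl(\|g\|_{L^2}+\alpha^2\|w\|_{L^2}\bigr) + \alpha^2\|w\|_{L^2}^2 \le C\|g\|_{L^2}\|w\|_{L^2},
\end{align*}
and the claimed Gagliardo--Nirenberg form follows by taking square roots. The only genuine subtlety is that $w$ inherits no direct boundary condition at $Y=0$---only $\varphi$ does---so every estimate involving a derivative of $w$ must pass through either the equation or the trace inequality. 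This forces the bootstrap order above: one must first transfer control from $\varphi$ (with good boundary behaviour) to $\|w\|_{L^2}$ itself, before any bound on $\partial_Y w$ or $\partial_Y^2 w$ becomes accessible.
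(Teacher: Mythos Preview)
Your argument is correct. The key identity $\langle g,\varphi\rangle=\|w\|_{L^2}^2$ is valid precisely because both boundary terms from the two integrations by parts at $Y=0$ involve $\varphi(0)$ and $\partial_Y\varphi(0)$, and both vanish by hypothesis; the rest of the bootstrap then closes cleanly.

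Your route differs from the paper's for the first inequality. The paper does not pair $g$ with $\varphi$ directly; instead it introduces the auxiliary Dirichlet problem
\[
(\partial_Y^2-\alpha^2)w_d=g,\quad w_d|_{Y=0}=0,\qquad (\partial_Y^2-\alpha^2)\varphi_d=w_d,\quad \varphi_d|_{Y=0}=0,
\]
obtains the full string of estimates for $(w_d,\varphi_d)$ by standard energy methods, and then writes $w=w_d+2\alpha\,\partial_Y\varphi_d(0)\,e^{-\alpha Y}$, using the explicit Green's-function formula of Lemma~\ref{lem:ham-bound} to compute the coefficient. Your approach is more elementary: it avoids the auxiliary decomposition and the appeal to Lemma~\ref{lem:ham-bound} by exploiting \emph{both} boundary conditions on $\varphi$ in a single pairing. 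The paper's construction, on the other hand, makes the structure of $w$ (Dirichlet part plus exponential corrector) explicit, which can be useful if one wants pointwise information. For the second inequality the two arguments coincide: both integrate $\|(\partial_Yw,\alpha w)\|_{L^2}^2$ by parts against $w$, control the spurious boundary term $w'(0)\overline{w(0)}$ via the trace bounds $|w(0)|^2\le 2\|w\|\|w'\|$, $|w'(0)|^2\le 2\|w'\|\|w''\|$, and close using $\alpha^2\|w\|_{L^2}\le\|g\|_{L^2}$ from the first part.
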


\begin{proof}
Without loss of generality, we may assume $\alpha>0$. Clearly, $\mathrm{e}^{-\alpha Y}\in H^2(\mathbb{R}_+)$, and $(\partial_Y^2-\alpha^2)\mathrm{e}^{-\alpha Y}=0$.
   Let $(w_{d},\varphi_{d})$ be the solution to the following elliptic equation
   \begin{align*}\left\{\begin{aligned}
     &(\partial_Y^2-\alpha^2)w_{d}=(\partial_Y^2-\alpha^2)w,\\
     & (\partial_Y^2-\alpha^2)\varphi_d=w_d,\ w_d|_{Y=0}=\varphi_d|_{Y}=0.
   \end{aligned}\right.\end{align*}
Using the energy method and the interpolation, it is easy to show that
   \begin{align}\label{est:nonslip-elliptic-1}
      &|\alpha|^{\f52}\|\partial_Y\varphi_d\|_{L^\infty}+ |\alpha|^3\|(\partial_Y\varphi_d,\alpha\varphi_d)\|_{L^2}+ \alpha^2\|w_d\|_{L^2}+|\alpha|\|\partial_Yw_d\|_{L^2}\nonumber\\
      &\quad+\|\partial_Y^2w_d\|_{L^2}\leq C\|(\partial_Y^2-\alpha^2)w\|_{L^2}.
   \end{align}
On the other hand, by  Lemma \ref{lem:ham-bound}, we know that
  \begin{align*}
     & \partial_Y(\partial_Y^2-\alpha^2)^{-1}(\mathrm{e}^{-\alpha Y})|_{Y=0}=-1/(2\alpha).
  \end{align*}
  Then $w=w_d+2\alpha \partial_Y\varphi_d(0) \mathrm{e}^{-\alpha Y}$, which along with \eqref{est:nonslip-elliptic-1} implies that
  \begin{align*}
     & |\alpha|^3\|(\partial_Y\varphi,\alpha\varphi)\|_{L^2}+ \alpha^2\|w\|_{L^2}+|\alpha|\|\partial_Yw\|_{L^2}+\|\partial_Y^2w\|_{L^2}\leq C\|(\partial_Y^2-\alpha^2)w\|_{L^2}.
  \end{align*}
  This gives the first inequality of the lemma.

For the second inequality, we first notice that
  \begin{align*}
     &\langle (\partial_Y^2-\alpha^2)w,-w\rangle =\|(\partial_Yw,\alpha w)\|_{L^2}^2+\partial_Yw(0)\overline{w(0)},\\
     &\|(\partial_Yw,\alpha w)\|_{L^2}^2\leq |\langle(\partial_Y^2-\alpha^2)w,w\rangle|+ \|\partial_Yw\|_{L^\infty}\|w\|_{L^\infty}.
  \end{align*}
  Thanks to $\partial_Yw,w \in L^2$,  we have $\lim_{Y\rightarrow+\infty} w(Y)=\lim_{Y\rightarrow+\infty}\partial_Yw(Y)=0$, hence,
  \begin{align*}
     & \|\partial_Yw\|_{L^\infty}^2\leq 2\|\partial_Y^2w\|_{L^2}\|\partial_Yw\|_{L^2},\quad \|w\|_{L^\infty}^2\leq 2\|\partial_Yw\|_{L^2}\|w\|_{L^2}.
  \end{align*}
  Summing up, we obtain
  \begin{align*}
     \|(\partial_Yw,\alpha w)\|_{L^2}^2\leq & |\langle(\partial_Y^2-\alpha^2)w,w\rangle| +2\|\partial_Y^2w\|_{L^2}^{\f12}\|\partial_Yw\|_{L^2}\|w\|_{L^2}^{\f12}\\
     \leq & \|(\partial_Y^2-\alpha^2)w\|_{L^2}\|w\|_{L^2} +2\|\partial_Y^2w\|_{L^2}^{\f12}\|\partial_Yw\|_{L^2}\|w\|_{L^2}^{\f12},
  \end{align*}
  which implies that
  \begin{align*}
    \|(\partial_Yw,\alpha w)\|_{L^2}^2\leq C\|(\partial_Y^2-\alpha^2)w\|_{L^2}\|w\|_{L^2}.
  \end{align*}
\end{proof}

\begin{lemma}\label{lem:Uineq}
  There exists a positive constant $C>0$, such that for any $z\in\mathbb{C},\ t>0$, it holds that
  \begin{align*}
     & \int_{0}^{t}|z-s|^{\f12}\mathrm{d}s\geq C^{-1}|z|^{\f12}t.
  \end{align*}
\end{lemma}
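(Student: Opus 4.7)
The plan is to reduce the complex problem to a real one via the trivial bound $|z-s|\ge |\mathrm{Re}(z)-s|$ and then split according to whether $|z|$ is comparable to $t$ or much larger.

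First, I would treat the easy \emph{large $|z|$} regime. Set a threshold, say $|z|\ge 2t$. Then for every $s\in[0,t]$ we have $|z-s|\ge |z|-s\ge |z|-t\ge |z|/2$, so $|z-s|^{1/2}\ge |z|^{1/2}/\sqrt{2}$ and integrating over $[0,t]$ immediately gives the bound with $C=\sqrt{2}$.

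Second, and this is the main point, I would handle the \emph{small $|z|$} regime $|z|<2t$. The key observation is that the integrand is pointwise bounded below by its real projection: writing $z=x+\mathrm{i}y$,
\begin{equation*}
\int_0^t |z-s|^{1/2}\,\mathrm{d}s \;\ge\; \int_0^t |x-s|^{1/2}\,\mathrm{d}s.
\end{equation*}
A direct computation (splitting according to whether $x\le 0$, $0\le x\le t$, or $x\ge t$, and using the monotonicity of $f(x)=x^{3/2}-(x-t)^{3/2}$ in the third case) shows that uniformly in $x\in\mathbb{R}$,
\begin{equation*}
\int_0^t |x-s|^{1/2}\,\mathrm{d}s \;\ge\; c_0\, t^{3/2},
\end{equation*}
for an explicit constant $c_0>0$ (the worst case is $x=t/2$, giving $c_0=\sqrt{2}/3$). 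Since $|z|<2t$ implies $|z|^{1/2}t\le \sqrt{2}\,t^{3/2}$, this directly yields $\int_0^t |z-s|^{1/2}\,\mathrm{d}s\ge (c_0/\sqrt{2})|z|^{1/2}t$.

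Combining the two cases gives the conclusion with $C^{-1}=\min(1/\sqrt{2},\,c_0/\sqrt{2})$. There is no real obstacle here; the only point requiring a tiny bit of care is the real-valued one-dimensional estimate, which is handled by the explicit integration above.
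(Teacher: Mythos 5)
Your proof is correct, but it organizes the argument differently from the paper. The paper first isolates a purely real one-dimensional claim, $\int_0^t |z_r-s|^{1/2}\,\mathrm{d}s\geq C^{-1}|z_r|^{1/2}t$ with $z_r=\mathbf{Re}(z)$ (proved by cases $z_r\le 0$, $0\le z_r\le t/2$, $z_r\ge t/2$), and then recovers the full statement by adding the trivial imaginary contribution $\int_0^t|z_i|^{1/2}\,\mathrm{d}s=|z_i|^{1/2}t$ and using $|z|^{1/2}\lesssim |z_r|^{1/2}+|z_i|^{1/2}$. You instead split on the size of $|z|$ relative to $t$: when $|z|\ge 2t$ the integrand is pointwise bounded below by $|z|^{1/2}/\sqrt{2}$, and when $|z|<2t$ you only need the weaker uniform real-variable bound $\int_0^t|x-s|^{1/2}\,\mathrm{d}s\ge c_0\,t^{3/2}$ (with worst case $x=t/2$), since then $|z|^{1/2}t\le\sqrt{2}\,t^{3/2}$. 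Both routes are elementary and yield explicit constants; yours has the mild advantage that the one-dimensional estimate you need is independent of the location of $\mathrm{Re}(z)$ and so the case analysis is slightly simpler, while the paper's version produces a real-part inequality that is a self-contained statement of the same shape as the lemma. The case that genuinely distinguishes the two decompositions is $z$ purely imaginary and large, which the paper absorbs through the $|z_i|^{1/2}t$ term and you absorb through the large-$|z|$ branch; both treatments are sound.
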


\begin{proof}
  Let $z_r=\mathbf{Re}(z),\ z_i=\mathbf{Im}(z)$. Let us first claim that
  \begin{align}\label{est:Uineq Re}
     & \int_{0}^{t}|z_r-s|^{\f12}\mathrm{d}s\geq C^{-1}|z_r|^{\f12}t.
  \end{align}
  Once \eqref{est:Uineq Re} holds,  we have
  \begin{align*}
     & \int_{0}^{t}|z-s|^{\f12}\mathrm{d}s\geq C^{-1} \big( \int_{0}^{t}|z_r-s|^{\f12}\mathrm{d}s+ \int_{0}^{t}|z_i|^{\f12}\mathrm{d}s\big)\geq C^{-1}\big(|z_r|^{\f12}t+|z_i|^{\f12}t\big)\geq C^{-1}|z|^{\f12}t.
  \end{align*}
  It remains to prove \eqref{est:Uineq Re}.\smallskip

 \no \textbf{Case 1}. $z_r\leq 0$.  In this case, we have
  $$\int_{0}^{t}|z_r-s|^{\f12}\mathrm{d}s\geq \int_{0}^{t}|z_r|^{\f12}\mathrm{d}s= |z_r|^{\f12}t.$$

\no \textbf{Case 2}. $0\leq z_r\leq t/2$. In this case, we have
  \begin{align*}
     \int_{0}^{t}|z_r-s|^{\f12}\mathrm{d}s&\geq \int_{t/2}^{t}|z_r-s|^{\f12}\mathrm{d}s=  \int_{t/2}^{t}(s-z_r)^{\f12}\mathrm{d}s\geq \int_{t/2}^{t}(s-t/2)^{\f12}\mathrm{d}s\\
     &= \dfrac{2(t-t/2)^{\f32}}{3}=\dfrac{2(t/2)^{\f32}}{3}\geq \dfrac{z_r^{\f12}t}{3}=\dfrac{|z_r|^{\f12}t}{3}.
  \end{align*}

 \no \textbf{Case 3}. $z_r\geq t/2$. In this case, we have
  \begin{align*}
      \int_{0}^{t}|z_r-s|^{\f12}\mathrm{d}s&\geq  \int_{0}^{t/4}|z_r-s|^{\f12}\mathrm{d}s\geq  \int_{0}^{t/4}(z_r-t/4)^{\f12}\mathrm{d}s\\
      &=\dfrac{(z_r-t/4)^{\f12}t}{4}\geq \dfrac{|z_r/2|^{\f12}t}{4},
  \end{align*}
  here we used $z_r-t/4\geq z_r/2=|z_r|/2$. Combining three cases, we conclude our result.
\end{proof}

 The following Hardy's type inequalities come  from \cite{GM}.

\begin{lemma}\label{lem:GM's-hardy}
  (1).Let $\sigma[\cdot]$ be a  linear operator defined by
  \begin{align*}
     &\sigma[f](Y)=\int_{Y}^{+\infty}f(Y_1)\mathrm{d}Y_1,\qquad f\in \mathcal{C}_{0}^{\infty}(\overline{\mathbb{R}_+}).
  \end{align*}
  Then for $1\leq p\leq +\infty$ and $k=0,1,...,$ we have
  \begin{align*}
     &\big\|Y^{k}\sigma[f]\big\|_{L^p_{Y}}\leq C_p\big\|Y^{k+1}f\big\|_{L^p_Y}.
  \end{align*}
  (2).Let $\mathcal{L}[\cdot]$ be a linear operator defined by
  \begin{align*}
     &\mathcal{L}[f](Y)=U(Y)\int_{Y}^{+\infty}\dfrac{f}{U^2}\mathrm{d}Y_1,\qquad f\in \mathcal{C}_{0}^{\infty}(\overline{\mathbb{R}_+}).
  \end{align*}
  Then for $1< p< +\infty$ and $k=0,1,...,$ we have
  \begin{align*}
     &\big\|Y^{k}\mathcal{L}[f]\big\|_{L^p_{Y}}\leq C\big\|Y^{k}(1+Y)f\big\|_{L^p_Y},\\
     &\|\partial_Y\mathcal{L}[f]\|_{L^2_{Y}}\leq C\big(\|f\|_{L^1_{Y}}+\|f\|_{L^2_Y}+\|\partial_Yf\|_{L^2_Y}\big).
  \end{align*}
\end{lemma}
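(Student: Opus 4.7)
The plan is to prove each part by reducing it to an appropriate Hardy-type inequality, guided by the dilation symmetry and the known behavior $U(Y)\sim Y/(1+Y)$ from \eqref{eq:U-S1}.

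For part (1), I would first rewrite the outer integral by the substitution $Y_1=YU$:
\[
Y^k\sigma[f](Y)=\int_1^\infty F(YU)\,U^{-(k+1)}\,dU,\qquad F(Y):=Y^{k+1}f(Y).
\]
This realizes the operator as a multiplicative convolution on $\mathbb{R}_+$ with kernel $\psi(U)=U^{-(k+1)}\mathbf{1}_{U>1}$. Combining Minkowski's integral inequality with the scaling $\|F(U\,\cdot)\|_{L^p(dY)}=U^{-1/p}\|F\|_{L^p}$ reduces the matter to the finiteness of $\int_1^\infty U^{-(k+1+1/p)}\,dU$, which holds whenever $k+1/p>0$ and gives the constant $C_p=(k+1/p)^{-1}$. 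The endpoint $p=1$ follows from a direct Fubini computation.

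For part (2), I would use the two-sided bound $U(Y)\asymp Y/(1+Y)$ to convert $\mathcal{L}$ into something controlled by the operator $\sigma$. Pointwise,
\[
Y^k|\mathcal{L}[f](Y)|\leq C\,\frac{Y^{k+1}}{1+Y}\int_Y^\infty \frac{(1+Y_1)^2|f(Y_1)|}{Y_1^{2}}\,dY_1,
\]
so expanding $(1+Y_1)^2/Y_1^{2}=1+2/Y_1+1/Y_1^{2}$ and applying part~(1) to each piece (with the appropriate shift of power) recovers $\|Y^k(1+Y)f\|_{L^p}$ on the right. The only subtlety is the behavior near $Y=0$, but the prefactor $Y^{k+1}/(1+Y)$ rather than $Y^{k+1}$ absorbs the singularity of $1/Y_1^2$.

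The derivative bound is more delicate. Direct differentiation gives
\[
\partial_Y\mathcal{L}[f]=U'(Y)\int_Y^\infty \frac{f}{U^2}\,dY_1-\frac{f(Y)}{U(Y)},
\]
and each of these two terms carries an $O(1/Y)$ singularity at the boundary that separately is not in $L^2$. The plan is to exploit the cancellation: since $U'(0)>0$, the function $U'$ is bounded below in a neighborhood of $0$, so the identity $\partial_{Y_1}(1/U)=-U'/U^2$ may be used to integrate by parts in that region. After IBP the two singular contributions cancel and one obtains
\[
\partial_Y\mathcal{L}[f](Y)=U'(Y)\int_Y^\infty \frac{\partial_{Y_1}(f/U')}{U}\,dY_1
\]
locally near $0$. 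The integrand expands as $\partial_{Y_1}f/U'-fU''/U'^{2}$; its $L^2$-contribution is then controlled by $\|\partial_Y f\|_{L^2}+\|f\|_{L^2}$ via Cauchy–Schwarz against $1/Y_1$, whose singularity is absorbed by the decay $|U'|\lesssim (1+Y)^{-3}$. Away from the boundary $U$ is bounded below and one has the direct pointwise bound $|\partial_Y\mathcal{L}[f](Y)|\lesssim (1+Y)^{-3}(\|f\|_{L^1}+\|f\|_{L^2})$. The $\|f\|_{L^1}$ term on the right-hand side originates both in the tail $\int_Y^\infty |f|\,dY_1\leq \|f\|_{L^1}$ and in controlling the trace $|f(0)|\lesssim \|f\|_{L^1}+\|\partial_Yf\|_{L^2}$ that appears when one glues the near-boundary IBP to the far-field estimate.

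The hard part will be executing the cancellation cleanly. Without the integration by parts, the term $f(Y)/U(Y)$ alone would force one to assume $f(0)=0$ (so that Hardy yields $f/Y\in L^2$); the delicate cancellation with $U'\!\int_Y^\infty f/U^2\,dY_1$ is exactly what allows a bound in terms of $\|f\|_{L^1}+\|f\|_{L^2}+\|\partial_Yf\|_{L^2}$ with no vanishing hypothesis on $f$. Carrying this out rigorously will require a smooth cutoff that separates the region near $0$—where $U'\neq 0$ and the IBP is legal—from the bulk of the domain, together with a careful treatment of the commutator terms generated by the cutoff.
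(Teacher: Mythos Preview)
The paper does not prove this lemma; it states the result and cites~\cite{GM}. Your outline is essentially a correct independent proof.

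Parts~(1) and the first estimate of~(2) are fine; for the latter it is slightly cleaner to use directly that $U(Y)/U^2(Y_1)\leq C(1+Y_1)/Y_1$ for $Y_1\geq Y$ (from $U\lesssim 1$ and $U(Y_1)\gtrsim Y_1/(1+Y_1)$), which reduces everything to a single application of part~(1) with integrand $(1+Y_1)|f(Y_1)|/Y_1$, and avoids having to argue separately about the $1/Y_1^2$ piece.

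For the derivative bound your IBP strategy is correct, but two passages in the sketch need adjustment. First, after integrating by parts on a near-boundary interval $[Y,Y_0]$ where $U'\geq c>0$, the remaining term
\[
U'(Y)\int_Y^{Y_0}\frac{\partial_{Y_1}(f/U')}{U}\,dY_1
\]
is controlled in $L^2_Y$ not by a bare Cauchy--Schwarz against $1/Y_1$ (that would diverge), but by another application of the Hardy inequality in part~(1): with $g=\partial_{Y_1}(f/U')$ one has $\|\sigma[g/Y_1]\|_{L^2}\leq C\|g\|_{L^2}\lesssim\|f\|_{L^2}+\|\partial_Yf\|_{L^2}$. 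The decay $|U'|\lesssim(1+Y)^{-3}$ plays no role in this near-boundary step; it enters only in the far-field region $Y\gtrsim Y_0$, where it makes $U'(Y)\int_Y^\infty f/U^2$ square-integrable by the crude bound $\lesssim(1+Y)^{-3}\|f\|_{L^1}$. Second, the boundary term generated by the IBP sits at the cutoff point $Y_0$, not at the origin, and is bounded via $|f(Y_0)|\lesssim\|f\|_{L^2}+\|\partial_Yf\|_{L^2}$ rather than through a trace at $Y=0$.
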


\section{Some estimates of Airy function}\label{Sec:Airy}

 Let $Ai(y)$ be the Airy function, which is a nontrivial solution of $f''-yf=0$.  We denote
\begin{align*}
&A_0(z)=\int_{\mathrm{e}^{{\mathrm{i}}\pi/6}z}^{+\infty}Ai(t)\mathrm{d}t =\mathrm{e}^{{i}\pi/6}\int_{z}^{+\infty}Ai(\mathrm{e}^{{\mathrm{i}}\pi/6}t)\mathrm{d}t.
\end{align*}

The following lemma comes from \cite{CLWZ}.

\begin{lemma}\label{lem:Airy-p1}
There exists $c>0$ and $\delta_0>0$ so that for $\textbf{Im}(z)\le \delta_0$,
\begin{align}
&\left|\f{A_0'(z)}{A_0(z)}\right|\lesssim1+|z|^{\f12},\quad {\rm Re}\f{A_0'(z)}{A_0(z)}\leq\min(-1/3,-c(1+|z|^{\f12})).
\end{align}
Moreover, for ${{\bf Im }z}\le \delta_0$, we have
\beno
 \Big|\f{A_0''(z)}{A_0(z)}\Big|\le C(1+|z|).
 \eeno
\end{lemma}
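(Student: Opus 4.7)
The plan is to combine the classical asymptotic expansion of the Airy function with a non-vanishing/compactness argument for $A_0$. Differentiating under the integral gives
\[
A_0'(z)=-e^{\mathrm{i}\pi/6}Ai(e^{\mathrm{i}\pi/6}z),\qquad A_0''(z)=-e^{\mathrm{i}\pi/3}Ai'(e^{\mathrm{i}\pi/6}z).
\]
In the strip $\{\textbf{Im}(z)\le\delta_0\}$ with $\delta_0$ small, the rotated variable $w=e^{\mathrm{i}\pi/6}z$ stays in a sector on which the standard uniform asymptotic expansions
\[
Ai(w)=\f{1}{2\sqrt\pi}w^{-1/4}e^{-\frac{2}{3}w^{3/2}}\bigl(1+O(|w|^{-3/2})\bigr),\quad Ai'(w)=-\f{1}{2\sqrt\pi}w^{1/4}e^{-\frac{2}{3}w^{3/2}}\bigl(1+O(|w|^{-3/2})\bigr)
\]
hold with principal branches. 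Performing one integration by parts in the defining integral and using $\f{d}{dt}e^{-\frac{2}{3}(e^{\mathrm{i}\pi/6}t)^{3/2}}=-e^{\mathrm{i}\pi/4}t^{1/2}e^{-\frac{2}{3}(e^{\mathrm{i}\pi/6}t)^{3/2}}$, I would obtain
\[
A_0(z)=\f{e^{-\mathrm{i}\pi/12}}{z^{1/2}}Ai(e^{\mathrm{i}\pi/6}z)\bigl(1+O(|z|^{-3/2})\bigr),\qquad|z|\to\infty.
\]
Taking ratios, the Airy factors cancel and the above together with the $Ai'/Ai$ expansion yields
\[
\f{A_0'(z)}{A_0(z)}=-e^{\mathrm{i}\pi/4}z^{1/2}\bigl(1+O(|z|^{-3/2})\bigr),\qquad\f{A_0''(z)}{A_0(z)}=\mathrm{i}\,z\bigl(1+O(|z|^{-3/2})\bigr).
\]
Since $\cos(\pi/4)>0$, both the upper bound $|A_0'/A_0|\lesssim|z|^{1/2}$ and the negativity $\textbf{Re}(A_0'/A_0)\le-c|z|^{1/2}$ follow for $|z|$ large; likewise $|A_0''/A_0|\lesssim|z|$, which is the large-$|z|$ half of the $A_0''$ bound.

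For bounded $|z|$ the key point is that $A_0$ does not vanish on the strip. Indeed, $A_0$ is entire with the explicit value $A_0(0)=\int_0^\infty Ai(t)\,dt=\tfrac13\ne 0$; combined with the large-$|z|$ asymptotic above, the zero set of $A_0$ in $\{\textbf{Im}(z)\le\delta_0\}$ is discrete and lies in a bounded region, and shrinking $\delta_0>0$ removes it from the strip (using $A_0(0)\ne 0$ and continuity of $A_0$). A compactness argument then yields uniform bounds for both ratios on the bounded part of the strip. The constant $-\tfrac13$ in the lemma comes from the explicit evaluation $A_0'(0)/A_0(0)=-3\,e^{\mathrm{i}\pi/6}Ai(0)$, whose real part equals $-\f{3^{5/6}}{2\Gamma(2/3)}<-\tfrac13$; continuity propagates the strict inequality to a neighborhood of the origin, and on the rest of the bounded region one simply shrinks the constant $c$ so that $-c(1+|z|^{1/2})\le -\tfrac13$, making the $\min$ in the statement valid everywhere.

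The main technical obstacle is the Stokes phenomenon for the Airy function on the full strip $\{\textbf{Im}(z)\le\delta_0\}$: the one-term asymptotic and the integration-by-parts identity above are only immediately valid in the sub-sector of the rotated argument where $\textbf{Re}((e^{\mathrm{i}\pi/6}z)^{3/2})>0$, which excludes part of the lower half-plane far from the positive real axis. In the complementary region I would either deform the contour defining $A_0$ into a path of steepest descent that preserves the same dominant balance, or use the Airy connection formulas to split $Ai(e^{\mathrm{i}\pi/6}z)$ into its exponentially dominant and subdominant pieces; since both $A_0'/A_0$ and $A_0''/A_0$ are controlled by the dominant mode alone, the same leading formulas persist with identical constants, and the estimates go through without essential change.
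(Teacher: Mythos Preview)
The paper does not prove this lemma; it is quoted verbatim from \cite{CLWZ}. So there is no ``paper's own proof'' to compare against, and what follows is an assessment of your sketch on its own merits.

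Your overall plan (classical Airy asymptotics for large $|z|$, compactness for bounded $|z|$) is the standard route and is essentially how the result is obtained in \cite{CLWZ}; note also the useful identity $A_0''(z)=\mathrm{i}zA_0(z)$, which you can verify from $Ai''=wAi$ and which immediately reduces the $A_0''/A_0$ bound to $|z|$, making that part trivial once non-vanishing is known.

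The genuine gap is in your non-vanishing step. You write that ``shrinking $\delta_0>0$ removes the zeros from the strip,'' but the strip $\{\mathbf{Im}(z)\le\delta_0\}$ contains the entire closed lower half-plane for every $\delta_0>0$; shrinking $\delta_0$ can only push out zeros with $\mathbf{Im}(z)>0$. Knowing $A_0(0)=1/3$ and the large-$|z|$ asymptotic tells you nothing about possible zeros at, say, a bounded point of the lower half-plane or on the negative real axis. You need an honest argument that $A_0$ is zero-free on $\{\mathbf{Im}(z)\le 0\}$. One clean way: show first that $\mathbf{Re}(A_0'/A_0)<0$ along the whole real axis (this can be done directly from the integral representation, or from the ODE $A_0''=\mathrm{i}zA_0$ via an energy-type identity), so that $|A_0|$ is strictly monotone there and hence nonzero; then combine with the large-$|z|$ asymptotic in the lower half-plane and an argument-principle or maximum-modulus argument to rule out interior zeros.

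A smaller issue: your treatment of the constant $-1/3$ is circular. Shrinking $c$ makes $-c(1+|z|^{1/2})$ \emph{less} negative, so the $\min$ becomes $-1/3$, and you still owe the uniform bound $\mathbf{Re}(A_0'/A_0)\le -1/3$ on the whole bounded region, not just near the origin. In practice any fixed negative constant suffices for the later applications, but as stated the lemma asks for $-1/3$ and your argument does not deliver it away from $z=0$. Finally, your acknowledged Stokes issue is real: for $z$ near the negative real axis the principal-branch formula $-e^{\mathrm{i}\pi/4}z^{1/2}$ gives the wrong sign if one approaches from above, so the asymptotic has to be stated with the branch of $z^{1/2}$ continued through the lower half-plane (equivalently, write $\phi'(z)=-e^{\mathrm{i}\pi/6}(e^{\mathrm{i}\pi/6}z)^{1/2}$ and keep the principal branch in the rotated variable).
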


We denote
\beno
\tilde{A}(Y)=Ai(e^{\mathrm{i}\frac{\pi}{6}}\kappa(Y+\eta))/Ai(e^{\mathrm{i}\frac{\pi}{6}}\kappa\eta),
\eeno
 where $\kappa>0$ and $\mathbf{Im}\eta<0$.  We define $\tilde{\Phi}(Y)$ as the solution of
 \beno
 (\partial_Y^2-\alpha^2)\tilde{\Phi}=\tilde{A},\quad \tilde{\Phi}(0)=0.
 \eeno
Under the assumption $\tilde{A}\in L^2(\mathbb{R}_+)\cap L^1(\mathbb{R}_+;\mathrm{e}^{\alpha Y})$, by Lemma \ref{lem:ham-bound}, we know that
\begin{align*}
   \tilde{\Phi}(Y)&= -\dfrac{\mathrm{e}^{-\alpha Y}}{\alpha}
   \int_{0}^{+\infty}\tilde{A}(Z)\sinh(\alpha Z)\mathrm{d}Z+ \dfrac{1}{\alpha}\int_{Y}^{+\infty}\tilde{A}(Z)\sinh\big(\alpha(Z-Y)\big)\mathrm{d}Z.
\end{align*}
We define the fast decay part of $\tilde{\Phi}(Y)$ as 
\begin{align*}
   &  \tilde{\Phi}_{f}(Y)= \dfrac{1}{\alpha}\int_{Y}^{+\infty}\tilde{A}(Z)\sinh\big(\alpha(Z-Y)\big)\mathrm{d}Z.
\end{align*}
Then $(\partial_Y^2-\alpha^2)\tilde{\Phi}_f=\tilde{A}$.

\begin{lemma}\label{lem:Airy-w}
Let $\kappa>0$ and $\mathbf{Im}\eta< 0$. Then there exists $c>0$ such that
\begin{align*}
&|\tilde{A}(Y)|\leq C\mathrm{e}^{-c\kappa Y\big(1+|\kappa\eta|^{\f12}\big)},\\
&\|Y^\beta\tilde{A}\|_{L^2}\leq C\kappa^{-\f12-\beta}\big(1+|\kappa\eta|)^{-\f14-\f \beta 2},\quad \beta\ge 0,\\
&\|(\partial_Y\tilde{\Phi},\alpha\tilde{\Phi})\|_{L^2}\leq C\kappa^{-\f32}\big(1+|\kappa\eta|)^{-\f34}.
\end{align*}
Moreover, if $c\kappa\geq 2\alpha>0$,  we have 
\begin{align*}
   & |\tilde{\Phi}_f(Y)|\leq C\kappa^{-2}\big(1+|\kappa\eta
   |)^{-1}\mathrm{e}^{-c\kappa Y\big(1+|\kappa\eta|^{\f12}\big)/2},\\
   &|\partial_Y\tilde{\Phi}_{f}(Y)|\leq C\kappa^{-1}\big(1+|\kappa\eta
   |)^{-\f12}\mathrm{e}^{-c\kappa Y\big(1+|\kappa\eta|^{\f12}\big)/2},\\
   &\big\|Y^{\beta}\tilde{\Phi}_f\big\|_{L^2}\leq C\kappa^{-\f{2\beta+5}{2}}\big(1+|\kappa\eta|)^{-\f{2\beta+5}{4}},\quad \beta\ge 0,\\
   &\|(\partial_Y\tilde{\Phi}_f,\alpha\tilde{\Phi}_f)\|_{L^2}\leq C\kappa^{-\f32}\big(1+|\kappa\eta|)^{-\f34},
\end{align*}
and 
\begin{align*}
     &|\tilde{\Phi}(Y)|\leq C\kappa^{-2}(1+|\kappa\eta|)^{-1}\mathrm{e}^{-\alpha Y},\\
     & \big\|Y^{\beta}\tilde{\Phi}\big\|_{L^2}\leq C\kappa^{-2}(1+|\kappa\eta|)^{-1}\alpha^{-\f{2\beta+1}{2}},\quad \beta\ge 0.
  \end{align*}
\end{lemma}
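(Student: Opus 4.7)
The main obstacle is to establish the pointwise exponential decay of $\tilde A(Y)$; once this is in hand, everything else follows by routine integration and one energy argument.

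First I would translate the problem through the function $A_0$ from Lemma \ref{lem:Airy-p1}. Since $A_0'(z)=-e^{i\pi/6}Ai(e^{i\pi/6}z)$, we can write $\tilde A(Y)=A_0'(\kappa(Y+\eta))/A_0'(\kappa\eta)$, which I would factor as
$$\tilde A(Y)=\frac{(A_0'/A_0)(\kappa(Y+\eta))}{(A_0'/A_0)(\kappa\eta)}\cdot\exp\Big(\kappa\int_0^Y (A_0'/A_0)(\kappa(s+\eta))\,ds\Big).$$
For $Y\geq 0$ and $\mathbf{Im}\,\eta<0$, the argument $\kappa(s+\eta)$ has $\mathbf{Im}(\kappa(s+\eta))<0\leq\delta_0$, so Lemma \ref{lem:Airy-p1} applies. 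The prefactor is polynomially bounded by $(1+|\kappa(Y+\eta)|^{1/2})/(1+|\kappa\eta|^{1/2})$ using both the upper bound on $|A_0'/A_0|$ and the lower bound $|A_0'/A_0|\geq c(1+|z|^{1/2})$ that is implicit in the real-part estimate. The exponential factor is dominated by $\exp\bigl(-c\kappa\int_0^Y(1+|\kappa(s+\eta)|^{1/2})\,ds\bigr)$. The key geometric input is $|s+\eta|\geq c(s+|\eta|)$, valid because $\eta$ is purely imaginary in the applications; combined with $\int_0^Y(s+|\eta|)^{1/2}\,ds\gtrsim Y(Y+|\eta|)^{1/2}$, this yields $\kappa\int_0^Y(1+|\kappa(s+\eta)|^{1/2})\,ds\geq c\kappa Y(1+|\kappa\eta|^{1/2})$. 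The polynomial prefactor is then absorbed into a slightly smaller $c$, giving the stated pointwise bound on $\tilde A$.

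The $L^2$ bound on $Y^\beta\tilde A$ follows by direct integration: $\int_0^\infty Y^{2\beta}e^{-2c\kappa Y(1+|\kappa\eta|^{1/2})}\,dY\sim\kappa^{-(2\beta+1)}(1+|\kappa\eta|)^{-(2\beta+1)/2}$. For $\|(\partial_Y\tilde\Phi,\alpha\tilde\Phi)\|_{L^2}$ I would test $(\partial_Y^2-\alpha^2)\tilde\Phi=\tilde A$ against $\overline{\tilde\Phi}$; the exponential decay of $\tilde A$ justifies the boundary terms at infinity, and combined with $\tilde\Phi(0)=0$ and Hardy's inequality this gives
$$\|\partial_Y\tilde\Phi\|_{L^2}^2+\alpha^2\|\tilde\Phi\|_{L^2}^2\leq|\langle\tilde A,\tilde\Phi\rangle|\leq\|Y\tilde A\|_{L^2}\|\tilde\Phi/Y\|_{L^2}\leq C\|Y\tilde A\|_{L^2}\|\partial_Y\tilde\Phi\|_{L^2},$$
whence the desired bound follows from the $\beta=1$ case just proved.

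Under the assumption $c\kappa\geq 2\alpha$, I would insert the pointwise bound on $\tilde A$ into the integral representations from Lemma \ref{lem:ham-bound}. Using $|\sinh(\alpha u)/\alpha|\leq u\,e^{\alpha u}$ and $|\cosh(\alpha u)|\leq e^{\alpha u}$ for $u\geq 0$, and setting $\lambda=c\kappa(1+|\kappa\eta|^{1/2})$, the hypothesis gives $\lambda-\alpha\geq\lambda/2$, so
$$|\tilde\Phi_f(Y)|\leq C\int_Y^\infty e^{-\lambda Z}(Z-Y)e^{\alpha(Z-Y)}\,dZ\leq\frac{Ce^{-\lambda Y}}{(\lambda-\alpha)^2}\leq C\kappa^{-2}(1+|\kappa\eta|)^{-1}e^{-\lambda Y},$$
and similarly $|\partial_Y\tilde\Phi_f(Y)|\leq C\kappa^{-1}(1+|\kappa\eta|)^{-1/2}e^{-\lambda Y}$. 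The $L^2$ estimates then follow by integrating these bounds, with the bound on $\|\alpha\tilde\Phi_f\|_{L^2}$ exploiting $\alpha\leq c\kappa/2$ to match the claimed exponent. Finally, for $\tilde\Phi$ I would use $\alpha\tilde\Phi(Y)=-e^{-\alpha Y}\int_0^\infty\tilde A(Z)\sinh(\alpha Z)\,dZ+\alpha\tilde\Phi_f(Y)$ together with $|\sinh(\alpha Z)/\alpha|\leq Z e^{\alpha Z}$: the first integral contributes a prefactor of order $\kappa^{-2}(1+|\kappa\eta|)^{-1}$ multiplying $e^{-\alpha Y}$, which dominates the $e^{-\lambda Y}$ coming from $\tilde\Phi_f$, and the weighted $L^2$ bound on $Y^\beta\tilde\Phi$ then reduces to $\int_0^\infty Y^{2\beta}e^{-2\alpha Y}\,dY\sim\alpha^{-(2\beta+1)}$.
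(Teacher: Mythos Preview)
Your approach is essentially the paper's: factor $\tilde A$ through $A_0'/A_0$, extract exponential decay from the real-part estimate in Lemma~\ref{lem:Airy-p1}, absorb the polynomial prefactor, and then obtain all the pointwise and $L^2$ bounds on $\tilde\Phi_f$, $\tilde\Phi$ by direct integration plus one energy/Hardy argument for $\|(\partial_Y\tilde\Phi,\alpha\tilde\Phi)\|_{L^2}$. Your use of $|\sinh(\alpha u)/\alpha|\le u\,e^{\alpha u}$ in place of the paper's single integration by parts is an immaterial variant.

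The one shortfall relative to the lemma \emph{as stated} is your inequality $|s+\eta|\ge c(s+|\eta|)$: this fails for general $\eta$ with $\mathbf{Im}\,\eta<0$ (take $\eta$ with large negative real part and small imaginary part, then set $s=-\mathbf{Re}\,\eta$). You flag the restriction to purely imaginary $\eta$, which indeed suffices for the paper's application ($\eta=d=-\mathrm{i}\varepsilon\alpha^2/U'(0)$), but the paper proves the full statement by invoking Lemma~\ref{lem:Uineq}, which gives $\int_0^t|z-s|^{1/2}\,ds\ge C^{-1}|z|^{1/2}t$ for arbitrary $z\in\mathbb{C}$; applied with $z=-\kappa\eta$ this yields $\int_0^Y|\kappa(s+\eta)|^{1/2}\,ds\ge C^{-1}|\kappa\eta|^{1/2}Y$ with no hypothesis on $\mathbf{Re}\,\eta$. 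Swapping in that lemma for your geometric step closes the gap; the rest of your argument goes through unchanged.
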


\begin{proof}
 By Lemma \ref{lem:Airy-p1} and Lemma \ref{lem:Uineq}, we have
  \begin{align*}
     &\left|\dfrac{A_0(t+B)}{A_0(B)}\right|= \left|\exp\big(\ln(A_0\big(t+B\big))-\ln(A_0(B))\big)\right| = \left|\exp\bigg(\int_{0}^{t}\dfrac{A_0'\big(s+B\big)}{A_0\big(s+B\big)}\mathrm{d}s\bigg)\right|\\
     &\leq  \exp\bigg(\int_{0}^{t}\mathbf{Re}\dfrac{A_0'\big(s+B\big)}{A_0 \big(s+B\big)}\mathrm{d}s\bigg) \leq \exp\bigg(-\int_{0}^{t}\max\big(1/3,c(1+|s+B|^{\f12})\big)\mathrm{d}s\bigg),
  \end{align*}
which along with Lemma \ref{lem:Uineq} implies
\begin{align}\label{A0tB}
\left|\dfrac{A_0(t+B)}{A_0(B)}\right|\leq \exp\left(-\max\big(t/3,c(1+|B|^{\f12})t\big)\right).
\end{align}
Thanks to $\mathbf{Re}\dfrac{A_0'(z)}{A_0(z)}\leq \min(-1/3,-c(1+|z|^{\f12})\big)<0$, we deduce that $\left|\mathbf{Re}\dfrac{A_0'(z)}{A_0(z)}\right| \geq c(1+|z|^{\f12})$ and
  \begin{align}\label{est:A/A'}
     & \left|\dfrac{A_0(z)}{A_0'(z)}\right|=\left|\dfrac{A_0'(z)}{A_0(z)}\right|^{-1}\leq \left|\mathbf{Re}\dfrac{A_0'(z)}{A_0(z)}\right|^{-1} \leq c^{-1}(1+|z|^{\f12})^{-1}.
  \end{align}

Now we are ready to show the estimates of $\tilde{A}(Y)$.
 Lemma \ref{lem:Airy-p1} gives
    \begin{align*}
     |\tilde{A}(Y)|&=\left|\dfrac{A_0'\big(\kappa(Y+\eta)\big)}{A_0'(\kappa\eta)} \right|= \left|\dfrac{A_0(\kappa\eta)}{A_0'(\kappa\eta}\right| \left|\dfrac{A_0\big(\kappa(Y+\eta)\big)}{A_0(\kappa\eta)}\right| \left|\dfrac{A_0'\big(\kappa(Y+\eta)\big)}{A_0\big(\kappa(Y+\eta)\big)}\right|\\
   & \leq C(1+|\kappa\eta|)^{-\f12}\big(1+|\kappa\eta|+\kappa Y\big)^{\f12} \mathrm{e}^{-c\kappa Y\big(1+|\kappa\eta|^{\f12}\big)}\\
     &\leq C\mathrm{e}^{-c\kappa Y\big(1+|\kappa\eta|^{\f12}\big)}.
  \end{align*}
  Then it is easy to find that for $\beta\ge 0$,
  \begin{align*}
     & \|Y^\beta\tilde{A}\|_{L^2}\leq C\big\|\mathrm{e}^{-c\kappa Y\big(1+|\kappa\eta|^{\f12}\big)}\big\|_{L^2}\leq C\kappa^{-\f12-\beta}\big(1+|\kappa\eta|)^{-\f14-\f \beta 2}.
       \end{align*}

Now we turn to deal with $\tilde{\Phi}(Y)$. By Hardy's inequality, we have
  \begin{align*}
     & \|(\partial_Y\tilde{\Phi},\alpha \tilde{\Phi})\|_{L^2}^2=\big|\langle\tilde{A} ,\tilde{\Phi}\rangle\big| \leq \|Y\tilde{A}\|_{L^2}\left\|\tilde{\Phi}/Y\right\|_{L^2}\leq 2\|Y\tilde{A}\|_{L^2}\|\partial_Y\Phi\|_{L^2},
  \end{align*}
  which gives
  \begin{align*}
     &\|(\partial_Y\tilde{\Phi},\alpha \tilde{\Phi})\|_{L^2}\leq 2\|Y\tilde{A}\|_{L^2}\leq C\kappa^{-\f32}\big(1+|\kappa\eta|)^{-\f34}.
  \end{align*}
  
Thanks to the definition of $\tilde{\Phi}_f$ and $|\tilde{A}(Y)|\leq C\mathrm{e}^{-c\kappa\big(1+|\kappa\eta|^{\f12}\big)Y}$, we have
  \begin{align*}
     |\tilde{\Phi}_f(Y)|&= \alpha^{-1}\left|\int_{Y}^{+\infty}\tilde{A}(Z)\sinh\big(\alpha(Z-Y)\big)\mathrm{d}Z\right|\\
     &\leq C\alpha^{-1}\int_{Y}^{+\infty}\mathrm{e}^{-c\kappa\big(1+|\kappa\eta|^{\f12}\big)Z}\sinh\big(\alpha(Z-Y)\big)\mathrm{d}Z\\
     &= C\kappa^{-1}\big(1+|\kappa\eta|^{\f12}\big)^{-1}\int_{Y}^{+\infty} \mathrm{e}^{-c\kappa\big(1+|\kappa\eta|^{\f12}\big)Z}\cosh\big(\alpha(Z-Y)\big)\mathrm{d}Z\\
     &\leq C\kappa^{-1}(1+|\kappa\eta|)^{-\f12}\int_{Y}^{+\infty} \mathrm{e}^{-c\kappa\big(1+|\kappa\eta|^{\f12}\big)Z}\exp\big(\alpha(Z-Y)\big)\mathrm{d}Z\\
     &\leq C\kappa^{-1}(1+|\kappa\eta|)^{-\f12}\big(c\kappa\big(1+|\kappa\eta|^{\f12}\big) -\alpha\big)^{-1}\exp\big(-\big(c\kappa\big(1+|\kappa\eta|^{\f12}\big)-\alpha\big)Y\big)
  \end{align*}
  Therefore, for $c\kappa\geq 2\alpha>0$, we have $c\kappa\big(1+|\kappa\eta|^{\f12}\big)-\alpha\geq c\kappa(1+|\kappa\eta|^{\f12})/2$ and 
  \begin{align*}
     |\tilde{\Phi}_f(Y)|&\leq C\kappa^{-2}(1+|\kappa\eta|)^{-1}\mathrm{e}^{-c\kappa(1+|\kappa\eta|^{\f12})Y/2}.
  \end{align*}
which implies that for $\beta\geq0 $, 
  \begin{align*}
     &\big\|Y^{\beta}\tilde{\Phi}_f\big\|_{L^2}\leq C\kappa^{-\f{2\beta+5}{2}}\big(1+|\kappa\eta|)^{-\f{2\beta+5}{4}}.
  \end{align*}
  
  Notice that
  \begin{align*}
    \partial_Y\tilde{\Phi}_f(Y) &=-\int_{Y}^{+\infty}\tilde{A}(Z)\cosh\big( \alpha(Z-Y)\big)\mathrm{d}Z.
  \end{align*}
  Similarly, we can obtain
  \begin{align*}
  &|\partial_Y\tilde{\Phi}_f(Y)|\leq C\kappa^{-1}\big(1+|\kappa\eta|\big)^{-\f12}\mathrm{e}^{-c\kappa Y(1+|\kappa\eta|^{\f12})/2},\\
    &\big\|Y^{\beta}\tilde{\Phi}_f\big\|_{L^2}\leq C\kappa^{-\f{2\beta+5}{2}}\big(1+|\kappa\eta|)^{-\f{2\beta+5}{4}},\\
   &\|(\partial_Y\tilde{\Phi}_f,\alpha\tilde{\Phi}_f)\|_{L^2}\leq C\kappa^{-\f32}\big(1+|\kappa\eta|)^{-\f34}.
  \end{align*}

 Thanks to $|\tilde{A}(Y)|\leq C\mathrm{e}^{-c\kappa\big(1+|\kappa\eta|^{\f12}\big)Y}$, we get
  \begin{align*}
      \alpha^{-1}\left|\int_{0}^{+\infty}\tilde{A}(Z)\sinh(\alpha Z)\mathrm{d}Z\right|&\leq C\alpha^{-1}\int_{0}^{+\infty}
     \mathrm{e}^{-c\kappa\big(1+|\kappa\eta|^{\f12}\big)Y}\sinh(\alpha Z)\mathrm{d}Z\\
     &= C\kappa^{-1}\big(1+|\kappa\eta|^{\f12}\big)^{-1}\int_{0}^{+\infty}
     \mathrm{e}^{-c\kappa\big(1+|\kappa\eta|^{\f12}\big)Y}\cosh(\alpha Z)\mathrm{d}Z\\
     &\leq  C\kappa^{-1}\big(1+|\kappa\eta|^{\f12}\big)^{-1}\int_{0}^{+\infty}
     \mathrm{e}^{-c\kappa\big(1+|\kappa\eta|^{\f12}\big)Y}2\exp(\alpha Z)\mathrm{d}Z\\
    & \leq C\kappa^{-1}(1+|\kappa\eta|)^{-\f12}\big(c\kappa\big(1+|\kappa\eta|^{\f12}\big) -\alpha\big)^{-1}.
  \end{align*}
 Then for $c\kappa\geq 2\alpha>0$, we have
  \begin{align*}
     & \alpha^{-1}\left|\int_{0}^{+\infty}\tilde{A}(Z)\sinh(\alpha Z)\mathrm{d}Z\right|\leq C\kappa^{-2}(1+|\kappa\eta|)^{-1}.
  \end{align*}
  Recall that 
  \begin{align*}
     \tilde{\Phi}(Y)&= -\dfrac{\mathrm{e}^{-\alpha Y}}{\alpha}
   \int_{0}^{+\infty}\tilde{A}(Z)\sinh(\alpha Z)\mathrm{d}Z+\tilde{\Phi}_f(Y).
  \end{align*}
 Then we obtain
  \begin{align*}
     &|\tilde{\Phi}(Y)|\leq C\kappa^{-2}(1+|\kappa\eta|)^{-1}\big(\mathrm{e}^{-\alpha Y}+\mathrm{e}^{-c\kappa(1+|\kappa\eta|)^{\f12}Y/2}\big)\leq C\kappa^{-2}(1+|\kappa\eta|)^{-1}\mathrm{e}^{-\alpha Y},
  \end{align*}
  which yields that 
  \begin{align*}
     & \big\|Y^{\beta}\tilde{\Phi}\big\|_{L^2}\leq C\kappa^{-2}(1+|\kappa\eta|)^{-1}\alpha^{-\f{2\beta+1}{2}}.
  \end{align*}
 \end{proof}

\begin{lemma}\label{lem:Airy-bound}
Let $c$ be the constant in Lemma \ref{lem:Airy-w}, $c\kappa\geq 2\alpha>0$ and $\mathbf{Im}\eta<0$. Then it holds that
  \begin{align*}
     &|\partial_Y\tilde{\Phi}(0)|\geq C^{-1}(1+|\kappa\eta|)^{-\f12}(\kappa+3\alpha)^{-1},\\
    & \partial_Y \tilde\Phi_f(0)=-\frac{e^{\mathrm i\frac{\pi}{6}}}{3\kappa Ai(e^{\mathrm i\frac{\pi}{6}}\kappa\eta)}+\mathcal{O}\Big(\frac{\eta}{ Ai(e^{\mathrm i\frac{\pi}{6}}\kappa\eta)}\Big)+\mathcal O\Big(\frac{\alpha^2}{(c\kappa(1+|\kappa\eta|)^{1/2}-\alpha)^3}\Big),\\
    &\tilde\Phi_f(0)=\mathrm i\kappa^{-3}+\mathcal O(\kappa^{-1}\eta)+\mathcal O\Big(\frac{\alpha^2}{(c\kappa(1+|\kappa\eta|)^{1/2}-\alpha)^4}\Big).
  \end{align*}
\end{lemma}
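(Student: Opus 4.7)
The plan is to apply Lemma \ref{lem:ham-bound} with $w=\tilde A$ and $\phi=\tilde\Phi$ (respectively $\tilde\Phi_f$), which yields the explicit integral representations
\begin{align*}
\partial_Y\tilde\Phi(0) &= -\int_0^{+\infty}\tilde A(Y)e^{-\alpha Y}\,dY,\qquad \partial_Y\tilde\Phi_f(0) = -\int_0^{+\infty}\tilde A(Z)\cosh(\alpha Z)\,dZ,\\
\tilde\Phi_f(0) &= \frac{1}{\alpha}\int_0^{+\infty}\tilde A(Z)\sinh(\alpha Z)\,dZ.
\end{align*}
All three asymptotics will then be extracted by Taylor expansion of the hyperbolic factors, combined with the Airy identities $uAi(u)=Ai''(u)$ and $A_0'(z)=-e^{i\pi/6}Ai(e^{i\pi/6}z)$.

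For the lower bound on $|\partial_Y\tilde\Phi(0)|$, the key observation is that $Ai(e^{i\pi/6}\kappa(Y+\eta))=-(e^{-i\pi/6}/\kappa)\partial_Y A_0(\kappa(Y+\eta))$, so a single integration by parts yields
\begin{align*}
\partial_Y\tilde\Phi(0)=\frac{e^{-i\pi/6}A_0(\kappa\eta)}{\kappa\, Ai(e^{i\pi/6}\kappa\eta)}\left[-1+\frac{\alpha}{A_0(\kappa\eta)}\int_0^{+\infty}A_0(\kappa(Y+\eta))e^{-\alpha Y}\,dY\right].
\end{align*}
The prefactor has modulus comparable to $(1+|\kappa\eta|)^{-1/2}/\kappa$, because Lemma \ref{lem:Airy-p1} yields the two-sided bound $|A_0(z)/Ai(e^{i\pi/6}z)|\sim(1+|z|^{1/2})^{-1}$. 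The same lemma furnishes exponential decay of $A_0(\kappa(Y+\eta))$ at rate $c\kappa(1+|\kappa\eta|^{1/2})$, so the bracketed correction has modulus $\le\alpha/(c\kappa(1+|\kappa\eta|^{1/2})+\alpha)\le 1/3$ under the hypothesis $c\kappa\ge 2\alpha$, and the asserted lower bound $\gtrsim(1+|\kappa\eta|)^{-1/2}(\kappa+3\alpha)^{-1}$ follows.

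For the two formulas at $Y=0$ involving $\tilde\Phi_f$, I would split the hyperbolic factors as $\cosh(\alpha Z)=1+(\cosh(\alpha Z)-1)$ and $\sinh(\alpha Z)/\alpha=Z+(\sinh(\alpha Z)/\alpha-Z)$. The principal integrals $\int_0^{+\infty}\tilde A(Z)\,dZ$ and $\int_0^{+\infty}Z\tilde A(Z)\,dZ$ are computed in closed form via the substitution $t=e^{i\pi/6}\kappa(Z+\eta)$: the first equals $e^{-i\pi/6}A_0(\kappa\eta)/(\kappa\,Ai(e^{i\pi/6}\kappa\eta))$, and the second, after using $tAi(t)=Ai''(t)$ to turn $\int tAi(t)\,dt$ into a boundary value of $Ai'$, equals a linear combination of $Ai'(e^{i\pi/6}\kappa\eta)/(\kappa^2\,Ai(e^{i\pi/6}\kappa\eta))$ and $\eta A_0(\kappa\eta)/(\kappa\,Ai(e^{i\pi/6}\kappa\eta))$. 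Taylor expanding in $\kappa\eta$ about zero, with $A_0(0)=\int_0^\infty Ai(t)\,dt=1/3$ (a consequence of contour deformation inside the sector $|\arg t|<\pi/3$ where $Ai$ decays) and the explicit values $Ai(0),Ai'(0)$, isolates the stated leading terms with errors $O(\eta/Ai(e^{i\pi/6}\kappa\eta))$ and $O(\kappa^{-1}\eta)$ respectively. The hyperbolic remainders are bounded term by term in the power series $\sum_{n\ge 1}\alpha^{2n}\int Z^{2n}\tilde A\,dZ/(2n)!$ and $\sum_{n\ge 1}\alpha^{2n}\int Z^{2n+1}\tilde A\,dZ/(2n+1)!$; invoking the pointwise bound $|\tilde A(Z)|\le Ce^{-c\kappa(1+|\kappa\eta|^{1/2})Z}$ from Lemma \ref{lem:Airy-w} and summing the resulting geometric series — convergent because $c\kappa\ge 2\alpha$ — produces the announced $\alpha^2/(c\kappa(1+|\kappa\eta|)^{1/2}-\alpha)^3$ and $\alpha^2/(c\kappa(1+|\kappa\eta|)^{1/2}-\alpha)^4$ errors.

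The main obstacle is algebraic bookkeeping rather than a conceptual one: carrying the sixth- and third-root-of-unity phases $e^{\pm i\pi/6},e^{\pm i\pi/3}$ through the chain of substitutions and the identity $A_0'(z)=-e^{i\pi/6}Ai(e^{i\pi/6}z)$, and combining them with $A_0(0)=1/3$ and the reference values $Ai(0),Ai'(0)$ so as to recover the clean leading coefficient in the $\tilde\Phi_f(0)$ asymptotic. A secondary technical point is that the remainder estimates must be formulated with the full denominator $c\kappa(1+|\kappa\eta|)^{1/2}-\alpha$ — rather than the cruder $c\kappa$ — which forces a slightly sharper majorization of the geometric series that is uniform in $\alpha$ up to its permitted upper limit $c\kappa/2$.
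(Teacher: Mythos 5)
Your proposal matches the paper's proof essentially step for step: the same integral representations from Lemma \ref{lem:ham-bound}, the same integration by parts against $A_0$ combined with \eqref{A0tB} and Lemma \ref{lem:Airy-p1} for the lower bound on $|\partial_Y\tilde{\Phi}(0)|$, and the same splittings $\cosh(\alpha Z)=1+(\cosh(\alpha Z)-1)$ and $\sinh(\alpha Z)=\alpha Z+(\sinh(\alpha Z)-\alpha Z)$, with the Airy identity converting the principal integrals into boundary values and the pointwise decay of $\tilde{A}$ from Lemma \ref{lem:Airy-w} controlling the remainders. The only cosmetic differences are that the paper bounds the hyperbolic remainders pointwise by $C\alpha^2Y^2e^{\alpha Y}$ (resp. $C\alpha^2Y^3e^{\alpha Y}$) rather than summing a power series, and uses the $e^{-\kappa Y/3}$ decay rate in the integration by parts to produce the exact factor $(\kappa+3\alpha)^{-1}$ rather than the equivalent $C\kappa^{-1}$.
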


\begin{proof}
  By Lemma \ref{lem:ham-bound}, we have
  \begin{align*}
     -\partial_Y\tilde{\Phi}(0) &=\int_{0}^{+\infty}\tilde{A}(Y)\mathrm{e}^{-\alpha Y}\mathrm{d}Y = \int_{0}^{+\infty}\dfrac{Ai\big( \mathrm{e}^{\mathrm{i}\frac{\pi}{6}}\kappa(Y+\eta)\big)}{ Ai\big(\mathrm{e}^{\mathrm{i}\frac{\pi}{6}}\kappa\eta\big)}\mathrm{e}^{-\alpha Y}\mathrm{d}Y\\
     &= \int_{0}^{+\infty}\dfrac{A_0'\big(\kappa(Y+\eta)\big)}{ A_0'(\kappa\eta)}\mathrm{e}^{-\alpha Y}\mathrm{d}Y\\
     &=-\dfrac{A_0(\kappa\eta)}{\kappa A_0'(\kappa\eta)} -\int_{0}^{+\infty}\dfrac{A_0\big(\kappa(Y+\eta)\big)}{ \kappa A_0'(\kappa\eta)}\partial_Y\big(\mathrm{e}^{-\alpha Y}\big)\mathrm{d}Y,
  \end{align*}
  which along with Lemma \ref{lem:Airy-w} gives
  \begin{align*}
     \kappa|A_0'(\kappa\eta)||\partial_Y\tilde{\Phi}(0)|&\geq \big|A_0(\kappa\eta)\big|- \int_{0}^{+\infty}\big|A_0\big(\kappa(Y+\eta)\big)\big| \big|\partial_Y\big(\mathrm{e}^{-\alpha Y}\big)\big|\mathrm{d}Y\\
     &\geq \big|A_0(\kappa\eta)\big|- \int_{0}^{+\infty} \mathrm{e}^{-\kappa Y/3} \big|A_0(\kappa\eta)\big|\big| \partial_Y\big(\mathrm{e}^{-\alpha Y}\big)\big|\mathrm{d}Y\\
     &= \big|A_0(\kappa\eta)\big|+ \big|A_0(\kappa\eta)\big|\int_{0}^{+\infty} \mathrm{e}^{-\kappa Y/3} \partial_Y\big(\mathrm{e}^{-\alpha Y}\big)\mathrm{d}Y\\
     &= \dfrac{\kappa}{3}\big|A_0(\kappa\eta)\big|\int_{0}^{+\infty} \mathrm{e}^{-\kappa Y/3-\alpha Y}\mathrm{d}Y\\
     &=\big|A_0(\kappa\eta)\big|\dfrac{\kappa}{ \kappa+3\alpha}.
  \end{align*}
   This along with Lemma \ref{lem:Airy-p1} shows that 
     \begin{align*}
     &|\partial_Y\tilde{\Phi}(0)|\geq \dfrac{\big|A_0(\kappa\eta)\big|}{(\kappa+3\alpha)|A_0'(\kappa\eta)|}\geq C^{-1}(1+|\kappa\eta|)^{-\f12}(\kappa+3\alpha)^{-1}.
  \end{align*}
  
  Now we estimate $\tilde{\Phi}_f(0)$. We first have 
  \begin{align*}
  	\partial_Y\tilde{\Phi}_f(0) &=-\int_{0}^{+\infty}\tilde{A}(Y)\cosh(\alpha Y)\mathrm{d}Y=I+II,
  \end{align*}
  where
  \begin{align*}
  	I=-\int_0^{+\infty}\tilde A(Y)\mathrm dY,\quad II=-\int_0^{+\infty}\tilde A(Y)(\cosh(\alpha Y)-1)\mathrm d Y.
  \end{align*}
  For $I$, we notice that by the definition of $\tilde A(Y)$,
  \begin{align*}
  	I=&-Ai^{-1}(e^{\mathrm i\frac{\pi}{6}}\kappa\eta)\int_0^{+\infty}Ai(e^{\mathrm i\frac{\pi}{6}}\kappa(Y+\eta)\mathrm dY=-\kappa^{-1}Ai^{-1}(e^{\mathrm i\frac{\pi}{6}}\kappa\eta)\int_{\kappa\eta}^{+\infty} Ai(e^{\mathrm i\frac{\pi}{6}}Z\mathrm )dZ\\
  	=&-\frac{1}{\kappa Ai(e^{\mathrm i\frac{\pi}{6}}\kappa\eta)}\int_{0}^{+\infty} Ai(e^{\mathrm i\frac{\pi}{6}}Z\mathrm )dZ+\frac{1}{\kappa Ai(e^{\mathrm i\frac{\pi}{6}}\kappa\eta)}\int_{0}^{\kappa\eta} Ai(e^{\mathrm i\frac{\pi}{6}}Z\mathrm )dZ\\
  	=&-\frac{e^{\mathrm i\frac{\pi}{6}}}{\kappa Ai(e^{\mathrm i\frac{\pi}{6}}\kappa\eta)}\int_0^{+\infty} Ai(Y)\mathrm dY+\frac{1}{\kappa Ai(e^{\mathrm i\frac{\pi}{6}}\kappa\eta)}\int_{0}^{\kappa\eta} Ai(e^{\mathrm i\frac{\pi}{6}}Z\mathrm )dZ.
  \end{align*}
  On the other hand, we know that $\int_0^{+\infty} Ai(Y)\mathrm dY=1/3$ and observe that
  \begin{align*}
  	\Big|\int_{0}^{\kappa\eta} Ai(e^{\mathrm i\frac{\pi}{6}}Z\mathrm )dZ\Big|\lesssim\kappa\eta.
  \end{align*}
  Then we obtain 
  \begin{align}\label{eq:I-esi}
  	I=-\frac{e^{\mathrm i\frac{\pi}{6}}}{3\kappa Ai(e^{\mathrm i\frac{\pi}{6}}\kappa\eta)}+\mathcal{O}\Big(\frac{\eta}{ Ai(e^{\mathrm i\frac{\pi}{6}}\kappa\eta)}\Big).
  \end{align}
For $II$, we notice that by Lemma \ref{lem:Airy-w},
  \begin{align*}
  	|II|\leq C\int_0^{+\infty}\alpha^2 Y^2 e^{-c\kappa Y\big(1+|\kappa\eta|^{\f12}\big)}e^{\alpha Y}\mathrm dY\leq C\frac{\alpha^2}{(c\kappa(1+|\kappa\eta|^\f12)-\alpha)^3},
  \end{align*}
  which along with \eqref{eq:I-esi} implies 
  \begin{align*}
  \partial_Y \tilde\Phi_f(0)=-\frac{e^{\mathrm i\frac{\pi}{6}}}{3\kappa Ai(e^{\mathrm i\frac{\pi}{6}}\kappa\eta)}+\mathcal{O}\Big(\frac{\eta}{ Ai(e^{\mathrm i\frac{\pi}{6}}\kappa\eta)}\Big)+\mathcal O\Big(\frac{\alpha^2}{(c\kappa(1+|\kappa\eta|)^{1/2}-\alpha)^3}\Big).
  \end{align*}
  
Now we turn to the estimate of $\tilde\Phi_f(0)$. We notice that
  \begin{align*}
  	\tilde\Phi_f(0)=\alpha^{-1}\int_0^{+\infty}\tilde A(Y)\sinh(\alpha Y)\mathrm dY=I_0+II_0,
  \end{align*}
  where
  \begin{align*}
  	I_0=\alpha^{-1}\int_0^{+\infty}\tilde A(Y)\alpha Y \mathrm dY,\quad II_0=\alpha^{-1} \int_0^{+\infty}\tilde A(Y)(\sinh(\alpha Y)-\alpha Y) \mathrm dY.
  \end{align*}
 Thanks to the definition of $\tilde A(Y)$, we have
  \begin{align*}
  	Y\tilde A(Y)=-\frac{\mathrm i}{\kappa^3}\partial_Y^2\tilde A(Y)-\eta\tilde A(Y).
  \end{align*}
  Then we get
  \begin{align*}
  	I_0=&-\frac{\mathrm i}{\kappa^3}\int_0^{+\infty}\partial_Y^2\tilde A(Y)\mathrm dY-\eta\int_0^{+\infty}\tilde A(Y)\mathrm dY\\
  	   =&\mathrm i\kappa^{-3}\partial_Y\tilde A(0)+\mathcal O(\kappa^{-1}\eta)=\mathrm i\kappa^{-3}+\mathcal O(\kappa^{-1}\eta),
  \end{align*}
  and
  \begin{align*}
  	|II_0|\leq C\alpha^2\int_0^{+\infty} Y^3 e^{-c\kappa Y\big(1+|\kappa\eta|^{\f12}\big)}e^{\alpha Y}\mathrm dY\leq C\frac{\alpha^2}{(c\kappa(1+|\kappa\eta|)^{1/2}-\alpha)^4}.
  \end{align*}
 Thus, we conclude that 
  \begin{align*}
  	\tilde\Phi_f(0)=\mathrm i\kappa^{-3}+\mathcal O(\kappa^{-1}\eta)+\mathcal O\Big(\frac{\alpha^2}{(c\kappa(1+|\kappa\eta|)^{1/2}-\alpha)^4}\Big).
  \end{align*}
  \end{proof}

 \section{The Homogeneous Rayleigh equation}
 
 Here we recall a result about the homogeneous Rayleigh equation from \cite{GM}.

 \begin{proposition}\label{Prop:hom-ray}
	For any $0<\alpha<1$, there exists a function $\varphi\in H^1(\mathbb{R}_+)$ such that
	\begin{align*}
		U(\partial_Y^2-\alpha^2)\varphi- U''\varphi=0,\quad Y>0,
	\end{align*}
	and there holds the following properties: $\varphi=\varphi_0+\varphi_2+\varphi_3$ with
	\begin{align*}
		&\varphi_0=Ue^{-\alpha Y},\quad \varphi_1|_{Y=0}=\frac{\alpha}{U'(0)}+\mathcal{O}(\alpha^2),\\
		&\|\partial_Y\varphi_1\|_{L^2}+\|\varphi_1\|_{L^2}\leq C\alpha,\\
		&\|\partial_Y\varphi_2\|_{L^2}+\alpha\|\varphi_2\|_{L^2}\leq C\alpha^{\f32}.
	\end{align*}
	Here $C$ is independent of $\alpha$. If $\frac{U''}{U}\in L^2(\mathbb{R}_+)$ in addition, then $\varphi_1$ and $\varphi_2$ beong to $H^2(\mathbb{R}_+)$.
\end{proposition}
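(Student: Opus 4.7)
The plan is to construct $\varphi$ as a perturbation of the asymptotic leading term $\varphi_0 = Ue^{-\alpha Y}$ by two corrections: $\varphi_1$ of size $O(\alpha)$ that accounts for the non-trivial behavior near the boundary $Y=0$, and $\varphi_2$ of size $O(\alpha^{3/2})$ that upgrades $\varphi_0+\varphi_1$ to an exact homogeneous solution. A direct computation gives $\mathrm{Ray}[\varphi_0] = -2\alpha U U' e^{-\alpha Y}$, which has size $O(\alpha)$ with the divergence structure $-\alpha \partial_Y(U^2) e^{-\alpha Y}$, so my construction will exploit the factored form $U\partial_Y^2 - U'' = \partial_Y(U^2 \partial_Y(\cdot/U))$.

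For $\varphi_1$, motivated by the requirement that $\varphi_0+\varphi_1$ should retain an exponential tail at infinity while producing the prescribed boundary value at $Y=0$, I propose the explicit formula
\begin{align*}
\varphi_1(Y) = \alpha\, U(Y) \int_Y^{+\infty} \bigl(U^{-2}(Z) - 1\bigr)\, e^{-\alpha Z}\, dZ,
\end{align*}
so that, using $\alpha \int_Y^{+\infty} e^{-\alpha Z}\,dZ = e^{-\alpha Y}$, we obtain the clean identity $\varphi_0+\varphi_1 = \alpha\, U(Y)\int_Y^{+\infty} U^{-2}(Z) e^{-\alpha Z}\,dZ$. The boundary value is computed by Taylor expansion: since $U^{-2}(Z) \sim (U'(0))^{-2} Z^{-2}$ near zero, the integral $\int_Y^{+\infty}(U^{-2}-1)e^{-\alpha Z}dZ$ exhibits a leading $1/(U'(0)^2 Y)$ singularity which combines with $U(Y)\sim U'(0)Y$ to produce $\varphi_1(0)=\alpha/U'(0)+O(\alpha^2)$; the $O(\alpha^2)$ remainder comes jointly from the next order in the Taylor expansions of $U$ and of $e^{-\alpha Z}$. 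The $H^1$ bound $\|\varphi_1\|_{L^2}+\|\partial_Y\varphi_1\|_{L^2}\le C\alpha$ follows from direct estimation using the decay $U^{-2}(Z)-1 \sim C/(1+Z)$ at infinity together with the weight $e^{-\alpha Z}$.

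A direct computation using the factored form then yields
\begin{align*}
\mathrm{Ray}[\varphi_0 + \varphi_1] = \alpha^3 \int_Y^{+\infty}\!\left[1 - \frac{U^2(Y)}{U^2(Z)}\right] e^{-\alpha Z}\, dZ,
\end{align*}
which is small both because of the $\alpha^3$ prefactor and because $U(Y)/U(Z) \to 1$ as $Z\to Y$. I then define $\varphi_2$ as the solution of $\mathrm{Ray}[\varphi_2] = -\mathrm{Ray}[\varphi_0+\varphi_1]$ with $\varphi_2(0)=0$ and decay at infinity, and invoke an estimate in the spirit of Lemma \ref{prop:Rayleigh-res}. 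The key point is to split this residual into a divergence part $-\partial_Y f_1$ plus an $\alpha^2 f_2$ part (for example, by the identity $\int_Y^\infty g(Y,Z) e^{-\alpha Z}dZ$ has a natural $Y$-derivative that extracts a boundary contribution), with $f_1(0)=0$ to avoid the $|\alpha|^{-2}|f_1(0)|$ penalty in Lemma \ref{prop:Rayleigh-res}. A weighted estimation of this decomposition, using the exponential decay and the decay of $U'$, will then produce $\|\partial_Y\varphi_2\|_{L^2}+\alpha\|\varphi_2\|_{L^2} \le C\alpha^{3/2}$.

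The main obstacle will be controlling the residual in the precise weighted norm required by Lemma \ref{prop:Rayleigh-res}, in particular managing the apparent singularity of $U''/U \sim C/Y$ near $Y=0$ that arises when one divides the Rayleigh equation by $U$. This is overcome via Hardy's inequality, applied through the integral operator $\mathcal{L}$ of Lemma \ref{lem:GM's-hardy}, combined with the structural assumption $\sum_{k=1,2}\sup_Y(1+Y)^3|\partial_Y^k U|<\infty$, which provides just enough integrability at infinity to handle the polynomial decay of $1-U$. Finally, the $H^2$ regularity of $\varphi_1,\varphi_2$ under the additional hypothesis $U''/U \in L^2(\mathbb{R}_+)$ follows by rewriting the equation as $\partial_Y^2\varphi_i = \alpha^2\varphi_i + (U''/U)\varphi_i + (\text{source})/U$ and observing that $(U''/U)\varphi_i\in L^2$ via the embedding $H^1\hookrightarrow L^\infty$.
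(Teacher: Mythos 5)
First, a remark on context: the paper itself does not prove Proposition \ref{Prop:hom-ray} --- Appendix C merely recalls it from \cite{GM} --- so your argument has to stand entirely on its own. Its first half does: with $\varphi_1=\alpha U\int_Y^\infty(U^{-2}-1)e^{-\alpha Z}\,dZ$ one indeed gets the clean identity $\varphi_0+\varphi_1=\alpha U\int_Y^\infty U^{-2}e^{-\alpha Z}\,dZ$, the residual
\begin{align*}
R:=U(\partial_Y^2-\alpha^2)(\varphi_0+\varphi_1)-U''(\varphi_0+\varphi_1)=\alpha^3\int_Y^\infty\Bigl[1-\f{U^2(Y)}{U^2(Z)}\Bigr]e^{-\alpha Z}\,dZ
\end{align*}
is exactly as you state, $\varphi_1(0)=\alpha/U'(0)+\mathcal O(\alpha^2)$, and the bound $\|\varphi_1\|_{H^1}\le C\alpha$ follows, e.g., from Lemma \ref{lem:GM's-hardy} applied to $\varphi_1=\alpha\,\mathcal L[(1-U^2)e^{-\alpha\,\cdot}]$ (the two $Y^{-1}$ singularities in $\partial_Y\varphi_1$ cancel). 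This part is correct.

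The gap is in the $\varphi_2$ step, and it is not merely technical. Note that $R(0)=\alpha^2\neq 0$, so the compatibility condition $\partial_Yf_1(0)=\alpha^2f_2(0)$ of Lemma \ref{prop:Rayleigh-res} (which forces the source to vanish at $Y=0$) can never be met; moreover that lemma controls only $\|\varphi\|_{L^2}$, never $\|\partial_Y\varphi\|_{L^2}$, which is the bound you actually need at order $\alpha^{3/2}$. The decisive obstruction is the nonzero mass of the residual: a Fubini computation gives $\int_0^\infty R\,dY=\alpha^2\int_0^\infty(1-U^2)\,dY+\mathcal O(\alpha^3|\log\alpha|)=:c\,\alpha^2+\cdots$ with $c\neq0$ for a generic profile. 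Integrating $\partial_Y\bigl(U^2\partial_Y(\varphi_2/U)\bigr)-\alpha^2U\varphi_2=-R$ over $(0,\infty)$ (the boundary terms vanish since $U(0)=\varphi_2(0)=0$) forces $\int_0^\infty U\varphi_2\,dY=\alpha^{-2}\int_0^\infty R\,dY=c+o(1)$. A matching analysis of $U^2\partial_Y(\varphi_2/U)=-\int_0^YR+\alpha^2\int_0^YU\varphi_2$ together with the decay requirement at infinity then shows that $\varphi_2$ must equal $c\,\alpha\,U(Y)e^{-\alpha Y}$ up to $\mathcal O(\alpha^2|\log\alpha|)$ already on bounded sets: the source renormalizes the slow mode at order $\alpha$. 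Consequently $\partial_Y\varphi_2$ contains $c\,\alpha\,U'e^{-\alpha Y}$, whose $L^2$ norm is comparable to $\alpha$, not $\alpha^{3/2}$; the claimed bound $\|\partial_Y\varphi_2\|_{L^2}\le C\alpha^{3/2}$ is therefore \emph{false} for the corrector your construction produces. No choice of splitting $-R=-\partial_Yf_1+\alpha^2f_2$ and no ``weighted estimation'' can circumvent this, because it is a property of the solution itself. What is missing is the explicit extraction of this renormalization: one must subtract a term $\lambda\alpha Ue^{-\alpha Y}$ with $\lambda\asymp c$ chosen to kill the mean of the residual to order $\alpha^3$, and then redistribute it as $\lambda\alpha(U-1)e^{-\alpha Y}$ (which has $H^1$ norm $\mathcal O(\alpha)$ and belongs in $\varphi_1$) plus $\lambda\alpha e^{-\alpha Y}$ (which has $\|\partial_Y\cdot\|_{L^2}+\alpha\|\cdot\|_{L^2}=\mathcal O(\alpha^{3/2})$ and belongs in $\varphi_2$), tracking the resulting $\mathcal O(\alpha)$ shifts of the boundary values. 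Only after this extraction does the remaining corrector have mean-free data and admit the $\alpha^{3/2}$ estimate.
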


\end{CJK*}
\end{document}